\documentclass{amsart}
\usepackage{graphics}
\usepackage{graphicx}
\usepackage{epsfig}
\usepackage{amscd}
\usepackage{amssymb}
\usepackage{amsthm}
\usepackage{amsmath}
\numberwithin{equation}{section}
\usepackage{mathrsfs}
\usepackage{color}
\usepackage{hyperref}
\usepackage{bm}
\usepackage{float}
\usepackage{geometry}
\usepackage{tikz}
\usepackage{multirow}
\usepackage{verbatim}
\usepackage{geometry}
\usepackage{placeins}

\usetikzlibrary{matrix,arrows,arrows.meta,calc}
\usetikzlibrary{decorations.pathreplacing,decorations.markings}
\usetikzlibrary{shapes,positioning}
\tikzstyle arrowstyle=[scale=1]
\tikzstyle directed=[postaction={decorate,decoration={markings,
    mark=at position .5 with {\arrow[arrowstyle]{stealth}}}}]
\tikzstyle reverse directed=[postaction={decorate,decoration={markings,
    mark=at position .65 with {\arrowreversed[arrowstyle]{stealth};}}}]

\newtheorem{thm}{Theorem}[section]
\newtheorem{cor}[thm]{Corollary}
\newtheorem{prop}[thm]{Proposition}
\newtheorem{conj}[thm]{Conjecture}

\theoremstyle{plain}
\newtheorem{lem}[thm]{Lemma}

\theoremstyle{remark}
\newtheorem{rem}[thm]{Remark}

\theoremstyle{definition}
\newtheorem{defn}[thm]{Definition}
\newtheorem{ex}[thm]{Example}

\numberwithin{equation}{section}

\newcommand{\A}{\mathcal{A}}
\newcommand{\T}{\mathcal{T}}
\newcommand{\B}{\mathcal{B}}

\newcommand{\FF}{\mathcal{F}}

\newcommand{\Q}{\mathbb{Q}}
\newcommand{\R}{\mathbb{R}}
\newcommand{\Z}{\mathbb{Z}}

\newcommand{\id}{\text{id}}
\newcommand{\geod}{\mathrm{geod}}
\newcommand{\gs}{\mathrm{gs}}
\newcommand{\Mag}{\mathrm{Mag}}
\newcommand{\Ch}{\mathrm{Ch}}
\newcommand{\supp}{\mathrm{supp}}
\newcommand{\house}{\mathrm{house}}
\DeclareMathOperator{\rank}{rank}
\DeclareMathOperator{\diam}{diam}
\DeclareMathOperator{\codim}{codim}

\begin{document}

\title{Magnitude homology of real hyperplane arrangements}

\author{Junnosuke Koizumi}
\address{RIKEN iTHEMS, Wako, Saitama 351-0198, Japan}
\email{junnosuke.koizumi@riken.jp}
%\thanks{}

\author{Ye Liu}
\address{Department of Pure Mathematics, Xi'an Jiaotong-Liverpool University, Suzhou, Jiangsu 215123, P. R. China}
\email{yeliumath@gmail.com}
%\thanks{}

%    General info
\subjclass[2020]{Primary 52C35; Secondary 55N35}

\date{}

%\dedicatory{}

\keywords{hyperplane arrangement, tope graph, magnitude, magnitude homology}

\begin{abstract}
We define and study the magnitude and magnitude homology of a real hyperplane arrangement by regarding its tope graph as a metric space.
We prove several structural results for the magnitude of arrangements, including a symmetry formula, palindromicity of the numerator and denominator, a face decomposition formula, and results on the sign pattern of the magnitude power series.
For the magnitude homology of arrangements, we obtain combinatorial formulas for small lengths and show that it detects Boolean arrangements.
We also lift the face decomposition formula to a homological decomposition and derive explicit formulas for the diagonal magnitude Betti numbers. Another notable feature is that the magnitude Euler characteristic satisfies a reciprocity theorem analogous to Ehrhart--Macdonald reciprocity. We conclude by presenting several conjectures.
In particular, we conjecture that the magnitude homology of an arrangement is torsion-free and determined by the intersection lattice.
\end{abstract}

\maketitle

\tableofcontents

\section{Introduction}

Let $\A$ be an arrangement of hyperplanes passing through the origin in $\R^d$.
The hyperplanes of $\A$ divide $\R^d$ into regions called chambers.
We denote the set of chambers by $\Ch(\A)$.
The \emph{tope graph} $\T(\A)$ of $\A$ is the graph with vertex set $\Ch(\A)$ and with edges joining two chambers separated by a single hyperplane.
More generally, the tope graph is defined for an oriented matroid.
Although its definition is elementary, the tope graph contains essentially all combinatorial information about a hyperplane arrangement:

\begin{thm}[Theorem 4.2.14 of \cite{Bjorner1999}]
    A simple oriented matroid is uniquely determined (up to reorientation) by its unlabeled tope graph.    
\end{thm}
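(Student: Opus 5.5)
The plan is the classical reconstruction argument (Björner--Edelman--Ziegler, as presented in Björner et al.). From the unlabeled tope graph $G$ we rebuild the set of covectors as sign vectors, up to relabeling and reorientation of the ground set. Simplicity of the oriented matroid is used throughout to identify the ground set with the set of ``parallel classes'' of edges of $G$.

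\textbf{Step 1: geometry of the tope graph.} For a simple oriented matroid, topes are sign vectors in $\{+,-\}^E$. The graph distance between two topes equals the size of their separation set $S(T,T')$, since there are no loops or parallel elements. This requires showing that a shortest path crosses each element at most once. The standard argument takes a tope $T$ and an element $e\in S(T,T')$, and uses the elimination axiom to find a neighbor of $T$ strictly closer to $T'$.

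\textbf{Step 2: recovering $E$ from $G$.} Define an equivalence on edges of $G$ generated by ``opposite edges of a 4-cycle'' (in an arrangement, each codimension-two flat locally gives even cycles). More robustly, for each edge $\{T,T'\}$ consider the half-space
$$H(T,T')=\{X : d(X,T)<d(X,T')\}.$$
I would show that every edge's two half-spaces are exactly $\{X : X_e=T_e\}$ and $\{X : X_e=T'_e\}$, where $e$ is the unique element separating $T$ and $T'$. This follows from Step 1, because $d(X,T)-d(X,T')=\pm1$ according to the sign of $X_e$. Hence the partitions of the vertex set arising from edges are in bijection with $E$. Choosing one side of each partition as ``$+$'' gives a sign vector for every vertex. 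Different choices differ by reorientation, and the bijection with $E$ is unique up to relabeling. Thus $G$ determines the set of topes $\mathcal{T}\subseteq\{+,-\}^E$ up to reorientation.

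\textbf{Step 3: from topes to covectors.} This is the main obstacle. An oriented matroid is determined by its topes, via the characterization of covectors as those sign vectors $X$ such that $X\circ T\in\mathcal{T}$ for every $T\in\mathcal{T}$ (Mandel / da Silva). Covectors are exactly the sign vectors whose composition with every tope is again a tope. I would cite this tope-characterization theorem rather than reprove it. If needed, an alternative is to recover faces as the intervals (geodesically convex subgraphs cut out by fixing signs on some elements), then check closure under composition and elimination. Combining Steps 2 and 3, two simple oriented matroids with isomorphic tope graphs have the same tope sets after a reorientation and relabeling. They therefore have the same covector sets, which proves the theorem.
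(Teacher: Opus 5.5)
The paper gives no proof of this statement; it is quoted from Bj\"orner et al. Your outline is essentially the standard Bj\"orner--Edelman--Ziegler argument from that source, and it is correct modulo the standard facts you cite. The steps are: the tope graph of a simple oriented matroid is a partial cube; the metric half-spaces $\{X : d(X,T)<d(X,T')\}$ of its edges recover the elements of $E$, and hence the tope set up to relabeling and reorientation; and the topes determine the covectors through the condition $X\circ T\in\mathcal{T}$ for all topes $T$.

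Two small corrections.
\begin{itemize}
\item The relation generated by opposite edges of $4$-cycles genuinely fails. The hexagonal tope graph of $A_2$ has no $4$-cycles but three edge classes, so keep only the half-space definition.
\item In Step~1, eliminating at a prescribed $e\in S(T,T')$ does not by itself give a neighbor of $T$ across $e$. Such a neighbor need not exist: take $T'=-T$ in the hexagon. The argument only yields a neighbor across \emph{some} element of $S(T,T')$, and it needs an extra induction to get there. It is cleaner to cite directly that tope graphs of simple oriented matroids are partial cubes.
\end{itemize}
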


We refer to Figure 1 of \cite{Yagi2026} for a detailed diagram of implications. In particular, if two arrangements have isomorphic tope graphs, then they have homeomorphic complexified complements, isomorphic intersection lattices, and isomorphic filtered Varchenko--Gelfand algebras. Hence we may study $\A$ via $\T(\A)$.

In this paper, by applying the emerging theory of magnitude to tope graphs, we define the magnitude and magnitude homology of arrangements and initiate the study of these invariants.
Magnitude is a cardinality-like invariant of enriched categories and metric spaces, defined by Leinster \cite{Leinster2013,Leinster2017}; for metric spaces, it can be interpreted as measuring their ``effective size.''
The magnitude of a graph is defined by regarding the graph as a metric space via the shortest-path metric \cite{Leinster2019}.
The magnitude of a graph is a rational function defined using the inverse of the $q$-distance matrix; in the case of hyperplane arrangements, this matrix coincides with the classical Varchenko matrix.
The notion of magnitude homology, a categorification of magnitude, was introduced for graphs by Hepworth and Willerton \cite{Hepworth2017}, and for enriched categories and metric spaces by Leinster and Shulman \cite{Leinster2021}.
It is a bigraded homology theory that reflects deep metric properties.

Our motivation is twofold.
First, magnitude and magnitude homology give new invariants of real hyperplane arrangements.
What information about an arrangement is reflected by these new invariants remains mysterious and is worth exploring.
Second, the tope graphs of arrangements form a class of graphs with many distinctive features, such as antipodal symmetry and decompositions into faces, and may therefore serve as a source of interesting examples in the magnitude theory of graphs.

At the decategorified level, we show that the magnitude of an arrangement, $\Mag(\mathcal{A}, q)$, satisfies remarkable structural properties.
We prove a symmetry relating $\Mag(\mathcal{A}, q)$ and $\Mag(\mathcal{A}, q^{-1})$, which reflects the antipodal symmetry of the tope graph.
The reduced rational form of $\Mag(\mathcal{A}, q)$ has palindromic numerator and denominator, and all its poles are roots of unity.
Moreover, if we define the \emph{interior magnitude} by $\Mag^\circ(\A,q)=(-1)^{\rank \A}q^{\#\A}\Mag(\A,q)$, then the following \emph{face decomposition formula} holds:
\[
\Mag(\A,q)=\sum_{F\in \mathcal{F}(\A)}\Mag^\circ(\A_F,q).
\]
Here $\FF(\A)$ denotes the set of faces (covectors) of $\A$, and $\A_F=\{H\in \A\mid F\subset H\}$.
This serves as a discrete $q$-analogue of the classical decomposition of the Euler characteristic of a zonotope.
We then investigate the sign pattern of the magnitude power series. In particular, we show that for arrangements of rank at most $3$, the signs of the magnitude coefficients eventually alternate. We also present an example whose magnitude has non-alternating coefficients.

The magnitude homology $MH_{k,\ell}(\A)$ of an arrangement $\A$ is a more refined invariant.
By analyzing the structure of gated subgraphs arising from the faces of an arrangement, we establish the basic properties of this invariant.
First, we describe all magnitude homology groups of length $\ell\leq 2$ in terms of the combinatorics of the arrangement.
Next, we show that magnitude homology detects Boolean arrangements: an arrangement is Boolean if and only if its magnitude homology is concentrated on the diagonal.
Furthermore, we define the \emph{interior magnitude homology} $MH^\circ_{k,\ell}(\A)$, which is a categorification of the interior magnitude $\Mag^\circ(\A)$, and lift the face decomposition formula for magnitude to the homological level.
In particular, we obtain the \emph{Euler characteristic reciprocity}
\[
\chi^\circ_{\ell+\#\A}(\A)=(-1)^{\mathrm{rank}\A}\chi_\ell(\A),
\]
where $\chi_\ell(\A)$ (resp. $\chi^\circ_\ell(\A)$) is the Euler characteristic of $MH_{*,\ell}(\A)$ (resp. $MH^\circ_{*,\ell}(\A)$).
This can be regarded as a new combinatorial reciprocity theorem analogous to the Ehrhart--Macdonald reciprocity theorem for lattice polytopes \cite{Ehrhart1967Reciprocity,Macdonald1971} and Stanley's reciprocity theorem for posets \cite{Stanley1970}.
As an application of the homological face decomposition, we give a complete description of the diagonal magnitude Betti numbers in terms of the intersection lattice.

The paper is organized as follows.
Section 2 recalls the necessary material from metric graph theory and the theory of hyperplane arrangements, establishing the foundational link between gated subgraphs and covector composition.
Section 3 investigates the basic properties and structure of the magnitude rational function.
Section 4 focuses on the sign pattern of magnitude power series.
Section 5 introduces the magnitude homology of conditional oriented matroids (COMs) and proves the vanishing theorem needed in Section 6.
Section 6 explores magnitude homology of arrangements, presenting the Boolean diagonality theorem and the face decomposition theorem.
Finally, Section 7 poses several open problems and conjectures for future study. The appendix contains the results of several SageMath computer experiments.

\subsection*{Use of AI}
In writing this paper, we used AI in the following ways.
\begin{itemize}
    \item A substantial part of the proof strategy for Theorem \ref{facedecomp} was generated by ChatGPT-5.4 Pro.
    \item Part of the proof strategy for Theorems \ref{alternating} and \ref{thm:COM_vanish} was generated by ChatGPT-5.5 Pro.
    \item We used ChatGPT-5.5 Pro to proofread the manuscript for minor errors.
\end{itemize}
We emphasize, however, that we have not used any AI-generated text without careful verification. The text of this paper has been written under the full responsibility of the authors.

\subsection*{Convention}
Throughout this paper, all graphs are finite, simple, and undirected unless otherwise specified. 
When we discuss a topological property or construction of a poset, we mean that of its order complex.
We use the term ``arrangement'' to mean a real central hyperplane arrangement.

\section{Recollections}

\subsection{Metric graph theory}\label{basicdef} 
For a graph $G$, a \emph{walk} of length $k$ from a vertex $x\in G$ to a vertex $y\in G$ is a sequence of vertices $(x_0,\ldots,x_k)$ such that $x_0=x$, $x_k=y$, and $\{x_i,x_{i+1}\}\in E(G)$ for $i=0,1,\ldots,k-1$. We define a distance function $d:V(G)\times V(G)\to \Z\cup\{\infty\}$ by declaring $d(x,y)$ to be the minimum length among walks from $x$ to $y$. If there is no walk from $x$ to $y$, we set $d(x,y)=\infty$. Then $(V(G),d)$ is a generalized metric space. When $G$ is connected, $(V(G),d)$ is a metric space. We call the maximum value of $d$ the \emph{diameter} of $G$, denoted by $\diam(G)$.

A subgraph $H$ of $G$ can be equipped with a metric in two ways. One is the restriction of $d_G$ to $H$, and the other is the graph metric $d_H$ on $H$ itself. We say that $H$ is an \emph{isometric subgraph} of $G$ if the two metrics coincide, that is, if $d_H(x,y)=d_G(x,y)$ for $x,y\in H$.

In a graph $G$, a walk from $x$ to $y$ of minimal length $d(x,y)$ is called a \emph{minimal walk}. For $x,y\in G$, define the \emph{closed interval} $[x,y]_G$ as the set of all vertices traversed by a minimal walk from $x$ to $y$. We equip it with the partial order $u\prec_G v$ if there is a minimal walk from $x$ to $y$ that traverses $u$ before $v$. Note that $[x,y]_G$ has a unique minimal element $x$ and a unique maximal element $y$. A vertex $u$ belongs to $[x,y]_G$ if and only if $d(x,y)=d(x,u)+d(u,y)$. More generally, vertices $u_1,\ldots,u_k$ form a strictly linearly ordered chain in $[x,y]_G$, that is, $x\prec_G u_1\prec_G\cdots\prec_G u_k\prec_G y$, if and only if 
\begin{equation}
    d(x,y)=d(x,u_1)+d(u_1,u_2)+\cdots+d(u_{k-1},u_k)+d(u_k,y).\label{linearlyorderedchain}
\end{equation}
We define the \emph{open interval} $(x,y)_G$ as the subposet of $[x,y]_G$ obtained by excluding $x$ and $y$.

We say that a subgraph $H$ of $G$ is \emph{convex} in $G$ if, for any $x,y\in H$, every minimal walk from $x$ to $y$ traverses only vertices in $H$; equivalently, $[x,y]_G\subseteq H$ for $x,y\in H$. Every convex subgraph is an isometric subgraph, but the converse is false.

\begin{defn}[Gated subgraphs]\label{gated}
    Let $G$ be a connected graph and let $H$ be an isometric subgraph of $G$. We say that $H$ is \emph{gated} if for every $x\in G$ there is $u\in H$ such that, for any $v\in H$,
    \[
    d(x,v)=d(x,u)+d(u,v).
    \]
    Such a vertex $u$ must be unique, since the above equation says that $u$ is the closest vertex to $x$ in $H$. We call $u$ the \emph{gate} of $H$ with respect to $x$. Intuitively, every minimal walk from $x$ to a vertex of $H$ enters $H$ at the gate.
    We call the map $p:V(G)\to V(H)$ taking $x\in G$ to its gate in $H$ the \emph{gate projection}. Note that $p(x)=x$ for $x\in H$.
\end{defn}

\begin{prop}[{\cite[Proposition A.1]{Aguiar2017}}]\label{gatedisconvex}
    A gated subgraph is necessarily convex.
\end{prop}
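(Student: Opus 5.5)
We argue directly from the definition of a gated subgraph. Let $H\subseteq G$ be gated, fix $x,y\in H$, and let $w\in[x,y]_G$, so that $d(x,y)=d(x,w)+d(w,y)$. We aim to show $w\in H$. Let $u=p(w)\in H$ be the gate of $w$ in $H$. Gatedness, applied with the vertices $v=x$ and $v=y$ of $H$, gives
\[
d(w,x)=d(w,u)+d(u,x),\qquad d(w,y)=d(w,u)+d(u,y).
\]

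Adding these two identities and substituting into the interval condition yields
\[
d(x,y)=d(x,w)+d(w,y)=2\,d(w,u)+d(x,u)+d(u,y).
\]
The triangle inequality gives $d(x,u)+d(u,y)\geq d(x,y)$, so $2\,d(w,u)\leq 0$. Hence $d(w,u)=0$, that is, $w=u\in H$.

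Since $w$ was an arbitrary vertex of $[x,y]_G$, we get $[x,y]_G\subseteq H$ for all $x,y\in H$, which is the convexity condition. Note that the vertex set of $H$ alone determines convexity in the sense of the definition above, and isometricity of $H$ is part of the definition of gated, so nothing further about edges needs to be checked.

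There is no real obstacle here. The only point to keep in mind is that the gate equation must be applied with $x$ and $y$ in the role of the target vertex $v\in H$, and $w$ in the role of the outside vertex.
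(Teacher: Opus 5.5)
Your proposal is correct and follows essentially the same argument as the paper: apply the gate equations for $w$ with target vertices $x$ and $y$, add them, and use the triangle inequality to force $d(w,p(w))=0$. The only cosmetic difference is that you phrase it in terms of $w\in[x,y]_G$, whereas the paper takes a vertex on a minimal walk; by the definitions these are the same thing.
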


\begin{proof}
    Suppose $H$ is gated in $G$. Choose a minimal walk in $G$ from $x$ to $y$ for $x,y\in H$, and let $z$ be an arbitrary vertex on this walk. We need to show that $z\in H$. Since $H$ is gated, we have
    \[
    d(z,x)=d(z,p(z))+d(p(z),x),\quad d(z,y)=d(z,p(z))+d(p(z),y).
    \]
    Since $z$ belongs to $[x,y]_G$, adding these two equations yields
    \[
    d(x,y)=d(x,z)+d(z,y)=d(x,p(z))+d(p(z),y)+2d(z,p(z))\geq d(x,y)+2d(z,p(z)).
    \]
    This forces $d(z,p(z))=0$, i.e. $z=p(z)\in H$.
\end{proof}

In this paper, the graph homomorphisms we use are \emph{distance non-increasing} maps (1-Lipschitz maps) $f:G\to H$, that is, maps $f\colon V(G)\to V(H)$ satisfying $d_H(f(x),f(y))\leq d_G(x,y)$ for $x,y\in G$. Note that gate projections are distance non-increasing.

\subsection{Magnitude of graphs}\label{mag}
See Leinster \cite{Leinster2019} for details. 
For a finite simple graph $G$, define the Zeta matrix $Z_G=Z_G(q)=[q^{d(x,y)}]_{x,y\in G}$, where $q^{\infty}=0$ by convention. 
Note that $Z_G(0)$ is the identity matrix and hence $\det Z_G(q)\in\Z[q]$ is a polynomial with constant term $1$. 
The rational function $1/\det Z_G(q)\in\Q(q)$ can then be expressed as a power series in $\Z[\![q]\!]$. 
In particular, $Z_G(q)$ is invertible over $\Q(q)$, with inverse $Z_G(q)^{-1}=\frac{1}{\det Z_G(q)}\mathrm{adj}~Z_G(q)$, whose entries can be expressed as power series in $\Z[\![q]\!]$.

The \emph{magnitude} of $G$ is defined as the sum of all entries of $Z_G(q)^{-1}$.
\[
\Mag(G)=\Mag(G,q):=\sum_{x,y\in G}Z_G(q)^{-1}_{x,y}\in \Q(q) ~(\mathrm{or}~\Z[\![q]\!]).
\] 
In general, the sum of all entries of a matrix $A$ is equal to $\mathbf{1}^TA\mathbf{1}$, where $\mathbf{1}$ is the column vector of appropriate size with all entries equal to $1$. 
If $B$ is invertible, then the sum of all entries of $B^{-1}$ can be computed from $\mathbf{1}^TB^{-1}\mathbf{1}=\mathbf{1}^T\mathbf{x}$, where $\mathbf{x}$ is the unique solution of $B\mathbf{x}=\mathbf{1}$. For a graph $G$, define the weight vector $w_G:=Z_G^{-1}\mathbf{1}$ as the unique solution to $Z_G\mathbf{x}=\mathbf{1}$. In other words, as a function $w_G:G\to \Q(q)$, $w_G(x)=\sum_{y\in G}Z_G(q)^{-1}_{x,y}$ is the $x$-row sum of $Z_G^{-1}$. 
Then the magnitude is given by
\[
\Mag(G)=\mathbf{1}^Tw_G=\sum_{x\in G}w_G(x).
\]

The magnitude of a vertex-transitive graph can be computed explicitly.
\begin{prop}[{\cite[Lemma 3.2]{Leinster2019}}]\label{vert-trans}
    If $G$ is vertex-transitive, then
    \[
    \Mag(G)=\frac{\#V(G)}{\sum_{x\in G}q^{d(g,x)}}
    \]
    for any $g\in G$.
\end{prop}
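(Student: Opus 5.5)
The natural approach is to guess the weight vector and check it.

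Assume $G$ is vertex-transitive and fix a vertex $g$. Put $S(q)=\sum_{x\in G}q^{d(g,x)}$. The first step is to show that this row sum does not depend on which vertex we use. For any other vertex $g'$, pick an automorphism $\sigma$ with $\sigma(g)=g'$. Graph automorphisms preserve the shortest-path metric, including the convention $q^\infty=0$ on disconnected pairs. Reindexing by $\sigma$ therefore gives
\[
\sum_{x}q^{d(g',x)}=\sum_{x}q^{d(\sigma g,\sigma x)}=\sum_x q^{d(g,x)}=S(q).
\]
So every row of $Z_G$ sums to $S(q)$, i.e.\ $Z_G\mathbf{1}=S(q)\mathbf{1}$.

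Next, note that $S(q)$ has constant term $1$: the term $x=g$ contributes $q^0$, and every other term is a positive power of $q$ or zero. Hence $S(q)\neq 0$ in $\Q(q)$, and $S(q)$ is also invertible in $\Z[\![q]\!]$.

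Now consider the constant vector $\mathbf{x}=S(q)^{-1}\mathbf{1}$. It satisfies $Z_G\mathbf{x}=\mathbf{1}$. By Section~\ref{mag}, $Z_G$ is invertible over $\Q(q)$, so this solution is unique and equals the weight vector: $w_G=S(q)^{-1}\mathbf{1}$.

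Summing the weights gives
\[
\Mag(G)=\sum_{x\in G}w_G(x)=\frac{\#V(G)}{S(q)},
\]
which is the claimed formula, valid for any choice of $g$.

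There is no real obstacle. The only points needing a word of justification are that automorphisms preserve $d$, including infinite distances, and that the denominator $S(q)$ is nonzero, so the division is legitimate both in $\Q(q)$ and in $\Z[\![q]\!]$.
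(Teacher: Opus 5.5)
Your proposal is correct and follows the paper's proof essentially verbatim: vertex-transitivity makes every row sum of $Z_G$ equal to $S(q)=\sum_{x}q^{d(g,x)}$, so the constant vector $S(q)^{-1}\mathbf{1}$ solves $Z_G\mathbf{x}=\mathbf{1}$ and is therefore the weight vector, and summing its entries gives $\#V(G)/S(q)$. Your extra checks, that automorphisms also preserve infinite distances and that $S(q)$ has constant term $1$ and so is nonzero, fill in details the paper leaves implicit.
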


\begin{proof}
    The denominator $s(g)=\sum_{x\in G}q^{d(g,x)}$ is independent of $g\in G$ by the assumption that $G$ is vertex-transitive, and we denote it by $s=s(g)$. 
    It is then easy to see that $w_G=[1/s \cdots 1/s]^T$ is the solution of $Z_G\mathbf{x}=\mathbf{1}$. 
    We conclude $\Mag(G)=\#V(G)/s$.
\end{proof}

\begin{ex}
    Let $K_n$ be the complete graph on $n$ vertices and let $C_n$ be the cycle graph on $n$ vertices. Both are vertex-transitive.
    Their magnitudes are given by
    \begin{align*}
        \Mag(K_n)&=\frac{n}{1+(n-1)q},\\
        \Mag(C_{2n})&=\frac{2n(q-1)}{(q^n-1)(q+1)}=\frac{2n}{(1+q)(1+q+q^2+\cdots+q^{n-1})},\\
        \Mag(C_{2n-1})&=\frac{(2n-1)(q-1)}{2q^n-q-1}=\frac{2n-1}{1+2q+2q^2+\cdots+2q^{n-1}}.
    \end{align*}
\end{ex}

Magnitude behaves like cardinality in the following sense.
\begin{prop}[{\cite[Lemmas 3.5 and 3.6]{Leinster2019}}]\label{magprod}
    Let $G, H$ be graphs. Write $G\sqcup H$ and $G\square H$ for the disjoint union and Cartesian product of $G$ and $H$, respectively. Then
    \[
    \Mag(G\sqcup H)=\Mag(G)+\Mag(H),\quad \Mag(G\square H)=\Mag(G)\Mag(H).
    \]
\end{prop}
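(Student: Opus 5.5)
The plan is to work with weight vectors, since $\Mag(G)=\sum_x w_G(x)$ and $w_G$ is the unique solution of $Z_G\mathbf{x}=\mathbf{1}$ over $\Q(q)$. It then suffices to produce an explicit solution of the defining linear system for each of $G\sqcup H$ and $G\square H$ in terms of $w_G$ and $w_H$, and to sum its entries.

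For the disjoint union, I will use the convention $q^\infty=0$. Vertices in different components are at distance $\infty$, so $Z_{G\sqcup H}$ is block diagonal with blocks $Z_G$ and $Z_H$. The inverse is therefore block diagonal with blocks $Z_G^{-1}$ and $Z_H^{-1}$. Summing all entries gives $\Mag(G)+\Mag(H)$, with no further work needed.

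For the Cartesian product, the key fact is that the graph metric on $G\square H$ is additive:
\[
d((x,y),(x',y'))=d_G(x,x')+d_H(y,y').
\]
This holds because each edge of $G\square H$ changes exactly one coordinate along an edge of that factor. Hence a walk projects to walks in $G$ and $H$ whose lengths add up to its length, which gives $\geq$. Concatenating a minimal walk in the first coordinate with a minimal walk in the second gives $\leq$. The formula remains valid when a distance is infinite, since then both sides are $\infty$, and the rule $q^{\infty}=0$ is compatible with $q^{a+b}=q^aq^b$.

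It follows that $Z_{G\square H}=Z_G\otimes Z_H$ (Kronecker product). Its inverse is therefore $Z_G^{-1}\otimes Z_H^{-1}$. Equivalently, $w_{G\square H}(x,y)=w_G(x)w_H(y)$ solves the system, since
\[
\sum_{x',y'}q^{d(x,x')}q^{d(y,y')}w_G(x')w_H(y')=1\cdot 1=1.
\]
Summing over all $(x,y)$ factors the sum as $\Mag(G)\Mag(H)$.

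The only step needing genuine care is the additivity of the product metric, together with the handling of infinite distances. Everything else is linear algebra over $\Q(q)$, which is valid because each $\det Z$ has constant term $1$ and so is invertible.
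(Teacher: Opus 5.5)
Your proposal is correct. The paper gives no proof of its own and simply cites Leinster's Lemmas 3.5 and 3.6, and your argument is essentially the standard one from that source: the Zeta matrix of a disjoint union is block diagonal, and additivity of the distance gives $Z_{G\square H}=Z_G\otimes Z_H$, so the weighting of the product is $w_G(x)w_H(y)$.
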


Under certain conditions, magnitude satisfies inclusion-exclusion.
\begin{thm}[{\cite[Theorem 4.9]{Leinster2019}}]
    If a graph $X$ has a decomposition into subgraphs $X=G\cup H$ such that $G\cap H$ is convex in $X$ and gated in $H$, then
    \[
    \Mag(X)=\Mag(G)+\Mag(H)-\Mag(G\cap H).
    \]
\end{thm}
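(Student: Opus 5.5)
The plan is to compare weightings. Write $w_X$, $w_G$, $w_H$, $w_{G\cap H}$ for the weight vectors, i.e.\ the unique solutions of $Z\mathbf{x}=\mathbf{1}$ over $\Q(q)$. I will build an explicit weighting on $X$ from the three smaller ones. Extend each of them by zero to a function on $V(X)$ and set
\[
w := w_G + w_H - w_{G\cap H}.
\]
If $w$ is a weighting for $X$, meaning $\sum_{y\in X} q^{d_X(x,y)}w(y)=1$ for every $x\in X$, then uniqueness gives $w=w_X$, and summing entries yields the formula. I will use that $G\cap H$ is convex, hence isometric, in $X$. I will also assume the intended hypotheses make $G$ and $H$ isometric in $X$, as in Leinster's setting; this matters because $Z_G$ is built from $d_G$. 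So all distances below may be taken in $X$.

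First fix $x\in G$. The $G$-term contributes exactly $1$, because $w_G$ is a weighting on $G$ and $d_G=d_X$ there. It then suffices to show that the $H$-term and the $(G\cap H)$-term cancel:
\[
\sum_{y\in H} q^{d(x,y)} w_H(y)=\sum_{y\in G\cap H} q^{d(x,y)}w_{G\cap H}(y).
\]
If $x\in H$, both sides equal $1$ and the claim is immediate. Otherwise $x\in G\setminus H$, and here the gate property does the work.

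The key claim is that $G\cap H$, being gated in $H$, also has a gate for every such $x$, realized through $H$. Concretely, I want a point $u\in G\cap H$ with $d(x,y)=d(x,u)+d(u,y)$ for all $y\in H$, not just for $y\in G\cap H$. Granting this, the left side becomes
\[
q^{d(x,u)}\sum_{y\in H}q^{d(u,y)}w_H(y)=q^{d(x,u)},
\]
and the right side becomes $q^{d(x,u)}\cdot 1$ by the same argument with $w_{G\cap H}$, since $u\in G\cap H$. This settles $x\in G$; the case $x\in H$ is symmetric, using the gate projection $H\to G\cap H$ and the hypothesis that $G\cap H$ is gated in $H$.

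That symmetric case is actually the cleaner one. For $x\in H\setminus G$ and $y\in G\cap H$, set $u=p(x)$; then $d(x,y)=d(x,u)+d(u,y)$, so
\[
\sum_{y\in G\cap H}q^{d(x,y)}w_{G\cap H}(y)=q^{d(x,u)}.
\]
The remaining task is to match this with $\sum_{y\in G}q^{d(x,y)}w_G(y)$. That requires every minimal path from $x\in H$ to $y\in G$ to pass through $G\cap H$, at $u$. I expect this to be the main obstacle. Any path from $H\setminus G$ to $G\setminus H$ must hit $G\cap H$, since $X=G\cup H$ and edges lie in $G$ or in $H$. Say it hits at $z$. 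Then $d(x,y)=d(x,z)+d(z,y)$, and gatedness in $H$ gives $d(x,z)=d(x,u)+d(u,z)$. The triangle inequality $d(u,y)\le d(u,z)+d(z,y)$ forces $d(x,y)=d(x,u)+d(u,y)$. Once this factorization holds for all $y\in G$, the sum equals $q^{d(x,u)}$ by the weighting property of $w_G$ at $u$. So the roles should be arranged so that the gate is taken in $H$, and I expect the case $x\in G\setminus H$ to be handled the same way via this separation argument.
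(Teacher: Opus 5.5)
Your proposal has a genuine gap in the case $x\in G\setminus H$. (The paper only cites this result from Leinster and gives no proof, so I am judging the argument on its own.) Your cases $x\in G\cap H$ and $x\in H\setminus G$ are sound. For $x\in H\setminus G$ your separation argument does show that the single vertex $u=p(x)$ satisfies $d(x,y)=d(x,u)+d(u,y)$ for all $y\in G$, so the $G$-term and the $(G\cap H)$-term both equal $q^{d(x,u)}$.

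The case $x\in G\setminus H$ does not follow by symmetry, because the hypothesis is asymmetric: $G\cap H$ is gated in $H$, not in $G$. Your key claim, that some $u\in G\cap H$ satisfies $d(x,y)=d(x,u)+d(u,y)$ for all $y\in H$, is false in general. Take $G$ to be the triangle on $\{a,b,x\}$ and $H$ the path with edges $ca$, $ab$, $be$. Then $G\cap H$ is the edge $ab$, which is convex in $X=G\cup H$ and gated in $H$. However $d(x,c)=d(x,e)=2$, while $d(x,a)=d(x,b)=1$ and $d(a,e)=d(b,c)=2$, so neither $a$ nor $b$ is a gate for $x$. What your separation argument actually yields for $x\in G$ and $y\in H$ is $d(x,y)=d(x,\pi(y))+d(\pi(y),y)$, where $\pi\colon H\to G\cap H$ is the gate projection. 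The gate depends on $y$, not on $x$, so you cannot factor out a single $q^{d(x,u)}$.

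The missing ingredient is that weights push forward along the gate projection. Put $U=G\cap H$; the claim is
\[
w_U(z)=\sum_{y\in\pi^{-1}(z)}q^{d(z,y)}w_H(y)\qquad(z\in U).
\]
To prove it, let $v(z)$ be the right-hand side. For $u\in U$ the gate property gives
\[
\sum_{z\in U}q^{d(u,z)}v(z)=\sum_{y\in H}q^{d(u,y)}w_H(y)=1,
\]
so $v$ is a weighting on $U$, and $v=w_U$ by uniqueness. With this, for $x\in G\setminus H$ you get
\[
\sum_{y\in H}q^{d(x,y)}w_H(y)=\sum_{z\in U}q^{d(x,z)}\sum_{y\in\pi^{-1}(z)}q^{d(z,y)}w_H(y)=\sum_{z\in U}q^{d(x,z)}w_U(z),
\]
which is exactly the cancellation you need.

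Separately, you should not assume that $G$ and $H$ are isometric in $X$; it follows from the hypotheses. Any excursion of an $X$-geodesic between two vertices of $G$ into $H\setminus G$ starts and ends in $G\cap H$. It can therefore be replaced by a path in $G\cap H$ of no greater length, since $G\cap H$ is convex, hence isometric, in $X$. The same argument works for $H$.
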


\subsection{Magnitude homology of graphs}
For a graph $G$, we call a $(k+1)$-tuple $\Vec{x}=(x_0,x_1,\ldots,x_k)\in G^{k+1}$ of vertices of $G$ a $k$-chain of $G$ and define its length by $\ell(\Vec{x})=d(x_0,x_1)+d(x_1,x_2)+\cdots+d(x_{k-1},x_k)$. We say that a $k$-chain $\Vec{x}$ is \emph{proper} if adjacent nodes are distinct, that is, if $x_i\neq x_{i+1}$ for all $i$. Let $P_{k}(G)$ be the set of proper $k$-chains of $G$, and let $P_{k,\ell}(G)$ be the set of proper $k$-chains of length $\ell$. The following result of Leinster is the starting point of the categorification of graph magnitude introduced by Hepworth--Willerton \cite{Hepworth2017}.

\begin{prop}[{\cite[Proposition 3.9]{Leinster2019}}]\label{coeff}
    For a graph $G$, we have
    \[
    \Mag(G)=\sum_{k=0}^{\infty}(-1)^k\sum_{\Vec{x}\in P_k(G)}q^{\ell(\Vec{x})}\in \Z[\![q]\!].
    \]
    In other words, if $\Mag(G)=\sum_{\ell=0}^{\infty}c_{\ell}q^{\ell}\in \Z[\![q]\!]$, then
    \[
    c_{\ell}=\sum_{k=0}^{\ell}(-1)^k\#P_{k,\ell}(G).
    \]
    In particular, we have $c_0=\#V(G)$ and $c_1=-2\#E(G)$.
\end{prop}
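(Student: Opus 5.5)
We expand $Z_G(q)^{-1}$ as a Neumann series in $\Z[\![q]\!]$ and read off the sum of its entries. Write $Z_G(q)=I+N$, where $N$ is the matrix with entries $N_{x,y}=q^{d(x,y)}$ for $x\neq y$ and $N_{x,x}=0$, with the convention $q^\infty=0$. Every nonzero entry of $N$ is a monomial $q^{d(x,y)}$ with $d(x,y)\geq 1$, so $N$ has entries in $q\Z[\![q]\!]$. Consequently the entries of $N^k$ lie in $q^k\Z[\![q]\!]$, and the series $\sum_{k\geq 0}(-1)^kN^k$ converges $q$-adically entrywise.

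Multiplying $(I+N)$ by the partial sum $\sum_{k=0}^{m}(-1)^kN^k$ telescopes to $I+(-1)^mN^{m+1}$. Letting $m\to\infty$ therefore shows that
\[
Z_G(q)^{-1}=\sum_{k=0}^{\infty}(-1)^kN^k
\]
in $\Z[\![q]\!]$. This agrees with the inverse over $\Q(q)$ described in \S\ref{mag}, since the inverse is unique and $\det Z_G$ is a unit in $\Z[\![q]\!]$.

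Next we expand the matrix powers entrywise. By definition of matrix multiplication,
\[
(N^k)_{x,y}=\sum_{x=x_0,x_1,\ldots,x_k=y}\ \prod_{i=0}^{k-1}N_{x_i,x_{i+1}}.
\]
The factor $N_{x_i,x_{i+1}}$ vanishes when $x_i=x_{i+1}$, and it also vanishes when the two vertices lie in different components. Hence only proper chains with finite length contribute, and each such chain contributes $q^{\ell(\Vec{x})}$. Chains of infinite length contribute $0$ under the convention $q^\infty=0$, so we may harmlessly regard them as contributing $q^\infty=0$. Summing over all $x,y$ gives
\[
\Mag(G)=\mathbf{1}^TZ_G^{-1}\mathbf{1}=\sum_k(-1)^k\sum_{\Vec{x}\in P_k(G)}q^{\ell(\Vec{x})}.
\]

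To extract the coefficient of $q^\ell$, observe that a proper $k$-chain has length at least $k$, because each step contributes distance at least $1$. So only $k\leq \ell$ contributes to $c_\ell$, and we get $c_\ell=\sum_{k=0}^{\ell}(-1)^k\#P_{k,\ell}(G)$.

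For the special cases:
\begin{itemize}
\item For $\ell=0$, only $k=0$ contributes, and $P_{0,0}(G)=V(G)$, so $c_0=\#V(G)$.
\item For $\ell=1$, we have $P_{0,1}(G)=\emptyset$, while $P_{1,1}(G)$ consists of ordered adjacent pairs. Its cardinality is $2\#E(G)$, so $c_1=-2\#E(G)$.
\end{itemize}

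The only point requiring care is the justification that the Neumann series inverse coincides with the rational-function inverse. This follows because $\Q(q)$ and $\Z[\![q]\!]$ both embed in $\Q(\!(q)\!)$, where inverses are unique.
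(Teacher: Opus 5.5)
Your proof is correct and is essentially the standard argument: this paper does not reprove the statement but cites Leinster's Proposition 3.9, whose proof is the same Neumann-series expansion $Z_G^{-1}=\sum_{k\geq 0}(-1)^k(Z_G-I)^k$ in $\Z[\![q]\!]$, with the entries of $(Z_G-I)^k$ read as sums over proper $k$-chains. Your extraction of $c_\ell$, $c_0$ and $c_1$ from the bound $\ell(\Vec{x})\geq k$ is likewise the standard one.
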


Let $MC_{k,\ell}(G)$ be the free abelian group generated by $P_{k,\ell}(G)$. We define the boundary operator $\partial:MC_{k,\ell}(G)\to MC_{k-1,\ell}(G)$ by $\partial=\sum_{i=1}^{k-1}(-1)^{i-1}\partial_i$, where $\partial_i:MC_{k,\ell}(G)\to MC_{k-1,\ell}(G)$ is defined by
\[
    \partial_i(x_0,\ldots,x_k)=\begin{cases}
    (x_0,\ldots,\widehat{x_i},\ldots,x_k), & \text{ if } d(x_{i-1},x_{i+1})=d(x_{i-1},x_i)+d(x_i,x_{i+1});\\
    0, & \text{ otherwise},
\end{cases}
\]
where $\widehat{x_i}$ denotes deletion of $x_i$. One can verify that $\partial^2=0$.
We call the homology of the chain complex $(MC_{*,\ell}(G),\partial)$ the length $\ell$ \emph{magnitude homology} of $G$, denoted by $MH_{*,\ell}(G)$. Since the boundary map preserves endpoints, we have a decomposition
\[
MC_{k,\ell}(G)=\bigoplus_{u,v\in G}MC_{k,\ell}(G;u,v),
\]
where $MC_{k,\ell}(G;u,v)$ is the subgroup generated by chains $P_{k,\ell}(G;u,v)=\{\Vec{x}\in P_{k,\ell}(G)\mid x_0=u,\ x_k=v\}$. These subgroups form subcomplexes and induce a decomposition of homology groups
\begin{equation}
    MH_{k,\ell}(G)=\bigoplus_{u,v\in G}MH_{k,\ell}(G;u,v). \label{vertexdecomp}
\end{equation}
We define the magnitude Betti number by $\beta_{k,\ell}(G)=\rank MH_{k,\ell}(G)$ and the length $\ell$ magnitude Euler characteristic by $\chi_{\ell}(G)=\sum_k(-1)^k\beta_{k,\ell}(G)$.

\begin{prop}[{\cite[Propositions 2.9 and 2.10]{Hepworth2017}}]\label{lowdeg}
    Let $G$ be a graph. 
    \begin{enumerate}
        \item $MH_{0,0}(G)$ is free abelian of rank $\beta_{0,0}(G)=\#V(G)$.
        \item $MH_{1,1}(G)$ is free abelian of rank $\beta_{1,1}(G)=2\#E(G)$.
        \item If $MH_{k,\ell}(G)\neq 0$, then $\ell/\diam(G)\leq k\leq\ell$, and the inequality $\ell/\diam(G)\leq k$ is strict if $\diam(G)>1$ and $\ell>0$.
    \end{enumerate}
\end{prop}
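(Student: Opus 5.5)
We prove the three parts of Proposition~\ref{lowdeg} directly from the chain complex $MC_{*,\ell}(G)$; they all come from looking at which proper chains can exist in low bidegrees.

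For (1), a proper $0$-chain is a single vertex $(x_0)$ of length $0$. Conversely, any proper $k$-chain with $k\geq 1$ has length at least $k$, since consecutive vertices are distinct and so each step contributes $d(x_{i-1},x_i)\geq 1$. Hence $MC_{k,0}(G)=0$ for $k\neq 0$, while $MC_{0,0}(G)$ is free on $V(G)$. There are no nonzero differentials, so $MH_{0,0}(G)\cong\Z^{\#V(G)}$.

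For (2), we use the same inequality $\ell(\vec x)\geq k$. A proper chain of length $1$ has $k\leq 1$. Since $MC_{0,1}(G)=0$ (a $0$-chain has length $0$), every chain in $MC_{1,1}(G)$ is a cycle. The only possible boundaries into $MC_{1,1}$ come from $MC_{2,1}(G)$, which vanishes because a proper $2$-chain has length at least $2$. So $MH_{1,1}(G)$ is free on the pairs $(x_0,x_1)$ with $d(x_0,x_1)=1$. These are the ordered pairs of adjacent vertices, and there are $2\#E(G)$ of them.

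For (3), the inequality $k\leq \ell$ is again $\ell(\vec x)\geq k$. For the lower bound, each step of a chain has $d(x_{i-1},x_i)\leq\diam(G)$, so $\ell\leq k\diam(G)$ whenever $MC_{k,\ell}\neq0$. This gives $\ell/\diam(G)\leq k$. (If $G$ is disconnected, then $\diam(G)=\infty$; infinite-distance steps contribute infinite length, and the bound reads $k\geq 0$.)

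The main obstacle is the strictness claim: when $D=\diam(G)>1$ and $\ell=kD>0$, we must show $MH_{k,kD}=0$. In this case every step of a chain has length exactly $D$. We use the vertex decomposition \eqref{vertexdecomp} and argue chain-level, without relying on a contraction.

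First, no face map $\partial_i$ is nonzero in this bidegree. It would require $d(x_{i-1},x_{i+1})=2D>D$, which is impossible, so $\partial=0$ on $MC_{k,kD}$. We then have to show that every generator $(x_0,\dots,x_k)$ with all steps of length $D$ is a boundary of something in $MC_{k+1,kD}$. Since $D\geq 2$, we can choose a vertex $y$ on a geodesic between $x_0$ and $x_1$, with $d(x_0,y)=a$ and $d(y,x_1)=D-a$, where $0<a<D$. Consider the $(k+1)$-chain $(x_0,y,x_1,\dots,x_k)$. Its $\partial_1$ face is the original chain. Its other faces $\partial_i$ with $i\geq2$ need $d(x_{i-1},x_{i+1})=2D$ (or, for $i=2$, the value $d(y,x_2)=D-a+D>D$), which is impossible. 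So its boundary is exactly $\pm(x_0,\dots,x_k)$, and the chain is a boundary. Therefore $MH_{k,kD}=0$, which gives the strict inequality.
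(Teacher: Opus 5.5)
Your proof is correct and complete. Parts (1) and (2) and the bounds in (3) follow from $\ell(\vec x)\ge k$ and from each step being at most $\diam(G)$, exactly as you say. For strictness, the chain $(x_0,y,x_1,\dots,x_k)$, with $y$ an interior vertex of a geodesic from $x_0$ to $x_1$, has boundary exactly $(x_0,\dots,x_k)$, so $MH_{k,k\diam(G)}=0$. The one case you leave implicit is strictness for disconnected $G$ with $\diam(G)=\infty$, and it is immediate because $MC_{0,\ell}(G)=0$ for $\ell>0$.

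The paper gives no proof of its own and only cites Hepworth--Willerton. Your direct inspection of the magnitude chain complex is the standard elementary argument for these facts.
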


The following proposition is a straightforward consequence of Proposition \ref{coeff} and says that magnitude can be recovered from magnitude homology.

\begin{prop}[{\cite[Theorem 2.8]{Hepworth2017}}]
    Let $G$ be a graph, and $\Mag(G)=\sum_{\ell=0}^{\infty}c_{\ell}q^{\ell}\in\Z[\![q]\!]$. Then
    \[
    c_{\ell}=\chi_{\ell}(G)=\sum_{k=0}^{\infty}(-1)^k\beta_{k,\ell}(G)=\sum_{k=\lceil \ell/\diam(G)\rceil}^{\ell}(-1)^k\beta_{k,\ell}(G).
    \]
\end{prop}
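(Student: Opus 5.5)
The statement is the identity $c_\ell=\chi_\ell(G)$. It says the Euler characteristic of the length-$\ell$ magnitude complex equals the coefficient of $q^\ell$ in $\Mag(G)$, with the sum truncated to $\lceil \ell/\diam(G)\rceil\le k\le \ell$. I will compute the Euler characteristic at the chain level and then compare with Proposition \ref{coeff}.

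The first step is to note that for each fixed $\ell$ the complex $MC_{*,\ell}(G)$ is a bounded complex of finitely generated free abelian groups. Finiteness holds because $G$ is finite, so $P_{k,\ell}(G)$ is finite for each $k$. Boundedness holds because a proper chain has each step $d(x_i,x_{i+1})\ge 1$ (possibly $\infty$, but then the length is not a finite $\ell$). Hence a proper $k$-chain of length $\ell$ satisfies $k\le \ell$, so $MC_{k,\ell}(G)=0$ for $k>\ell$. For a complex of this kind the alternating sum of the ranks of the homology equals the alternating sum of the ranks of the chain groups. This follows from rank additivity applied to the short exact sequences $0\to Z_k\to C_k\to B_{k-1}\to 0$ and $0\to B_k\to Z_k\to H_k\to 0$, using that rank is additive over $\Z$, equivalently after tensoring with $\Q$. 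Therefore
\[
\chi_\ell(G)=\sum_k(-1)^k\beta_{k,\ell}(G)=\sum_{k=0}^{\ell}(-1)^k\#P_{k,\ell}(G).
\]

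The second step is to invoke Proposition \ref{coeff}, which says the right-hand side is exactly $c_\ell$. This gives $c_\ell=\chi_\ell(G)$.

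The third step is the truncation of the range of summation. By Proposition \ref{lowdeg}(3), $\beta_{k,\ell}(G)=0$ unless $\ell/\diam(G)\le k\le \ell$. So only the values $k\ge\lceil \ell/\diam(G)\rceil$ contribute, and the three expressions agree. If $G$ is disconnected, the diameter is infinite and the lower bound is simply $0$. The same chain-level bound also applies directly in that case: each step of a chain with finite total length has $d\le\diam(G)$, so $\ell\le k\,\diam(G)$.

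None of these steps is a real obstacle. The only point needing care is the finiteness and boundedness of the complex, which is what justifies equating the homological Euler characteristic with the chain-level one.
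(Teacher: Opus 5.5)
Your proposal is correct and follows the same route as the paper. The paper gives no separate proof: it cites Hepworth--Willerton and calls the statement a straightforward consequence of Proposition \ref{coeff}, which is exactly your combination of the chain-level Euler characteristic identity for the bounded, finitely generated free complex $MC_{*,\ell}(G)$, Proposition \ref{coeff}, and Proposition \ref{lowdeg}(3) for the truncation.
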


Hepworth--Willerton proved that magnitude homology is a functor.

\begin{prop}[{\cite[Proposition 3.3]{Hepworth2017}}]\label{functor}
    Magnitude homology is a functor from the category of finite graphs with distance non-increasing maps to the category of bigraded abelian groups.
\end{prop}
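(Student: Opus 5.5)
The plan is to build the functor out of a map of chain complexes. Given a distance non-increasing map $f\colon G\to H$, I will define $f_\#\colon MC_{k,\ell}(G)\to MC_{k,\ell}(H)$ on a generator $\Vec{x}=(x_0,\ldots,x_k)\in P_{k,\ell}(G)$ by
\[
f_\#(x_0,\ldots,x_k)=\begin{cases}(f(x_0),\ldots,f(x_k)), & \text{if } \ell(f(x_0),\ldots,f(x_k))=\ell,\\ 0, & \text{otherwise}.\end{cases}
\]
This is well defined as a map into proper chains. Indeed, since $d_H(f(x_i),f(x_{i+1}))\le d_G(x_i,x_{i+1})$, the image chain has length $\le\ell$. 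Equality of total length forces equality $d_H(f(x_i),f(x_{i+1}))=d_G(x_i,x_{i+1})$ for every $i$. In particular each of these distances is $\ge 1$, so the image is again proper. Functoriality on the chain level, $(g\circ f)_\#=g_\#\circ f_\#$ and $\id_\#=\id$, is then immediate. The composite is nonzero on $\Vec{x}$ exactly when both steps preserve the length, because lengths can only decrease along each map.

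The main step is to check that $f_\#$ commutes with the boundary, i.e.\ $\partial f_\#=f_\#\partial$, termwise $\partial_i f_\#(\Vec{x})=f_\#\partial_i(\Vec{x})$ for $1\le i\le k-1$. I will split into cases.

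If $f_\#(\Vec{x})\ne0$, then all consecutive distances are preserved, and the question is whether the condition $d(x_{i-1},x_{i+1})=d(x_{i-1},x_i)+d(x_i,x_{i+1})$ holds in $G$ exactly when the corresponding condition holds in $H$. This follows from the chain
\[
d_H(fx_{i-1},fx_{i+1})\le d_G(x_{i-1},x_{i+1})\le d_G(x_{i-1},x_i)+d_G(x_i,x_{i+1})=d_H(fx_{i-1},fx_i)+d_H(fx_i,fx_{i+1}).
\]
If equality holds in $H$, all inequalities in the chain are equalities, so equality holds in $G$. Conversely, if equality holds in $G$, then deleting $x_i$ gives a chain of length $\ell$ whose image has length $\le\ell$. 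The image length is also $\ge$ the $H$-length of $f_\#(\Vec{x})$ after the $H$-triangle inequality, and for $f_\#\partial_i\Vec{x}$ to be nonzero we need its length to be $\ell$. This holds precisely when $d_H(fx_{i-1},fx_{i+1})=d_G(x_{i-1},x_{i+1})$, which is equivalent to $H$-equality by the chain above. In all subcases the two sides agree, either as the same chain or as $0$.

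If $f_\#(\Vec{x})=0$, I need $f_\#\partial_i\Vec{x}=0$. When $\partial_i\Vec{x}\neq0$, the deleted chain has the same endpoints-distances except that $d(x_{i-1},x_i)+d(x_i,x_{i+1})$ is replaced by $d(x_{i-1},x_{i+1})$. Some other consecutive distance, or one of the two around $x_i$, drops under $f$. If the drop is elsewhere, it persists. If the drop is at one of the two around $x_i$, then $d_H(fx_{i-1},fx_{i+1})\le$ the sum of the images $<d_G(x_{i-1},x_{i+1})$, so the image of $\partial_i\Vec{x}$ again has length $<\ell$.

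This case analysis, carefully keeping track of strict inequalities, is the only real obstacle. Once it is done, $f_\#$ is a chain map compatible with composition and identities, so it induces $f_*\colon MH_{k,\ell}(G)\to MH_{k,\ell}(H)$ functorially in $f$, which proves Proposition~\ref{functor}.
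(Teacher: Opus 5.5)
Your proposal is correct and is essentially the standard Hepworth--Willerton argument, which the paper cites without proof: push a proper chain forward along $f$ when its length is preserved and send it to $0$ otherwise, then check $\partial_i f_\#=f_\#\partial_i$ termwise. Your case analysis can be collapsed by noting that length can only drop under deleting a node and under applying $f$, so $\partial_i f_\#(\Vec{x})$ and $f_\#\partial_i(\Vec{x})$ are both nonzero exactly when $\ell(f(x_0),\ldots,\widehat{f(x_i)},\ldots,f(x_k))=\ell(\Vec{x})$; also, your remark that the image of $\partial_i\Vec{x}$ has length $\ge$ that of $f_\#(\Vec{x})$ should read $\le$, a harmless slip.
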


A direct application of functoriality is as follows. Let $G$ be a connected graph and let $H$ be a gated subgraph of $G$ with gate projection $p:G\to H$. Let $\iota:H\to G$ be the inclusion. Both $p$ and $\iota$ are distance non-increasing. The composition $p\circ\iota=\id_{H}$ induces $\id=p_*\circ\iota_*:MH_{k,\ell}(H)\to MH_{k,\ell}(G)\to MH_{k,\ell}(H)$. This gives a decomposition of $MH_{k,\ell}(G)$.
\begin{prop}\label{gateddecomp}
    If $p:G\to H$ is a gate projection, then
    \[
    MH_{k,\ell}(G)\cong MH_{k,\ell}(H)\oplus \ker(p_*).
    \]
\end{prop}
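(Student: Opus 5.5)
The plan is to derive the splitting purely formally from functoriality (Proposition \ref{functor}) and the identity $p\circ\iota=\id_H$. First I will check that both maps are morphisms in the category on which magnitude homology is a functor. The inclusion $\iota\colon H\to G$ is distance non-increasing because a gated subgraph is isometric by Definition \ref{gated}; in fact $d_H=d_G$ on $H$, which is stronger than what is needed. The gate projection $p$ is distance non-increasing, as noted after Proposition \ref{gatedisconvex}. Since this is only asserted there, I will verify it briefly. For $x,y\in G$, gatedness applied to $x$ and the vertex $p(y)\in H$ gives
\[
d(x,p(y))=d(x,p(x))+d(p(x),p(y)).
\]
Applying it to $y$ and the vertex $p(x)\in H$ gives
\[
d(y,p(x))=d(y,p(y))+d(p(y),p(x)).
\]
Adding these and using the triangle inequality $d(x,p(y))\le d(x,y)+d(y,p(y))$ together with $d(y,p(x))\le d(y,x)+d(x,p(x))$ yields
\[
2d(p(x),p(y))\le 2d(x,y).
\]
Hence $p$ is $1$-Lipschitz. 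Distances in $H$ agree with those in $G$, so this holds for $d_H$ as well.

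Next, since $p\circ\iota=\id_H$, functoriality gives $p_*\circ\iota_*=\id$ on $MH_{k,\ell}(H)$ for every bidegree $(k,\ell)$. The result then follows from the standard splitting lemma for abelian groups: if $s\colon A\to B$ and $r\colon B\to A$ satisfy $r\circ s=\id_A$, then $B=s(A)\oplus\ker r$. I will prove this directly. Each $b\in B$ decomposes as
\[
b=s(r(b))+\bigl(b-s(r(b))\bigr),
\]
and the second summand lies in $\ker r$. If $s(a)\in\ker r$, then $a=r(s(a))=0$, so $s(A)\cap\ker r=0$. Moreover $s$ is injective, so $s(A)\cong A$. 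Taking $A=MH_{k,\ell}(H)$, $B=MH_{k,\ell}(G)$, $s=\iota_*$ and $r=p_*$ gives
\[
MH_{k,\ell}(G)\cong MH_{k,\ell}(H)\oplus\ker(p_*).
\]

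No step is a serious obstacle. The only point needing care is that $p$ really is a morphism, meaning distance non-increasing, which the computation above settles. One could also note that the splitting respects the endpoint decomposition \eqref{vertexdecomp} only loosely, since $p$ moves endpoints, but the statement does not require this.
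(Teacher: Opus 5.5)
Your proof is correct and is essentially the paper's argument. You apply functoriality (Proposition \ref{functor}) to $p\circ\iota=\id_H$ to get $p_*\circ\iota_*=\id$, then split $MH_{k,\ell}(G)=\iota_*(MH_{k,\ell}(H))\oplus\ker(p_*)$; your explicit check that $p$ is $1$-Lipschitz fills in a step the paper only asserts.
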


There is a K\"unneth formula for magnitude homology of graphs.

\begin{thm}[{\cite[Theorem 5.3]{Hepworth2017}}]\label{Kunneth}
There is a natural short exact sequence that splits non-naturally
\[
0\to MH_{*,*}(G)\otimes MH_{*,*}(H)\xrightarrow{\square} MH_{*,*}(G\square H)\to \mathrm{Tor}(MH_{*-1,*}(G),MH_{*,*}(H))\to 0,
\]
where $\square$ is the exterior product defined in \cite[Definition 5.2]{Hepworth2017}.
\end{thm}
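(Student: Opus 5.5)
The statement to prove is the Künneth theorem (Theorem \ref{Kunneth}), cited from Hepworth--Willerton. The plan is to realize it as an instance of the algebraic Künneth theorem for tensor products of free chain complexes. Everything is bigraded by length, so I work one length at a time. I will use the endpoint decomposition \eqref{vertexdecomp} and the fact that distances in $G\square H$ add: $d((g,h),(g',h'))=d_G(g,g')+d_H(h,h')$.

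\textbf{Step 1: the exterior product on chains.} For proper chains $\vec g=(g_0,\dots,g_a)$ in $G$ and $\vec h=(h_0,\dots,h_b)$ in $H$, define $\vec g\square\vec h$ as a signed sum over all $(a,b)$-shuffles. Each shuffle gives the proper $(a+b)$-chain in $G\square H$ that moves in the $G$-coordinate at the steps labelled by the shuffle and in the $H$-coordinate at the others, weighted by the shuffle sign. Lengths add, so this gives a map
\[MC_{*,*}(G)\otimes MC_{*,*}(H)\to MC_{*,*}(G\square H).\]
Checking that it is a chain map is the usual Eilenberg--Zilber computation. The key input is that deleting a vertex of a shuffled chain at a point where the direction changes is always "geodesic": in the rectangle spanned by a $G$-step and an $H$-step, the distances add. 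These interior terms cancel in pairs of adjacent shuffles, and the remaining faces are exactly $\partial\vec g\square\vec h\pm\vec g\square\partial\vec h$.

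\textbf{Step 2: show $\square$ is a quasi-isomorphism.} Filter $MC_{*,\ell}(G\square H)$ by the number of "direction changes", or more naturally by the pair of projected lengths. A chain $\vec x$ in $G\square H$ has $\ell(\vec x)=\ell_G+\ell_H$, where $\ell_G$ and $\ell_H$ are the lengths of its two projections (with repeats removed), and the differential can only preserve or lower these. Its terms either keep the projected lengths or produce non-geodesic deletions, which vanish. So $MC_{*,\ell}(G\square H)$ splits as $\bigoplus_{\ell_1+\ell_2=\ell}$ of subcomplexes, indexed by $(\ell_1,\ell_2)$.

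Each summand with indices $(\ell_1,\ell_2)$ should then be chain homotopy equivalent to $MC_{*,\ell_1}(G)\otimes MC_{*,\ell_2}(H)$. I would show this via an explicit deformation retraction that "sorts" mixed steps. A single step moving in both coordinates at once is impossible in a Cartesian product, but a single proper step in $G\square H$ may have distance greater than $1$ while moving in both factors. Such a step must be subdivided by inserting the intermediate corner vertex, and this insertion gives the contracting homotopy.

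This step is the main obstacle. I would first try an acyclic-models or discrete Morse matching argument: match each chain containing a "diagonal" step with the chain obtained by inserting the corner $(g_{i+1},h_i)$. The critical cells are then precisely the shuffle chains, which identifies the homology with that of the tensor product complex.

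\textbf{Step 3: algebraic Künneth.} The complexes $MC_{*,\ell}(G)$ are free abelian. The algebraic Künneth theorem therefore gives a natural short exact sequence
\[0\to H(C)\otimes H(D)\to H(C\otimes D)\to \mathrm{Tor}(H(C),H(D))_{*-1}\to 0,\]
which splits non-naturally. Summing over $\ell_1+\ell_2=\ell$ and composing with the isomorphism from Step 2 yields the statement, with the first map induced by $\square$.

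Naturality comes from Step 1: for distance non-increasing maps, $(f\square f')_*(\vec g\square\vec h)=f_*\vec g\square f'_*\vec h$. Both sides vanish in the same degenerate cases, because distances add coordinatewise. This is compatible with the functoriality in Proposition \ref{functor}.
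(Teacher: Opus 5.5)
Your Steps 1 and 3 are fine. The shuffle map is a chain map by the usual Eilenberg--Zilber cancellation, the splitting by projected lengths $(\ell_1,\ell_2)$ is correct, naturality holds at chain level, and the algebraic K\"unneth theorem applies to these free complexes.

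The gap is Step 2. That $\square$ is a quasi-isomorphism onto each $(\ell_1,\ell_2)$-summand is the whole content of the theorem, and the Morse matching you sketch does not establish it. Record a chain of $G\square H$ by two projections and a word:
\begin{itemize}
\item the projections $\vec g,\vec h$, with consecutive repetitions removed, having $a$ and $b$ steps;
\item a word in $\mathsf{E}$ (a step moving only in $G$), $\mathsf{N}$ (only in $H$) and $\mathsf{D}$ (in both).
\end{itemize}
Inserting the corner $(g_{i+1},h_i)$ replaces a $\mathsf{D}$ by $\mathsf{EN}$. This rule is not a matching. For $a=b=2$, the chain $\mathsf{DD}$ yields $\mathsf{END}$ or $\mathsf{DEN}$. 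Each of these still has a diagonal step, and the rule sends both to the same chain $\mathsf{ENEN}$.

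Nor can the critical cells be ``precisely the shuffle chains''. For $a=b=1$ the only chains are $\mathsf{D}$, $\mathsf{EN}$, $\mathsf{NE}$, with $\partial\mathsf{EN}=\partial\mathsf{NE}=\mathsf{D}$, so one of the two shuffles must be spent cancelling $\mathsf{D}$. In general there are $\binom{a+b}{a}$ shuffle chains over each basis element $\vec g\otimes\vec h$, so such a Morse complex could not be identified with $MC(G)\otimes MC(H)$.

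A consistent rule does exist: pair at the first occurrence of either $\mathsf{D}$ or $\mathsf{EN}$. It is acyclic and leaves exactly one critical chain $\mathsf{N}^b\mathsf{E}^a$ per pair $(\vec g,\vec h)$. You would still have to compute the Morse differential and show that the Morse inclusion sends this chain to $\pm\vec g\square\vec h$. The statement and its naturality need the equivalence to be induced by $\square$.

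A more direct way to close the gap is to filter $MC_{*,\ell}(G\square H)$ by $a+b$. A nonzero face $\partial_i$ does one of two things:
\begin{itemize}
\item it lowers $a+b$, when $g_{i-1}\neq g_i\neq g_{i+1}$ or $h_{i-1}\neq h_i\neq h_{i+1}$, so that a node of $\vec g$ or $\vec h$ is deleted;
\item otherwise it merges an adjacent $\mathsf{EN}$ or $\mathsf{NE}$ into a $\mathsf{D}$; such deletions are always geodesic and keep $\vec g,\vec h$ unchanged.
\end{itemize}
So this is a filtration, and its associated graded splits over pairs $(\vec g,\vec h)$. Up to a global sign, each summand is the complex of lattice paths from $(0,0)$ to $(a,b)$ with steps $(1,0),(0,1),(1,1)$. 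That is the simplicial chain complex of the staircase triangulation of $\Delta^a\times\Delta^b$ relative to its boundary. Its homology is $\Z$ in degree $a+b$, generated by the signed sum of shuffles, which is exactly the image of $\vec g\otimes\vec h$.

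On $MC(G)\otimes MC(H)$ the filtration by $a+b$ is just the filtration by degree. Hence $\square$ induces an isomorphism on the homology of the associated graded pieces. Induction over this finite filtration with the five lemma then shows that $\square$ is a quasi-isomorphism, and your Step 3 finishes the argument.
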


\begin{ex}\label{complete_graph}
    For the complete graph $K_n$, we have
    \[
    MH_{k,\ell}(K_n)=\begin{cases}
    \bigoplus_{\Vec{x}\in P_{\ell}(K_n)}\Z\Vec{x}, & \text{ if } k=\ell;\\
    0, & \text{ otherwise}.
    \end{cases}
    \]
The magnitude Betti number is given by $\beta_{\ell,\ell}(K_n)=n(n-1)^{\ell}$.
One can verify that
$\Mag(K_n,q)=\sum_{\ell}(-1)^{\ell}\beta_{\ell,\ell}(K_n)q^{\ell}=\frac{n}{1+(n-1)q}$.
\end{ex}

\begin{ex}\label{hypercube}
    For the hypercube graph $Q_d=K_2^{\square d}$, the K\"unneth formula (Theorem \ref{Kunneth}) gives
    \[
    MH_{k,\ell}(Q_d)\cong\bigoplus_{\sum k_i=k, \sum \ell_i=\ell}MH_{k_1,\ell_1}(K_2)\otimes\cdots\otimes MH_{k_d,\ell_d}(K_2),
    \]
    Using the result for $K_2$, we obtain
    \[
    MH_{k,\ell}(Q_d)\cong\begin{cases}
    \Z^{\beta_{\ell,\ell}(Q_d)}, & \text{ if } k=\ell;\\
    0, & \text{ otherwise},
\end{cases}
\]
where $\beta_{\ell,\ell}(Q_d)=2^d\binom{\ell+d-1}{d-1}$. One can verify that
$\Mag(Q_d,q)=\sum_{\ell}(-1)^{\ell}\beta_{\ell,\ell}(Q_d)q^{\ell}=\frac{2^d}{(1+q)^d}$.
\end{ex}

\begin{defn}[Diagonal graphs]
    We say that $G$ is \emph{diagonal} if $MH_{k,\ell}(G)=0$ for $k\neq\ell$.
\end{defn}

\begin{ex}\label{diag}
    We have seen in Examples \ref{complete_graph} and \ref{hypercube} that $K_n$ and $Q_d$ are diagonal. In particular, the cycle graph $C_4=Q_2$ is diagonal. However, Gu \cite{Gu2018} computed $MH_{k,\ell}(C_n)$ for all $n\geq 5$ and showed that the graphs $C_n$ ($n\geq 5$) are not diagonal.
\end{ex}

Kaneta--Yoshinaga \cite{Kaneta2021} constructed a computationally tractable approximation of magnitude homology.
Let $\Vec{x}=(x_0,\ldots,x_k)\in P_k(G)$. 
We say $\Vec{x}$ is \emph{geodesic} if $\ell(\Vec{x})=d(x_0,x_k)$. 
Every subchain of a geodesic proper chain is again geodesic and proper.
Any proper $1$-chain is geodesic.
A proper $2$-chain $(x_0,x_1,x_2)$ is geodesic if and only if $d(x_0,x_2)=d(x_0,x_1)+d(x_1,x_2)$.
If $u,v\in G$ are vertices and $\ell=d(u,v)$, then any chain in $P_{k,\ell}(G;u,v)$ must be geodesic. 

\begin{defn}[Geodesic magnitude homology]
We define the length $\ell$ \emph{geodesic magnitude complex} by
\[
MC_{*,\ell}^{\geod}(G):=\bigoplus_{\substack{u,v\in G\\ d(u,v)=\ell}}MC_{*,\ell}(G;u,v).
\]
Its homology is called the length $\ell$ \emph{geodesic magnitude homology}.
The geodesic magnitude homology $MH^{\geod}_{k,\ell}(G)$ is a direct summand of $MH_{k,\ell}(G)$.
It admits a decomposition
\[
MH_{k,\ell}^{\geod}(G)=\bigoplus_{\substack{u,v\in G\\ d(u,v)=\ell}}MH_{k,\ell}(G;u,v).
\]
Note that $MC_{*,\ell}^{\geod}(G)$ and $MH_{k,\ell}^{\geod}(G)$ are trivial if $\ell>\diam(G)$.
We define the geodesic magnitude Betti number by $\beta_{k,\ell}^{\geod}(G)=\rank MH_{k,\ell}^{\geod}(G)$ and the length $\ell$ geodesic magnitude Euler characteristic by $\chi_{\ell}^{\geod}(G)=\sum_{k}(-1)^k\beta_{k,\ell}^{\geod}(G)$. Observe that $\beta_{k,\ell}(G)\geq \beta_{k,\ell}^{\geod}(G)$.
\end{defn}

Geodesic magnitude homology can be computed using the order complex of open intervals.
Given $a,b\in G$, consider the open interval $(a,b)_G$ introduced in Section \ref{basicdef}.
We write $C_*(a,b)_G$ for the reduced chain complex of the order complex of $(a,b)_G$. 

\begin{prop}[{\cite[Proposition 2.3]{Gomi2025}}]\label{intervalMH}
    For $a,b\in G$ with $d(a,b)=\ell$, there is an isomorphism of chain complexes
    \[
    MC_{*,\ell}(G;a,b)\cong C_{*-2}(a,b)_G,
    \]
    which induces an isomorphism of homology groups
    \[
    MH_{k,\ell}(G;a,b)\cong\widetilde{H}_{k-2}(C_*(a,b)_G).
    \]
\end{prop}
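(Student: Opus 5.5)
The plan is to build an explicit, degree-shifting bijection between proper chains from $a$ to $b$ and chains in the open interval, and then to check that it intertwines the two boundary maps.

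\emph{Identification of generators.} Since $d(a,b)=\ell$, every proper chain $\vec{x}=(x_0,\ldots,x_k)\in P_{k,\ell}(G;a,b)$ satisfies $\ell(\vec{x})=d(x_0,x_k)$, so it is geodesic. By \eqref{linearlyorderedchain} this means exactly that $a=x_0\prec_G x_1\prec_G\cdots\prec_G x_{k-1}\prec_G x_k=b$ is a strictly increasing chain in $[a,b]_G$. Properness rules out repeated entries, and the geodesic condition forces strict increase. I therefore send $\vec{x}$ to the $(k-2)$-simplex $\{x_1,\ldots,x_{k-1}\}$ of the order complex of $(a,b)_G$. The edge cases fit the reduced complex: for $k=1$ the chain $(a,b)$ goes to the empty simplex in degree $-1$; for $k=0$ there is nothing when $a\neq b$; and the case $a=b$, $\ell=0$ is handled separately or by convention. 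Conversely, every strictly increasing chain $u_1\prec\cdots\prec u_{k-1}$ in $(a,b)_G$ gives a proper chain $(a,u_1,\ldots,u_{k-1},b)$. Its length is $\ell$, again by \eqref{linearlyorderedchain}. This yields a bijection of bases $P_{k,\ell}(G;a,b)\leftrightarrow\{(k-2)\text{-simplices}\}$.

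\emph{Compatibility with differentials.} Every subchain of a geodesic chain is geodesic, so for $1\le i\le k-1$ the condition $d(x_{i-1},x_{i+1})=d(x_{i-1},x_i)+d(x_i,x_{i+1})$ always holds. Hence $\partial_i$ simply deletes $x_i$, and
\[
\partial=\sum_{i=1}^{k-1}(-1)^{i-1}\partial_i.
\]
Under the bijection, deleting $x_i$ corresponds to deleting the $(i-1)$-th vertex of the simplex $[x_1,\ldots,x_{k-1}]$. The simplicial face map carries sign $(-1)^{i-1}$, so the two differentials agree on the nose. We also need the reduced boundary of a $0$-simplex to be the empty simplex; this matches $\partial(a,x_1,b)=(a,b)$. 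The result is an isomorphism of chain complexes $MC_{*,\ell}(G;a,b)\cong C_{*-2}(a,b)_G$, and taking homology gives $MH_{k,\ell}(G;a,b)\cong\widetilde H_{k-2}(C_*(a,b)_G)$.

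\emph{Main obstacle.} The only delicate point is bookkeeping. First, the sign convention of $C_*$ must match the magnitude differential. Second, the reduced augmentation in degree $-1$ must match $k=1$. I would fix the convention that simplices are ordered by $\prec_G$; the order complex of $(a,b)_G$ consists of chains that are totally ordered, so this ordering is canonical. I would then verify the low-degree identifications directly.
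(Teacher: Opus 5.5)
Your proposal is correct and follows the same route as the paper. Since $d(a,b)=\ell$, every chain in $P_{k,\ell}(G;a,b)$ is geodesic, and sending it to $x_1\prec_G\cdots\prec_G x_{k-1}$ gives the isomorphism; your checks of the sign $(-1)^{i-1}$ against the simplicial face maps and of the augmentation $\partial(a,x_1,b)=(a,b)$ fill in details the paper leaves implicit.

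Your caveat about $a=b$ is warranted. There the literal statement fails by a degree shift: the chain $(a)$ sits in degree $0$, while the empty simplex sits in degree $1$ after the shift. So one must either assume $a\neq b$, or adopt the convention $C_*(a,a)_G=\Z$ concentrated in degree $-2$, which matches the paper's later use of $S^{-2}$ when the face is a chamber.
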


\begin{proof}
    Any proper chain $\Vec{x}=(a,x_1,\ldots,x_{k-1},b)\in P_{k,\ell}(G;a,b)$ must be geodesic since $\ell=d(a,b)$.
    Mapping $\Vec{x}$ to the linearly ordered chain $x_1\prec_G\cdots\prec_G x_{k-1}$ in $C_{k-2}(a,b)_G$ defines the desired isomorphism.
\end{proof}

\begin{cor}\label{geodMHdecomp}
    The geodesic magnitude homology admits a decomposition
    \[
    MH_{k,\ell}^{\geod}(G)\cong\bigoplus_{\substack{a,b\in G\\ d(a,b)=\ell}}\widetilde{H}_{k-2}(C_*(a,b)_G).
    \]
\end{cor}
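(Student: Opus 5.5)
This follows by summing Proposition \ref{intervalMH} over the pairs of vertices at distance $\ell$.

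By definition,
\[
MC^{\geod}_{*,\ell}(G)=\bigoplus_{d(a,b)=\ell}MC_{*,\ell}(G;a,b)
\]
is a direct sum of subcomplexes. This holds because the boundary operator $\partial$ only deletes interior vertices $x_i$ with $1\le i\le k-1$, so it preserves the endpoints $x_0=a$ and $x_k=b$. Homology commutes with finite direct sums of chain complexes, so
\[
MH^{\geod}_{k,\ell}(G)=\bigoplus_{d(a,b)=\ell}MH_{k,\ell}(G;a,b).
\]
This is the decomposition already recorded in the definition of geodesic magnitude homology.

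Next, fix a summand with $d(a,b)=\ell$. Proposition \ref{intervalMH} gives an isomorphism of chain complexes $MC_{*,\ell}(G;a,b)\cong C_{*-2}(a,b)_G$, and hence
\[
MH_{k,\ell}(G;a,b)\cong\widetilde H_{k-2}(C_*(a,b)_G).
\]
Substituting this into each summand of the direct sum yields the stated decomposition.

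The only point needing care is the boundary degrees. For $k=1$, the chain $(a,b)$ should correspond to the empty chain in degree $-1$ of the reduced complex. When $a\neq b$ we have $k\ge 1$, since a proper $0$-chain is a single vertex and would force $a=b$. All of this is already built into Proposition \ref{intervalMH}, so the corollary needs no further work.
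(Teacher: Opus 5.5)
Your argument is correct and is exactly how the paper obtains the corollary: the endpoint splitting $MC^{\geod}_{*,\ell}(G)=\bigoplus_{d(a,b)=\ell}MC_{*,\ell}(G;a,b)$ into subcomplexes, followed by Proposition \ref{intervalMH} applied summand by summand.

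One caveat, which you inherit from the paper rather than introduce, concerns $\ell=0$ (so $a=b$). There $MC_{*,0}(G;a,a)$ is $\Z$ in degree $0$, whereas the reduced complex of the empty poset $(a,a)_G$, shifted by $2$, sits in degree $1$. So that case needs the usual poset-topology convention that $(a,a)_G$ is the $(-2)$-sphere; this convention is also implicit in the paper's later use of $S^{\rank\A_F-2}$ with $\rank\A_F=0$. For $\ell\ge 1$ your degree bookkeeping is complete.
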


Geodesic magnitude homology is a coarse approximation of magnitude homology. As a more refined approximation, Kaneta--Yoshinaga \cite{Kaneta2021} introduced \emph{geodesically simple magnitude homology}.
For a proper chain $\Vec{x}=(x_0,\ldots,x_k)\in P_k(G)$, an internal node $x_i$ ($i=1,2,\ldots,k-1$) is called a \emph{smooth} node of $\Vec{x}$ if $d(x_{i-1},x_{i+1})=d(x_{i-1},x_i)+d(x_i,x_{i+1})$. A node that is not smooth is called a \emph{singular} node. Note that endpoints are singular. We write $\varphi(\Vec{x})$ for the subchain of $\Vec{x}$ consisting of all singular nodes and call it the frame of $\Vec{x}$. Note that $\varphi(\Vec{x})$ is not necessarily proper.

\begin{defn}[Geodesically simple magnitude homology]
    A proper chain $\Vec{x}$ is \emph{geodesically simple} if $\ell(\varphi(\Vec{x}))=\ell(\Vec{x})$.
    We define $MC_{k,\ell}^{\gs}(G)$ to be the subgroup of $MC_{k,\ell}(G)$ generated by geodesically simple chains.
    Then $MC_{*,\ell}^{\gs}(G)$ forms a subcomplex of $MC_{*,\ell}(G)$ \cite[Propositions 3.6 and 3.7]{Kaneta2021}.
    Its homology is called the \emph{geodesically simple magnitude homology} of $G$, denoted by $MH_{*,\ell}^{\gs}(G)$.
\end{defn}

How well $MH_{*,\ell}^{\gs}(G)$ approximates $MH_{*,\ell}(G)$ is controlled by the notion of a $4$-cut.

\begin{defn}[$4$-cuts]
    A proper $3$-chain $(x_0,x_1,x_2,x_3)\in P_3(G)$ is called a \emph{$4$-cut} of $G$ if $(x_0,x_1,x_2)$ and $(x_1,x_2,x_3)$ are geodesic but $(x_0,x_1,x_2,x_3)$ is not geodesic. Let $m_G\geq 3$ be the minimum length of $4$-cuts of $G$. We set $m_G=\infty$ if $G$ has no $4$-cuts.
    Any proper chain of length less than $m_G$ is geodesically simple.
\end{defn}

\begin{lem}\label{exist4cut}
    Suppose that $\Vec{x}=(x_0,\ldots,x_k)\in P_k(G)$ satisfies $\varphi(\Vec{x})=(x_0,x_k)$ and $\Vec{x}$ is not geodesically simple.
    That is, $(x_{i-1},x_i,x_{i+1})$ is geodesic for $i=1,\ldots,k-1$, but $\Vec{x}$ is not geodesic.
    Then $\Vec{x}$ has a subchain that is a $4$-cut.
\end{lem}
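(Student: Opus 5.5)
The plan is to locate the first position where geodesicity fails, and to show that the three vertices just before it together with that vertex form a $4$-cut. Throughout, $(x_{i-1},x_i,x_{i+1})$ is geodesic for $1\le i\le k-1$ but $\Vec{x}$ is not geodesic. For $1\le j\le k$, write $\Vec{x}_{[0,j]}=(x_0,\ldots,x_j)$.

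\textbf{Step 1: find the first failure.} The chain $\Vec{x}_{[0,1]}$ is geodesic, since every proper $1$-chain is. The chain $\Vec{x}_{[0,k]}=\Vec{x}$ is not geodesic by hypothesis. Hence there is a smallest index $j\ge 1$ such that $\Vec{x}_{[0,j]}$ is geodesic and $\Vec{x}_{[0,j+1]}$ is not. Since $\Vec{x}_{[0,2]}=(x_0,x_1,x_2)$ is geodesic by hypothesis, we get $j\ge 2$. In particular $x_{j-1}$ is defined and $j+1\le k$. Note that $j\ge 2$ means $j-1\ge 1$; this is exactly what makes a genuine $3$-chain available below.

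\textbf{Step 2: choose the candidate.} The candidate $4$-cut is $(y_0,y_1,y_2,y_3)=(x_0,x_{j-1},x_j,x_{j+1})$. It is a subchain of $\Vec{x}$, meaning we delete nodes but keep their order. We check the three conditions in the definition.

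\begin{enumerate}
\item \emph{The triple $(x_0,x_{j-1},x_j)$ is geodesic.} Since $\Vec{x}_{[0,j]}$ is geodesic, so is its subchain $(x_0,x_{j-1},x_j)$. Here we use the stated fact that subchains of geodesic chains are geodesic, which follows from the triangle inequality.
\item \emph{The triple $(x_{j-1},x_j,x_{j+1})$ is geodesic.} This is the hypothesis at the index $i=j$, which is allowed because $1\le j\le k-1$.
\item \emph{The $3$-chain $(x_0,x_{j-1},x_j,x_{j+1})$ is not geodesic.} Its length is
\[
d(x_0,x_{j-1})+d(x_{j-1},x_j)+d(x_j,x_{j+1}).
\]
By item (1), $d(x_0,x_{j-1})+d(x_{j-1},x_j)=d(x_0,x_j)$. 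Since $\Vec{x}_{[0,j]}$ is geodesic, $d(x_0,x_j)=\ell(\Vec{x}_{[0,j]})$. So the length equals $\ell(\Vec{x}_{[0,j+1]})$. This is strictly greater than $d(x_0,x_{j+1})$, because $\Vec{x}_{[0,j+1]}$ is not geodesic.
\end{enumerate}

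\textbf{Step 3: check properness.} The one delicate point is that a $4$-cut must be a proper chain.
\begin{itemize}
\item We have $x_{j-1}\neq x_j$ and $x_j\neq x_{j+1}$ because $\Vec{x}$ is proper.
\item We have $x_0\neq x_{j-1}$. Suppose instead $x_0=x_{j-1}$. Then $\Vec{x}_{[0,j-1]}$ is a geodesic chain from $x_0$ to itself, so its length $d(x_0,x_0)$ is $0$. This forces all its nodes to coincide, contradicting properness, since $j-1\ge 1$.
\end{itemize}

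So $(x_0,x_{j-1},x_j,x_{j+1})$ is a proper $3$-chain satisfying all three conditions, and hence a $4$-cut contained in $\Vec{x}$ as a subchain. The main obstacle is only the choice of $j$ in Step 1; once $j$ is chosen, everything reduces to additivity of lengths for geodesic chains.
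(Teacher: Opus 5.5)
Your proof is correct and is essentially the paper's argument: find a minimal non-geodesic contiguous piece of $\Vec{x}$, then collapse its geodesic part to obtain a proper non-geodesic $3$-chain whose two triples are geodesic. The paper minimizes over all contiguous windows $(y_0,\ldots,y_p)$ and takes $(y_0,y_1,y_{p-1},y_p)$, whereas you minimize over prefixes, take $(x_0,x_{j-1},x_j,x_{j+1})$, and get the second triple directly from the local hypothesis; your check $x_0\neq x_{j-1}$ plays the role of the paper's check $y_1\neq y_{p-1}$.
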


\begin{proof}
    Let $\Vec{y}=(y_0,\ldots,y_p)$ be a contiguous subchain of $\Vec{x}$, that is, $y_0=x_i,y_1=x_{i+1},\ldots,y_p=x_{i+p}$ for some $i$, such that $\Vec{y}$ is not geodesic and $p$ is as small as possible. This means $(y_0,\ldots,y_{p-1})$ and $(y_1,\ldots,y_p)$ are geodesic. Then $p\geq 3$. We claim $\Vec{z}:=(y_0,y_1,y_{p-1},y_p)$ is a $4$-cut. First note that $y_{p-1}\neq y_1$ as they are connected by a geodesic $(y_1,y_2,\ldots,y_{p-1})$. That $(y_0,y_1,y_{p-1})$ and $(y_1,y_{p-1},y_p)$ are geodesic follows from the assumption that $(y_0,\ldots,y_{p-1})$ and $(y_1,\ldots,y_p)$ are geodesic. Moreover, $\ell(\Vec{z})=d(y_0,y_1)+d(y_1,y_{p-1})+d(y_{p-1},y_p)=\sum_{i=1}^pd(y_{i-1},y_i)>d(y_0,y_p)$, since $\Vec{y}$ is not geodesic. This shows $\Vec{z}$ is not geodesic and hence is a $4$-cut.
\end{proof}

Let $\Vec{a}\in P_{m,\ell}(G)$ and $k\geq m$. We define $P_{k,\Vec{a}}(G):=\{\Vec{x}\in P_{k,\ell}(G)\mid \varphi(\Vec{x})=\Vec{a}\}$. Note that $P_{k,\Vec{a}}(G)$ consists of geodesically simple chains and is empty if $\varphi(\Vec{a})\neq \Vec{a}$. We further define $MC^{\gs}_{k,\Vec{a}}(G)$ as the free abelian group generated by $P_{k,\Vec{a}}(G)$, which is a subgroup of $MC_{k,\ell}^{\gs}(G)$. These groups form a subcomplex $MC^{\gs}_{*,\Vec{a}}(G)$ whose homology $MH^{\gs}_{*,\Vec{a}}(G)$ is called the \emph{$\Vec{a}$-framed magnitude homology} of $G$. Kaneta--Yoshinaga \cite{Kaneta2021} proved that $MH_{k,\ell}^{\gs}(G)$ decomposes as a direct sum of framed magnitude homology groups:
\begin{thm}[{\cite[Theorem 3.12]{Kaneta2021}}]\label{gsmh}
    Let $G$ be a graph and $k,\ell>0$. We have a decomposition
    \[
MH_{k,\ell}^{\gs}(G)\cong\bigoplus_{\substack{m\leq k\\ \Vec{a}\in P_{m,\ell}(G)}} MH^{\gs}_{k,\Vec{a}}(G).
\]
Furthermore, if $k=1$ or $\ell<m_G$, then $MH_{k,\ell}^{\gs}(G)\cong MH_{k,\ell}(G)$.
\end{thm}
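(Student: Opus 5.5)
The plan is to prove the decomposition at the chain level, by showing that the boundary operator on $MC^{\gs}_{*,\ell}(G)$ preserves the frame of every chain. The second assertion then reduces to comparing the complexes $MC^{\gs}$ and $MC$ in low degrees.

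\emph{Step 1: structure of a geodesically simple chain.} Let $\Vec{x}$ be geodesically simple with frame $\varphi(\Vec{x})=(a_0,\dots,a_m)$. Write $\Vec{x}$ as a concatenation of segments running from $a_j$ to $a_{j+1}$. By the triangle inequality, each segment has length at least $d(a_j,a_{j+1})$. Since $\ell(\Vec{x})=\ell(\varphi(\Vec{x}))$, equality must hold for every segment, so each segment is a geodesic chain.

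\emph{Step 2: $\partial$ preserves the frame.} First, $\partial_i$ vanishes unless $x_i$ is smooth, so only deletions of smooth nodes contribute. Delete a smooth node $x_i$ and consider the new chain $\Vec{x}'$. I will show that every other node keeps its type.
\begin{itemize}
\item Nodes not adjacent to $x_i$ have unchanged neighbours, so their type is unchanged.
\item Let $x_{i+1}$ be a smooth neighbour. Its new neighbours $x_{i-1},x_{i+2}$ lie in a single geodesic segment, because $x_i$ is smooth and so is not a frame node. Hence $(x_{i-1},x_{i+1},x_{i+2})$ is geodesic and $x_{i+1}$ remains smooth.
\item Let $x_{i+1}$ be a singular neighbour, and suppose it became smooth in $\Vec{x}'$. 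Then $d(x_{i-1},x_{i+2})=d(x_{i-1},x_{i+1})+d(x_{i+1},x_{i+2})$. Smoothness of $x_i$ gives $d(x_{i-1},x_{i+1})=d(x_{i-1},x_i)+d(x_i,x_{i+1})$. Combining these, $(x_{i-1},x_i,x_{i+1},x_{i+2})$ is geodesic. A subchain of a geodesic chain is geodesic, so $(x_i,x_{i+1},x_{i+2})$ is geodesic and $x_{i+1}$ was already smooth, a contradiction.
\item The case $x_{i-1}$ is symmetric, and endpoints stay endpoints.
\end{itemize}
Therefore $\varphi(\Vec{x}')=\varphi(\Vec{x})$. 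This is the delicate step, and it is where Step 1 is used.

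\emph{Step 3: the splitting.} By Step 2, $MC^{\gs}_{*,\ell}(G)=\bigoplus_{\Vec{a}}MC^{\gs}_{*,\Vec{a}}(G)$ as chain complexes. The frame of a $k$-chain has $m+1\le k+1$ nodes, so $m\le k$. Frames that are not proper, or not themselves frames, contribute zero, since $P_{k,\Vec{a}}(G)=\emptyset$ in those cases. Taking homology gives the stated decomposition.

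\emph{Step 4: the comparison.} If $\ell<m_G$, every proper chain of length $\ell$ is geodesically simple, as noted after the definition of $4$-cuts. Hence $MC^{\gs}_{*,\ell}(G)=MC_{*,\ell}(G)$ and the homologies agree.

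For $k=1$, the group $MH_1$ depends only on $MC_0$, $MC_1$ and $MC_2$, so it suffices to show that all proper $1$- and $2$-chains are geodesically simple.
\begin{itemize}
\item A proper $1$-chain is its own frame.
\item For a $2$-chain $(x_0,x_1,x_2)$, if $x_1$ is singular the chain is its own frame.
\item If $x_1$ is smooth, the frame is $(x_0,x_2)$ and $\ell(x_0,x_2)=d(x_0,x_2)=\ell(\Vec{x})$.
\end{itemize}
Thus $MC_{j,\ell}^{\gs}(G)=MC_{j,\ell}(G)$ for $j\le 2$, and $MH^{\gs}_{1,\ell}(G)\cong MH_{1,\ell}(G)$.
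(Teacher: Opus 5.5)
Your proof is correct and follows the route of the cited source; the paper itself only quotes the result, together with the facts that $MC^{\gs}_{*,\ell}(G)$ is a subcomplex and that chains of length $<m_G$ are geodesically simple. As in that source, you show the nonvanishing face maps preserve the frame, so the geodesically simple complex splits by frame, and the comparison follows from the $4$-cut remark and the fact that every proper $j$-chain with $j\le 2$ is geodesically simple. One step you assert without justification is that frames of geodesically simple chains are proper. This is immediate from your Step 1: the geodesic segment between two equal consecutive frame nodes would have length $0$, which is impossible for a segment of a proper chain.
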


For a frame $\Vec{a}=(a_0,a_1,\ldots,a_m)\in P_{m,\ell}(G)$, we denote by $C_*(\Vec{a})_G$ the total complex of $C_*(a_0,a_1)_G\otimes\cdots\otimes C_*(a_{m-1},a_m)_G$. Proposition \ref{intervalMH} can be generalized as follows.

\begin{thm}[{\cite[Theorem 4.4 and Corollary 4.5]{Kaneta2021}}]\label{fmh}
    Suppose that a frame $\Vec{a}$ has length $\ell(\Vec{a})<m_G$. Then there is an isomorphism of chain complexes
    \[
    MC^{\gs}_{*,\Vec{a}}(G)\cong C_{*-2m}(\Vec{a})_G,
    \]
    which induces isomorphisms of homology groups
    \[
    MH^{\gs}_{k,\Vec{a}}(G)\cong \widetilde{H}_{k-2m}(C_*(\Vec{a})_G).
    \]
\end{thm}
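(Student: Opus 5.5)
The plan is to write down an explicit bijection between generators on the two sides, then check that it preserves degrees and intertwines the differentials up to a consistent sign. Fix a frame $\Vec{a}=(a_0,\ldots,a_m)\in P_{m,\ell}(G)$ with $\ell<m_G$. A chain $\Vec{x}\in P_{k,\Vec{a}}(G)$ is $\Vec{a}$ with extra nodes inserted between consecutive entries. Say $n_i\ge 0$ nodes $y^i_1,\ldots,y^i_{n_i}$ are inserted between $a_{i-1}$ and $a_i$, all of them smooth in $\Vec{x}$, so $k=m+\sum_i n_i$.

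\textbf{Step 1: each segment is a chain in the open interval.} The contiguous subchain $(a_{i-1},y^i_1,\ldots,y^i_{n_i},a_i)$ has only smooth internal nodes. Its length is at most $\ell(\Vec{x})=\ell<m_G$, where the equality holds because $\Vec{x}$ is geodesically simple. By Lemma~\ref{exist4cut} it cannot fail to be geodesic, since otherwise it would contain a $4$-cut of length $<m_G$. Hence by \eqref{linearlyorderedchain} the nodes satisfy $a_{i-1}\prec y^i_1\prec\cdots\prec y^i_{n_i}\prec a_i$. So they form an $(n_i-1)$-simplex of the order complex of $(a_{i-1},a_i)_G$, with $n_i=0$ giving the empty simplex in degree $-1$ of the reduced complex. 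Sending $\Vec{x}$ to the tensor product of these simplices gives an element of degree $\sum_i(n_i-1)=k-2m$ in $C_*(\Vec{a})_G$.

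\textbf{Step 2: surjectivity, i.e.\ the frame is preserved.} Conversely, take arbitrary strict chains in each $(a_{i-1},a_i)_G$ and concatenate them with $\Vec{a}$. The result is proper, its internal segment nodes are smooth, and its length is $\ell$. The point to check is that each $a_i$ remains singular. Let $y$ and $z$ be the neighbours of $a_i$ in the concatenation, and suppose $(y,a_i,z)$ were geodesic. Then the chain $(a_{i-1},\ldots,y,a_i,z,\ldots,a_{i+1})$ would have all internal nodes smooth and length $<m_G$. Lemma~\ref{exist4cut} would make it geodesic, so $(a_{i-1},a_i,a_{i+1})$ would be geodesic. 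This contradicts $\varphi(\Vec{a})=\Vec{a}$, which holds since $P_{k,\Vec{a}}(G)\ne\emptyset$ forces it. So the map is a bijection $P_{k,\Vec{a}}(G)\to$ basis of $C_{k-2m}(\Vec{a})_G$.

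\textbf{Step 3: differentials.} In $\partial=\sum_j(-1)^{j-1}\partial_j$, the terms deleting a singular node vanish by definition. Deleting a smooth node $y^i_s$ preserves length and properness. By the argument of Step 2, applied to the shorter segment, it also preserves the frame. On the interval side it is exactly the $s$-th face of the $i$-th simplex. The position of $y^i_s$ in $\Vec{x}$ is $j=\sum_{t<i}n_t+(i-1)+s$. The Koszul sign of the total complex on the $i$-th factor is $(-1)^{\sum_{t<i}(n_t-1)}$ times the simplicial sign $(-1)^{s-1}$. The two signs therefore differ by $(-1)^{\sum_{t<i}n_t+i-1}(-1)^{\sum_{t<i}n_t-(i-1)}=(-1)^{2(i-1)}=+1$, up to a convention-dependent global sign which can be absorbed by rescaling generators by $\pm1$. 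Hence we obtain an isomorphism of chain complexes, and the statement on homology follows immediately.

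I expect Step 2, together with its reuse in Step 3, to be the main obstacle. This is where the hypothesis $\ell(\Vec{a})<m_G$ is genuinely needed: it guarantees that inserting or deleting smooth nodes never changes which nodes are singular. The sign bookkeeping in Step 3 is routine once the indexing is fixed.
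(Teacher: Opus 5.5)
Your proposal is correct and follows the paper's proof. It uses the same bijection on generators, and proves surjectivity exactly as the paper does: apply Lemma~\ref{exist4cut} to the contiguous subchain from $a_{i-1}$ to $a_{i+1}$, then use $\ell<m_G$ to exclude the resulting $4$-cut.

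You also supply two things the paper leaves implicit or omits: the justification that each segment is a chain in the interval, and the chain-map sign check. In that sign check the exponent should be $2\sum_{t<i}n_t$ rather than $2(i-1)$, though the conclusion $+1$ is unaffected. One more point: $\varphi(\Vec{a})=\Vec{a}$ should be taken as the standing hypothesis (this is what ``frame'' means, and the paper assumes it explicitly) rather than deduced from $P_{k,\Vec{a}}(G)\neq\emptyset$. Without it, $MC^{\gs}_{*,\Vec{a}}(G)=0$ while $C_*(\Vec{a})_G$ need not vanish.
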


\begin{rem}
    The original statement of Theorem 4.4 in \cite{Kaneta2021} does not contain the assumption $\ell(\Vec{a})<m_G$.
    However, the result fails if a frame node $a_i$ of $\Vec{a}$ can be smoothed out by nodes from neighboring intervals. For example, let $G=C_4$ be the cycle graph of $4$ vertices, labeled by $0,1,2,3$ in cyclic order. Choose $\Vec{a}=(0,1,3)\in P_{2,3}(G)$. Then $(0,1)_G$ is empty and $(1,3)_G$ consists of two incomparable points $\{0,2\}$. Compare $MC^{\gs}_{3,\Vec{a}}(G)$ with $C_{-1}(\Vec{a})_G$. The element $\varnothing\otimes 2$ in $C_{-1}(\Vec{a})_G$ does not have a preimage in $MC^{\gs}_{3,\Vec{a}}(G)$, since the frame of $(0,1,2,3)$ is not equal to $\Vec{a}=(0,1,3)$.
    This is because $(0,1,2,3)$ is a $4$-cut. 
\end{rem}

\begin{proof}[Proof of Theorem \ref{fmh}]
    Let $\Vec{a}=(a_0,a_1,\ldots,a_m)\in P_{m,\ell}(G)$ such that $\varphi(\Vec{a})=\Vec{a}$. Then $\ell=\ell(\Vec{a})<m_G$. If $\Vec{x}\in P_{k,\Vec{a}}(G)$ is a basis element of $MC^{\gs}_{k,\Vec{a}}(G)$, then $\Vec{x}$ has the form
    \[
    \Vec{x}=(a_0,x^1_{1},\ldots,x^1_{k_1},a_1,x^2_{1},\ldots,x^2_{k_2},a_2,\ldots,a_m).
    \]
    Note that $k=m+\sum_{i=1}^m k_i$. The smooth nodes between $a_{i-1}$ and $a_i$ form a linearly ordered chain $\langle x^i\rangle:=(x^i_1\prec_G\cdots\prec_G x^i_{k_i})$ in $C_{k_i-1}(a_{i-1},a_i)_G$. Define $\Phi_k:MC^{\gs}_{k,\Vec{a}}(G)\to C_{k-2m}(\Vec{a})_G$ by 
    \[
    \Phi_k(\Vec{x})=\langle x^1\rangle\otimes\langle x^2\rangle\otimes\cdots\otimes\langle x^m\rangle.
    \]
    The map $\Phi_k$ is injective on basis elements. To prove that $\Phi_k$ is also surjective, consider a basis element $\langle y^1\rangle\otimes\cdots\otimes\langle y^m\rangle$ of $C_{k-2m}(\Vec{a})_G$, where $\langle y^i\rangle=(y^i_1\prec_G\cdots\prec_G y^i_{k_i})\in C_{k_i-1}(a_{i-1},a_i)_G$ for $i=1,\ldots,m$. Let
    \[
        \Vec{y}=(a_0,y^1_{1},\ldots,y^1_{k_1},a_1,y^2_{1},\ldots,y^2_{k_2},a_2,\ldots,a_m).
    \]
    We have $\Vec{y}\in P_{k,\ell}(G)$. It remains to prove $\varphi(\Vec{y})=\Vec{a}$. First note that $(a_{i-1},y^i_1,\ldots,y^i_{k_i},a_i)$ is geodesic and hence $y^i_{j}$ is smooth in $\Vec{y}$. Next suppose, for contradiction, that $a_i$ is smooth in $\Vec{y}$ for some $i=1,\ldots,m-1$, that is,
    \[
    d(y^i_{k_i},y^{i+1}_1)=d(y^i_{k_i},a_i)+d(a_i,y^{i+1}_1).
    \]
    Here $y^i_{k_i}$ is understood as $a_{i-1}$ if $k_i=0$. Similarly, $y^{i+1}_1$ is understood as $a_{i+1}$ if $k_{i+1}=0$. Note that $k_i$ and $k_{i+1}$ cannot both be zero, since otherwise $\varphi(\Vec{a})\neq\Vec{a}$. Consider the subchain
    \[
    \Vec{z}=(a_{i-1},y^i_1,\ldots,y^i_{k_i},a_i,y^{i+1}_1,\ldots,y^{i+1}_{k_{i+1}},a_{i+1}).
    \]
    It satisfies $\varphi(\Vec{z})=(a_{i-1},a_{i+1})$ and $\ell(\Vec{z})=d(a_{i-1},a_i)+d(a_i,a_{i+1})>d(a_{i-1},a_{i+1})$ by the assumption that $a_i$ is singular in $\Vec{a}$. By Lemma \ref{exist4cut}, $\Vec{z}$ contains a subchain $\Vec{w}$ that is a $4$-cut. However, $\ell(\Vec{w})\leq \ell(\Vec{z})\leq \ell(\Vec{y})=\ell$, which contradicts the assumption $\ell<m_G$. The verification that $\Phi_k$ defines a chain map is omitted.
\end{proof}

\subsection{Hyperplane arrangements}
The standard references are \cite{Orlik1992,Stanley2007}. Let $\A$ be a real central hyperplane arrangement in $\R^d$.
For two chambers $C,C^{\prime}\in\Ch(\A)$, we write $S(C,C^{\prime})$ for the set of hyperplanes in $\A$ that separate $C$ and $C^{\prime}$.
The \emph{tope graph} $\T(\A)$ of $\A$ is the graph with vertex set $\Ch(\A)$ in which two vertices $C$ and $C^{\prime}$ are connected by an edge if $S(C,C^{\prime})$ consists of a single hyperplane. 
The graph metric $d$ on $\T(\A)$ satisfies $d(C,C^{\prime})=\#S(C,C^{\prime})$. 
Note that $\T(\A)$ is a \emph{partial cube}, that is, it can be regarded as an isometric subgraph of the hypercube graph $Q_{\A}=\{+,-\}^{\A}$.

We denote by $L(\A)=\{\bigcap_{H\in\mathcal{B}}H\mid \mathcal{B}\subseteq \A\}$ the \emph{intersection lattice} of $\A$, ordered by reverse inclusion. An element of $L(\A)$ is called a \emph{flat}. Note that the maximal flat is $\hat{1}=\bigcap_{H\in\A}H=:\bigcap\A$, called the center of $\A$, and the minimal flat $\hat{0}$ is the ambient space $\R^d$. The arrangement is called \emph{essential} if the center is the origin. The rank of a flat $X\in L(\A)$ is defined as the codimension: $\rank X=\codim X$. The rank of $\A$ is defined as $\rank \A=\rank (\bigcap\A)$. We write $L_k(\A)$ for the set of flats of rank $k$ and $L_{\leq k}(\A)$ for the set of flats of rank at most $k$. Using the M\"obius function $\mu:L(\A)\times L(\A)\to\Z$, we define the \emph{characteristic polynomial} of $\A$ by $\chi(\A,t)=\sum_{X\in L(\A)}\mu(\hat{0},X)t^{\dim X}$. Zaslavsky's theorem states that $\#\Ch(\A)=|\chi(\A,-1)|$ \cite{Zaslavsky1975}.

The arrangement $\A$ determines a stratification of $\R^d$; the strata are called \emph{faces}, or covectors. Let $\FF(\A)$ be the poset of faces, ordered by closure inclusion. The unique minimal face is the center $\cap\A=\cap_{H\in\A}H$, and the maximal faces are the chambers $\Ch(\A)$. There is an order-reversing surjective map $s:\FF(\A)\to L(\A)$ taking a face $F$ to its support $s(F)$, that is, the minimal subspace of $\R^d$ containing $F$. We define the dimension of a face as the dimension of its support.

For a face $F\in\FF(\A)$, let $\A_F=\{H\in\A\mid F\subseteq H\}$. Also, for a flat $X\in L(\A)$, let $\A_X=\{H\in\A\mid X\subseteq H\}$ and $\A^X=\{H\cap X\mid H\in\A\setminus\A_X\}$. Note that if $F\in\FF(\A)$ has support $s(F)=X\in L(\A)$, then $\A_F=\A_X$. 
\begin{defn}\label{cX}
    For a flat $X\in L(\A)$, we define
    \[
    c^X:=\#\Ch(\A^X)=\#s^{-1}(X),\quad c_X=\#\Ch(\A_X).
    \]
    If $X$ is not the ambient space $\hat{0}=\R^d$, we also define the beta invariant of $X$ by
    \[
    \beta_X:=\beta(\A_X),
    \]
    where $\beta(\A)$ for an arrangement $\A$ is defined by $\beta(\A)=(-1)^{\rank \A-1}\chi^{\prime}(\A,1)=|\chi^{\prime}(\A,1)|$.
\end{defn}
\begin{rem}
    The beta invariant $\beta_X$ counts the bounded chambers in the deconing of $\A_X$. In other words, let $F$ be a face whose support is $X$ and pick $H\in\A$ such that $F\subseteq H$. Then $\beta_X=\frac{1}{2}\#\{C\in\Ch(\A)\mid \overline{C}\cap H=\overline{F}\}$ (see \cite{Zaslavsky1975,Aguiar2017,Stanley2007}). Hence the product $c^X\beta_X$ is also a counting function:
    \[
    c^X\beta_X=\frac{1}{2}\#\{C\in\Ch(\A)\mid s(\overline{C}\cap H)=X\}.
    \]
\end{rem}

For an arrangement $\A$ in $V=\R^d$, we fix a defining form $\alpha_H\in V^*$ for each $H\in\A$ and consider the line segment $L_H=[-\alpha_H,\alpha_H]$ in $V^*$. We write $Z(\A)=\sum_{H\in\A}L_H$ for the Minkowski sum of the line segments and call it the \emph{zonotope} of $\A$. The poset of nonempty faces of this polytope is anti-isomorphic to $\FF(\A)$. For a face $F\in\FF(\A)$, we say that the corresponding face $z_F$ of $Z(\A)$ is \emph{dual} to $F$. In particular, the $0$-faces (vertices) of $Z(\A)$ are dual to the chambers of $\A$.

With these $\alpha_H$ chosen, each face $F\in\FF(\A)$ can be represented by a sign vector $F=(F_H)_{H\in\A}\in\{+,-,0\}^{\A}$, where $F_H=\mathrm{sgn}(\alpha_H(F))$. Declaring $0<\pm$ determines a partial order on the set of sign vectors, which coincides with the face order of $\FF(\A)$.

\begin{defn}[Tits product]
    For $F,G\in \{+,-,0\}^\A$, we define their \emph{Tits product} $FG\in \{+,-,0\}^\A$ by
    \[
    (FG)_H=\begin{cases}
    F_H, \text{ if }F_H\neq 0;\\
    G_H, \text{ if }F_H=0.
    \end{cases}
    \]
    Note that the Tits product is not commutative in general.
\end{defn}

For any $F,G\in\FF(\A)$, we have $FG\in \FF(\A)$.
Geometrically, $FG$ is the face reached by starting from a relative interior point of $F$ and moving a sufficiently small distance toward a relative interior point of $G$ (see Figure \ref{fig:tits_product}).

\begin{figure}[htbp]
    \centering
    \begin{tikzpicture}[scale=0.8]
        
        % ==========================================
        % PANEL (a): Computing FG
        % ==========================================
        \begin{scope}[shift={(0,0)}]
            % Shade the resulting face FG (Sector between 0 and 60 degrees)
            \fill[blue!80!black!15] (0,0) -- (3.3,0) arc (0:60:3.3) -- cycle;
            
            % Draw the A2 arrangement lines
            \draw[thick] (-3.5,0) -- (3.5,0) node[right] {$H_1$};
            \draw[thick] (240:3.5) -- (60:3.5) node[above right] {$H_2$};
            \draw[thick] (300:3.5) -- (120:3.5) node[above left] {$H_3$};
            
            % Define points x in F and y in G
            \coordinate (x) at (2,0);          % F is the positive H1 ray
            \coordinate (y) at (150:2.5);      % G is the top-left chamber
            
            % Mark the faces F and G
            \node[blue!80!black, font=\bfseries, below=2pt] at (x) {$x \in F$};
            \fill[blue!80!black] (x) circle (2.5pt);
            
            \node[red!80!black, font=\bfseries, above=2pt] at (y) {$y \in G$};
            \fill[red!80!black] (y) circle (2.5pt);
            
            % Draw the trajectory from x to y
            \draw[->, thick, dashed, gray] (x) -- (y);
            
            % Draw the "small epsilon" vector leaving F and entering FG
            \draw[->, ultra thick, blue!80!black] (x) -- ++(-0.9, 0.28) node[above right, xshift=8pt, yshift=-4pt] {\small $\varepsilon$};
            
            % Label the resulting face
            \node[blue!80!black, font=\bfseries] at (30:2) {$FG$};
            
            % Subcaption
            \node at (0, -4) {(a) Computing $FG$};
        \end{scope}

        % ==========================================
        % PANEL (b): Computing GF
        % ==========================================
        \begin{scope}[shift={(8.5,0)}]
            % Shade the resulting face GF (which is just G, the 120 to 180 sector)
            \fill[red!80!black!15] (0,0) -- (120:3.3) arc (120:180:3.3) -- cycle;
            
            % Draw the A2 arrangement lines
            \draw[thick] (-3.5,0) -- (3.5,0) node[right] {$H_1$};
            \draw[thick] (240:3.5) -- (60:3.5) node[above right] {$H_2$};
            \draw[thick] (300:3.5) -- (120:3.5) node[above left] {$H_3$};
            
            % Define points x in F and y in G (Same as Panel A)
            \coordinate (x) at (2,0);
            \coordinate (y) at (150:2.5);
            
            % Mark the faces F and G
            \node[blue!80!black, font=\bfseries, below=2pt] at (x) {$x \in F$};
            \fill[blue!80!black] (x) circle (2.5pt);
            
            \node[red!80!black, font=\bfseries, above=2pt] at (y) {$y \in G$};
            \fill[red!80!black] (y) circle (2.5pt);
            
            % Draw the trajectory from y to x
            \draw[->, thick, dashed, gray] (y) -- (x);
            
            % Draw the "small epsilon" vector leaving G (but staying in G)
            \draw[->, ultra thick, red!80!black] (y) -- ++(0.9, -0.28) node[above, xshift=-10pt, yshift=3pt] {\small $\varepsilon$};
            
            % Label the resulting face
            \node[red!80!black, font=\bfseries] at (160:1.5) {$GF = G$};
            
            % Subcaption
            \node at (0, -4) {(b) Computing $GF$};
        \end{scope}

    \end{tikzpicture}
    \caption{A geometric visualization of the noncommutative Tits product in the $A_2$ arrangement. Let $F$ be a ray (blue) and $G$ be a chamber (red). \textbf{(a)} The product $FG$ represents taking a small step from $F$ toward $G$, which immediately lands in the adjacent chamber. \textbf{(b)} The product $GF$ represents taking a small step from $G$ toward $F$. Because $G$ is an open chamber, a sufficiently small step remains entirely within $G$, so $GF = G$.}
    \label{fig:tits_product}
\end{figure}

\subsection{Graph-theoretic properties of tope graphs}

For an arrangement $\A$ in $\R^d$, note that the tope graph $\T(\A)$ is isomorphic to the $1$-skeleton of the zonotope $Z(\A)$. For a face $F\in\FF(\A)$, we let $\T(\A)_F$ be the induced subgraph of $\T(\A)$ with vertices $\Ch(\A)_F=\{C\in\Ch(\A)\mid F\leq C\}$; equivalently, $\T(\A)_F$ is the $1$-skeleton of the face $z_F$ of $Z(\A)$. We call $\T(\A)_F$ the $F$-subgraph of $\T(\A)$. We first prove a useful lemma.

\begin{lem}\label{facegated}
    For $F\in\FF(\A)$, the subgraph $\T(\A)_F$ of $\T(\A)$ is gated with gate projection $D\mapsto FD$. 
\end{lem}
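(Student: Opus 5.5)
We work entirely with sign vectors. Since $d(C,C')=\#S(C,C')$, the distance between chambers $C,D$ is the number of $H\in\A$ with $C_H\neq D_H$, i.e.\ the Hamming distance of their sign vectors in $\{+,-\}^\A$. The plan is to show that $FD$ is a chamber of $\T(\A)_F$ and then verify the gate identity by counting separating hyperplanes one by one.

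First, for any chamber $D$, the Tits product $FD$ lies in $\FF(\A)$. It has no zero entries, because $(FD)_H=F_H$ when $F_H\neq0$ and $(FD)_H=D_H\neq0$ otherwise. Hence $FD$ is a chamber. Since $FD$ agrees with $F$ wherever $F_H\neq 0$, we get $F\leq FD$, so $FD\in\Ch(\A)_F$. If $D$ is already in $\Ch(\A)_F$, then $F\le D$ means $D_H=F_H$ whenever $F_H\ne0$, which gives $FD=D$.

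Next, for any $D\in\Ch(\A)$ and $E\in\Ch(\A)_F$, we verify $S(D,E)=S(D,FD)\sqcup S(FD,E)$ hyperplane by hyperplane. If $F_H=0$, then $(FD)_H=D_H$, so $H\notin S(D,FD)$, and $H\in S(FD,E)$ iff $H\in S(D,E)$. If $F_H\neq0$, then $E_H=F_H=(FD)_H$ because $F\le E$. Thus $H\notin S(FD,E)$, and $H\in S(D,FD)$ iff $D_H\neq F_H=E_H$ iff $H\in S(D,E)$. Taking cardinalities gives $d(D,E)=d(D,FD)+d(FD,E)$, which is exactly the gate condition of Definition~\ref{gated} with $u=FD$.

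It remains to confirm that $\T(\A)_F$ is an isometric subgraph, as Definition~\ref{gated} requires. This is the only step needing more than sign bookkeeping, and I expect it to be the main obstacle. Since $\T(\A)_F$ is the $1$-skeleton of the face $z_F$ of the zonotope, and that face is itself a zonotope for the restricted arrangement $\A_F$, its graph distance between $E,E'$ equals the number of hyperplanes of $\A_F$ separating them. This count is $\#S(E,E')$, because $E$ and $E'$ agree on every $H\notin\A_F$.

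Alternatively, I would argue directly. Take a minimal walk $D_0=E,\dots,D_k=E'$ in $\T(\A)$ and apply $D\mapsto FD$. Adjacent chambers either map to the same chamber or to adjacent ones, since the map only changes coordinates and never increases Hamming distance. Each $FD_i$ lies in $\Ch(\A)_F$, and the endpoints are fixed. Deleting repeats gives a walk inside $\T(\A)_F$ of length at most $d(E,E')$, which proves isometry.
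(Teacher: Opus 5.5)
Your proposal is correct and follows the paper's proof. Both verify $d(D,C)=d(D,FD)+d(FD,C)$ hyperplane by hyperplane as a Hamming-distance count, splitting on whether $F_H=0$ or $F_H\neq 0$.

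Your extra checks that $FD\in\Ch(\A)_F$ and that $\T(\A)_F$ is isometric are details the paper leaves implicit, and your walk-projection argument for isometry is valid. The isometry step is not really an obstacle, though. Since $\T(\A)_F$ is an induced subgraph, it already follows from the gate identity via the convexity argument of Proposition \ref{gatedisconvex}.
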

\begin{proof}
    We must show that the equality
    \begin{equation}
        d(D,C)=d(D,FD)+d(FD,C) \label{gatedcond}
    \end{equation}
    holds for $D\in \Ch(\A)$ and $C\in\Ch(\A)_F$. In terms of sign vectors, the distance $d(X,Y)=\sum_{H\in\A}d(X,Y)_H$ is the Hamming distance (the number of unequal signs), where
    \[
    d(X,Y)_H=\begin{cases}
        1,\text{ if }X_H\neq Y_H;\\
        0,\text{ if }X_H=Y_H.
    \end{cases}
    \]
    We prove that both sides of equation (\ref{gatedcond}) have the same contribution on each $H\in\A$. If $H\notin \A_F$, then $F_H\neq 0$ and $(FD)_H=F_H$; also, since $C\in\Ch(\A)_F$, we have $C_H=F_H$. Therefore (\ref{gatedcond}) evaluated at $H$ is $d(D,C)_H=d(D,FD)_H+0$, which is true since both sides equal $1$ if $D_H\neq F_H$ and $0$ if $D_H=F_H$. Next suppose $H\in \A_F$. Then $F_H=0$ and $d(D,FD)_H=0$. The equation (\ref{gatedcond}) evaluated at $H$ is $d(D,C)_H=0+d(FD,C)_H$, which is true since $D_H=(FD)_H$.
\end{proof}

We record some important examples of tope graphs.
\begin{ex}\label{topeboolean}
    Let $\A=Bl_d$ be the $d$-th Boolean arrangement, that is, the arrangement of $d$ coordinate hyperplanes in $\R^d$. Then the tope graph $\T(\A)$ is the whole hypercube graph $Q_{\A}=\{+,-\}^{\A}\cong Q_d$.
\end{ex}

\begin{ex}\label{topegeneric}
    Let $r\geq 2$. An arrangement of $n$ hyperplanes in $\R^r$ is said to be \emph{generic} if any subarrangement of $r$ hyperplanes has rank $r$. Boolean arrangements are generic. Generic arrangements are realizations of uniform matroids $U_{r,n}$. By abuse of notation, we use $U_{r,n}$ to denote a generic arrangement of $n$ hyperplanes in $\R^r$. For example, consider a generic arrangement $\A=U_{r,r+1}$. The tope graph $\T(\A)$ is isomorphic to the induced subgraph of $Q_{\A}=\{+,-\}^{\A}\cong Q_{r+1}$ obtained by removing the all-$+$ vector and the all-$-$ vector. 
\end{ex}

\begin{ex}\label{topecoxeter}
    Consider a finite Coxeter system $(W,S)$ and the associated Coxeter arrangement $\A_W$. The zonotope $Z(\A_W)$ is the $W$-permutahedron, and the tope graph $\T(\A_W)$ is isomorphic to the Cayley graph of $(W,S)$.
\end{ex}

Tope graphs have the following graph-theoretic properties. 
%Recall that an arrangement is \emph{simplicial} if all of its chambers are simplicial cones.
\begin{prop}[{\cite[Propositions 4.2.15 and 4.4.8]{Bjorner1999}}]\label{basic}
    Let $\A$ be an arrangement of $\rank \A=r$. The tope graph $\T=\T(\A)$ has the following properties as an unlabeled graph.
    \begin{enumerate}
        \item For each vertex $C$, there is a unique vertex $-C$ such that $d(C,-C)=\diam(\T)$.
        \item The mapping $C\mapsto -C$ is a fixed-point-free automorphism of $\T$.
        \item $\T$ is bipartite.
        \item For each vertex $C$, we have $\deg(C)\geq r$, where equality holds if and only if $C$ is simplicial.
        \item $\T$ is $r$-connected.
    \end{enumerate}
\end{prop}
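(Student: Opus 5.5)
Since this proposition is quoted from Björner et al., the plan is to derive each item from the sign-vector description of $\T(\A)$. Distance is Hamming distance, $d(C,C')=\#S(C,C')$, on sign vectors in $\{+,-\}^{\A}$. For a chamber $C$, the negated vector $-C$ is again a chamber, namely the antipodal cone. Every hyperplane separates $C$ from $-C$, so $d(C,-C)=\#\A$. Conversely, no pair of chambers can be separated by more than $\#\A$ hyperplanes. Hence $\diam(\T)=\#\A$, and $-C$ is the unique chamber at that distance from $C$, since a chamber at distance $\#\A$ must differ from $C$ in every coordinate. This gives (1).

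For (2), negation of sign vectors preserves Hamming distance, so $C\mapsto -C$ is an automorphism. It has no fixed points because $\A$ is nonempty; the case $\A=\varnothing$ can be excluded or treated separately. The important point is that the map is defined purely graph-theoretically by (1), so it depends only on the unlabeled graph.

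For (3), color each chamber by the parity of the number of $+$ signs in its sign vector. Adjacent chambers differ in exactly one coordinate, so every edge joins the two color classes and $\T$ is bipartite.

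For (4), the neighbours of $C$ correspond bijectively to the walls of $C$, that is, to the hyperplanes $H$ for which $\overline{C}\cap H$ is a facet of $\overline{C}$. Crossing such a wall changes only the sign of $H$, assuming $\A$ is simple; otherwise we pass to the simplification, which leaves $\T$ unchanged. After quotienting by the center, $C$ becomes a pointed cone in an $r$-dimensional space. A pointed polyhedral cone in $\R^r$ has at least $r$ facets, with equality exactly when it is simplicial.

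Item (5), $r$-connectivity, is the main obstacle. I would invoke Balinski's theorem: the $1$-skeleton of an $r$-dimensional polytope is $r$-connected. By the paper's earlier remark, $\T(\A)$ is the $1$-skeleton of the zonotope $Z(\A)$. After passing to the essentialization, $Z(\A)$ has dimension $r$, so Balinski's theorem yields (5).

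A purely combinatorial alternative would use induction on the rank together with the gated face subgraphs of Lemma~\ref{facegated}. Given a set of at most $r-1$ removed vertices, one would reconnect the remaining chambers through a face subgraph $\T(\A)_F$ that avoids them. I expect this route to be harder to carry out cleanly, so I would rely on Balinski's theorem instead.
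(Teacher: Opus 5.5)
Your proof is correct. Note that the paper gives no proof of its own: it imports the statement from \cite[Propositions 4.2.15 and 4.4.8]{Bjorner1999}, where it is established for tope graphs of simple oriented matroids, without a linear realization. Your route uses realizability instead:
\begin{itemize}
\item items (1)--(3) follow directly from the Hamming-metric description $d(C,C')=\#S(C,C')$ and the fact that $-C$ is the antipodal cone;
\item item (4) reduces to the elementary fact that a full-dimensional pointed cone in $\R^r$ has at least $r$ facets, with equality exactly for simplicial cones;
\item item (5) is Balinski's theorem applied to the $r$-dimensional zonotope $Z(\A)$, whose $1$-skeleton the paper identifies with $\T(\A)$.
\end{itemize}
This gives a short, self-contained proof in exactly the generality the paper works in (real central arrangements). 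The cost is that it does not cover non-realizable oriented matroids, for which the paper also mentions tope graphs. There $r$-connectivity in particular needs an oriented-matroid substitute for Balinski's theorem, since no zonotope is available.

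Two small remarks. First, the simplicity caveat in (4) is vacuous. An arrangement consists of distinct hyperplanes, so the relative interior of a facet of $\overline{C}$ lies on only one hyperplane, and crossing it flips exactly one sign. Second, $\dim Z(\A)=r$ holds without essentializing, since $Z(\A)$ spans the linear span of the forms $\alpha_H$.
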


The homotopy type of open intervals in $\T(\A)$ is described as follows.

\begin{prop}[{\cite[Theorem 2.2]{Edelman1985}}, see also {\cite[Theorem 4.4.2]{Bjorner1999}}]\label{intervelhomotopy}
    Let $C_1,C_2\in \Ch(\A)$.
    Then the open interval $(C_1,C_2)_{\T(\A)}$ in $\T(\A)$ is homotopy equivalent to a sphere $S^{\rank \A_F-2}$ if $[C_1,C_2]_{\T(\A)}=\Ch(\A)_F$ for some $F\in\FF(\A)$, and is contractible otherwise.
\end{prop}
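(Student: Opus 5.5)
The statement is Edelman's theorem: the open interval $(C_1,C_2)_{\T(\A)}$ is a homotopy sphere $S^{\rank \A_F-2}$ when $[C_1,C_2]=\Ch(\A)_F$, and contractible otherwise. I would prove it by identifying the interval with a poset of regions and then using the Tits product as a closure operator.

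\textbf{Step 1: reduce to the weak order.} Fix the base chamber $B=C_1$. For $C\in\Ch(\A)$ put $S(C):=S(B,C)$. Because $d$ is the Hamming distance on sign vectors, $C\in[C_1,C_2]$ holds iff $S(C)\subseteq S(C_2)$. Likewise $u\prec v$ in $[C_1,C_2]$ holds iff $S(u)\subseteq S(v)$. So $[C_1,C_2]$ is the interval $[B,C_2]$ in the poset of regions $P(\A,B)$, ordered by inclusion of separation sets.

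\textbf{Step 2: a crosscut-type closure.} Let $\A_0=S(C_2)$ be the set of hyperplanes separating $C_1$ and $C_2$. Consider the atoms of $[C_1,C_2]$, i.e.\ the chambers adjacent to $C_1$ across a wall of $C_1$ lying in $\A_0$.
\begin{itemize}
\item For a subset of such walls, let $F$ be the face of $C_1$ cut out by the intersection of those walls.
\item By Lemma~\ref{facegated}, the subgraph $\Ch(\A)_F$ is gated, with gate projection $D\mapsto FD$.
\item The element $C_2$ is an upper bound for the chosen atoms precisely when $\Ch(\A)_F\subseteq[C_1,C_2]$, i.e.\ when $\A_F\subseteq \A_0$.
\item In that case the chambers containing $F$ form the interval $[C_1,-_FC_1]$, where $-_FC_1$ is the chamber opposite $C_1$ through $F$. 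This holds because $\A_F$ localizes to a central arrangement whose tope graph has the antipode of Proposition~\ref{basic}.
\end{itemize}

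\textbf{Step 3: apply the crosscut theorem.} With the atoms as crosscut, the open interval $(C_1,C_2)$ is homotopy equivalent to the simplicial complex $\Gamma$ on the atoms whose faces are the sets with a join strictly below $C_2$. By Step 2, these are the sets of walls $W$ of $C_1$ in $\A_0$ whose face $F_W$ satisfies $\Ch(\A)_{F_W}\subsetneq[C_1,C_2]$.
\begin{itemize}
\item Suppose no face $F$ of $C_1$ has $\Ch(\A)_F=[C_1,C_2]$. Then $\Gamma$ should be a cone, or more generally contractible. This is the main obstacle. I would use the gate projection $D\mapsto FD$ as a retraction and argue by induction on $\#\A_0$, applying Quillen's fiber lemma to the map $(C_1,C_2)\to(C_1,F C_2)$.
\item If $[C_1,C_2]=\Ch(\A)_F$, then $\Gamma$ is the boundary of the simplex on the walls of $F$ inside $C_1$. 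This gives a sphere of dimension $(\text{number of atoms})-2$.
\end{itemize}

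\textbf{Step 4: count the sphere dimension.} It remains to match the number of atoms with $\rank\A_F$. After localizing to $\A_F$, Proposition~\ref{intervelhomotopy} effectively reduces to the full interval $[C,-C]$ of an arrangement. There one uses the zonotope face $z_F$: the open interval is the proper part of a polytope's face-lattice-like structure, homotopic to the boundary of the dual zonotope face, a sphere $S^{\rank\A_F-2}$. So in the end I would argue the full-interval case via the polytope boundary rather than by counting atoms, since the chamber $C$ need not be simplicial.
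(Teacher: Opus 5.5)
The paper gives no proof of this statement; it quotes it from \cite{Edelman1985} and \cite{Bjorner1999}. Your proposal has a genuine gap, described below.

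\textbf{Where it fails.} Your Step 1 is correct. Steps 2--3 fail as soon as $C_1$ is not simplicial. The crosscut theorem you invoke is a statement about lattices. Without lattice structure, its nerve-lemma proof (cover $(C_1,C_2)$ by the cones $(C_1,C_2)_{\geq a}$ over the atoms $a$) needs every nonempty set of common upper bounds of atoms in the open interval to be contractible. Tope intervals are not lattices in general.

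Concretely, take $U_{3,4}$ defined by $xyz(x+y+z)$, with sign vectors in the order $x,y,z,x+y+z$. Its chambers are all sign vectors except $\pm(+,+,+,-)$.
\begin{itemize}
\item $C_1=(+,+,-,+)$ is a quadrilateral cone whose four walls are all of $\A$.
\item The atoms $(+,+,+,+)$ and $(+,+,-,-)$, across $H_3=\{z=0\}$ and $H_4=\{x+y+z=0\}$, have exactly two minimal upper bounds, $(-,+,+,-)$ and $(+,-,+,-)$, and these are incomparable. So there is no join, and their common upper bounds in $(C_1,-C_1)$ form $S^0$.
\item The same example refutes Step 2. We have $H_3\cap H_4\cap\overline{C_1}=\{0\}$, so $\A_{F_W}=\A$. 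Yet $C_2=(-,+,+,-)$ bounds both atoms while $\A_{F_W}\not\subseteq\A_0=\{H_1,H_3,H_4\}$. In fact $C_2$ bounds every set of atoms of $[C_1,C_2]$, so this cannot be the criterion.
\item For $C_2=-C_1$, the boundary of the simplex on the four walls is $S^2$, whereas the interval is $\simeq S^1$.
\end{itemize}
Your own rule for $\Gamma$ actually gives the $4$-cycle of adjacent walls in this example, not that simplex boundary. But once joins fail to exist, nothing in your argument shows $\Gamma\simeq(C_1,C_2)$.

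\textbf{What survives.} The argument does prove the case where $C_1$ is simplicial. Then the walls of $C_1$ through $F_W$ are exactly $W$, and the chamber of $\A_{F_W}$ containing $C_1$ is the simplicial cone they cut out. Take any $D$ with $W\subseteq S(C_1,D)$. Its gate $F_WD$ lies beyond all walls of that cone, so it is the antipode there. Hence $\A_{F_W}\subseteq S(C_1,D)$, and the join of the atoms is the top $F_W(-C_1)$ of $\Ch(\A)_{F_W}$. It follows that $\Gamma$ is either the full simplex on $W_0$ or its boundary, with $\#W_0=\rank\A_F$. In particular the contractible case is immediate and needs no Quillen fiber lemma.

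\textbf{What is missing.} For general $C_1$ the essential content is not supplied.
\begin{itemize}
\item Step 4's appeal to a ``face-lattice-like structure'' is not an argument. $[C,-C]$ is an order on the vertices of $Z(\A_F)$, not its face lattice. Moreover $(C,-C)\simeq S^{\rank\A_F-2}$ for non-simplicial $C$ is exactly the nontrivial base case of the theorem.
\item The fiber-lemma plan for the contractible case leaves $F$ unspecified. The map $D\mapsto FD$ sends part of $(C_1,C_2)$ to the endpoints $C_1$ and $FC_2$, and no fiber is analyzed.
\end{itemize}
So as written the proposal establishes the statement only for simplicial $C_1$. The general case needs a genuinely different argument, such as the one in the sources the paper cites.
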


\subsection{Varchenko matrix}

Varchenko introduced a bilinear form associated with an arrangement and computed its determinant \cite{Varchenko1993}. Generalizations to oriented matroids can be found in \cite{Hochstattler2019}. See \cite{Denham1999} and \cite[Sections 8.4--8.5]{Aguiar2017} for details. 

Here we only consider the equal-weight case. For an arrangement $\A$, we define the \emph{$q$-Varchenko matrix} by
\[
\mathcal{V}_q(\A)=[q^{d(C,D)}]_{C,D\in\Ch(\A)}.
\]
In other words, $\mathcal{V}_q(\A)$ is the Zeta matrix of the tope graph $\T(\A)$ (see Section \ref{mag}). This matrix can also be defined for oriented matroids.

\begin{thm}[\cite{Varchenko1993}, see also {\cite[Theorem 8.23]{Aguiar2017}}]\label{detvar}
    The determinant of the $q$-Varchenko matrix is
    \[
    \det\mathcal{V}_q(\A)=\prod_{X\in L(\A)\setminus\{\hat{0}\}}(1-q^{2|\A_X|})^{c^X\beta_X}.
    \]
\end{thm}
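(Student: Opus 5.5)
The statement is classical and is cited from \cite{Varchenko1993}. If I had to reprove it, I would follow the multivariate strategy of Varchenko and Aguiar--Mahajan rather than work with the single variable $q$. Assign an independent weight $x_H$ to each $H\in\A$. Define the generic Varchenko matrix $\mathcal{V}(\A)=[\prod_{H\in S(C,D)}x_H]_{C,D}$, and for a flat $X$ set $a_X=\prod_{H\in\A_X}x_H$. The goal becomes
\[
\det\mathcal{V}(\A)=\prod_{X\in L(\A)\setminus\{\hat{0}\}}(1-a_X^2)^{c^X\beta_X}.
\]
Specializing $x_H=q$ then gives the stated formula, since $a_X\mapsto q^{|\A_X|}$. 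Working with independent variables is the key move: the factors $1-a_X^2$ for distinct $X$ become pairwise coprime irreducible-type factors in $\Z[x_H]$, so their multiplicities can be detected separately.

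The plan has three steps, carried out in order.

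First, a degree and normalization count. The determinant is a polynomial with constant term $1$, since at $x=0$ the matrix is the identity. I would bound its degree in each variable $x_H$ by expanding over permutations. Each chamber pairs with at most one term in which the entry crosses $H$, and I expect the degree in $x_H$ to be at most the number of chambers $C$ with $\overline{C}\cap H$ a facet-type face, counted appropriately. I would then check that this bound equals the degree of the right-hand side in $x_H$, namely $\sum_{X\subseteq H}2c^X\beta_X$. This identity should follow from the Remark after Definition \ref{cX}, which gives $c^X\beta_X=\frac12\#\{C\mid s(\overline{C}\cap H)=X\}$, summed over all flats $X\subseteq H$.

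Second, and this is the main obstacle, divisibility with the correct multiplicity. For each flat $X\neq\hat0$ I would show that $(1-a_X^2)^{c^X\beta_X}$ divides $\det\mathcal{V}(\A)$. Specialize the variables so that $a_X=\pm1$, working over the field of fractions of the remaining variables, and show that the corank is at least $c^X\beta_X$. The null vectors should be built locally. For each face $F$ with support $X$ (there are $c^X$ of them), use the gated subgraph $\T(\A)_F$ of Lemma \ref{facegated}, whose gate projection is $D\mapsto FD$. This reduces the construction to the localization $\A_X$, where one needs $\beta_X$ independent null vectors. These are indexed by the bounded chambers of the deconing, via alternating sums over chambers. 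I expect the gatedness to make these vectors extend by zero, or by the gate projection, to null vectors of the full matrix. Linear independence across different faces $F$ comes from disjoint supports. The delicate points are exhibiting the $\beta_X$ local null vectors explicitly and proving their independence. The first thing I would try is induction on $|\A_X|$ via deletion--restriction, using $\beta(\A)=\beta(\A\setminus H)+\beta(\A/H)$.

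Third, the conclusion. Steps one and two show that the right-hand side divides the determinant and has the same multidegree. Comparing constant terms, both equal to $1$, forces equality. Specializing $x_H=q$ then yields the theorem.
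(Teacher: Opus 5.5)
The paper does not prove this statement; it quotes it from Varchenko and Aguiar--Mahajan, so your proposal has to stand on its own. Its outer architecture is the standard one: generic weights $x_H$, a degree bound, divisibility by each $(1\pm a_X)^{c^X\beta_X}$, and comparison of constant terms. Steps~1 and~3 are fine after one correction.

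\emph{Degree bound.} What you need is just $\deg_{x_H}\det\mathcal{V}(\A)\le\#\Ch(\A)$, since each row contributes at most one factor $x_H$. Summing the Remark after Definition~\ref{cX} over all flats $X\subseteq H$ gives exactly $\sum_{X\subseteq H}2c^X\beta_X=\#\Ch(\A)$, because every $\overline{C}\cap H$ is a closed face whose support is a flat in $H$. Restricting to ``facet-type'' intersections would give $2c^H$, which is too small: for $A_2$ the degree in each $x_H$ is $6$, not $4$.

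\emph{Localization.} This part of Step~2 is correct, and it can be made exact. Write $\mathcal{V}_{D,C}=\prod_{H\in S(D,C)}x_H$. The sign-vector argument of Lemma~\ref{facegated} gives $S(D,C)=S(D,FD)\sqcup S(FD,C)$ for $C\in\Ch(\A)_F$, so $\mathcal{V}_{D,C}=\mathcal{V}_{D,FD}\,\mathcal{V}_{FD,C}$. The stars of the $c^X$ faces with $s(F)=X$ are pairwise disjoint, so $\mathcal{V}(\A)=\widetilde{M}\Lambda$. Here $\Lambda$ is block diagonal, with a copy of $\mathcal{V}(\A_X)$ on each star and the identity elsewhere. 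The matrix $\widetilde{M}$ has polynomial entries: on the star of $F$ its columns are $\mathcal{V}_{D,FD}\,\delta_{E,FD}$, and elsewhere they are the original columns. Hence $\det\mathcal{V}(\A_X)^{c^X}$ divides $\det\mathcal{V}(\A)$ in $\Z[x_H]$, with no need to talk about coranks.

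\textbf{The gap.} What remains after this reduction is the following: for a central arrangement $\mathcal{B}$ with $a=\prod_{H\in\mathcal{B}}x_H$, you must show $(1-a^2)^{\beta(\mathcal{B})}\mid\det\mathcal{V}(\mathcal{B})$. This top-flat factor is the real content of the theorem. Once it is known for every central arrangement, your three steps finish the proof formally. The proposal gives no argument for it: the null vectors ``indexed by bounded chambers of the deconing'' are never written down.

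The suggested deletion--restriction induction also has nothing evident to act on. Take the locus $a=\pm1$ with the other variables generic. There $\mathcal{V}(\mathcal{B}\setminus H)$ and $\mathcal{V}(\mathcal{B}^H)$ are nonsingular, where $K\in\mathcal{B}^H$ is weighted by $\prod_{H'\cap H=K}x_{H'}$. Indeed, by induction their determinants involve only factors $1\pm m$ with $m$ a proper submonomial of $a$. So the recursion $\beta(\mathcal{B})=\beta(\mathcal{B}\setminus H)+\beta(\mathcal{B}^H)$ does not supply kernel vectors in any evident way.

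Moreover, the corank formulation asks for more than the theorem. It requires the kernel on $\{a=\pm1\}$ to have dimension exactly $\beta(\mathcal{B})$, i.e.\ no elementary divisor $(1\mp a)^e$ with $e\ge 2$ in the local Smith form. The determinant formula does not imply this. It does hold in rank two: for $n$ lines, the kernel on $\{a=1\}$ consists of antipodally symmetric vectors and has dimension $n-2$. In general it needs its own proof. As written, the proposal reduces the theorem to its hardest case rather than proving it.
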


For a fixed chamber $B$, the closed interval $[B,-B]_{\T(\A)}$ is called the \emph{tope poset} of $\A$ based at $B$, ranked by distance from $B$.
We write $D_{\A,B}(q)=\sum_{C\in\Ch(\A)}q^{d(B,C)}$ for the rank generating function; it is also the sum of the entries in the $B$-row/column of the $q$-Varchenko matrix $\mathcal{V}_q(\A)$.

\begin{prop}[\cite{Solomon1966}]\label{Cox}
    If $\A$ is a Coxeter arrangement with exponents $e_1,\ldots,e_{\ell}$, then we have
    \[
    D_{\A,B}(q)=\prod_{i=1}^{\ell}[1+e_i]_q
    \]
    for every $B\in\Ch(\A)$, where $[n]_q=1+q+q^2+\cdots+q^{n-1}$ is the $q$-number.
\end{prop}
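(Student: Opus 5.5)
The statement to be proved is Solomon's formula (Proposition \ref{Cox}): the rank generating function of the tope poset of a Coxeter arrangement is $\prod_{i}[1+e_i]_q$. The plan is to move to the Coxeter group and use the known factorization of its Poincaré series.

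By Example \ref{topecoxeter}, $\T(\A_W)$ is the Cayley graph of $(W,S)$. Fix a base chamber $B$ and identify it with the identity $e$. Then $d(B,wB)=\#S(B,wB)$ is the number of reflecting hyperplanes separating $B$ from $wB$, which equals the Coxeter length $\ell(w)$. Since $W$ acts simply transitively on chambers by isometries of the tope graph, $D_{\A,B}(q)$ does not depend on $B$, and
\[
D_{\A,B}(q)=\sum_{w\in W}q^{\ell(w)}=:W(q),
\]
the Poincaré polynomial of $W$. The proposition therefore reduces to the classical identity $W(q)=\prod_i[1+e_i]_q$.

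For that identity I would take one of two routes. The first, algebraic route goes through invariant theory. By Chevalley's theorem, $S(V^*)^W$ is a polynomial algebra on homogeneous generators of degrees $d_i=e_i+1$, and $S(V^*)$ is free over it with the coinvariant algebra as fiber. This gives $\mathrm{Hilb}(S(V^*)/I)=\prod_i[d_i]_q$. One then still needs that the coinvariant algebra has Hilbert series $W(q)$. This can be seen via the BGG/Demazure basis indexed by $W$ with degrees $\ell(w)$, or via Solomon's original argument. That argument takes the alternating-sum formula over parabolic subgroups,
\[
\sum_{J\subseteq S}(-1)^{|J|}\frac{W(q)}{W_J(q)}=q^{N},
\]
which comes from the Euler characteristic of the Coxeter complex (a sphere) refined by length using minimal coset representatives. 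One proves by induction on rank that $\prod_i[d_i]_q$ satisfies the same recursion.

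The cleaner route I would actually present combines Proposition \ref{vert-trans}-style symmetry with the factorization through parabolic cosets: $W(q)=W_J(q)\,W^J(q)$, where $W^J$ is the set of minimal coset representatives. This factorization is the gated-projection statement of Lemma \ref{facegated} applied to the face $F$ fixed by $W_J$. For the final product formula I would cite Solomon's theorem.

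The main obstacle is the identification of the degrees, i.e.\ matching $W(q)$ with $\prod_i[e_i+1]_q$. This is genuinely an invariant-theoretic input. Since the paper attributes the statement to \cite{Solomon1966}, I would state the reduction to $W(q)$ carefully and cite that identity rather than reprove it.
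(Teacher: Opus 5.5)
Your proposal is correct and is essentially what the paper does: the paper gives no argument beyond the citation to Solomon. The only step implicit in that citation is the one you spell out. The identification $d(B,wB)=\#S(B,wB)=\ell(w)$ turns $D_{\A,B}(q)$ into the Poincar\'e polynomial $\sum_{w\in W}q^{\ell(w)}$, which is independent of $B$ by simple transitivity, and Solomon's factorization then applies.

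One small correction to your optional sketch concerns the relation $\sum_{J\subseteq S}(-1)^{|J|}P(q)/P_J(q)=q^N$, where $P$ and $P_J$ are the degree products for $W$ and its parabolic subgroups $W_J$. That these products satisfy this relation is not itself obtained by induction on rank; it comes from Molien series combined with Solomon's identity $\sum_{J\subseteq S}(-1)^{|J|}\mathrm{Ind}_{W_J}^W 1=\varepsilon$. Induction on rank enters only when comparing the two solutions of the relation. The coset factorization $W(q)=W_J(q)\,W^J(q)$ is not needed for the statement.
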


The generating-function decomposition formula has been generalized to supersolvable arrangements \cite{Bjorner1990} and inductively factored arrangements \cite{Jambu1995}. See also \cite{Abe2020,Oh2008,Giordani2025}.

\subsection{Conditional oriented matroids}
An oriented matroid (OM) abstracts the set of faces of a hyperplane arrangement. Here we review the more general notion of a conditional oriented matroid (COM), which abstracts the set of faces of a hyperplane arrangement that have nonempty intersection with a fixed convex region \cite{BCK-COM}.

Fix a finite set $E$.
For two sign vectors $F,G\in \{+,-,0\}^E$, their \emph{separator} $S(F,G)$ is defined by
\[
S(F,G)=\bigl\{e\in E\mid \{F_e,G_e\}=\{+,-\}\bigr\}.
\]
A conditional oriented matroid (COM) is a set of sign vectors $\mathcal{L}\subset \{+,-,0\}^E$ with the following properties:
\begin{enumerate}
    \item For any $F,G\in \mathcal{L}$, the Tits product $F\cdot(-G)$ of $F$ and $-G$ also belongs to $\mathcal{L}$.
    \item For any $F,G\in \mathcal{L}$ and for any $e\in S(F,G)$, there exists $I\in \mathcal{L}$ such that $I_e=0$ and $I_{e'} = (FG)_{e'}$ for all $e'\in E\setminus S(F,G)$.
\end{enumerate}
The tope graph $\mathcal{T}(\mathcal{L})\subset Q_E$ of a COM $\mathcal{L}$ is the induced subgraph of $Q_E$ spanned by $\mathcal{L}\cap \{+,-\}^E$.

\begin{ex}[Realizable COM]
    Fix an ambient space $V=\mathbb{R}^d$.
    Let $D$ be an open convex subset of $V$ and let $\A$ be a (not necessarily central) hyperplane arrangement in $V$.
    We fix a defining form for each $H\in \A$.
    Then the set of possible sign patterns on $D$ defines a COM $\mathcal{L}\subset \{+,-,0\}^\A$.
    A COM of this form is called \emph{realizable}.
    A COM-graph is called realizable if the corresponding COM is realizable.
\end{ex}

In this paper, to simplify the discussion, we use the notion of a COM-graph, which is equivalent to the notion of a COM.
Let $Q_E=\{+,-\}^E$ be the hypercube graph.
A subgraph $G$ of $Q_E$ is called a \emph{partial cube} if $G$ is an isometric subgraph of $Q_E$.
Note that a nonempty partial cube is induced and connected.
A \emph{halfspace} of $G$ is a subgraph of the form $\{x\in G\mid x_e=\bullet\}$ for $e\in E$ and $\bullet\in \{+,-\}$.

\begin{prop}[{\cite[Theorem 2]{Albenque-Knauer}}]
    Let $G\subset Q_E$ be a partial cube and $H\subset G$ be a convex subgraph.
    Then $H$ can be written as an intersection of halfspaces.
    In particular, $H$ is also a partial cube.
\end{prop}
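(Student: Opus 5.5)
The plan is to show that $H$ equals the intersection $H'$ of all halfspaces of $G$ that contain $H$. Here I read ``convex subgraph'' as the induced subgraph on a convex vertex set. Since $H\subseteq H'$ is trivial, the work is to show the reverse inclusion. Throughout I use that $G$ is an isometric subgraph of $Q_E$, so $d_G$ is the Hamming distance on sign vectors. Consequently, along any minimal walk in $G$ each coordinate $e\in E$ flips at most once.

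Fix a vertex $x\in G\setminus H$ (assume $H$ nonempty; otherwise the statement is vacuous or handled by an empty intersection of two opposite halfspaces). Choose $y\in H$ minimizing $d(x,y)$, so $d(x,y)\geq 1$. Let $z$ be the neighbour of $y$ on some minimal walk from $y$ to $x$, and let $e$ be the coordinate in which $y$ and $z$ differ. Since the walk is minimal and coordinates flip at most once, $x_e=z_e\neq y_e$. I claim that the halfspace $\{v\in G\mid v_e=y_e\}$ contains $H$. This halfspace then separates $H$ from $x$, giving $x\notin H'$.

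To prove the claim, I argue by contradiction. Suppose $w\in H$ has $w_e\neq y_e$, so $w_e=z_e$. Because $z$ differs from $y$ only in coordinate $e$, the Hamming distances satisfy $d(z,w)=d(y,w)-1$. Hence $d(y,w)=d(y,z)+d(z,w)$, so $z\in[y,w]_G$. By convexity of $H$ we get $z\in H$. On the other hand, $d(x,z)=d(x,y)-1$, which contradicts the choice of $y$ as a closest vertex of $H$ to $x$. This proves $H=H'$ as vertex sets, and since $H$ is induced, as subgraphs.

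For the ``in particular'' part, recall from Section \ref{basicdef} that a convex subgraph is isometric in $G$. Composing with the isometric embedding $G\subseteq Q_E$ shows that $H$ is an isometric subgraph of $Q_E$, i.e.\ a partial cube. The only delicate point is the step $z\in[y,w]_G$. It relies on the Hamming-distance description of $d_G$, which is exactly where the partial cube hypothesis on $G$ enters.
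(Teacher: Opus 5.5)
Your proof is correct. The paper gives no proof of this proposition and only cites Albenque--Knauer. Your nearest-point separation argument is the standard proof of that fact: take a closest $y\in H$ to $x$, let $e$ be the coordinate flipped by the first edge of a geodesic from $y$ toward $x$, and use convexity to force $H\subseteq\{v\in G\mid v_e=y_e\}$.

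Reading ``convex subgraph'' as the induced subgraph on a convex vertex set is the right convention. The paper's own remark that convex subgraphs are isometric already presupposes it.
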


Let $G\subset Q_E$ be a partial cube and $X\subset G$ be a subset.
The \emph{support} of $X$ is defined by
\[
S(X)=\{e\in E\mid \text{there exist } x,y\in X\text{ with }x_e\neq y_e\}.
\]
An \emph{antipodal subgraph} of $G$ is a nonempty convex subgraph $H\subset G$ that is invariant under the automorphism of $Q_E$ that flips all coordinates in $S(H)$.

\begin{defn}[COM-graphs]
    A partial cube $G\subset Q_E$ is called a \emph{COM-graph} if all of its antipodal subgraphs are gated.
\end{defn}

\begin{thm}[{\cite[Theorem 1.1]{KM-COM}}]
    The tope graph of a COM is a COM-graph.
    This induces a bijection
    \[
    \bigl\{\text{COM }\mathcal{L}\subset \{+,-,0\}^E\bigr\}\xrightarrow{\cong}
    \{\text{COM-graph }G\subset Q_E\};\quad \mathcal{L}\mapsto \mathcal{T}(\mathcal{L}).
    \]
    The elements of $\mathcal{L}$ correspond bijectively to antipodal subgraphs of $\mathcal{T}(\mathcal{L})$.
\end{thm}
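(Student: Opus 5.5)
The statement is quoted from \cite{KM-COM}, and I would reconstruct its proof by matching covectors with subgraphs of the tope graph. For a COM $\mathcal{L}\subset\{+,-,0\}^E$ and $X\in\mathcal{L}$, set $\mathcal{T}_X=\{T\in\mathcal{L}\cap\{+,-\}^E\mid X\le T\}$. The plan is to show three things: $\mathcal{T}(\mathcal{L})$ is a partial cube, each $\mathcal{T}_X$ is an antipodal subgraph, and every antipodal subgraph arises this way and is gated.

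\emph{Step 1 (algebra of $\mathcal{L}$).} First I would derive from axiom (1) that $\mathcal{L}$ is closed under the Tits product itself. The identity to use is $FG=F\cdot\bigl(-(F\cdot(-G))\bigr)$, and the point to check is that this is a legitimate double application of (1): the inner product $F\cdot(-G)$ lies in $\mathcal{L}$ by (1), and the outer product is again of the form $F\cdot(-G')$ with $G'=F\cdot(-G)\in\mathcal{L}$. I would also derive from (1) the face-symmetry property $X\cdot(-T)\in\mathcal{L}$ for topes $T\geq X$.

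\emph{Step 2 (partial cube).} Next I would show that $\mathcal{T}(\mathcal{L})$ is an isometric subgraph of $Q_E$. This is by induction on $\#S(T,T')$ for topes $T,T'$. Pick $e\in S(T,T')$ and apply axiom (2) to get $I\in\mathcal{L}$ with $I_e=0$ and $I$ agreeing with $T$ and $T'$ off $S(T,T')$. Then $I\cdot T$ and $I\cdot T'$ are topes by Step 1. Choosing $e$ suitably, for instance so that $I$ has minimal zero set, I expect to obtain a tope adjacent to $T$ that lies strictly closer to $T'$, which completes the induction.

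\emph{Step 3 ($\mathcal{T}_X$ is antipodal and gated).} The argument is the sign-vector computation of Lemma~\ref{facegated}, with gate map $T\mapsto X\cdot T$; this map lands in topes by Step 1. Gatedness gives convexity by Proposition~\ref{gatedisconvex}. The support $S(\mathcal{T}_X)$ is the zero set of $X$: for $X_e=0$, the topes $X\cdot T$ and $X\cdot(-T)$ differ at $e$. Face symmetry from Step 1 then shows that flipping the coordinates in the zero set of $X$ preserves $\mathcal{T}_X$, so $\mathcal{T}_X$ is antipodal. The same support computation makes $X\mapsto\mathcal{T}_X$ injective, because $X$ is recovered from $\mathcal{T}_X$ by taking zeros on $S(\mathcal{T}_X)$ and the common sign elsewhere.

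\emph{Step 4 (converse, the main obstacle).} Given an antipodal $H$, let $X$ be the sign vector that is $0$ on $S(H)$ and equals the common value of $H$ elsewhere. I must show $X\in\mathcal{L}$; then $H\subseteq\mathcal{T}_X$, and convexity plus antipodality should force equality. To get $X\in\mathcal{L}$, I would take $T\in H$ and its antipode $T^*\in H$, so that $S(T,T^*)=S(H)$, and apply elimination (2) repeatedly, composing the resulting covectors via Step 1 to enlarge the zero set one coordinate at a time. The difficulty is keeping the nonzero part pinned to $H$'s common signs at each stage. If direct iteration fails, I would fall back on induction on $\#S(H)$ via restriction to a halfspace, using that convex subgraphs are intersections of halfspaces \cite{Albenque-Knauer}.

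\emph{Step 5 (bijection).} Finally, for surjectivity, given a COM-graph $G$ I would define $\mathcal{L}_G$ as the set of sign vectors of its antipodal subgraphs. Axiom (1) would be verified by realizing $F\cdot(-G)$ through the gate projection onto the subgraph for $F$. Axiom (2) would follow from gatedness together with antipodal symmetry, by taking the smallest antipodal subgraph that contains an edge crossing $e$ and meets the relevant region. Checking that $\mathcal{T}(\mathcal{L}_G)=G$ and $\mathcal{L}_{\mathcal{T}(\mathcal{L})}=\mathcal{L}$ then reduces to Steps 3 and 4.
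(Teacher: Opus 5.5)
The paper gives no proof of this statement; it quotes Knauer--Marc. Judged on its own, your outline has a sensible architecture (covectors $X\mapsto\mathcal{T}_X$, gate map $T\mapsto X\cdot T$, and $\mathcal{L}_G$ for the converse), and Steps 1 and 3 are fine. There are, however, genuine gaps.

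\textbf{Step 2.} As stated, Step 2 cannot work, because the statement silently needs a simplicity hypothesis. Take $E=\{1,2\}$ and $\mathcal{L}=\{(0,0),(+,+),(-,-)\}$, which comes from two copies of one hyperplane. It satisfies both axioms, but its two topes are at Hamming distance $2$ with no edge between them, so $\mathcal{T}(\mathcal{L})$ is not a partial cube. In your Step 2 the only eliminant is $I=(0,0)$, and $I\cdot T=T$, so no choice of $e$ helps. Similarly, a loop (a coordinate vanishing on all of $\mathcal{L}$) leaves no topes, which breaks the covector/antipodal-subgraph bijection. You must assume no loops and no two non-constant coordinates parallel or antiparallel; this holds in the paper's realizable applications.

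Even with this hypothesis, ``I expect'' has to be replaced by an argument. Write $z(X)$ for the zero set. The sets $S(T,I\cdot T)$, $S(I\cdot T,I\cdot T')=z(I)\cap S(T,T')$ and $S(I\cdot T',T')$ partition $S(T,T')$, so induction handles every case except $z(I)=S(T,T')$. In that case $T,T'$ lie in the face of $I$. Its restriction to $z(I)$ is the deletion $\{Y|_{z(I)}\mid Y\in\mathcal{L}\}$, a simple oriented matroid. The oriented-matroid fact that every tope has a subtope with singleton zero set then finishes the step.

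\textbf{Step 4.} This is the real content of the forward direction, since it is what makes every antipodal subgraph gated, and you leave it open. The missing idea is to eliminate between a covector and its antipode inside $H$, rather than between topes; this keeps the signs off $S(H)$ fixed automatically. Induct on $\#S(H)$.
\begin{itemize}
\item Let $A=S(H)$ and let $\sigma$ be the common sign of $H$ off $A$. Choose $Z\in\mathcal{L}$ equal to $\sigma$ off $A$ with $B=z(Z)$ inclusion-maximal.
\item For $T\in H$, the topes equal to $\sigma$ off $A$ form the interval between $T$ and its antipode in $H$. Hence $\mathcal{T}_Z\subseteq H$ by convexity.
\item Suppose $B\neq A$. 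The flip $\phi_A$ of the coordinates in $A$ is an automorphism of the convex subgraph $H$, so $\phi_A(\mathcal{T}_Z)$ is an antipodal subgraph of support $B$.
\item By induction its sign vector $Z'$ lies in $\mathcal{L}$; here $Z'$ equals $\sigma$ off $A$, is $0$ on $B$, and equals $-Z$ on $A\setminus B$.
\item Eliminating $Z,Z'$ at $e\in S(Z,Z')=A\setminus B$ gives a covector equal to $\sigma$ off $A$ and vanishing on $B\cup\{e\}$. This contradicts the maximality of $B$.
\end{itemize}

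\textbf{Step 5.} Showing that $\mathcal{L}_G$ is a COM is the hard half of the cited theorem, and here it is only gestured at. First, the gate projection onto the subgraph of $F$ realizes $F\cdot G$, not $F\cdot(-G)$. You must follow it with the antipodal flip of that subgraph and then re-check antipodality.

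More seriously, for elimination the required covector must vanish on $z(F)\cap z(G)$ as well as at $e$, since $(FG)_f=0$ there. So you need an antipodal subgraph whose support contains $(z(F)\cap z(G))\cup\{e\}$ and is contained in $z(FG)\cup S(F,G)$. In the realizable case this is the face where a segment from $FG$ to $GF$ crosses the $e$-th hyperplane. ``The smallest antipodal subgraph containing an edge crossing $e$'' gives no control over this support, and your proposal contains no argument that produces the required subgraph.
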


Knauer--Marc \cite{KM-corners} introduced a method called \emph{corner peeling}, which cuts off part of a COM-graph to produce a new COM-graph.
Let $G\subset Q_E$ be a COM-graph.
A \emph{corner} of $G$ is a nonempty subset $C$ of $G$ satisfying certain conditions; see \cite[Section 5]{KM-corners} for details.
Here we only need the following facts:
\begin{enumerate}
    \item Any antipodal subgraph of $G'=G\setminus C$ is convex in $G$.
    In particular, $G'$ is also a COM-graph.
    \item There is a unique maximal antipodal subgraph $H\subset G$ which contains $C$.
    \item No vertex of $G\setminus H$ is adjacent to a vertex of $C$.
\end{enumerate}

\begin{ex}
Let $G$ be the COM-graph induced by the vertices of $Q_3=\{+,-\}^3$ other than $(-,-,-)$. Geometrically, this is the realizable COM-graph corresponding to the non-central arrangement in $\mathbb{R}^2$ defined by the three lines $x=0$, $y=0$, and $x+y=1$. Then $C=\{(+,-,-)\}$ is a corner of $G$. Moreover, $H=\{(+,\pm,\pm)\}$ is the unique maximal antipodal subgraph containing $C$.
\end{ex}

\begin{figure}[htpb]
\begin{tikzpicture}[
    scale=1.0,
    vertex/.style={circle, fill=black, inner sep=1.6pt},
    edge/.style={line width=0.7pt}
]

% Orthogonal projection along the (1,1,1)-direction.
% Coordinates are proportional to
%   X = x-y,   Y = (x+y-2z)/sqrt(3).

\coordinate (ppp) at (0,0);
\coordinate (ppm) at (-2,1.2);
\coordinate (pmp) at (2,1.2);
\coordinate (mpp) at (0,-2.4);
\coordinate (pmm) at (0,2.4);
\coordinate (mpm) at (-2,-1.2);
\coordinate (mmp) at (2,-1.2);

% The vertex mmm = (-,-,-) would project to (0,0), but is removed.

\node[blue!80!black] at (0,1.2) {$H=\{(+,\pm,\pm)\}$};
\node[red!80!black] at (0,2) {$C$};

% Edges of the cube, with (-,-,-) and its incident edges removed.
\draw[edge, very thick, color=blue!80!black] (ppp)--(ppm);
\draw[edge, very thick, color=blue!80!black] (ppp)--(pmp);
\draw[edge] (ppp)--(mpp);

\draw[edge, very thick, color=blue!80!black] (ppm)--(pmm);
\draw[edge] (ppm)--(mpm);

\draw[edge, very thick, color=blue!80!black] (pmp)--(pmm);
\draw[edge] (pmp)--(mmp);

\draw[edge] (mpp)--(mpm);
\draw[edge] (mpp)--(mmp);

% Vertices
\node[vertex] at (ppp) {};
\node[vertex] at (ppm) {};
\node[vertex] at (pmp) {};
\node[vertex] at (mpp) {};
\node[vertex, color=red!80!black] at (pmm) {};
\node[vertex] at (mpm) {};
\node[vertex] at (mmp) {};

% Labels
\node[above left] at (ppm) {$(+,+,-)$};
\node[above right] at (pmp) {$(+,-,+)$};
\node[below] at (mpp) {$(-,+,+)$};

\node[above] at (pmm) {$(+,-,-)$};
\node[below left] at (mpm) {$(-,+,-)$};
\node[below right] at (mmp) {$(-,-,+)$};

\node[below left=2pt] at (ppp) {$(+,+,+)$};

\end{tikzpicture}
\caption{
The COM-graph $Q_3\setminus\{(-,-,-)\}$ has a corner
$C=\{(+,-,-)\}$. The upper face $H=\{(+,\pm,\pm)\}$ is the unique
maximal antipodal subgraph containing $C$.
}
\end{figure}

\begin{defn}[Corner peeling]
    Let $G\subset Q_E$ be a COM-graph.
    A \emph{corner peeling} of $G$ is a partition $G=C_1\sqcup\cdots\sqcup C_m$ such that $C_i$ is a corner of $G_i=G\setminus (C_1\sqcup\cdots\sqcup C_{i-1})$.
\end{defn}

\begin{thm}[{\cite[Proposition 5.5]{KM-corners}}]\label{thm:corner-peeling}
    Any realizable COM-graph admits a corner peeling.
\end{thm}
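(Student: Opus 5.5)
The plan is to realize the peeling by a \emph{sweep}: cut the convex domain down by a moving halfspace and record which topes disappear at each critical moment. Let $G$ be realized by an open convex set $D\subseteq V=\R^d$ and an affine arrangement $\A$, so that $G$ is the tope graph of $\mathcal{L}(D,\A)$. The topes are exactly the sign vectors of the regions $R$ of $\A$ with $R\cap D\neq\varnothing$.

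\textbf{Setting up the sweep.} Fix a generic linear functional $f\in V^*$. For $c\in\R$ put $D_c=D\cap\{f<c\}$. This set is open and convex, so every $\mathcal{L}(D_c,\A)$ is again a realizable COM. Its tope graph $G_c$ is an induced subgraph of $G$ and is convex in it: it is the set of topes $T$ with $\inf_{T\cap D} f<c$, and the sign vectors occurring on $D_c$ are determined by halfspace conditions. For $c\gg 0$ we have $G_c=G$, since each region meeting $D$ already meets $D$ at some finite value of $f$. For $c\ll 0$ we have $G_c=\varnothing$ when $f$ is bounded below on $D$; otherwise $G_c$ stabilizes at the topes on which $\inf f=-\infty$. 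In that second case I would first replace $f$ by a generic functional for which this stable set is handled by the same argument applied to $-f$, or simply run the descending sweep from both ends.

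\textbf{Critical values.} The set $G_c$ changes only at the finitely many values
\[
c_T=\inf_{T\cap D}f .
\]
By genericity of $f$, each such infimum is attained in $\overline{D}$ at a point $p$ lying in the relative interior of a unique face $F$ of $\A$. Moreover, distinct critical values give distinct points. Listing the critical values in decreasing order $c^{(1)}>c^{(2)}>\cdots$ defines
\[
C_i=G_{c^{(i)}+\varepsilon}\setminus G_{c^{(i)}} .
\]
This gives a partition $G=C_1\sqcup\cdots\sqcup C_m$, and $G\setminus(C_1\sqcup\cdots\sqcup C_{i-1})=G_{c^{(i)}+\varepsilon}$ is realizable at every stage. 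It then suffices to show that each $C_i$ is a corner of $G_{c^{(i)}+\varepsilon}$, which is the single-step statement: for realizable $(D,\A)$ and $c=c^{(1)}$ the largest critical value, the set $G\setminus G_c$ is a corner.

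\textbf{Single step.} Let $p$ and $F$ be as above. Every removed tope $T$ has $p\in\overline{T}$, so $T\geq F$. Hence $C\subseteq G_F$, the antipodal subgraph of the covector $F$, which is the star of $p$ and is gated by the argument of Lemma \ref{facegated}.
\begin{itemize}
\item Locally near $p$, $\A$ looks like the central arrangement $\A_F$. The removed topes are exactly the chambers of $\A_F$ lying in the open halfspace $\{f>f(p)\}$, intersected with the local cone of $D$ at $p$.
\item This local description should give the corner properties. The set $G_F$ is the unique maximal antipodal subgraph containing $C$; here I would use that $p$ is the unique point realizing the maximum, together with genericity.
\item No tope outside $G_F$ is adjacent to $C$: crossing a hyperplane not through $p$ from a tope in $C$ leads to a tope whose infimum of $f$ is below $c$, or, when the tope is not in $G_F$, contradicts maximality of $c^{(1)}$.
\item The antipodal subgraphs of $G_c$ are convex in $G$, because $G_c$ is itself a realizable COM-graph and its covectors are covectors of $\mathcal{L}(D,\A)$ restricted to $D_c$.
\end{itemize}

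\textbf{Main obstacle.} The hard step is checking that $C$ satisfies the full defining conditions of a corner in \cite[Section 5]{KM-corners}, rather than only the three listed facts. This amounts to showing that the complement $G_F\setminus C$ is the tope set of a ``halfspace-cut'' of the oriented matroid $\A_F$ by a generic affine hyperplane, which is exactly the local picture at $p$. A secondary difficulty is the case where $D$ is unbounded, or $p$ lies on $\partial D$ rather than in $D$. There I would replace $D$ by $D\cap B$ for a large open ball $B$, which has the same tope set and makes $f$ bounded on $D$. I would then argue that the local cone of $D$ at $p$ only shrinks $G_F$ to an antipodal-compatible subset, so the local argument still applies.
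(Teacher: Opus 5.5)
The paper does not prove this statement; it imports it from \cite{KM-corners}. Your sweep therefore has to stand on its own, and as written it fails at critical points on $\partial D$. Your single step needs the face $F$ of $\A$ containing the minimizer $p$ to be a covector of $\mathcal{L}(D,\A)$, i.e.\ $F\cap D\neq\varnothing$. Otherwise the star of $F$ is not an antipodal subgraph, and the removed set need not lie in any cell.

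Here is a concrete failure. Let $D=\{y>0\}$, cut by a large disk as you suggest, and let $\A=\{x=0,\ x=y\}$. The topes are the sectors $T_1=\{0<\theta<\pi/4\}$, $T_2=\{\pi/4<\theta<\pi/2\}$ and $T_3=\{\pi/2<\theta<\pi\}$. So $G$ is the path $T_1T_2T_3$, and its antipodal subgraphs are its vertices and its two edges, since the origin is not in $D$. Take any $f$ whose gradient lies in the open positive quadrant; this is an open set of functionals, so genericity does not exclude it. Then $f>0$ on $T_1\cup T_2$, while $f<0$ somewhere on $T_3$. Hence $c_{T_1}=c_{T_2}=0$ is the largest critical value, attained for both at $p=0$, and your first step removes $C=\{T_1,T_2\}$. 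The only cell containing $C$ is the edge $H=\{T_1,T_2\}$. Yet $T_3\notin H$ is adjacent to $T_2\in C$, which contradicts property (3) of corners. Intersecting with a ball does nothing at this boundary point. Your claim that the local cone of $D$ at $p$ ``only shrinks $G_F$ to an antipodal-compatible subset'' is also false: here it cuts the star of the origin down to a non-antipodal path.

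What is missing is a choice of realization in general position before sweeping. Since $\mathcal{L}(D,\A)$ only records which faces of $\A$ meet $D$, you can replace $D$ as follows. For each face $F$ meeting $D$, take a small simplex in $D$ around one point of $F\cap D$, perturb its vertices generically, and let the new domain be the interior of the convex hull of these simplices. This is a bounded open polytope with the same COM, whose faces are in general position with respect to the flats of $\A$.

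With this domain and generic $f$, the argument goes through:
\begin{itemize}
\item Each critical point $p$ lies in the relative interiors of a face $P$ of $\overline{D}$ and a face $F$ of $\A$ with complementary direction spaces, so $F$ meets $D$ near $p$.
\item Every face of a removed tope that meets the current domain $D\cap\{f<f(p)+\varepsilon\}$ has $p$ in its closure. This follows from compactness and uniqueness of the minimizer; the cut $\{f=f(p)+\varepsilon\}$ stays away from $p$.
\item Hence $G_F$ is the unique maximal cell containing $C$, and no tope outside $G_F$ is adjacent to $C$.
\item $C$ consists of the chambers of $\A_F$ whose trace on the slice through $p$ parallel to $P$ lies in $\{f>f(p)\}$. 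These are the topes of the cell $G_F$ in a generic open halfspace through its center.
\end{itemize}
So your single-step statement must carry a general-position hypothesis on $D$; for an arbitrary realizable $(D,\A)$ it is refuted by the example above. You also still have to check the result against the actual definition of a corner in \cite[Section 5]{KM-corners}, which your proposal never states.

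Finally, a side claim is wrong: $G_c$ need not be convex in $G$. For two lines through the origin, a disk $D$, and a halfplane $\{f<c\}$ missing the origin, $G_c$ is a $3$-vertex path in $C_4$. What you actually need, and what holds, is that every antipodal subgraph of $G_c$ is an antipodal subgraph of $G$. This is true because a face meeting the open set $D_c$ has its whole star meeting $D_c$.
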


\subsection{Algebraic discrete Morse theory}

We shall use a standard consequence of algebraic discrete Morse theory.
Discrete Morse theory was introduced by Forman \cite{Forman} as a
combinatorial analogue of Morse theory for cell complexes. Its basic purpose is
to simplify a cell complex, or its cellular chain complex, by pairing cells in
adjacent dimensions and cancelling the paired cells without changing the
relevant homotopy type. The algebraic version, developed by Sköldberg and
independently by Jöllenbeck--Welker \cite{Skoldberg,JW}, formulates this
cancellation procedure directly for based chain complexes.

Let $C_*=(C_k,d_k)$ be a bounded chain complex of finitely generated free abelian
groups.
Choose a basis
$\mathcal B_k$ of $C_k$ for each $k$, and put
$\mathcal B=\bigcup_k \mathcal B_k$.
For $a\in \mathcal B_k$, write
\[
    d_k(a)=\sum_{b\in \mathcal B_{k-1}} [a:b]\,b,
    \quad [a:b]\in \mathbb Z .
\]
Associated with this based chain complex is a directed graph
$\Gamma(C_*,\mathcal B)$. Its vertices are the elements of $\mathcal B$, and
there is an arrow
$a\to b$
whenever $a\in \mathcal B_k$, $b\in \mathcal B_{k-1}$, and $[a:b]\neq 0$.

A \emph{matching} $M$ on $\Gamma(C_*,\mathcal B)$ is a set of arrows no two of
which share a vertex. Thus each basis element is incident to at most one
matched arrow. If $a\to b$ belongs to $M$, then the coefficient $[a:b]$ is
called the weight of the matched arrow. Over $\mathbb Z$, the invertible
weights are precisely $\pm 1$. A basis element that is not incident to any
arrow of $M$ is called critical. We say that $M$ is \emph{complete} if it has no
critical vertices.

Let $\Gamma_M$ be the directed graph obtained from
$\Gamma(C_*,\mathcal B)$ by reversing every arrow in $M$ and leaving all
other arrows unchanged. A matching $M$ is called \emph{acyclic} if $\Gamma_M$ has
no directed cycles. This use of the word ``acyclic'' should be distinguished
from acyclicity of a chain complex.

The algebraic Morse theorem says that, if $M$ is a Morse matching, then
$C_*$ is chain homotopy equivalent to the Morse complex generated by the
critical basis elements \cite[Theorem~1]{Skoldberg}. We shall only use the
following special case.

\begin{thm}\label{thm:complete-morse-matching}
Let $C_*$ be a bounded chain complex of finitely generated free abelian
groups, and let $\mathcal B_k$ be a chosen basis of
$C_k$ for each $k$. Suppose that $\Gamma(C_*,\mathcal B)$ admits a
matching $M$ such that:
\begin{enumerate}
    \item every basis element is matched;
    \item if $a\in \mathcal B_k$ and $b\in \mathcal B_{k-1}$ are matched,
    then the coefficient $[a:b]$ is $\pm 1$;
    \item the matching $M$ is acyclic.
\end{enumerate}
Then $C_*$ is contractible. In particular,
$C_*$ is acyclic, that is, $H_k(C_*)=0$ for all $k\geq 0$.
\end{thm}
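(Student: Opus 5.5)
The plan is to induct on the total number $N=\#\mathcal B$ of basis elements, which is finite because $C_*$ is bounded and each $C_k$ is finitely generated. At each step I cancel one matched pair by Gaussian elimination and check that the three hypotheses pass to the smaller complex. If $N=0$, then $C_*=0$ and there is nothing to prove. Since every basis element is matched, $N>0$ means $M$ is nonempty.

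\emph{Step 1 (Gaussian elimination).} Fix a matched arrow $a\to b$ with $a\in\mathcal B_k$, $b\in\mathcal B_{k-1}$ and $[a:b]=\varepsilon=\pm1$. I replace the basis $\mathcal B_k$ of $C_k$ by
\[
\{a\}\cup\{x':=x-[x:b]\varepsilon\, a \mid x\in\mathcal B_k\setminus\{a\}\},
\]
and the basis $\mathcal B_{k-1}$ of $C_{k-1}$ by
\[
\{d(a)\}\cup(\mathcal B_{k-1}\setminus\{b\}).
\]
Both are bases because the transition matrices are unitriangular up to the unit $\varepsilon$. In these bases $C_*$ splits as a direct sum of chain complexes $C'_*\oplus(\mathbb Z a\xrightarrow{\cong}\mathbb Z\,d(a))$. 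The second summand is contractible, so $C_*\simeq C'_*$.

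The complex $C'_*$ has basis $\mathcal B\setminus\{a,b\}$. Its differential is
\[
d'(x)=d(x)-[x:b]\varepsilon\, d(a)
\]
followed by projection away from $b$ (in degree $k$), and it agrees with $d$ elsewhere. So the new coefficients are
\[
[x:y]'=[x:y]-[x:b]\,\varepsilon\,[a:y].
\]

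\emph{Step 2 (transporting the matching).} Let $M'=M\setminus\{a\to b\}$. This is a complete matching on $\mathcal B\setminus\{a,b\}$, provided its arrows still exist in $\Gamma(C',\mathcal B')$ with weight $\pm1$. By the formula above, $[x:y]'\neq[x:y]$ only if both $x\to b$ and $a\to y$ are arrows of $\Gamma$.

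Suppose $y\to x$ is a matched pair, in either orientation, whose coefficient changed. Then $\Gamma_M$ would contain the reversed matched arrow together with $x\to b$, the reversed arrow $b\to a$, and $a\to y$. Together these form the directed cycle $y\to x\to b\to a\to y$, contradicting acyclicity. Hence matched weights are unchanged and remain $\pm1$.

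\emph{Step 3 (acyclicity of $M'$).} Every new arrow $x\to y$ of $\Gamma(C',\mathcal B')$ comes with the path $x\to b\to a\to y$ in $\Gamma_M$. Therefore any directed cycle in $\Gamma'_{M'}$ lifts to a directed cycle in $\Gamma_M$, by replacing each new arrow with this path and keeping old arrows as they are. So $M'$ is acyclic.

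By induction $C'_*$ is contractible, and hence so is $C_*$. The main obstacle is Steps 2--3: verifying that elimination neither destroys the unit weights of the other matched pairs nor creates cycles. The argument above reduces both to the acyclicity of $\Gamma_M$ via the single path $x\to b\to a\to y$.
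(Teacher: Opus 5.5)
Your proof is correct, but it takes a different route from the paper. The paper gives no argument of its own. It specializes Sköldberg's algebraic Morse theorem \cite[Theorem~1]{Skoldberg}, which says that $C_*$ is chain homotopy equivalent to the Morse complex spanned by the critical basis elements. For a complete matching that Morse complex is $0$, so $C_*$ is contractible.

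You instead prove the special case directly, by induction on $\#\mathcal B$, cancelling one matched pair at a time by Gaussian elimination and checking that the three hypotheses survive. Your key observation handles both threats with one path. A changed matched weight, or a newly created arrow $x\to y$, can only occur when $x\to b\to a\to y$ is a path in $\Gamma_M$. Acyclicity then forbids the first, and lets you lift cycles of $\Gamma'_{M'}$ in the second.

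The citation is shorter, but it rests on the full machinery: the Morse differential built from weighted sums over zig-zag paths in $\Gamma_M$, and an explicit homotopy equivalence. Your route is self-contained, and since there are no critical cells it never needs to describe a Morse differential at all. It is essentially the standard elimination proof of algebraic Morse theory, specialized to this case. The induction uses that $\mathcal B$ is finite, which boundedness and finite generation guarantee.

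Two small points should be made explicit.

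\begin{itemize}
\item The splitting $C_*=C'_*\oplus(\mathbb Z a\to\mathbb Z\,d(a))$ uses $d^2=0$. For $z\in\mathcal B_{k+1}$, write $d(z)=\alpha a+\sum_x\beta_x x'$ in the new basis. The $b$-coefficient of $d(d(z))=0$ is $\alpha\varepsilon$, so $\alpha=0$. Hence $d(z)\in C'_k$, with $\beta_x=[z:x]$ unchanged.
\item In Step 3, a cycle of $\Gamma'_{M'}$ lifts only to a closed directed walk in $\Gamma_M$. That walk contains a directed cycle, which gives the contradiction.
\end{itemize}
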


\section{Magnitude of arrangements}

For a real central hyperplane arrangement $\A$, we define the magnitude $\Mag(\A)=\Mag(\A,q)$ of $\A$ as the magnitude of its tope graph $\T(\A)$.

\subsection{Basic properties}
We first prove the symmetry of the magnitude with respect to $q\leftrightarrow q^{-1}$.

\begin{prop}\label{reci}
 Let $\A$ be an arrangement with $N=\#\A$ hyperplanes. Then
 \[
 \Mag(\A,q^{-1})=q^N\Mag(\A,q).
 \]
\end{prop}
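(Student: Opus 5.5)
Our plan is to use the antipodal map $C\mapsto -C$ from Proposition \ref{basic}. The one fact about it that matters is that every hyperplane separates $C$ from $-C$. Consequently, for any chambers $C,D$, each hyperplane $H\in\A$ separates exactly one of the pairs $(C,D)$ and $(C,-D)$. This gives
\[
d(C,-D)=N-d(C,D).
\]
Write $Z=Z(q)=\mathcal{V}_q(\A)$ for the Zeta matrix of $\T(\A)$. Let $P$ be the permutation matrix of the involution $D\mapsto -D$, so that $P$ is symmetric, $P^2=I$, and $P\mathbf{1}=\mathbf{1}$. The identity above then becomes a matrix identity:
\[
Z(q^{-1})_{C,D}=q^{-d(C,D)}=q^{-N}q^{d(C,-D)}=q^{-N}(ZP)_{C,D},
\]
that is, $Z(q^{-1})=q^{-N}Z(q)P$.

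We first note that $Z(q^{-1})$ is invertible over $\Q(q)$. This holds because $\det Z(q)$ is a nonzero polynomial (its constant term is $1$), so $\det Z(q^{-1})$ is also nonzero. We can now invert the identity:
\[
Z(q^{-1})^{-1}=q^{N}P^{-1}Z(q)^{-1}=q^N P\,Z(q)^{-1}.
\]
Summing all entries, we get
\[
\Mag(\A,q^{-1})=\mathbf{1}^TZ(q^{-1})^{-1}\mathbf{1}=q^N\mathbf{1}^TP\,Z(q)^{-1}\mathbf{1}=q^N\mathbf{1}^TZ(q)^{-1}\mathbf{1}=q^N\Mag(\A,q),
\]
where the middle step uses $\mathbf{1}^TP=(P\mathbf{1})^T=\mathbf{1}^T$.

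We should be clear about what the identity means. It is an identity of rational functions in $\Q(q)$, where substituting $q^{-1}$ is a field automorphism, and not an identity of power series. The only step that needs real care is the distance identity $d(C,-D)=N-d(C,D)$. It follows from $d=\#S(\cdot,\cdot)$ together with the sign-vector description $(-D)_H=-D_H$. That description holds because $-D$ is the chamber $\{-x: x\in D\}$, and this chamber is the vertex at distance $\diam=N$ from $D$. Everything else is formal linear algebra.
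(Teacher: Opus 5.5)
Your proposal is correct and follows essentially the same route as the paper: the identity $d(C,-D)=N-d(C,D)$ gives $q^N\mathcal{V}_{q^{-1}}=\mathcal{V}_qP_\tau$, and inverting and summing entries with $\mathbf{1}^TP_\tau^{-1}=\mathbf{1}^T$ finishes the argument. Your added remarks on invertibility over $\Q(q)$ and on the identity being one of rational functions are accurate.
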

\begin{proof}
Recall that there is an involution $\tau:\Ch(\A)\to\Ch(\A)$ sending $D$ to $-D$.
For any two chambers $C,D\in\Ch(\A)$, we have $d(C,D)+d(C,-D)=N$. Thus the $q$-Varchenko matrix $\mathcal{V}_q=\mathcal{V}_q(\A)=[q^{d(C,D)}]$ satisfies
\[
q^N\mathcal{V}_{q^{-1}}=[q^{N-d(C,D)}]=[q^{d(C,-D)}]=\mathcal{V}_{q}P_{\tau},
\]
where $P_{\tau}$ is the permutation matrix induced by $\tau$. By the definition of magnitude, we have
\begin{align*}
\Mag(\A,q^{-1})=\mathbf{1}^T(\mathcal{V}_{q^{-1}})^{-1}\mathbf{1}=\mathbf{1}^T(q^{-N}\mathcal{V}_qP_{\tau})^{-1}\mathbf{1}=
q^{N}\mathbf{1}^TP_{\tau}^{-1}\mathcal{V}_q^{-1}\mathbf{1}=q^{N}\mathbf{1}^T\mathcal{V}_q^{-1}\mathbf{1}=q^{N}\Mag(\A,q),  
\end{align*}
where we used $\mathbf{1}^TP_{\tau}^{-1}=\mathbf{1}^T$.
\end{proof}

Since tope graphs are partial cubes, $\Mag(\A)$ satisfies the one-point property established in \cite{Leinster2023}.
\begin{prop}\label{1pt}
    If $\A$ is an arrangement, then $\Mag(\A,1)=1$.
\end{prop}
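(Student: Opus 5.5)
The plan is to use the fact, just noted, that $\T(\A)$ is a partial cube, and to apply the one-point property of \cite{Leinster2023}. That result says that the magnitude of a partial cube, viewed as a rational function in $q$, has a removable singularity at $q=1$ with value $1$. Two points need care. First, $\Mag(\A,q)$ really is a rational function to which that result applies: it is the magnitude of the graph $\T(\A)$, whose $q$-Varchenko matrix is invertible over $\Q(q)$. Second, the meaning of ``$\Mag(\A,1)$'' must be made precise. By Theorem \ref{detvar}, $\det\mathcal{V}_q(\A)$ vanishes at $q=1$, since every factor $1-q^{2|\A_X|}$ does. So $\Mag(\A,1)$ has to be read as the value at $q=1$ of the reduced rational function, and part of the claim is that $q=1$ is not a pole.

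If a self-contained argument is preferred, I would argue by perturbation around $q=1$. Put $q=1+\varepsilon$ and expand
\[
\mathcal{V}_q=J+\varepsilon D+O(\varepsilon^2),
\]
where $J$ is the all-ones matrix and $D=[d(C,C')]$ is the distance matrix of $\T(\A)$. Since $J=\mathbf{1}\mathbf{1}^T$ has rank one, a Schur-complement (or Sherman--Morrison) computation of $\mathbf{1}^T\mathcal{V}_q^{-1}\mathbf{1}$ shows the following. The limit as $\varepsilon\to0$ equals $1$, provided there is a vector $u$ with $\mathbf{1}^Tu\neq 0$ for which $Du$ is a constant multiple of $\mathbf{1}$, and provided the higher-order terms do not interfere.

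For tope graphs such a $u$ is easy to produce. By Proposition \ref{basic}, $C\mapsto -C$ is an automorphism, and $d(C,D)+d(C,-D)=N$ (as used in Proposition \ref{reci}). Taking $u=\mathbf{1}$ gives
\[
D\mathbf{1}=\tfrac{N}{2}\,\#\Ch(\A)\,\mathbf{1},
\]
because each row of $D$ sums to $\tfrac{N}{2}\,\#\Ch(\A)$: pairing $D$ with $-D$ contributes $N$ per pair.

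The main obstacle is the degeneracy at $q=1$. The distance matrix of a partial cube can be singular; for instance, the hypercube's distance matrix has a large kernel. So a first-order expansion does not by itself control $\mathbf{1}^T\mathcal{V}_q^{-1}\mathbf{1}$. This is exactly the difficulty that \cite{Leinster2023} resolves for partial cubes, by an isometric embedding into $Q_\A$ and a product/induction argument. I would therefore rely on that theorem for this step rather than redo the analysis.
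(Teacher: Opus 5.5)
Your proposal matches the paper, which proves this in one line by observing that $\T(\A)$ is a partial cube and invoking the one-point property from \cite{Leinster2023}. Your clarification that $\Mag(\A,1)$ means the value of the reduced rational function, so that $q=1$ is not a pole, is the right reading and is exactly what is used later to get $Q(1)\neq 0$. The perturbation sketch and the guess about how the cited theorem is proved are unnecessary: the citation is the entire argument, applicable because partial cubes embed isometrically in $\ell_1$ and are therefore of negative type.
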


Next we prove some structural results for $\Mag(\A)$.
\begin{thm}
For an arrangement $\A$, let $\Mag(\A,q)=P(q)/Q(q)$ be the reduced form of the magnitude, with the leading coefficient of $Q(q)$ positive. Then
\begin{enumerate}
    \item $\#\A=\deg Q-\deg P$.
    \item $P,Q$ are palindromic.
    \item All roots of $Q$ are roots of unity other than $1$.
\end{enumerate}
\end{thm}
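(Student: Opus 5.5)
The plan is to read everything off two facts: the explicit determinant of the $q$-Varchenko matrix (Theorem \ref{detvar}) and the symmetry $\Mag(\A,q^{-1})=q^N\Mag(\A,q)$ (Proposition \ref{reci}), with $N=\#\A$. First I locate the poles, then deduce that $Q$ is palindromic, and finally transfer palindromicity to $P$ through the symmetry, which also yields the degree formula.

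\emph{Step 1: roots of $Q$.} Write $\Mag(\A,q)=\mathbf{1}^T\operatorname{adj}(\mathcal{V}_q)\mathbf{1}/\det\mathcal{V}_q$. The numerator lies in $\Z[q]$. By Theorem \ref{detvar}, $\det\mathcal{V}_q=\prod_{X\neq\hat 0}(1-q^{2|\A_X|})^{c^X\beta_X}$. Hence in the reduced form, $Q$ divides $\det\mathcal{V}_q$ up to a scalar. Every irreducible factor of $Q$ over $\Q$ is therefore a cyclotomic polynomial $\Phi_n$, so all roots of $Q$ are roots of unity. To exclude $q=1$, i.e.\ the factor $\Phi_1=q-1$, I use Proposition \ref{1pt}. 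Since $\Mag(\A,1)=1$ is a finite value of the reduced rational function, $Q(1)\neq 0$. This proves (3).

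\emph{Step 2: $Q$ is palindromic.} By Step 1, $Q=c\prod_i\Phi_{n_i}$ with all $n_i\geq 2$ and $c>0$. Each $\Phi_n$ with $n\geq2$ is palindromic: its roots are closed under $\zeta\mapsto\zeta^{-1}$, and $\Phi_n(0)=1$. A product of palindromic polynomials is palindromic, so $q^{\deg Q}Q(q^{-1})=Q(q)$.

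\emph{Step 3: $P$ and the degree formula.} Set $P^*(q)=q^{\deg P}P(q^{-1})$, which is a polynomial with $P^*(0)\neq0$. Substituting $P/Q$ into Proposition \ref{reci} and using Step 2 gives
\[
q^{-\deg P}P^*(q)\cdot\frac{q^{\deg Q}}{Q(q)}=q^N\frac{P(q)}{Q(q)},\qquad\text{so}\qquad P^*(q)=q^{N-\deg Q+\deg P}P(q).
\]
Now $P(0)\neq0$, because the constant term of $\Mag$ is $\#\Ch(\A)\neq0$ and $Q(0)\neq0$. Let $e=N-\deg Q+\deg P$. If $e<0$, the right-hand side is not a polynomial. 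If $e>0$, the right-hand side vanishes at $0$ while $P^*(0)\neq0$. Hence $e=0$, which gives $\#\A=\deg Q-\deg P$ and $P^*=P$, proving (1) and (2).

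The main obstacle is only bookkeeping. One must make sure that the reduced form can be normalized to have rational coefficients, so that the factorization into cyclotomic polynomials over $\Q$ applies. One must also correctly justify that $q=1$ is not a pole. Both points are handled by Theorem \ref{detvar} and Proposition \ref{1pt}.
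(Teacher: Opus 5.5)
Your proof is correct, but it reaches (1) and (2) by a different route than the paper.

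The paper never uses the Varchenko determinant for (1) and (2). From Proposition \ref{reci} alone, and using $a_0,b_0\neq 0$, it compares degrees to get $\deg Q-\deg P=N$. Hence $P/Q=P^*/Q^*$, where $P^*/Q^*$ is again reduced. Uniqueness of reduced forms then gives $P^*=rP$ and $Q^*=rQ$, and $Q(1)\neq 0$ (from Proposition \ref{1pt}) forces $r=1$. Theorem \ref{detvar} enters only afterwards, for (3).

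You instead prove (3) first. You then use the fact that, over $\Q$, a polynomial whose roots are roots of unity other than $1$ is a product of the palindromic $\Phi_n$ with $n\geq 2$. Palindromicity of $P$ and the degree formula then come out of the symmetry as the single identity $P^*=q^{e}P$, settled by comparing orders at $q=0$.

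What your version buys:
\begin{itemize}
\item It shows that palindromicity of $Q$ is a formal consequence of (3) together with rationality of the coefficients.
\item It makes the role of $Q(1)\neq 0$ transparent: $\Phi_1$ is the only anti-palindromic cyclotomic factor. This mirrors how the paper uses $Q(1)\neq 0$ to pin down $r=1$.
\end{itemize}

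What the paper's version buys: it is more economical in its inputs. It shows that (1) and (2) hold for any rational function with the $q\leftrightarrow q^{-1}$ symmetry that is regular and nonzero at $0$ and regular at $1$, with no knowledge of where the poles lie.
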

\begin{proof}
Let $P(q)=a_mq^m+\cdots+a_0$ and $Q(q)=b_nq^n+\cdots+b_0$ where $a_m\neq 0$ and $b_n\neq 0$. Then $m=\deg P$ and $n=\deg Q$. Since $\Mag(\A,0)=a_0/b_0=\#\Ch(\A)$ (Proposition \ref{coeff}), we have $a_0\neq 0$ and $b_0\neq 0$. By the symmetry formula (Proposition \ref{reci}), we have
\[
q^N=\frac{\Mag(\A,q^{-1})}{\Mag(\A,q)}=\frac{P(q^{-1})Q(q)}{P(q)Q(q^{-1})}.
\]
Multiplying both sides by $q^{m-n}$, we obtain
\[
q^{N+m-n}=\frac{q^mP(q^{-1})Q(q)}{P(q)q^nQ(q^{-1})}=\frac{(a_m+\cdots+a_0q^m)(b_nq^n+\cdots+b_0)}{(a_mq^m+\cdots+a_0)(b_n+\cdots+b_0q^n)}.
\]
Comparing degrees, we obtain $N+m-n=0$, and (1) is proved.

Write $P^*(q)=q^mP(q^{-1})=a_m+\cdots+a_0q^m$ and $Q^*(q)=q^nQ(q^{-1})=b_n+\cdots+b_0q^n$ for the reverse polynomials. We have proved
\[
\Mag(\A,q)=\frac{P(q)}{Q(q)}=\frac{P^*(q)}{Q^*(q)}.
\]
Note that the fraction $P^*(q)/Q^*(q)$ is reduced, since if $P^*$ and $Q^*$ share a common root $\alpha\neq0$, then $P$ and $Q$ would share the common root $1/\alpha$, contradicting the assumption that $P/Q$ is reduced. Hence we must have $P^*(q)=rP(q)$ and $Q^*(q)=rQ(q)$ for some constant $r\neq0$. Substituting $q=1$ into the equations $Q^*(q)=q^nQ(1/q)$ and $Q^*(q)=rQ(q)$, we obtain
\[
Q^*(1)=Q(1)=rQ(1).
\]
Since $\Mag(\A,1)=P(1)/Q(1)=1$ (Proposition \ref{1pt}) and $P/Q$ is reduced, we have $Q(1)\neq 0$, so $r=1$. Then $P^*=P$ and $Q^*=Q$, and (2) is proved.

Since $\mathcal{V}_q^{-1}=\frac{1}{\det\mathcal{V}_q}\mathrm{adj~}\mathcal{V}_q$, the roots of $Q(q)$ are roots of $\det\mathcal{V}_q$, which are roots of unity by Theorem \ref{detvar}. We have already seen that $Q(1)\neq 0$. Thus (3) is proved.
\end{proof}

If an arrangement $\A$ decomposes as a direct sum $\A_1\oplus\A_2$, then the tope graph decomposes as the Cartesian product $\T(\A)=\T(\A_1)\square\T(\A_2)$. Therefore, by Proposition \ref{magprod}, the magnitude decomposes multiplicatively.
\begin{prop}
    $\Mag(\A_1\oplus\A_2)=\Mag(\A_1)\Mag(\A_2)$.
\end{prop}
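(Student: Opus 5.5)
The plan is to reduce the statement to the product formula of Proposition \ref{magprod}. For this we need the graph isomorphism $\T(\A_1\oplus\A_2)\cong\T(\A_1)\square\T(\A_2)$, already asserted in the sentence preceding the statement. Concretely, let $\A_1$ live in $V_1$ and $\A_2$ in $V_2$. Then $\A_1\oplus\A_2$ is the arrangement in $V_1\oplus V_2$ consisting of the hyperplanes $H\oplus V_2$ for $H\in\A_1$ and $V_1\oplus H'$ for $H'\in\A_2$.

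The first step is to identify the chambers. A point $(v_1,v_2)$ avoids every hyperplane of $\A_1\oplus\A_2$ exactly when $v_1$ avoids all of $\A_1$ and $v_2$ avoids all of $\A_2$. Hence the complement is the product of the two complements, and its connected components are products $C_1\times C_2$ of chambers. This gives a bijection
\[
\Ch(\A_1\oplus\A_2)\cong\Ch(\A_1)\times\Ch(\A_2).
\]
In sign-vector terms, the sign vector of $C_1\times C_2$ is the concatenation of the sign vectors of $C_1$ and $C_2$, indexed by the disjoint union $\A_1\sqcup\A_2$.

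The second step is to check adjacency through the separating sets. Since $S(C_1\times C_2,\,D_1\times D_2)=S(C_1,D_1)\sqcup S(C_2,D_2)$, two product chambers are adjacent exactly when this set has one element. That happens precisely when one factor is equal and the other factor is adjacent, which is the edge relation of the Cartesian product. Equivalently, the distances add: $d=d_1+d_2$, which is the product metric.

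Finally, since magnitude is a graph invariant, Proposition \ref{magprod} gives $\Mag(\A_1\oplus\A_2)=\Mag(\T(\A_1)\square\T(\A_2))=\Mag(\A_1)\Mag(\A_2)$. No step here is a real obstacle. The only point needing care is the chamber bijection, and the product-of-complements argument settles it.
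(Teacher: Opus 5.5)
Your proposal is correct and follows the paper's argument: the paper also identifies $\T(\A_1\oplus\A_2)$ with the Cartesian product $\T(\A_1)\square\T(\A_2)$ and applies Proposition \ref{magprod}. Your chamber bijection and separating-set check supply the details behind that identification, which the paper only asserts.
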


Coxeter arrangements carry a natural action of the corresponding Coxeter groups on the chambers, which makes their magnitudes easy to compute.
\begin{prop}\label{coxeter}
    For a Coxeter arrangement $\A_W$ with exponents $\{e_1,\ldots,e_{\ell}\}$, we have
    \[
    \Mag(\A_W)=\frac{\#W}{\prod_i[1+e_i]_q}.
    \]
\end{prop}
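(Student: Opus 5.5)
The plan is to show that $\T(\A_W)$ is vertex-transitive and then combine Proposition \ref{vert-trans} with Solomon's formula (Proposition \ref{Cox}).

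First, by Example \ref{topecoxeter}, $\T(\A_W)$ is isomorphic to the Cayley graph of $(W,S)$. Concretely, fix a base chamber $B$; the map $w\mapsto wB$ is a bijection $W\to\Ch(\A_W)$. Two chambers $wB$ and $w'B$ are adjacent exactly when $w^{-1}w'\in S$. Left multiplication by any $g\in W$ maps the Cayley graph to itself and acts transitively on its vertices. Geometrically, $g$ is an orthogonal transformation permuting the hyperplanes of $\A_W$, so it preserves the number of separating hyperplanes and hence the distance $d(C,D)=\#S(C,D)$. Therefore $\T(\A_W)$ is vertex-transitive, and $\#V(\T(\A_W))=\#\Ch(\A_W)=\#W$, since $W$ acts simply transitively on the chambers.

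Next, Proposition \ref{vert-trans} gives
\[
\Mag(\A_W)=\frac{\#W}{\sum_{C\in\Ch(\A_W)}q^{d(B,C)}}=\frac{\#W}{D_{\A_W,B}(q)}.
\]
By Proposition \ref{Cox}, $D_{\A_W,B}(q)=\prod_{i=1}^{\ell}[1+e_i]_q$, which yields the claimed formula.

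The only step needing care is vertex-transitivity, and it is standard: one only has to observe that the $W$-action preserves the arrangement and is simply transitive on chambers. No serious obstacle is expected.
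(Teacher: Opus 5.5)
Your proposal is correct and follows the same route as the paper: you show $\T(\A_W)$ is vertex-transitive as the Cayley graph of $(W,S)$, then combine Proposition~\ref{vert-trans} with Solomon's formula (Proposition~\ref{Cox}). Your check that left multiplication preserves adjacency (under the convention that $wB$ and $w'B$ are adjacent iff $w^{-1}w'\in S$) fills in a detail the paper leaves implicit.
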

\begin{proof}
    The tope graph $\T(\A_W)$ is the $1$-skeleton of the $W$-permutahedron, or equivalently the Cayley graph of $W$ with respect to the standard generating set (Example \ref{topecoxeter}), and hence is vertex-transitive. The result follows from Proposition \ref{vert-trans} and Proposition \ref{Cox}.
\end{proof}

\subsection{Face decomposition formula}

For use in reciprocity statements, we make the following definition.
\begin{defn}\label{def:intmag}
    We define the \emph{interior magnitude} of $\A$ by
    \[
    \Mag^{\circ}(\A,q):=(-1)^{\rank \A}\Mag(\A,q^{-1})=(-1)^{\rank \A}q^{\#\A}\Mag(\A,q),
    \]
    where the latter equality follows from Proposition \ref{reci}.
\end{defn}

We prove the following ``face decomposition formula'' for the magnitude of an arrangement.

\begin{thm}\label{facedecomp}
    For any arrangement $\A$, we have
    \[
           \Mag(\A)=\sum_{F\in\FF(\A)}\Mag^{\circ}(\A_F)=\sum_{F\in\FF(\A)}(-1)^{\codim F}q^{\#\A_F}\Mag(\A_F).
    \]
\end{thm}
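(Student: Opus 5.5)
The plan is to construct a weight vector for $\T(\A)$ explicitly out of the weight vectors of the face subgraphs, and then sum its entries.

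\textbf{Setup and reduction.} Fix a face $F\in\FF(\A)$. The subgraph $\T(\A)_F$ is isomorphic to $\T(\A_F)$: a chamber $C\geq F$ is determined by its signs on $\A_F$, and distances inside $\T(\A)_F$ only count hyperplanes of $\A_F$. Let $w_F\colon\Ch(\A)\to\Q(q)$ be the weight vector of $\T(\A_F)$, transported to $\T(\A)_F$ and extended by zero. By Lemma \ref{facegated}, $\T(\A)_F$ is gated with gate projection $C\mapsto FC$. Hence, for every chamber $C$,
\[
\sum_{D}q^{d(C,D)}w_F(D)=q^{d(C,FC)}\sum_{D\geq F}q^{d(FC,D)}w_F(D)=q^{d(C,FC)}.
\]
Define the candidate
\[
w=\sum_{F}(-1)^{\rank\A_F}q^{\#\A_F}w_F .
\]
I read the sign $(-1)^{\codim F}$ in the statement as $(-1)^{\rank \A_F}$, which agrees with Definition \ref{def:intmag}. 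If $\mathcal V_q w=\mathbf 1$, then $w=w_{\T(\A)}$. Summing entries then gives $\Mag(\A)=\sum_F(-1)^{\rank\A_F}q^{\#\A_F}\Mag(\A_F)=\sum_F\Mag^\circ(\A_F)$, which is the theorem.

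\textbf{Key identity.} By the display above, $(\mathcal V_q w)(C)=\sum_F(-1)^{\rank\A_F}q^{\#\A_F+d(C,FC)}$. The quantity $d(C,FC)$ counts the $H\notin\A_F$ with $F_H\neq C_H$, and every $H\in\A_F$ has $F_H=0\neq C_H$. So the exponent equals $n_C(F):=\#\{H\in\A: F_H\neq C_H\}$, and it remains to prove, for each chamber $C$,
\[
\sum_{F\in\FF(\A)}(-1)^{\rank\A_F}q^{n_C(F)}=1 .
\]
Expand $q^{n_C(F)}=\prod_H\bigl(1+(q-1)[F_H\neq C_H]\bigr)=\sum_{T\subseteq\A}(q-1)^{\#T}[F_H\neq C_H\ \forall H\in T]$. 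The identity then reduces to two claims:
\[
\sum_F(-1)^{\rank\A_F}=1,\qquad \sum_{F:\,F_H\neq C_H\,\forall H\in T}(-1)^{\rank\A_F}=0\quad (T\neq\varnothing).
\]

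\textbf{Topological step (the main point).} Pass to the essentialization, which does not change faces, ranks or signs, so assume $\A$ is essential in $\R^r$ with $r=\rank\A$. Then $\rank\A_F=\codim F=r-\dim F$, and the faces are open cells partitioning $\R^r$. Compactly supported Euler characteristic is additive over such a partition and equals $(-1)^{\dim}$ on each open cell. Hence the sum over a union $K$ of faces is $(-1)^r\chi_c(K)$.
\begin{itemize}
\item For the first claim, $K=\R^r$ and $\chi_c(\R^r)=(-1)^r$, giving $1$.
\item For the second claim, $K=\{x:\ C_H\alpha_H(x)\le 0\ \forall H\in T\}$, a closed convex polyhedral cone. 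It contains $-C$, so it is full-dimensional, and it is not all of $\R^r$ since $T\neq\varnothing$. Write $K=L\times K'$ with $L$ its lineality space and $K'$ a nonzero pointed cone. Then $K'$ is homeomorphic to a closed half-space, so $\chi_c(K')=0$ and therefore $\chi_c(K)=0$.
\end{itemize}
The step needing most care is this Euler-characteristic bookkeeping, specifically checking that $K$ is exactly a union of faces. It is, because the condition $F_H\neq C_H$ means $F_H\in\{0,-C_H\}$, which is closed under taking faces of $F$. A purely combinatorial alternative is to use Zaslavsky-type face counts of the restricted region.
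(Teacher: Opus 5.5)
Your proposal is correct and follows the paper's proof essentially step for step. You use the same candidate weight vector $\sum_F(-1)^{\codim F}q^{\#\A_F}w_F$ and the gate projection $D\mapsto FD$ to reduce to $\sum_F(-1)^{\codim F}q^{\#\A_F+d(D,FD)}=1$ (Lemma \ref{key_identity}), then the expansion $q^{m}=\sum_{T}(q-1)^{\#T}$ and an Euler characteristic count; the one cosmetic difference is that you compute $(-1)^r\chi_c$ of the closed cone $K\subseteq\R^r$, whereas the paper computes $(-1)^{r-1}(\chi(\Sigma_T)-1)$ for the disk $\Sigma_T=K\cap S^{r-1}$, and the two agree since $\chi_c(K)=1-\chi(\Sigma_T)$.
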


\begin{rem}
    Theorem \ref{facedecomp} can be reformulated as
    \[
    \Mag(\A)=\sum_{X\in L(\A)}c^X\Mag^{\circ}(\A_X)=\sum_{X\in L(\A)}(-1)^{\codim X}c^Xq^{\#\A_X}\Mag(\A_X),
    \]
    where $c^X=\#\Ch(\A^X)$.
    It can also be written as
    \[
    \Mag(\A)=\frac{\sum_{F\in\FF(\A)\setminus\{\cap\A\}}\Mag^{\circ}(\A_F)}{1-(-1)^{\rank \A}q^{\#\A}}=\frac{\sum_{F\in\FF(\A)\setminus\{\cap\A\}}(-1)^{\codim F}q^{\#\A_F}\Mag(\A_F)}{1-(-1)^{\rank \A}q^{\#\A}}.
    \]
    This can be regarded as a recursive formula for computing $\Mag(\A)$.
\end{rem}

\begin{rem}
    Taking the limit $q\to 1$ gives $\Mag(\A,1)=1$ and $\Mag^\circ(\A,1)=(-1)^{\rank \A}$ by Proposition \ref{1pt}.
    Therefore, Theorem \ref{facedecomp} can be regarded as a $q$-analogue of the decomposition of the Euler characteristic of the zonotope into its open faces.
\end{rem}

\begin{cor}
     The magnitude $\Mag(\A)$ of an arrangement $\A$ is determined by $L(\A)$.
\end{cor}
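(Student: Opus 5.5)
The plan is to derive the corollary from the recursive form of Theorem \ref{facedecomp} recorded in the first remark after it, and to argue by induction on $\#\A$.

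First, group the faces in Theorem \ref{facedecomp} by their support. If $s(F)=X$, then $\A_F=\A_X$, and exactly $c^X=\#s^{-1}(X)$ faces have support $X$. This gives
\[
\Mag(\A)=\sum_{X\in L(\A)}(-1)^{\codim X}c^X q^{\#\A_X}\Mag(\A_X).
\]
The summand for $X=\hat 1=\bigcap\A$ has $c^{\hat1}=1$ and $\A_{\hat1}=\A$. Moving it to the left-hand side gives
\[
\bigl(1-(-1)^{\rank\A}q^{\#\A}\bigr)\Mag(\A)=\sum_{X\neq\hat1}(-1)^{\codim X}c^X q^{\#\A_X}\Mag(\A_X).
\]
The factor in front of $\Mag(\A)$ is a nonzero element of $\Q(q)$, so we may divide by it.

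Next, check that every ingredient on the right-hand side is a function of the lattice $L(\A)$.
\begin{itemize}
\item The number $\#\A_X$ is the number of atoms of $L(\A)$ lying below $X$.
\item The number $\codim X=\rank X$ is the rank of $X$ in the graded lattice $L(\A)$.
\item The intersection lattice of $\A^X$ is isomorphic to the upper interval $[X,\hat1]$. Zaslavsky's theorem then gives $c^X=|\chi(\A^X,-1)|$, which is computed from the Möbius function of $[X,\hat1]$.
\item The intersection lattice of $\A_X$ is the lower interval $[\hat0,X]$.
\end{itemize}

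For the induction, note that if $X\neq\hat1$ then $\#\A_X<\#\A$. Indeed, if every $H\in\A$ contained $X$, then $X\subseteq\bigcap\A=\hat1$, which forces $X=\hat1$. The base case is the empty arrangement, whose tope graph is a single vertex, so its magnitude is $1$. By the induction hypothesis, each $\Mag(\A_X)$ with $X\neq\hat1$ is determined by $[\hat0,X]$, and hence by $L(\A)$. The displayed identity then determines $\Mag(\A)$.

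No step here is a genuine obstacle; the substance lies in Theorem \ref{facedecomp} itself. The only points needing care are the standard lattice isomorphisms $L(\A_X)\cong[\hat0,X]$ and $L(\A^X)\cong[X,\hat1]$, and the strict decrease of $\#\A_X$ that makes the induction well-founded.
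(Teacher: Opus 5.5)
Your argument is correct and is essentially the paper's own: the corollary is read off from the recursive form of Theorem~\ref{facedecomp}, i.e.\ $\Mag(\A)=\sum_{X\in L(\A)}(-1)^{\codim X}c^Xq^{\#\A_X}\Mag(\A_X)$ with the $X=\hat 1$ term moved to the left. This works because $c^X$, $\#\A_X$, $\codim X$ and $L(\A_X)\cong[\hat 0,X]$ are all lattice data, and you have supplied the induction on $\#\A$ that the paper leaves implicit. The one imprecision is your claim that $1-(-1)^{\rank\A}q^{\#\A}$ is always nonzero: it vanishes for the empty arrangement, so the division is only valid for $\#\A\ge 1$, which your choice of base case already covers.
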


\begin{lem}\label{key_identity}
For any arrangement $\A$ and $D\in \Ch(\A)$, we have
\[
\sum_{F\in \FF(\A)}(-1)^{\codim F}q^{\#\A_F+d(D,FD)}=1.
\]
\end{lem}

\begin{proof}
If $\rank \A=0$, then $\A$ is the empty arrangement and there is nothing to prove, so let us assume $r:=\rank \A\ge 1$. Passing to the essentialization does not change the chamber set, the tope graph, or the quantities $\#\A_F$ and $\codim F=\rank(\A_F)$, so we may work in an essential arrangement of rank $r$.

Choose defining forms $\alpha_H$ so that $D_H=+$ for every $H\in \A$.
For a face $F$, define $M_D(F)\subset \A$ by
\[
M_D(F):=\{H\in \A: F_H\neq +\}.
\]
Since $F_H=0$ exactly when $H\in \A_F$, while $d(D,FD)$ counts the hyperplanes on which $F_H=-$, we have $\#M_D(F)=\#\A_F+d(D,FD)$.
Therefore it is enough to prove
\[
S_D(q):=\sum_{F\in \FF(\A)}(-1)^{\codim F}q^{\#M_D(F)}=1.
\]
Expand each power as
\[
q^{\#M_D(F)}=(1+(q-1))^{\#M_D(F)}
=\sum_{T\subseteq M_D(F)}(q-1)^{\#T}.
\]
After exchanging the order of summation, we obtain
\[
S_D(q)=\sum_{T\subseteq \A}(q-1)^{\#T}E_T,
\qquad
E_T:=\sum_{F:\,T\subseteq M_D(F)}(-1)^{\codim F}.
\]
Hence the desired identity will follow once we show $E_T=\mathbf{1}_{T=\varnothing}$, i.e. $E_T=0$ if $T\neq\varnothing$ and $E_{\varnothing}=1$.
We interpret $E_T$ as a reduced Euler characteristic. Let $S^{r-1}$ be the unit sphere in the ambient space $V=\R^r$. For each hyperplane $H\in \A$, let
\[
H_D^-:=\{x\in S^{r-1}:\alpha_H(x)\le 0\}
\]
be the closed hemisphere opposite to $D$, and for $T\subseteq \A$ define
\[
\Sigma_T:=\bigcap_{H\in T}H_D^- \subseteq S^{r-1},
\qquad
\Sigma_{\varnothing}:=S^{r-1}.
\]
The arrangement induces a regular CW decomposition of $S^{r-1}$ whose open cells are $F\cap S^{r-1}$ for non-minimal faces $F\in \FF(\A)\setminus\{\cap \A\}$.
An open cell $F\cap S^{r-1}$ lies in $\Sigma_T$ if and only if $T\subseteq M_D(F)$. Therefore
\[
\chi(\Sigma_T)
=\sum_{\substack{F\in \FF(\A)\setminus\{\cap \A\}\\ T\subseteq M_D(F)}}(-1)^{\dim F-1}.
\]
Since $\codim F=r-\dim F$ and the minimal face $\cap \A$ contributes $(-1)^r$ to $E_T$, we obtain
\begin{align*}
E_T
&=(-1)^r+\sum_{\substack{F\in \FF(\A)\setminus\{\cap \A\}\\ T\subseteq M_D(F)}}(-1)^{r-\dim F}
=(-1)^{r-1}\left(\chi(\Sigma_T)-1\right).
\end{align*}
If $T=\varnothing$, then $\Sigma_{\varnothing}=S^{r-1}$, so $E_{\varnothing}=(-1)^{r-1}(\chi(S^{r-1})-1)=1$.
If $T\neq\varnothing$, then $\Sigma_T$ is the intersection of a nonempty collection of closed hemispheres.
Moreover, $\Sigma_T$ contains $-D$, so it has nonempty interior.
This shows that $\Sigma_T$ is homeomorphic to $D^{r-1}$ and hence
$E_T=(-1)^{r-1}(\chi(D^{r-1})-1)=0$.
This completes the proof that $E_T=\mathbf{1}_{T=\varnothing}$.
\end{proof}

\begin{proof}[Proof of Theorem \ref{facedecomp}]
Fix a face $F\in \FF(\A)$.
Let $w_F\colon \Ch(\A)_F\to \Q(q)$ be the weight vector of $\T(\A)_F\cong \T(\A_F)$, so that
\[
\sum_{C\in \Ch(\A)_F} q^{d(E,C)}w_F(C)=1
\]
for every $E\in \Ch(\A)_F$.
Extend $w_F$ by zero outside $\Ch(\A)_F$.
Now define a function $\widetilde w_\A\colon \Ch(\A)\to \Q(q)$ by
\[
\widetilde w_\A(C)
:=\sum_{F\in \FF(\A)}(-1)^{\codim F}q^{\#\A_F}w_F(C).
\]
Taking the sum over all chambers gives
\begin{align*}
\sum_{C\in \Ch(\A)}\widetilde w_\A(C)
&=\sum_{F\in \FF(\A)}(-1)^{\codim F}q^{\#\A_F}
\sum_{C\in \Ch(\A)_F}w_F(C)\\
&=\sum_{F\in \FF(\A)}(-1)^{\codim F}q^{\#\A_F}\Mag(\A_F,q)\\
&=\sum_{F\in \FF(\A)}\Mag^{\circ}(\A_F,q).
\end{align*}
Therefore Theorem \ref{facedecomp} will follow once we prove that $\widetilde w_\A$ is the actual weight vector of $\A$.
Fix a chamber $D\in \Ch(\A)$. We compute the $D$-th coordinate of $\mathcal{V}_q(\A)\widetilde w_\A$:
\begin{align*}
\sum_{C\in \Ch(\A)} q^{d(D,C)}\widetilde w_\A(C)
&=\sum_{F\in \FF(\A)}(-1)^{\codim F}q^{\#\A_F}
\sum_{C\in \Ch(\A)_F} q^{d(D,C)}w_F(C).
\end{align*}
Since $\T(\A)_F$ is gated with gate projection $D\mapsto FD$ (Lemma \ref{facegated}), we have
\[
d(D,C)=d(D,FD)+d(FD,C)
\]
for every $C\in \Ch(\A)_F$.
Using this identity and the defining equation for the local weight vector $w_F$, we obtain
\begin{align*}
\sum_{C\in \Ch(\A)} q^{d(D,C)}\widetilde w_\A(C)
&=\sum_{F\in \FF(\A)}(-1)^{\codim F}q^{\#\A_F+d(D,FD)}
\sum_{C\in \Ch(\A)_F} q^{d(FD,C)}w_F(C)\\
&=\sum_{F\in \FF(\A)}(-1)^{\codim F}q^{\#\A_F+d(D,FD)}.
\end{align*}
By Lemma \ref{key_identity}, this equals $1$ for every $D\in \Ch(\A)$.
\end{proof}

\subsection{Examples} \label{firstexamples}

We use the notation $\A(n,k)$ from Gr\"unbaum's catalogue \cite{Grunbaum2009}.

\begin{ex}\label{nearpencil}
    The near-pencil $\A(n,0)$ has the tope graph $C_{2(n-1)}\Box K_2$ and hence
    \begin{align*}
        \Mag(\A(n,0))=\frac{4(n-1)}{[2]_q^2[n-1]_q}.
    \end{align*}
\end{ex}

\begin{ex}\label{Coxeterrank3}
Note that $\A(6,1)$ is the Coxeter arrangement of type $A_3$ or $D_3$, and $\A(9,1)$ is the Coxeter arrangement of type $B_3$.
Therefore, the magnitudes of both arrangements can be computed from Proposition \ref{coxeter}. In what follows, $\Phi_k(q)\in\Z[q]$ is the $k$-th cyclotomic polynomial.
\begin{align*}
    \Mag(\A(6,1))&=\frac{24}{[4]_q!}=\frac{24}{\Phi_2(q)^2\Phi_3(q)\Phi_4(q)}\\
    &=24-72q+96q^2-72q^3+48q^4-72q^5+120q^6-144q^7+144q^8-144q^9+144q^{10}+\cdots
\end{align*}
\begin{align*}
    \Mag(\A(9,1))&=\frac{48}{[2]_q[4]_q[6]_q}=\frac{48}{\Phi_2(q)^3\Phi_3(q)\Phi_4(q)\Phi_6(q)}\\
    &=48-144q+192q^2-192q^3+240q^4-336q^5+432q^6-528q^7+624q^8-720q^9+816q^{10}+\cdots
\end{align*}
Additional examples are given in the appendix.
\end{ex}

\section{Computations for rank 3 arrangements}

\subsection{Magnitude of rank 3 arrangements}
Using the face decomposition formula (Theorem \ref{facedecomp}), we can explicitly compute the magnitude of arrangements of rank $3$.

Let $\A$ be an arrangement of rank $3$ with $n=\#\A$.
For $k\geq 2$, we write $n_k$ for the number of faces $F\in \FF(\A)$ with $\codim F=2$ and $\#\A_F=k$.

\begin{thm}\label{thm:rank3}
    Let $\A$ be an arrangement of rank $3$ with $\#\A=n$.
    Then we have
    \[
    \Mag(\A)=\dfrac{1}{1+q^n}\biggl(\#\Ch(\A)-\sum_{k\geq 2}\dfrac{2kn_kq(1-q^{k-1})}{(1+q)(1-q^k)}\biggr).
    \]
\end{thm}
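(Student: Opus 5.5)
We apply the recursive form of Theorem \ref{facedecomp}:
\[
(1-(-1)^{3}q^{n})\Mag(\A)=(1+q^n)\Mag(\A)=\sum_{F\in\FF(\A)\setminus\{\cap\A\}}(-1)^{\codim F}q^{\#\A_F}\Mag(\A_F).
\]
We then sort the faces $F\neq\cap\A$ by codimension and evaluate each local magnitude $\Mag(\A_F)$ explicitly.

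\emph{Codimension $0$.} These are the chambers. Here $\A_F=\varnothing$, the tope graph is a point and $\Mag=1$, so together they contribute $\#\Ch(\A)$.

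\emph{Codimension $1$.} These faces lie in a single hyperplane, so $\A_F=\{H\}$ with tope graph $K_2$ and $\Mag=2/(1+q)$. Each contributes $-q\cdot 2/(1+q)$.

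\emph{Codimension $2$.} Suppose $\#\A_F=k$. Then $\A_F$ is a rank $2$ arrangement of $k$ lines, whose tope graph is the cycle $C_{2k}$. Hence
\[
\Mag(\A_F)=\frac{2k}{(1+q)[k]_q},
\]
and each such face contributes $q^k\cdot 2k/((1+q)[k]_q)$.

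Next we count the codimension $1$ faces. Every codimension $2$ face (a ray) with $\#\A_F=k$ lies on $k$ lines, so it is an endpoint of $2k$ edges of the induced cell decomposition of the sphere. Each edge (a codimension $1$ face) has exactly two endpoints, because $\A$ is essential of rank $3$. The number of codimension $1$ faces is therefore $\tfrac12\sum_k 2k\,n_k=\sum_k k n_k$. This lets us distribute the codimension $1$ contribution over the rays: each ray of multiplicity $k$ receives $k$ copies of $-2q/(1+q)$.

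The contribution attached to a ray of multiplicity $k$ is then
\[
\frac{2k}{1+q}\Bigl(\frac{q^k(1-q)}{1-q^k}-q\Bigr)
=\frac{2kq}{1+q}\cdot\frac{q^{k-1}-q^{k}-1+q^k}{1-q^k}
=-\frac{2kq(1-q^{k-1})}{(1+q)(1-q^k)}.
\]
Summing over all rays gives
\[
(1+q^n)\Mag(\A)=\#\Ch(\A)-\sum_{k\ge 2}\frac{2kn_kq(1-q^{k-1})}{(1+q)(1-q^k)},
\]
which is the stated formula.

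Two points need care. First, the edge–ray incidence count requires each codimension $1$ face to have exactly two endpoint rays, i.e.\ to be a bounded arc on $S^2$. This holds after essentialization, which changes neither the statement nor any of the quantities involved. Second, we must confirm that the tope graph of $k$ lines in rank $2$ is $C_{2k}$. After that the argument reduces to the displayed algebra.
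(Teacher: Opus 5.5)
Your proposal is correct and matches the paper's proof: both apply the face decomposition (Theorem \ref{facedecomp}), evaluate the local magnitudes by codimension as $1$, $2/(1+q)$ and $q^k\Mag(C_{2k})$, and double-count to get $\sum_k kn_k$ codimension-$1$ faces, which absorbs the codimension-$1$ term into the codimension-$2$ terms. The only cosmetic difference is that you redistribute the codimension-$1$ contribution ray by ray before simplifying, whereas the paper combines the two sums afterwards.
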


\begin{proof}
    Let $\FF_d(\A)$ denote the set of faces of codimension $d$ of $\A$.
    By Theorem \ref{facedecomp}, we have
    \begin{align*}
    \Mag(\A)&{}=\sum_{F\in \FF_0(\A)}1-\sum_{F\in \FF_1(\A)}\dfrac{2q}{1+q}+\sum_{F\in \FF_2(\A)}\dfrac{2\#\A_Fq^{\#\A_F}(1-q)}{(1+q)(1-q^{\#\A_F})}-q^n\Mag(\A,q)\\
    &{}=\#\Ch(\A)-\#\FF_1(\A)\cdot \dfrac{2q}{1+q}+\sum_{k=2}^\infty\dfrac{2kn_kq^k(1-q)}{(1+q)(1-q^k)}-q^n\Mag(\A,q).
    \end{align*}
    For each $F\in \FF_2(\A)$, the number of faces $G\in \FF_1(\A)$ of codimension $1$ adjacent to $F$ is exactly $2\#\A_F$.
    Conversely, for each $G\in \FF_1(\A)$, there are exactly two faces $F\in \FF_2(\A)$ of codimension $2$ adjacent to $G$.
    Therefore we have
    \[
    \#\FF_1(\A)=\dfrac{1}{2}\sum_{F\in \FF_2(\A)}2\#\A_F=\sum_{k=2}^\infty kn_k.
    \]
    Using this identity, we obtain
    \begin{align*}
    (1+q^n)\Mag(\A)&{}=\#\Ch(\A)-\sum_{k=2}^\infty\dfrac{2kn_kq}{1+q}+\sum_{k\geq 2}\dfrac{2kn_kq^k(1-q)}{(1+q)(1-q^k)}\\
    &{}=\#\Ch(\A)-\sum_{k=2}^\infty\dfrac{2kn_kq(1-q^{k-1})}{(1+q)(1-q^k)}.
    \end{align*}
    This completes the proof.
\end{proof}

\begin{ex}
    The arrangement $\A(6,1)$ from Gr\"unbaum's catalogue \cite{Grunbaum2009} satisfies
    \[
    \#\Ch(\A(6,1))=24\quad\text{and}\quad
    n_k=\begin{cases}
        6,&\text{ if }k=2;\\
        8,&\text{ if }k=3;\\
        0,&\text{ if }k\geq 4.
    \end{cases}
    \]
    By Theorem \ref{thm:rank3}, its magnitude is given by
    \begin{align*}
        \Mag(\A(6,1))&{}=\dfrac{1}{1+q^6}\biggl(24-\dfrac{24q}{(1+q)^2}-\dfrac{48q}{1+q+q^2}\biggr)\\
        &{}=\frac{24}{(1+q)^2(1+q+q^2)(1+q^2)}.
    \end{align*}
    This coincides with the formula given in Example \ref{Coxeterrank3}.
\end{ex}

\subsection{Alternating sign property}
In all examples shown in Appendix \ref{appendix}, the coefficients in the series expansion of the magnitude of each arrangement seem to have alternating signs. That is, if we write
\[
\Mag(\A)=\sum_{\ell=0}^\infty c_\ell q^\ell,
\]
then $(-1)^\ell c_\ell\geq 0$ appears to hold.
Here we prove that, if $\rank \A\leq 3$, then $(-1)^\ell c_\ell\geq 0$ holds for all sufficiently large $\ell$.
On the other hand, in the next subsection, we construct an example for which the inequality
$(-1)^\ell c_\ell \geq 0$
fails for small $\ell$.

\begin{defn}
    For each $k\geq 2$, we define a rational function $G_k(q)$ and a sequence $(g_{k,\ell})_{\ell\geq 0}$ by
    \[
    G_k(q)=\dfrac{q(1-(-q)^{k-1})}{(1-q)(1-(-q)^k)}=\sum_{\ell=0}^\infty g_{k,\ell}q^\ell.
    \]
    Theorem \ref{thm:rank3} implies that, for an arrangement $\A$ of rank $3$ with $\#\A=n$, we have
    \begin{align}\label{mag_minus_q}
    \Mag(\A,-q)=\dfrac{1}{1+(-q)^n}\biggl(\#\Ch(\A)+\sum_{k=2}^\infty 2kn_kG_k(q)\biggr).
    \end{align}
\end{defn}

\begin{lem}\label{n2positive}
    Let $\A$ be an arrangement of rank $3$.
    Let $n_k$ be the number of faces $F\in \FF(\A)$ with $\codim F=2$ and $\#\A_F=k$.
    Then we have $n_2>0$.
\end{lem}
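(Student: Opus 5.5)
The plan is to obtain $n_2>0$ from an Euler characteristic count on the sphere, in the spirit of Melchior's proof of the dual Sylvester--Gallai theorem. In fact the count should give the stronger bound $n_2\geq 6$.

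First, as in the proof of Lemma \ref{key_identity}, I pass to the essentialization. This changes neither the face poset nor the numbers $\#\A_F$, so $\A$ may be taken essential in $\R^3$. The arrangement then induces a regular CW decomposition of $S^2$ whose open cells are $F\cap S^2$ for the nonminimal faces $F$. Write $V=\#\FF_2(\A)$, $E=\#\FF_1(\A)$ and $F_0=\#\FF_0(\A)=\#\Ch(\A)$. Then $V-E+F_0=\chi(S^2)=2$ and $V=\sum_{k\geq2}n_k$. The proof of Theorem \ref{thm:rank3} already shows $E=\sum_{k\ge2}kn_k$: a vertex with $\#\A_F=k$ is incident to $2k$ edges, and every edge has two endpoints.

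The next step bounds the number of chambers. Each chamber $C$ is an open cone in $\R^3$. Because $\A$ is essential, $C$ contains no line, so its closure is a pointed polyhedral cone. Such a cone needs at least $3$ facets, since an intersection of at most two closed halfspaces in $\R^3$ contains a line. Hence $\overline{C}\cap S^2$ is a spherical polygon with at least $3$ edges. Every edge lies on the boundary of exactly two chambers, so double counting gives $2E\geq 3F_0$.

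Finally, substitute $F_0=2-V+E$ into $2E\ge 3F_0$. This gives $2E\geq 6-3V+3E$, that is, $3V-E\geq 6$. Writing $V$ and $E$ in terms of the $n_k$,
\[
\sum_{k\geq 2}(3-k)n_k\geq 6, \qquad\text{so}\qquad n_2\geq 6+\sum_{k\geq4}(k-3)n_k\geq 6>0 .
\]
I expect the only real obstacle to be the facet count in the second step. One has to justify that a chamber meets $S^2$ in a genuine polygon with at least three sides, with no digons. This relies on essentiality together with rank exactly $3$, and I would spell it out using the pointed-cone argument above.
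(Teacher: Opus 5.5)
Your proof is correct and is essentially the paper's argument. The paper applies Euler's formula to the boundary of the zonotope $Z(\A)$, whose $2$-faces are $2k$-gons counted by $n_k$; you run the same count on the dual cell decomposition of $S^2$. Your step that every chamber has at least three sides is the dual of the fact that every vertex of a $3$-polytope has degree at least $3$, which the paper's appeal to Euler's formula uses implicitly. As a small bonus, your bookkeeping yields the explicit inequality $\sum_{k}(3-k)n_k\geq 6$, and hence $n_2\geq 6$, which the paper does not state.
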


\begin{proof}
    We may assume that $\A$ is essential.
    Consider the zonotope $Z(\A)$ of $\A$, which is a polytope in $\R^3$.
    The facets of $Z(\A)$ are polygons with an even number of vertices.
    The number of facets with $2k$ vertices is exactly $n_k$.
    If $n_2=0$, then every facet of $Z(\A)$ must have at least $6$ vertices.
    However, Euler's polyhedral formula shows that this is impossible.
    Therefore we have $n_2>0$.
\end{proof}

\begin{thm}\label{alternating}
    Let $\A$ be an arrangement with $\rank \A\leq 3$.
    Let $c_\ell$ denote the coefficient of $q^\ell$ in the series expansion of $\Mag(\A)$:
    \[
    \Mag(\A)=\sum_{\ell=0}^\infty c_\ell q^\ell.
    \]
    Then we have $(-1)^\ell c_\ell\geq 0$ for all sufficiently large $\ell$.
\end{thm}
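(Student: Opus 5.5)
We treat the ranks separately; in each case we study the series $\Mag(\A,-q)=\sum_\ell (-1)^\ell c_\ell q^\ell$ and show its coefficients are eventually nonnegative.

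\emph{Ranks $0,1,2$.} For rank $0$ we have $\Mag=1$, and for rank $1$ the tope graph is $K_2$, so $\Mag=2/(1+q)$ and $\Mag(\A,-q)=2/(1-q)$. For rank $2$ with $n$ lines the tope graph is $C_{2n}$. Then $\Mag(\A,-q)=2n(1+q)/\bigl((1-q)(1-(-q)^n)\bigr)$. If $n$ is even this equals $2n(1+q)/\bigl((1-q)(1-q^n)\bigr)$. If $n$ is odd it equals $2n(1+q)[n]_q/\bigl((1-q)(1-q^{2n})\bigr)$. In both cases it is a product of series with nonnegative coefficients, so here the conclusion even holds for all $\ell$.

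\emph{Rank $3$: setup.} We use \eqref{mag_minus_q} and compare pole orders on the unit circle. All poles of $\Mag(\A,-q)$ are roots of unity. Hence $(-1)^\ell c_\ell$ is a quasi-polynomial in $\ell$ whose degree is governed by the maximal pole order. It suffices to show two things: the pole at $q=1$ has strictly larger order than every other pole, and its leading Laurent coefficient $\lim_{q\to1}(1-q)^m\Mag(\A,-q)$ is positive. Then $(-1)^\ell c_\ell\sim \text{const}\cdot \ell^{m-1}$ with positive constant.

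\emph{Poles of the bracket.} First analyze each $G_k$. For $k$ odd, $G_k=q[k-1]_q/(1+q^k)$, which is regular at $1$ and has only simple poles elsewhere. For $k$ even, $G_k=q(1+q^{k-1})/\bigl((1-q)(1-q^k)\bigr)$. It has a double pole at $q=1$ with positive leading coefficient $2/k$, and only simple poles at the other $k$-th roots of unity. By Lemma~\ref{n2positive}, $n_2>0$, and $G_2=q/(1-q)^2$. Since all $n_k\ge 0$, the bracket $\#\Ch(\A)+\sum 2kn_kG_k$ therefore has a double pole at $1$ with strictly positive leading coefficient. Cancellation is impossible because all even-$k$ contributions have the same positive sign. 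Its poles elsewhere are at most simple.

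\emph{Effect of the prefactor.} Next multiply by $1/(1+(-q)^n)$, which contributes simple poles at the roots of $(-q)^n=-1$. If $n$ is odd, $q=1$ is such a root, and near it $1/(1-q^n)$ is positive. The total order at $1$ is then $3$, while every other pole has order at most $2$, which finishes this case.

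\emph{Main obstacle: $n$ even.} If $n$ is even, the prefactor is regular and equal to $1/2$ at $q=1$, so the order there is $2$. We must exclude order-$2$ poles at some $\zeta\neq1$. Such a $\zeta$ would need $\zeta^n=-1$ and $(-\zeta)^k=1$ for some even $k$ with $n_k>0$, and this can genuinely occur, for instance $n=6$, $k=4$, $\zeta=i$. My first attempt would be to compute the residue of the bracket at such $\zeta$ explicitly and show it vanishes. The hope is that the simple-pole part of $\sum_k 2kn_kG_k$ at $\zeta$ cancels via the combinatorial relation $\#\FF_1(\A)=\sum_k kn_k$ together with Euler-type relations on the zonotope. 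Failing that, I would return to the original form of Theorem~\ref{thm:rank3}. There I would use that $1+q^n$ divides the numerator of the bracket at such $\zeta$, exploiting that the true magnitude is palindromic and that its denominator divides $\det\mathcal V_q$ (Theorem~\ref{detvar}). This would bound the pole order at $\zeta$ by the multiplicity allowed in the Varchenko determinant, and I would aim to show that this multiplicity is $1$ at every $\zeta\neq1$ in rank $3$.
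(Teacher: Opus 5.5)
Your treatment of ranks $0$--$2$ and of rank $3$ with $n=\#\A$ odd is correct. In the odd case the pole of $\Mag(\A,-q)$ at $q=1$ has order $3$ with leading coefficient $\tfrac{4}{n}\sum_{k\ \mathrm{even}}n_k>0$, and every other pole has order at most $2$. This is a legitimate alternative to the paper's direct estimate $a_\ell\ge 4n_2\ell-C$. (A small slip: for rank $2$ and $n$ odd one has $\Mag(\A,-q)=2n(1+q)[n]_q/(1-q^{2n})$, without the extra factor $1-q$; the conclusion is unaffected.)

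The genuine gap is the case $n$ even. You leave it open, and neither remedy you propose can work, because the order-$2$ poles at $\zeta\neq 1$ really occur. If $k$ is even, $\zeta^k=1$ and $\zeta\neq\pm1$, then
\[
\operatorname{Res}_{q=\zeta}\,2kn_kG_k(q)=-\frac{2n_k\,\zeta(1+\zeta)}{1-\zeta}.
\]
This is a fixed nonzero number times $n_k\ge 0$, so the even-$k$ residues at $\zeta$ add up and never cancel. The constant and the odd-$k$ terms are regular at every $\zeta$ with $\zeta^n=-1$, since $\gcd(1+q^k,1+q^n)=1$.

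For a concrete case, take six planes in $\R^3$, four containing a common line $L$ and two in general position. Then $n=6$, $n_2=18$, $n_4=2$, $\#\Ch(\A)=26$, and $\Mag(\A,-q)=(26+72G_2+16G_4)/(1+q^6)$ has double poles at $q=\pm i$. Hence $(-1)^\ell c_\ell$ contains period-$4$ oscillating terms of order $\ell$. This is the same order as the contribution of $q=1$, so pole-order dominance says nothing.

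The Varchenko route is no better. The flat $L$ alone contributes $(1-q^8)^{c^L\beta_L}=(1-q^8)^4$ to $\det\mathcal{V}_q(\A)$, so $\Phi_4$ divides the determinant to order at least $4$, not $1$. Palindromicity does not remove these poles either.

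What is missing is an argument that the \emph{combined} contribution of all order-$2$ poles is positive. Equivalently, the coefficient of $\ell$ in the quasi-polynomial $(-1)^\ell c_\ell$ must be positive in every residue class. The paper gets this without residues, in three steps:
\begin{itemize}
\item For even $k$, $G_k(q)=\frac{q}{1-q}\sum_{j\ge0}(q^{kj}+q^{kj+k-1})$ has nonnegative, nondecreasing coefficients $g_{k,\ell}$. So the coefficients $b_{k,\ell}=\sum_{j}(-1)^jg_{k,\ell-nj}$ of $G_k(q)/(1+q^n)$ are nonnegative by pairing consecutive terms, even though these series may have double poles off $q=1$.
\item For $k=2$ one has $g_{2,\ell}=\ell$, so $b_{2,\ell}\to\infty$.
\item The constant and the odd-$k$ terms divided by $1+q^n$ have only simple poles, hence bounded coefficients.
\end{itemize}
Together these give $(-1)^\ell c_\ell\ge 4n_2b_{2,\ell}-D\to\infty$.
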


\begin{proof}
When $\rank\A\leq 2$, the claim can be checked directly.
Therefore, we may assume that $\rank \A=3$.
Let $\#\A=n$.
Write $n_k$ for the number of faces $F\in \FF(\A)$ with $\codim F=2$ and $\#\A_F=k$.
Define $(e_\ell)_{\ell\geq 0}$ by
\[
\Mag(\A,-q)=\dfrac{1}{1+(-q)^n}\biggl(\#\Ch(\A)+\sum_{k=2}^\infty 2kn_kG_k(q)\biggr)=\sum_{\ell=0}^\infty e_\ell q^\ell.
\]
It suffices to show that $e_\ell>0$ for all sufficiently large $\ell$.
Put
\[
H(q):=\#\Ch(\A)+\sum_{k=2}^{\infty}2k n_kG_k(q),\qquad
H(q)=\sum_{\ell=0}^\infty a_\ell q^\ell .
\]
We first record two elementary facts about the coefficients $g_{k,\ell}$ of $G_k$.
If $k$ is even, then
\[
G_k(q)=\frac{q(1+q^{k-1})}{(1-q)(1-q^k)}
      =\frac{q}{1-q}\sum_{j=0}^\infty\left(q^{kj}+q^{kj+k-1}\right).
\]
Hence $g_{k,\ell}$ is non-negative and non-decreasing in $\ell$.  In particular,
for $k=2$ we have $g_{2,\ell}=\ell$.
If $k$ is odd, then
\[
G_k(q)=\frac{q(1-q^{k-1})}{(1-q)(1+q^k)}
      =\frac{q(1+q+\cdots+q^{k-2})}{1+q^k}.
\]
Thus the coefficient sequence $(g_{k,\ell})_{\ell\ge 0}$ is periodic and bounded.
Since $n_2>0$ by Lemma \ref{n2positive}, there is a constant
$C>0$ such that $a_\ell\ge 4n_2\ell-C$ for all $\ell\ge 0$.

We now distinguish the parity of $n$.
First suppose that $n$ is odd.  Then $1+(-q)^n=1-q^n$, so
\[
\Mag(\A,-q)=\frac{H(q)}{1-q^n}=H(q)\sum_{j=0}^\infty q^{nj}.
\]
Therefore, with $N=\lfloor \ell/n\rfloor$,
\[
e_\ell=\sum_{j=0}^{N}a_{\ell-nj}
     \ge \sum_{j=0}^{N}\bigl(4n_2(\ell-nj)-C\bigr).
\]
The right-hand side is a polynomial of degree $1$ in $\ell$ with leading coefficient $4n_2(N+1)>0$, so it tends to $+\infty$ as $\ell\to\infty$.
Hence $e_\ell>0$ for all
sufficiently large $\ell$ in this case.

Next suppose that $n$ is even.  Then $1+(-q)^n=1+q^n$ and
\[
\frac{1}{1+q^n}=\sum_{j=0}^\infty (-1)^j q^{nj}.
\]
For even $k$, let $b_{k,\ell}$ be the coefficient of $q^\ell$ in the series expansion of $\frac{G_k(q)}{1+q^n}$.
Then we have
\[
b_{k,\ell}=\sum_{j=0}^{\lfloor \ell/n\rfloor}(-1)^jg_{k,\ell-nj}.
\]
Since $(g_{k,\ell})_\ell$ is non-negative and non-decreasing, pairing adjacent
terms gives $b_{k,\ell}\ge 0$ for every $\ell$.
For $k=2$, using $g_{2,m}=m$, we also get $b_{2,\ell}\to +\infty$.
We claim that the odd $k$ terms, after division by $1+q^n$, are
bounded.  Indeed, for odd $k$ and even $n$,
\[
\frac{G_k(q)}{1+q^n}
 =\frac{q(1+q+\cdots+q^{k-2})}{(1+q^k)(1+q^n)}.
\]
Moreover $\gcd(1+q^k,1+q^n)=1$, because $k$ is odd and $n$ is even.  Hence the
denominator has only simple roots of unity, and the coefficient sequence is
bounded.  The coefficient sequence of $\#\Ch(\A)/(1+q^n)$ is also bounded.  Thus there
is a constant $D>0$ such that the total contribution of $\#\Ch(\A)$ and all odd $k$
terms to $e_\ell$ is at least $-D$ for every $\ell$.
Combining these observations, and using Lemma \ref{n2positive}, we obtain
\[
e_\ell\ge 4n_2 b_{2,\ell}-D.
\]
Since $b_{2,\ell}\to +\infty$, the right-hand side is positive for all
sufficiently large $\ell$.  This proves the theorem.
\end{proof}

\subsection{Arrangements with non-alternating magnitude}
In this section, we use the construction of Burr--Gr\"unbaum--Sloane \cite{BGS} in the orchard problem to construct an example of an arrangement whose magnitude does not have the alternating-sign property.

Recall the formula \eqref{mag_minus_q} for $\Mag(\A,-q)$.
The series expansion of $G_3(q)$ is given by
\[
    G_3(q)=\dfrac{q}{1-q+q^2}=q+q^2-q^4-q^5+q^7+q^8-\cdots.
\]
This suggests that for an arrangement with a large $n_3$, the series expansion of $\Mag(\A,-q)$ might have negative coefficients.
This motivates the construction given below.

\begin{defn}
Let $S\subset \R^2$ be a finite set of points.
For each $k\geq 2$, define $m_k$ to be the number of lines that contain exactly $k$ points from $S$.
Define an arrangement $\A(S)$ in $\R^3$ by
\[
\A(S)=\{(x,y,1)^\perp\mid (x,y)\in S\}.
\]
The arrangement $\A(S)$ is essential unless $S$ is contained in a single line.
It satisfies $n_k=2m_k$ for $k\geq 2$.
\end{defn}

We use the following result to construct an $S$ with large $m_3$.

\begin{thm}[{\cite[Theorem 1]{BGS}}, see also {\cite[Proposition 2.6]{GreenTao}}]\label{thm:large_m3}
    For any $N\geq 3$, there exists a finite set $S\subset \R^2$ with $\#S=N$ such that
    \[
    m_k=\begin{cases}
        N-1-2\cdot \mathbf{1}_{3\mid N},&\text{ if }k=2;\\
        \biggl\lfloor\dfrac{N(N-3)}{6}+1\biggr\rfloor,&\text{ if }k=3;\\
        0,&\text{ if }k\geq 4,
    \end{cases}
    \]
    where $\mathbf{1}_{3\mid N}$ equals $1$ if $3$ divides $N$ and $0$ otherwise.
\end{thm}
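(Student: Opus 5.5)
The plan is to realise $S$ on a real cubic curve and exploit its group law. This is the Burr--Gr\"unbaum--Sloane construction in the form of \cite[Proposition 2.6]{GreenTao}.

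\emph{Setup.} Fix a smooth real cubic curve $\gamma\subset\R^2$, say an elliptic curve in Weierstrass form. Take an inflection point at infinity as the origin of the chord--tangent group law. The identity component $\gamma^0$ of its real points is a compact abelian Lie group, so $\gamma^0\cong\R/\Z$. Three points $P,Q,R$ of $\gamma$, counted with multiplicity, lie on a common line if and only if $P+Q+R=0$. Identify the cyclic subgroup $\frac1N\Z/\Z\subset\R/\Z$ with $\Z/N$. Fix $x\in\R/\Z$ and put $S=x+\frac1N\Z/\Z$, viewed as points of $\gamma^0\subset\R^2$, choosing $x$ so that no point of $S$ is at infinity. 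Translating by $x$, three points $a,b,c\in\Z/N$ are collinear exactly when $a+b+c=c_0$, where $c_0$ is determined by $3Nx$ modulo $N$ and can be any prescribed residue class by choosing $x$.

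\emph{Counting.} By B\'ezout, a line meets the cubic $\gamma$ in at most $3$ points, so $m_k=0$ for $k\ge 4$. Every pair of distinct points of $S$ lies on exactly one line, so
\[
\binom N2=m_2+3m_3 .
\]
A line through $a\neq b$ in $S$ meets $\gamma$ a third time at $c=c_0-a-b$. The line is ordinary exactly when $c\notin S$, or when $c$ coincides with $a$ or $b$, i.e.\ it is tangent at one of the points. With the coset chosen so that the third intersection point always lies in the coset, the case $c\notin S$ does not occur. Ordinary lines then correspond to pairs $\{a,b\}$ with $b=c_0-2a$ and $b\neq a$.

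Each $a$ gives one such pair, and distinct $a$ give distinct lines, since a line meets $\gamma$ only at $a$ doubly and $b$ simply. Hence $m_2=N-\#\{a\in\Z/N: 3a=c_0\}$. If $3\nmid N$, the equation $3a=c_0$ has exactly one solution, so $m_2=N-1$. If $3\mid N$, choose $c_0\in 3\Z/N$ to get three solutions, so $m_2=N-3$. In both cases $m_2=N-1-2\cdot\mathbf 1_{3\mid N}$. Solving the displayed identity for $m_3$ and reducing $N(N-3)$ modulo $6$ in the two cases then yields $m_3=\lfloor N(N-3)/6+1\rfloor$.

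\emph{Main obstacle.} The delicate step is the geometric input, not the counting. One must justify that:
\begin{itemize}
\item the real identity component is a circle group whose finite subgroups are cyclic;
\item collinearity of distinct points is exactly the relation $a+b+c=c_0$;
\item tangency accounts for all degenerate cases, so that no third intersection falls outside $S$;
\item no point of $S$ lies at infinity.
\end{itemize}
I would handle these by citing the standard chord--tangent description for the identity component. If orientation or sign issues arise with the coset $x+\frac1N\Z/\Z$, I would fall back to the version of \cite{GreenTao} that places the points on the union of a conic and a line.
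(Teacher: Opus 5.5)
Your counting (Bézout, $\binom N2=m_2+3m_3$, tangent lines as the only ordinary lines, the floor computation) is correct, and using a smooth elliptic curve instead of the acnodal cubic of Example~\ref{ex:18} is harmless. The gap is in the case $3\mid N$: there your two requirements on $x$ cannot both hold.

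To get three solutions of $3a=c_0$ you need $c_0=3c'$. Then $3x\equiv -c_0/N$ in $\R/\Z$ says $3(x+c'/N)\equiv 0$, so $x+c'/N\in\{0,\tfrac13,\tfrac23\}$. Since $3\mid N$, these all lie in $\tfrac1N\Z/\Z$. Hence $x\in\tfrac1N\Z/\Z$, and your coset $S$ is the subgroup itself. It therefore contains the identity $O$, which is the flex at infinity of your Weierstrass model.

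Conversely, when $3\mid N$, any coset avoiding $O$ has $3\nmid c_0$. Then $3a=c_0$ has no solution and you get $m_2=N$, not $N-3$. Geometrically, the solutions of $3a=c_0$ are the real flexes $0,\tfrac13,\tfrac23$ lying in $S$, and all three lie in a coset only if it is the subgroup. So your bullet ``no point of $S$ lies at infinity'' is not a technicality that citing the chord--tangent law will settle; it fails in this case.

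The fix is to work in $\R P^2$ throughout. Take $S=\tfrac1N\Z/\Z$ with $c_0=0$ for every $N$; then $3a=0$ has $1$ or $3$ solutions according as $3\nmid N$ or $3\mid N$. Afterwards, apply a projective transformation sending some line that misses the finite set $S$ to the line at infinity. Collinearity is preserved, so every $m_k$ is unchanged, and $S$ now lies in $\R^2$.

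This is exactly how the paper handles $N=18$ in Example~\ref{ex:18}. It takes the full subgroup $\{\phi(j/18)\}$, which contains $\phi(0)=[0:1:0]$, and then applies $[X:Y:Z]\mapsto[X:Y:X+Y+Z]$. Beyond that example, the paper only cites \cite{BGS,GreenTao}.

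Also drop your fallback. Points on a conic and a line form the B\"or\"oczky construction for the Sylvester--Gallai problem. Its line component carries roughly $N/2$ points of $S$, so it violates $m_k=0$ for $k\ge 4$ once $N$ is moderately large. It also has far fewer $3$-rich lines than required.
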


\begin{ex}\label{ex:18}
    As an example of Theorem \ref{thm:large_m3}, we construct a set of points $S\subset \mathbb{R}^2$ with $\#S=18$ such that
    \[
    m_k=\begin{cases}
        15,&\text{ if }k=2;\\
        46,&\text{ if }k=3;\\
        0,&\text{ if }k\geq 4.
    \end{cases}
    \]
    Let $X$ be the set of nonsingular points of the real projective algebraic curve
    \[
    Y^2=X^2(X-Z).
    \]
    Then the points of $X$ naturally form a group isomorphic to $\mathbb{R}/\mathbb{Z}$.
    An explicit parametrization is given by
    \[
    \phi(t)=\bigl[\cot^2(\pi t)+1:\cot(\pi t)(\cot^2(\pi t)+1):1\bigr]\quad (t\in \mathbb{R}/\mathbb{Z}).
    \]
    Let $P_j\in \R^2$ be the image of $\phi(j/18)$ under the projective transformation
    $[X:Y:Z]\mapsto [X:Y:X+Y+Z]$.
    Then $P_i,P_j,P_k$ lie on a single line if and only if $i+j+k\equiv 0 \pmod{18}$.
    Define $S\subset \R^2$ by
    \[
    S=\{P_j\mid j=0,1,\dots,17\}.
    \]
    There are exactly $46$ triples $\{i,j,k\}\subset \mathbb{Z}/18\mathbb{Z}$ such that $i+j+k\equiv 0\pmod{18}$, so we have $m_3=46$.
    Also, $m_k=0$ for $k\geq 4$.
    Since $m_2+3m_3=\binom{18}{2}$ by double-counting, we have $m_2=15$.
\end{ex}

\begin{ex}
    We give an explicit construction of an arrangement $\A$ such that the series expansion of $\Mag(\A)$ does not have the alternating-sign property.
    Let $S\subset \R^2$ be a finite set as in Example \ref{ex:18}.
    Then the associated arrangement $\A(S)$ satisfies
    \[
    \#\A(S)=18,\quad n_k=
    \begin{cases}
        30,&\text{ if }k=2;\\
        92,&\text{ if }k=3;\\
        0,&\text{ if }k\geq 4,
    \end{cases}\quad\text{and}\quad \#\Ch(\A)=216.
    \]
    Figure \ref{fig:ex18} shows the projectivization of $\A(S)$.
    By Theorem \ref{thm:rank3}, the magnitude of $\A(S)$ is given by
    \begin{align*}
        \Mag(\A)&{}=\dfrac{1}{1+q^{18}}\biggl(216-\dfrac{120q}{(1+q)^2}-\dfrac{552q}{1+q+q^2}\biggr)\\
        &{}=216-672q+792q^2-360q^3-72q^4-48q^5+720q^6-1392q^7+1512q^8-\cdots.
    \end{align*}
    In particular, the coefficient of $q^4$ is $-72<0$.
\end{ex}

\begin{figure}
    \centering
    \includegraphics[width=0.35\linewidth]{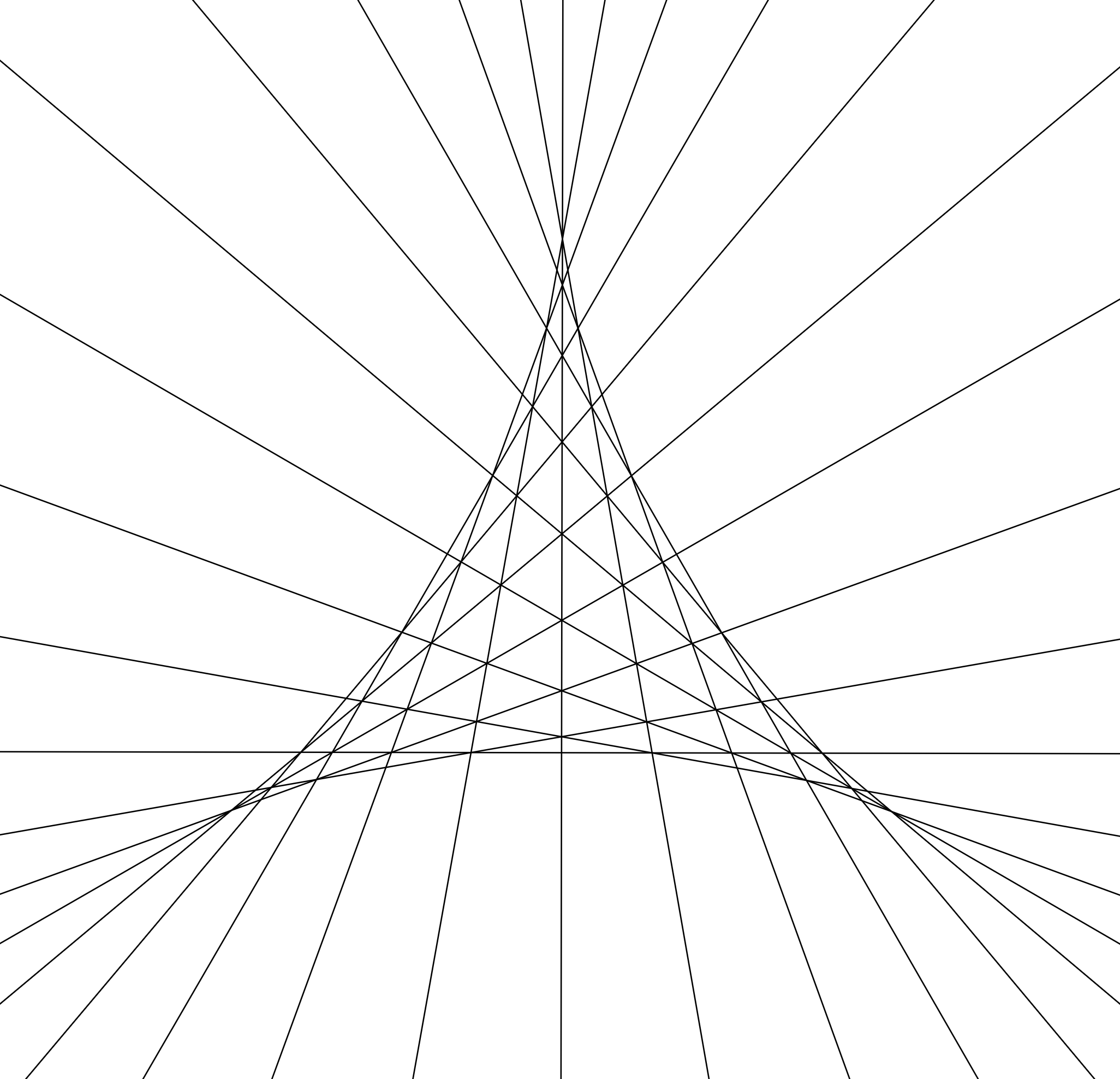}
    \caption{An arrangement whose magnitude does not have the alternating-sign property}
    \label{fig:ex18}
\end{figure}

\section{Magnitude homology of COM-graphs}
Throughout this section, we fix a finite set $E$.
Let $G\subset Q_E$ be a COM-graph.
We define the magnitude homology $MH_{k,\ell}(G)$ of $G$ as the magnitude homology of its underlying graph.

\subsection{Magnitude homology with support}
We first define the following variant of magnitude homology.

\begin{defn}[Magnitude homology with support]\label{def:MH_support_COM}
    Let $G\subset Q_E$ be a COM-graph.
    For a proper chain $\vec{x}=(x_0,\dots,x_k)$ of $G$, we define its support $S(\vec{x})$ to be $S(\{x_0,\dots,x_k\})$.
    For a subset $E'\subset E$, define $MC_{k,\ell}(G;E')$ to be the subgroup of $MC_{k,\ell}(G)$ generated by chains $\vec{x}$ with $S(\vec{x})=E'$.
    The following lemma shows that $MC_{*,\ell}(G;E')$ is a subcomplex of $MC_{*,\ell}(G)$.
    The $E'$-supported magnitude homology $MH_{*,\ell}(G;E')$ is defined to be the homology of the chain complex $MC_{*,\ell}(G;E')$.
\end{defn}

\begin{lem}\label{lem:full_support_boundary}
    Let $G\subset Q_E$ be a COM-graph.
    Let $\vec{x}=(x_0,\dots,x_k)$ be a proper chain of $G$ with $S(\vec{x})=E'$.
    If $(x_{i-1},x_i,x_{i+1})$ is geodesic, then $\partial_i\vec{x}=(x_0,\dots,\widehat{x_i},\dots,x_k)$ has the same support $S(\partial_i\vec{x})=E'$.
\end{lem}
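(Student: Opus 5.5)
Since $G\subset Q_E$ is a partial cube, the distance between two vertices is the Hamming distance of their sign vectors. The support of a set of vertices is the set of coordinates on which not all of them agree. The plan is to show that deleting a smooth node $x_i$ removes no coordinate from the support. The inclusion $S(\partial_i\vec{x})\subseteq S(\vec{x})=E'$ is immediate, because the vertex set of $\partial_i\vec{x}$ is a subset of that of $\vec{x}$. So we only need to show that every $e\in E'$ still lies in $S(\partial_i\vec{x})$.

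Fix $e\in E'$ and suppose, for contradiction, that all vertices $x_j$ with $j\neq i$ have the same sign $\sigma$ at $e$. Then $x_i$ must be the only vertex with $(x_i)_e=-\sigma$, since otherwise $e\notin S(\vec{x})$. In particular $(x_{i-1})_e=(x_{i+1})_e=\sigma\neq(x_i)_e$.

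Because $(x_{i-1},x_i,x_{i+1})$ is geodesic, we have $d(x_{i-1},x_{i+1})=d(x_{i-1},x_i)+d(x_i,x_{i+1})$. In Hamming terms this equality holds coordinatewise: for every $f\in E$ the contribution at $f$ to the left side is at most the contribution to the right side (triangle inequality in $\{+,-\}$). Equality of the totals therefore forces equality in every coordinate. At $e$, however, the left side contributes $0$ and the right side contributes $1+1=2$, a contradiction.

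Hence each $e\in E'$ is witnessed by two vertices of $\partial_i\vec{x}$ with different signs, and $S(\partial_i\vec{x})=E'$.

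There is no real obstacle here. The only points to be careful about are that the argument uses only the partial cube structure, not the COM axioms, and that $x_{i-1},x_{i+1}$ exist because $i$ is an internal index ($1\le i\le k-1$). The coordinatewise equality is the same trick used in the proof of Lemma \ref{facegated}. As a consequence, $\partial$ maps $MC_{*,\ell}(G;E')$ into itself, since each nonzero $\partial_i\vec{x}$ arises only when $x_i$ is smooth.
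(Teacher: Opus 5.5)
Your proof is correct and matches the paper's argument. The paper likewise uses the coordinatewise form of the geodesic condition in $Q_E$: if $x_{i-1}$ and $x_{i+1}$ agree at a coordinate $e$, then $x_i$ agrees with them there, so $\{(x_{i-1})_e,(x_i)_e,(x_{i+1})_e\}=\{(x_{i-1})_e,(x_{i+1})_e\}$ and deleting $x_i$ removes no coordinate from the support.
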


\begin{proof}
Let $a,b,c\in Q_E$.
If $(a,b,c)$ is geodesic, then for every coordinate $e\in E$ with $a_e=c_e$, we have $b_e=a_e=c_e$. In particular, $\{a_e,b_e,c_e\}=\{a_e,c_e\}$ holds for every $e\in E$.
Applying this to $(a,b,c)=(x_{i-1},x_i,x_{i+1})$ proves the assertion.
\end{proof}

By definition, the magnitude homology $MH_{*,\ell}(G)$ decomposes as
\[
MH_{*,\ell}(G)\cong \bigoplus_{E'\subset E}MH_{*,\ell}(G;E').
\]
The main result of this section is the following. The remainder of this section is devoted to its proof.

\begin{thm}[Vanishing theorem]\label{thm:COM_vanish}
    Let $G\subset Q_E$ be a realizable COM-graph.
    Suppose that $G$ has no antipodal subgraph $H\subset G$ with $\supp(H)=E$.
    Then we have
    \[
    MH_{k,\ell}(G;E)=0\quad (\forall k,\ell\geq 0).
    \]
\end{thm}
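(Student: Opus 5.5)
We plan to argue by induction on the number of vertices of $G$, using a corner peeling (Theorem~\ref{thm:corner-peeling}). The base case is $G=\varnothing$. For nonempty $G$, observe first that $E\neq\varnothing$: if $E=\varnothing$, then a single vertex would be an antipodal subgraph with support $E$, contradicting the hypothesis.

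For the inductive step, write $G=G'\sqcup C$, where $C$ is a corner of $G$. Let $H\subset G$ be the unique maximal antipodal subgraph containing $C$. Since $G'$ and $G$ are partial cubes in $Q_E$, both carry the Hamming metric, so distances agree on $G'$. Thus $MC_{*,\ell}(G';E)$ is the subcomplex of $MC_{*,\ell}(G;E)$ spanned by chains avoiding $C$; it is a subcomplex because $\partial_i$ only deletes vertices and, by Lemma~\ref{lem:full_support_boundary}, preserves the support $E$. Moreover $G'$ satisfies the hypothesis of the theorem. Indeed, any antipodal subgraph of $G'$ is convex in $G$ by fact (1) about corners, and it is antipodal in $G$ as well, being invariant under the flip of its own support. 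So if it had support $E$, it would contradict the hypothesis on $G$. By induction $MC_{*,\ell}(G';E)$ is acyclic, and the long exact sequence reduces the theorem to showing that the quotient complex $Q_*:=MC_{*,\ell}(G;E)/MC_{*,\ell}(G';E)$ is acyclic. The quotient is based by the proper chains of support $E$ and length $\ell$ that meet $C$.

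For $Q_*$ we will build a complete acyclic Morse matching and apply Theorem~\ref{thm:complete-morse-matching}. Let $p\colon G\to H$ be the gate projection; $H$ is antipodal, hence gated because $G$ is a COM-graph. Since $\supp(H)\neq E$ and $\vec{x}$ has support $E$, every basis chain $\vec{x}$ of $Q_*$ contains a vertex outside $H$. Fact (3) says no vertex of $G\setminus H$ is adjacent to $C$. Choose, by a fixed rule, the first index $i$ with $x_i\in C$ such that a chain-neighbour $x_j$, $j=i\pm1$, lies outside $H$; if no such $i$ exists, take a maximal block of consecutive entries in $H$ that contains a $C$-vertex and look at its boundary. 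Consider the gate $g=p(x_j)$. By gatedness $(x_j,g,x_i)$ is geodesic, and $g\neq x_j$. The matching is then:
\begin{enumerate}
\item if $g\neq x_i$ and $g$ is not already present between $x_j$ and $x_i$, insert $g$ between them;
\item if $g$ is already present as a smooth node adjacent to $x_i$, delete it.
\end{enumerate}
Insertion or deletion of a smooth node preserves the length, and by Lemma~\ref{lem:full_support_boundary} it preserves the support $E$ and the property of meeting $C$, since $x_i$ is untouched. The incidence coefficient of a matched pair is $\pm1$, because the inserted node is smooth and appears once. To make the assignment an involution, we must ensure that the chosen index $i$, and the chain-neighbour used to compute the gate, are the same before and after the move. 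Here convexity of $H$ and the fact that $g\in H$ is adjacent in the chain to $x_i$ will be used. The degenerate case $g=x_i$ must be excluded or handled by passing to the next candidate; this is where fact (3) enters, since it forces $d(x_j,x_i)\geq 2$ and places the entry point of $H$ appropriately relative to $C$.

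The main obstacle will be this last part: choosing the rule so that the matching is well defined, complete, and acyclic. For acyclicity we plan a standard argument, namely that along any alternating path in $\Gamma_M$ a suitable statistic strictly decreases, so that no directed cycle can occur. We will try the pair (position of the first matched $C$-vertex, number of nodes lying in $H$ before it), ordered lexicographically. If this rule fails in the edge cases (a $C$-vertex at an endpoint, or $g\in C$), we would refine it by ordering the vertices of $C$ along the peeling and projecting onto the smallest face of $H$ containing both $x_i$ and the relevant neighbour, which is gated by the COM axioms.
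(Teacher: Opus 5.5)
Your overall architecture is the paper's: peel corners, reduce to acyclicity of $MC_{*,\ell}(G;E)/MC_{*,\ell}(G';E)$, and build a complete acyclic Morse matching by toggling the gate $p(y)\in H$ of a vertex $y\notin H$. (For the induction, take $C$ to be the first corner of a fixed peeling so that $G'$ again admits one. The paper simply filters by the whole peeling.)

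However, the matching itself, which carries all the content, is left undefined, and the rule you sketch is not an involution. Take $G=Q_3\setminus\{(-,-,-)\}$ with $C=\{(+,-,-)\}$ and $H=\{(+,\pm,\pm)\}$, and the full-support chain $\bigl((-,+,+),(+,-,-),(-,-,+)\bigr)$ of length $5$. Its $C$-vertex has outside chain-neighbours on both sides. Using the left one, you insert $(+,+,+)$. In the new chain the $C$-vertex still has the outside neighbour $(-,-,+)$, so your rule now inserts $(+,-,+)$ instead of deleting $(+,+,+)$; the same happens symmetrically if you prefer the right neighbour. Your fallback for $H$-blocks and your deletion rule (2), which looks next to $x_i$, also fail to fit together once vertices of $H\setminus C$ separate the outside vertex from the $C$-vertex.

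Two ideas are missing.

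\emph{The gate never lies in the corner.} For every $y\in G\setminus H$ we have $p(y)\notin C$. Otherwise the penultimate vertex of a geodesic from $y$ to $p(y)$ is adjacent to $C$, hence lies in $H$ by fact (3), contradicting that $p(y)$ is the closest vertex of $H$. So the degenerate case $g=x_i$ you worry about never occurs.

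\emph{Anchor the toggle at the outside vertex, not at the $C$-vertex.} Let $j$ be the first index with $x_j\in C$. Put $r=\max\{i<j\mid x_i\notin H\}$ if this set is nonempty, and $r=\min\{i>j\mid x_i\notin H\}$ otherwise. Then insert or delete $p(x_r)$ in the slot immediately next to $x_r$, on the side facing $x_j$. Because the toggled vertex is never in $C$, the following are all preserved: which entry is the first $C$-vertex, the selected vertex $x_r$, the left/right type, and membership in the quotient basis. So this is an involution with coefficients $\pm1$. The same fact shows that in the deletion case the entry beyond $p(x_r)$ exists and lies in $H$, so $p(x_r)$ is indeed smooth.

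For acyclicity, the number of $C$-entries cannot increase along a zig-zag path in $\Gamma_M$, so it is constant on a directed cycle. This forces each downward step to delete $x_r$ itself. Deleting $p(x_r)$ returns the previous chain, deleting a $C$-entry lowers the count, and any other deletion leaves an upper chain from which the path cannot go up. After deleting $x_r$, the statistic $\omega=r$ (left chains) or $\omega=-r$ (right chains) strictly decreases, so no cycle exists. Your lexicographic statistic is unverified and does not obviously capture this mechanism.
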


\subsection{First reduction}
Let $G\subset Q_E$ be a realizable COM-graph.
Suppose that $G$ has no antipodal subgraph $H\subset G$ with $\supp(H)=E$.
We want to show that $MH_{k,\ell}(G;E)=0\ (\forall k,\ell\geq 0)$.

By Theorem \ref{thm:corner-peeling}, $G$ admits a corner peeling $G=C_1\sqcup\cdots\sqcup C_m$.
Let $G_i=G\setminus (C_1\sqcup\cdots\sqcup C_{i-1})$.
Then we have a sequence of inclusions of chain complexes
\[
0=MC_{*,\ell}(G_{m+1};E)\hookrightarrow
MC_{*,\ell}(G_m;E)\hookrightarrow
\cdots\hookrightarrow
MC_{*,\ell}(G_1;E)=MC_{*,\ell}(G;E).
\]
It suffices to show that each step $MC_{*,\ell}(G_{i+1};E)\hookrightarrow MC_{*,\ell}(G_i;E)$ is a quasi-isomorphism.
Since each $G_i$ also satisfies the assumption of Theorem \ref{thm:COM_vanish}, we may reduce to the following claim:

\begin{thm}\label{thm:corner-excision}
    Let $G\subset Q_E$ be a COM-graph and suppose that $G$ has no antipodal subgraph $H\subset G$ with $\supp(H)=E$.
    Let $C$ be a corner of $G$ and set $G'=G\setminus C$.
    Then the inclusion $MC_{*,\ell}(G';E)\hookrightarrow MC_{*,\ell}(G;E)$ is a quasi-isomorphism.
\end{thm}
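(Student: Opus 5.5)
The plan is to show that the quotient complex $Q_*:=MC_{*,\ell}(G;E)/MC_{*,\ell}(G';E)$ is contractible. By the long exact sequence of the pair, this is equivalent to the inclusion being a quasi-isomorphism. First I will check that $Q_*$ is a well-defined based complex. Since $G'$ is a COM-graph whose antipodal subgraphs are convex in $G$, the graph $G'$ is isometric in $G$; in fact $G'$ is convex in $G$ (it is itself a partial cube cut out by halfspaces in the realizable picture), so the magnitude complex of $G'$ is literally a subcomplex of that of $G$. Therefore $Q_*$ is free on the proper chains $\vec{x}$ of $G$ with $S(\vec{x})=E$ and length $\ell$ that meet $C$ at least once. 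Its differential is the usual one, except that faces landing entirely in $G'$ are set to zero. I will kill $Q_*$ with Theorem~\ref{thm:complete-morse-matching}.

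To build the matching I will use the structure of the corner. Let $H$ be the unique maximal antipodal subgraph containing $C$. By hypothesis $S(H)\neq E$, so I fix $e_0\in E\setminus S(H)$; all of $H$, hence all of $C$, has the constant value $\sigma$ at $e_0$. Since $H$ is gated (COM-graph), I have the gate projection $p\colon G\to H$. Now take a chain $\vec{x}$ in $Q_*$. Full support forces some node $x_j$ with $(x_j)_{e_0}=-\sigma$, and such a node is not in $H$. Let $x_i$ be the \emph{first} node of $\vec{x}$ lying in $C$. The matching inserts or deletes the gate of the neighbouring node next to $x_i$, in the style of the Hepworth--Willerton/Kaneta--Yoshinaga contractions. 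Concretely, I scan from $x_i$ toward the nearest node $y$ outside $H$; this node exists by full support. I then pair $\vec{x}$ with the chain obtained by inserting $p(y)$ just before $y$ if $p(y)$ is not already there, or by deleting it if it is. Gatedness gives $d(x_i,y)=d(x_i,p(y))+d(p(y),y)$ and more generally $d(h,y)=d(h,p(y))+d(p(y),y)$ for every $h\in H$. So inserting $p(y)$ preserves length and support, $p(y)$ is a smooth node, and the matched coefficient is $\pm1$.

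The points to verify are these:
\begin{enumerate}
\item The partner still meets $C$. The node $x_i$ is untouched, and I insert only $p(y)\in H$, so the first $C$-node does not change.
\item The inserted node $p(y)$ is distinct from its neighbours, so the chain stays proper. If $p(y)$ equals the preceding node, I treat the chain as being in the ``delete'' state instead.
\item The pairing is an involution: after insertion, the nearest non-$H$ node and its gate are recognised identically.
\item The matching is acyclic.
\end{enumerate}
For acyclicity I will use the standard argument: order the chains by the position and the length of the prefix up to $y$. Every unmatched arrow in $\Gamma_M$ either shortens the chain or does not increase this data, while reversed arrows strictly increase it in a controlled way. This yields a potential function that decreases along any cycle.

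I expect the main obstacle to be point~(3) together with acyclicity, in the presence of the corner fact that no vertex of $G\setminus H$ is adjacent to $C$. The danger is that deleting some other smooth node could change which node is ``first in $C$'' or ``nearest outside $H$''. A face could also remove the only $C$-node, but such a face becomes $0$ in $Q_*$, which helps. If the naive choice fails, my first fallback is to filter $Q_*$ by the position of the first $C$-node. I would then prove contractibility of each graded piece separately. On each piece, the insert/delete-$p(y)$ operation gives an explicit contracting homotopy $s$, and gatedness should reduce the check $\partial s+s\partial=\mathrm{id}$ to a few cases.
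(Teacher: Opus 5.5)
Your strategy is the paper's: kill the quotient $MC_{*,\ell}(G;E)/MC_{*,\ell}(G';E)$ by a complete acyclic Morse matching that inserts or deletes the gate $\pi(y)$ (your $p(y)$) of an exterior node $y\notin H$ next to the first $C$-occurrence $x_i$. As written, however, the matching is not well defined, and the one place where the corner axiom is needed is missing.

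\textbf{The choice of $y$.} Suppose you scan rightward from $x_i$ and insert $\pi(y)$ just before $y$. Full support does not produce such a $y$. In the paper's example $G=Q_3\setminus\{(-,-,-)\}$, $C=\{(+,-,-)\}$, the $1$-chain $((-,+,+),(+,-,-))$ has full support but no exterior node after its $C$-node, so it is unmatched. Suppose instead $y$ is the positionally nearest exterior node on either side. Then the rule is unstable: an insertion between a left $y$ and $x_i$ shifts $x_i$ by one position, which can make a right exterior node the nearer one, so the pairing is not an involution. The paper uses a priority rule. If some exterior node precedes the first $C$-node, take the last such $x_r$ and insert/delete $\pi(x_r)$ immediately after it. Only if none exists, take the first exterior node after the first $C$-node and insert/delete immediately before it.

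\textbf{The missing lemma.} Your justification ``I insert only $p(y)\in H$'' is empty, since $C\subset H$. You need $\pi(y)\notin C$ for all $y\in G\setminus H$ (Lemma~\ref{lem:gate-avoids-corner}). This follows from the fact that no vertex of $C$ is adjacent to $G\setminus H$, by looking at the last edge of a geodesic from $y$ to $\pi(y)$. Without it, $y$ could directly follow $x_i$ with $\pi(y)=x_i$, and your ``delete'' step would remove $x_i$, which may be an endpoint or the only $C$-node. Likewise, a left-side insertion could create a new first $C$-node. This lemma is exactly what makes the priority rule stable.

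\textbf{Acyclicity.} Your sketch does not establish it. A face of an upper chain may delete the first $C$-node itself. That moves the reference point to a later $C$-occurrence beyond other exterior nodes, so the position of $y$ can increase. The paper first uses $N_C(\vec x)=\#\{i\mid x_i\in C\}$. This never increases along $\vec x\to\vec y\to\vec x'$, because inserted gates avoid $C$ (again by the lemma), so it is constant on a cycle. Then the only face of $\vec y$ that keeps $N_C$, is a lower chain, and differs from $\vec x$ is the deletion of the exterior node $x_r$. Under that deletion, $\omega=r$ (left chains) or $\omega=-r$ (right chains) strictly decreases.

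\textbf{Two minor points.} First, $G'$ is not convex in $G$ in general. In the example above, $(+,+,-)$ and $(+,-,+)$ lie in $G'$ but have a geodesic through $(+,-,-)\in C$. Only isometry is needed, and it holds because both graphs are partial cubes in $Q_E$. Second, your fallback filtration by the position of the first $C$-node is not a filtration, since $\partial$ can move that position in either direction.
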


Let $G\subset Q_E$ and $C\subset G$ be as in the statement of Theorem \ref{thm:corner-excision}.
Then we have the following:
\begin{enumerate}
        \item There is a unique maximal antipodal subgraph $H\subset G$ which contains $C$.
        \item Since $G$ is a COM-graph, $H$ admits a gate projection $\pi\colon G\to H$.
        \item No vertex of $C$ is adjacent to a vertex of $G\setminus H$.
\end{enumerate}

\begin{lem}\label{lem:gate-avoids-corner}
If $y\in G\setminus H$, then $\pi(y)\in H\setminus C$.
\end{lem}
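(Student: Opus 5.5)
Since $y\notin H$ and $\pi(y)\in H$, it suffices to rule out $\pi(y)\in C$. The plan is to take a shortest path from $y$ to $\pi(y)$ and use the gate property to locate the vertex just before $\pi(y)$.

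Fix a shortest path $y=z_0,z_1,\dots,z_m=\pi(y)$ in $G$, where $m=d(y,\pi(y))\ge 1$. Let $z=z_{m-1}$ be the vertex adjacent to $\pi(y)$.

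We claim $z\notin H$. Since $z$ lies on a shortest path from $y$ to $\pi(y)$, we have $d(y,\pi(y))=d(y,z)+d(z,\pi(y))$. If $z$ were in $H$, then $d(y,z)<d(y,\pi(y))$. This contradicts the fact that $\pi(y)$ is the unique closest vertex of $H$ to $y$ (Definition \ref{gated}). Hence $z\in G\setminus H$.

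Now suppose, for contradiction, that $\pi(y)\in C$. Then $z\in G\setminus H$ is adjacent to the vertex $\pi(y)$ of $C$. This contradicts fact (3) above: no vertex of $C$ is adjacent to a vertex of $G\setminus H$. Therefore $\pi(y)\in H\setminus C$.

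There is no real obstacle. The only point needing care is that the predecessor $z$ of the gate on a geodesic lies outside $H$, and this follows directly from the gate inequality, with no further COM theory required.
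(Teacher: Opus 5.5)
Your proof is correct and essentially the paper's argument: a shortest path from $y$ to $\pi(y)$, the fact that the gate is the closest vertex of $H$ to $y$, and fact (3) on corners. The only difference is the order: the paper assumes $\pi(y)\in C$, uses fact (3) to place the penultimate vertex in $H$, and contradicts minimality, whereas you first use minimality to place that vertex outside $H$ and then contradict fact (3).
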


\begin{proof}
Suppose, for a contradiction, that $\pi(y)=c\in C$.  Let
\[
  y=v_0,v_1,\ldots,v_m=c
\]
be a shortest path from $y$ to $c$.
Since $y\notin H$ and $c\in H$, we have $m>0$.  The vertex $v_{m-1}$ is adjacent to $c$.
Since no vertex of $C$ is adjacent to a vertex of $G\setminus H$, we have $v_{m-1}\in H$.
This contradicts the fact that $c=\pi(y)$ is a closest vertex of $H$ to $y$.  Therefore $\pi(y)\notin C$.
\end{proof}

\subsection{Discrete Morse matching}
Let $G,C,H$ be as above.
Let $G'=G\setminus C$. Define
\[
MC_{*,\ell}(G/G';E):=MC_{*,\ell}(G;E)/MC_{*,\ell}(G';E).
\]
We prove the acyclicity of $MC_{*,\ell}(G/G';E)$ using the discrete Morse theorem (Theorem \ref{thm:complete-morse-matching}).
The group $MC_{k,\ell}(G/G';E)$ is freely generated by the full-support chains
\[
  \vec{x}=(x_0,\ldots,x_k),\quad \supp(\vec{x})=E,
\]
which contain at least one vertex of $C$.
Boundary terms that no longer contain a vertex of $C$ are zero in the quotient.  By Lemma \ref{lem:full_support_boundary}, every nonzero boundary term of a full-support chain is still full-support.
Let $\B_{k,\ell}$ denote the above basis of $MC_{k,\ell}(G/G';E)$, and let $\B=\bigcup_k\B_{k,\ell}$.

\begin{defn}
    Let $\vec{x}=(x_0,\ldots,x_k)\in\B_{k,\ell}$.
    Let
    \[
    j=j(\vec{x})=\min\{i\mid x_i\in C\}
    \]
    be the first occurrence of a vertex of $C$ in $\vec{x}$.
\end{defn}

\begin{defn}[Discrete Morse matching]
    Let $\vec{x}=(x_0,\ldots,x_k)\in\B_{k,\ell}$ and $j=j(\vec{x})$.
    Since $\supp(H)\subsetneq E=\supp(\vec{x})$, there exists some $i$ with $x_i\notin H$.
    \begin{enumerate}
        \item If there exists $i<j$ with $x_i\not\in H$, then we say that $\vec{x}$ is a \emph{left chain}.
        In this case, we let
        \[
        r=r(\vec{x})=\max\{i<j\mid x_i\notin H\}.
        \]
        Then $x_r\notin H$ and $x_{r+1},x_{r+2},\ldots,x_j\in H$.
        If $\pi(x_r)\neq x_{r+1}$, then we call $\vec{x}$ a \emph{left lower chain} and pair it with
        \[
        (x_0,\dots,x_r,\pi(x_r),x_{r+1},\dots,x_k) \in \B_{k+1,\ell}.
        \]
        Since $\pi$ is the gate projection to $H$, we have $d(x_r,x_{r+1})=d(x_r,\pi(x_r))+d(\pi(x_r),x_{r+1})$ and hence the length of the chain does not change.
        If $\pi(x_r)=x_{r+1}$, we call $\vec{x}$ a \emph{left upper chain} and pair it with
        \[
        (x_0,\dots,x_r,x_{r+2},\dots,x_k)\in \B_{k-1,\ell}.
        \]
        This also preserves the length of the chain.
        Moreover, the resulting chain still contains a vertex of $C$ since $\pi(x_r)\notin C$ by Lemma \ref{lem:gate-avoids-corner}.
        \item If there is no $i<j$ with $x_i\not\in H$, then we say that $\vec{x}$ is a \emph{right chain}.
        In this case, there exists some $i>j$ with $x_i\notin H$.
        Let
        \[
        r=r(\vec{x})=\min\{i>j\mid x_i\notin H\}.
        \]
        Then $x_j,x_{j+1},\ldots,x_{r-1}\in H$ and $x_r\notin H$.
        If $\pi(x_r)\neq x_{r-1}$, then we call $\vec{x}$ a \emph{right lower chain} and pair it with
        \[
        (x_0,\ldots,x_{r-1},\pi(x_r),x_r,\ldots,x_k) \in \B_{k+1,\ell}.
        \]
        Since $\pi$ is the gate projection to $H$, we have $d(x_{r-1},x_r)=d(x_{r-1},\pi(x_r))+d(\pi(x_r),x_r)$ and hence the length of the chain does not change.
        If $\pi(x_r)=x_{r-1}$, then we call $\vec{x}$ a \emph{right upper chain} and pair it with
        \[
        (x_0,\ldots,x_{r-2},x_r,\ldots,x_k)\in \B_{k-1,\ell}.
        \]
        This also preserves the length of the chain.
        Moreover, the resulting chain still contains a vertex of $C$ since $\pi(x_r)\notin C$ by Lemma \ref{lem:gate-avoids-corner}.
    \end{enumerate}
    This defines a complete matching $M$ on $\B$ such that the weight of each matched arrow is $\pm 1$.
\end{defn}

\subsection{Proof of acyclicity}
It remains to check the acyclicity of the matching $M$ constructed above.
Let us show that there is no directed cycle in the directed graph $\Gamma_M$.
It suffices to show that there is no directed cycle of the form
\begin{align}
  \vec{x}^{(0)}\to\vec{y}^{(0)}\to \vec{x}^{(1)}\to \vec{y}^{(1)}\to\cdots\to \vec{x}^{(n)}\to\vec{y}^{(n)}\to\vec{x}^{(n+1)}=\vec{x}^{(0)},\label{eq:directed_cycle}
\end{align}
where each $\vec{x}^{(i)}$ is a lower chain, each $\vec{y}^{(i)}$ is the matched upper chain, and $\vec{x}^{(i+1)}$ is a boundary face of $\vec{y}^{(i)}$ different from $\vec{x}^{(i)}$.

Supporse, for a contradiction, that a directed cycle of the form \eqref{eq:directed_cycle} exists.
For a lower chain $\vec{x}$, define $N_C(\vec{x})=\#\{i\mid x_i\in C\}$.
Then one checks that $N_C(\vec{x}^{(i)})\geq N_C(\vec{x}^{(i+1)})$ holds.
Therefore we must have $N_C(\vec{x}^{(0)})=\cdots=N_C(\vec{x}^{(n)})$.
Also define an integer $\omega(\vec{x})$ as follows:
\begin{enumerate}
    \item If $\vec{x}$ is a left lower chain, then $\omega(\vec{x})=r(\vec{x})$.
    \item If $\vec{x}$ is a right lower chain, then $\omega(\vec{x})=-r(\vec{x})$.
\end{enumerate}

\begin{lem}
    For a directed cycle of the form \eqref{eq:directed_cycle}, we have $\omega(\vec{x}^{(i+1)})<\omega(\vec{x}^{(i)})$ for $i=0,1,\dots,n$.
\end{lem}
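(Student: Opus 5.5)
The plan is a direct case analysis of the boundary faces of the upper chain. Fix a step $\vec{x}=\vec{x}^{(i)}\to\vec{y}=\vec{y}^{(i)}\to\vec{x}'=\vec{x}^{(i+1)}$ of \eqref{eq:directed_cycle}. Write $r=r(\vec{x})$, $j=j(\vec{x})$ and $p=\pi(x_r)$. The matched upper chain $\vec{y}$ is obtained from $\vec{x}$ by inserting $p$, and $\vec{x}'=\partial_t\vec{y}$ for some internal index $t$ other than the position of $p$. Two constraints will cut down the possibilities for $t$.

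\begin{enumerate}
\item Since $p\notin C$ (Lemma \ref{lem:gate-avoids-corner}), we have $N_C(\vec{y})=N_C(\vec{x})$. So $N_C(\vec{x}')=N_C(\vec{x})$, which is forced by the constancy of $N_C$ along the cycle, means the deleted vertex $y_t$ is not in $C$. In particular the first occurrence of a vertex of $C$ is not deleted, and the left/right type of $\vec{x}'$ is governed by the same vertices as for $\vec{x}$, up to an index shift.
\item $\vec{x}'$ must be a \emph{lower} chain.
\end{enumerate}

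I will show that every surviving $t$ except one forces $\vec{x}'$ to be an upper chain. That one exceptional deletion decreases $\omega$.

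\emph{Left case.} Here $\vec{y}=(x_0,\dots,x_r,p,x_{r+1},\dots,x_k)$, and $\omega(\vec{x})=r\ge 0$.

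\begin{itemize}
\item If $t<r$, or if $t\ge r+2$ with $y_t\notin C$, then $\vec{x}'$ still contains the consecutive pair $x_r,p$ with $p=\pi(x_r)$. This pair precedes the first $C$-vertex, and everything between $x_r$ and that $C$-vertex lies in $H$. Hence $\vec{x}'$ is a left chain with $r(\vec{x}')$ the index of $x_r$ and $\pi(x_r)$ equal to the next entry, i.e.\ a left upper chain. This is excluded.
\item The only remaining option is $t=r$, deleting $x_r$ itself. Then every vertex outside $H$ preceding the first $C$-vertex of $\vec{x}'$ has index $\le r-1$.
  \begin{itemize}
  \item If $\vec{x}'$ is a left chain, then $\omega(\vec{x}')=r(\vec{x}')\le r-1$.
  \item If $\vec{x}'$ is a right chain, then $\omega(\vec{x}')=-r(\vec{x}')<0\le r$, since $r(\vec{x}')>j(\vec{x}')\ge 0$.
  \end{itemize}
  In both cases $\omega(\vec{x}')<\omega(\vec{x})$.
\end{itemize}

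\emph{Right case.} Here $\vec{y}=(x_0,\dots,x_{r-1},p,x_r,\dots,x_k)$ with $p=\pi(x_r)$ at index $r$, and $\omega(\vec{x})=-r$. No vertex before index $j$ is outside $H$, and deleting a non-$C$ vertex does not create one. So $\vec{x}'$ is again a right chain with the same first $C$-vertex.

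\begin{itemize}
\item If $y_t\notin C$ and $t\notin\{r,r+1\}$, then $p,x_r$ remain consecutive with $p=\pi(x_r)$. All vertices between the first $C$-vertex and $p$ lie in $H$, so $\vec{x}'$ is a right upper chain. This is excluded.
\item The only remaining deletion is $t=r+1$, removing $x_r$. The first vertex after $j$ not in $H$ then occurs among the old $x_{r+1},x_{r+2},\dots$, which sit at indices $\ge r+1$ in $\vec{x}'$. Hence $r(\vec{x}')\ge r+1$, and $\omega(\vec{x}')\le -(r+1)<-r=\omega(\vec{x})$.
\end{itemize}

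\emph{Bookkeeping.} Throughout I will check the index shifts, i.e.\ how the positions of $x_r$, $p$ and the first $C$-vertex move after deletion. I will also use that $\partial_t$ is only taken for internal $t$, so when $r=0$ the case $t=r$ simply does not occur. The main obstacle is verifying in each excluded case that $\vec{x}'$ really is of upper type, and not accidentally of the other (left/right) type. This is where constraint 1 is essential: any change of $j$ would require deleting a $C$-vertex, which is ruled out by the constancy of $N_C$.
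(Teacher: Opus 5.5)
Your proposal is correct and follows the paper's proof essentially step for step. Constancy of $N_C$ rules out deleting a vertex of $C$. Every deletion other than $x_r$ (and other than the inserted $\pi(x_r)$) leaves an upper chain, and deleting $x_r$ strictly decreases $\omega$ in both the left and right cases. Your extra bookkeeping only makes explicit what the paper leaves implicit: you exclude all $C$-vertices rather than just $x_j$, and you note that when $r=0$ (left case) or $r=k$ (right case) no admissible deletion exists.
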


\begin{proof}
First suppose that $\vec{x}^{(i)}$ is a left lower chain.
Write
\[
  \vec{x}^{(i)}=(\dots,x_r,x_{r+1},\dots,x_j,\dots),
  \quad x_r\notin H,\quad x_{r+1},\ldots,x_j\in H.
\]
The matched upper chain is
\[
  \vec{y}^{(i)}=(\dots,x_r,\pi(x_r),x_{r+1},\dots,x_j,\dots).
\]
The chain $\vec{x}^{(i+1)}$ is obtained by deleting a vertex $v$ from $\vec{y}^{(i)}$.
Since $\vec{x}^{(i)}\neq\vec{x}^{(i+1)}$, we have $v\neq \pi(x_r)$.
Since $N_C(\vec{x}^{(i)})=N_C(\vec{x}^{(i+1)})$, we have $v\neq x_j$.
If $v\neq x_r$, then $\vec{x}^{(i+1)}$ remains an upper chain.
Therefore $v$ must be $x_r$.
After deleting $x_r$, either there is still a vertex outside $H$ before $x_j$, in which case $r(\vec{x}^{(i+1)})<r(\vec{x}^{(i)})$, or there is no exterior vertex, in which case $\vec{x}^{(i+1)}$ is a right chain and $\omega(\vec{x}^{(i+1)})<0\leq \omega(\vec{x}^{(i)})$.  In both subcases, we have $\omega(\vec{x}^{(i+1)})<\omega(\vec{x}^{(i)}).$

The right case is almost symmetric.
Write
\[
  \vec{x}^{(i)}=(\dots,x_j,\dots,x_{r-1},x_r,\dots),
  \quad x_r\notin H,\quad x_j,\dots,x_{r-1}\in H.
\]
The matched upper chain is
\[
  \vec{y}^{(i)}=(\dots,x_j,\dots,x_{r-1},\pi(x_r),x_r,\dots).
\]
The chain $\vec{x}^{(i+1)}$ is obtained by deleting a vertex $v$ from $\vec{y}^{(i)}$.
Since $\vec{x}^{(i)}\neq\vec{x}^{(i+1)}$, we have $v\neq \pi(x_r)$.
Since $N_C(\vec{x}^{(i)})=N_C(\vec{x}^{(i+1)})$, we have $v\neq x_j$.
If $v\neq x_r$, then $\vec{x}^{(i+1)}$ remains an upper chain.
Therefore $v$ must be $x_r$.
After deleting $x_r$, there is still a vertex outside $H$ after $x_j$.
The selected index $r(\vec{x}^{(i+1)})$ satisfies $r(\vec{x}^{(i+1)})>r(\vec{x}^{(i)})$, and therefore
\[
  \omega(\vec{x}^{(i+1)})=-r(\vec{x}^{(i+1)})<-r(\vec{x}^{(i)})=\omega(\vec{x}^{(i)}).
\]
This completes the proof.
\end{proof}

This contradiction shows that there is no directed cycle in $\Gamma_M$ and thus the matching $M$ is acyclic.
Therefore, by Theorem \ref{thm:complete-morse-matching}, $MC_{*,\ell}(G/G';E)$ is acyclic and thus Theorem \ref{thm:COM_vanish} is proved.

\section{Magnitude homology of arrangements}

Let $\A$ be an arrangement.
We continue to denote invariants of $\T(\A)$ simply by the corresponding invariants of $\A$, such as $MH_{k,\ell}(\A):=MH_{k,\ell}(\T(\A))$, $\beta_{k,\ell}(\A):=\rank MH_{k,\ell}(\A)$, and $\chi_{\ell}(\A)=\sum_k(-1)^k\beta_{k,\ell}(\A)$. Also recall the notation $c^X=\#\Ch(\A^X)$ and $c_X=\#\Ch(\A_X)$ for $X\in L(\A)$.

\subsection{Small lengths}
We begin our study of $MH_{k,\ell}(\A)$ by investigating small lengths $\ell\leq 2$.

\begin{prop}\label{smalllength}
    Let $\A$ be an arrangement. Then the only nontrivial $MH_{k,\ell}(\A)$ for $\ell\leq 2$ are as follows.
    \begin{enumerate}
        \item $MH_{0,0}(\A)$ is free abelian of rank $\beta_{0,0}(\A)=\#\Ch(\A)$.
        \item $MH_{1,1}(\A)$ is free abelian of rank $\beta_{1,1}(\A)=2\sum_{H\in \A}c^H$.
        \item $MH_{2,2}(\A)$ is free abelian of rank $\beta_{2,2}(\A)=\beta_{1,1}(\A)+4\sum_{X\in L_2(\A),\#\A_X=2}c^X$.
    \end{enumerate}
\end{prop}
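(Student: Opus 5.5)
Parts (1) and (2) follow from Proposition \ref{lowdeg}: $\beta_{0,0}=\#\Ch(\A)$, and $\beta_{1,1}=2\#E(\T(\A))$. The edges of $\T(\A)$ correspond bijectively to codimension-one faces: an edge joins two chambers separated by a single hyperplane $H$, i.e.\ sharing a facet whose support is $H$. The number of faces with support $H$ is $c^H=\#\Ch(\A^H)$, so $\#E=\sum_H c^H$. For vanishing off the diagonal with $\ell\le 2$, Proposition \ref{lowdeg}(3) gives $k\le \ell$, and $k\ge 1$ strictly above $\ell/\diam$ when $\ell>0$ and $\diam\ge2$. So for $\ell=1$ only $k=1$ survives. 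For $\ell=2$ we must exclude $k=1$, which Proposition \ref{lowdeg}(3) handles whenever $\diam>1$. The degenerate case $\diam\le1$ means $\#\A\le1$, and then $\T(\A)=K_2$ or a point, which is diagonal by Example \ref{complete_graph}.

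For $\ell=2$, $k=2$, I would use the endpoint decomposition (\ref{vertexdecomp}). Since $\T(\A)$ is bipartite (Proposition \ref{basic}), every chain $(x_0,x_1,x_2)$ of length $2$ has $d(x_0,x_2)\in\{0,2\}$. Also $MC_{3,2}=0$, so $MH_{2,2}=\ker(\partial\colon MC_{2,2}\to MC_{1,2})$.

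If $x_0=x_2$, the chain $(x_0,x_1,x_0)$ is not geodesic, so its boundary is zero. These chains contribute $2\#E=\beta_{1,1}$, since they are indexed by ordered adjacent pairs.

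If $d(x_0,x_2)=2$, the summand is geodesic. By Proposition \ref{intervalMH} it equals $\widetilde H_0$ of the open interval $(x_0,x_2)$, which is a discrete set of $m$ points with $m\in\{1,2\}$, since $\T(\A)$ is a partial cube. Then $\widetilde H_0$ is free of rank $m-1$. So we only need to count ordered pairs $(C,D)$ at distance $2$ with two common neighbours.

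The main step is identifying these pairs. Let $S(C,D)=\{H_1,H_2\}$. Both intermediate chambers exist exactly when $[C,D]$ is a square. I expect this to be equivalent to $[C,D]=\Ch(\A)_F$ for a codimension-two face $F$ with $\A_F=\{H_1,H_2\}$, so that $\#\A_X=2$ for $X=H_1\cap H_2$. Two routes support this. Proposition \ref{intervelhomotopy} gives that the open interval is $S^0$ (two points) precisely when $[C,D]$ is a face subgraph with $\rank\A_F=2$ and diameter $\#\A_F=2$. The direct route uses a convex-geometry argument: four chambers around a common codimension-two face with only two hyperplanes through it.

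Conversely, each such face $F$ with $\#\A_F=2$ has $\T(\A)_F\cong C_4$. It contributes $4$ ordered antipodal pairs, and distinct faces give distinct pairs, since the gated interval determines $F$. The number of faces with support $X$ is $c^X$, so the total is $4\sum_{X\in L_2,\#\A_X=2}c^X$.

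Torsion-freeness holds because every summand is free: kernels of maps between free groups, together with the $\widetilde H_0$ groups. The main obstacle is the square-equals-face identification. I would lean on Proposition \ref{intervelhomotopy} for it.
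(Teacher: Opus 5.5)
Your treatment of (1), (2) and of $MH_{2,2}$ is sound. However, the vanishing of $MH_{1,2}(\A)$ is not established. Proposition \ref{lowdeg}(3) only gives $2/\diam(\T(\A))<k\le 2$, and $\diam\T(\A)=\#\A$. For every arrangement with $\#\A\ge 3$ this reads $2/\#\A<k$, which does \emph{not} exclude $k=1$. The strict inequality rules out $k=1$ only when $\diam=2$, i.e.\ $\#\A=2$. So your sentence ``which Proposition \ref{lowdeg}(3) handles whenever $\diam>1$'' fails in exactly the cases of interest, and a separate argument is needed. This is why the paper says that Proposition \ref{lowdeg}(3) only reduces the problem to proving $MH_{1,2}(\A)=0$. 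It then proves this via Theorems \ref{gsmh} and \ref{fmh}: each frame $(a_0,a_1)$ with $d(a_0,a_1)=2$ contributes $\widetilde H_{-1}$ of the nonempty open interval $(a_0,a_1)$, which is $0$.

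The repair is immediate within your own setup. In the endpoint decomposition, $MC_{1,2}(\T(\A);a,b)$ is nonzero only when $d(a,b)=2$. Then Proposition \ref{intervalMH} gives $MH_{1,2}(\T(\A);a,b)\cong\widetilde H_{-1}((a,b))=0$, because the interval contains a midpoint. Equivalently, $(a,b)=\partial(a,y,b)$ for any midpoint $y$, and $\partial$ vanishes on $1$-chains, so every $1$-chain of length $2$ is a boundary.

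With that added, your argument for (3) is a slightly more elementary variant of the paper's. The paper uses the Kaneta--Yoshinaga framed decomposition, which requires $\ell=2<m_{\T(\A)}$. You instead use $MC_{3,2}=0$ and the endpoint splitting, and handle the chains $(x_0,x_1,x_0)$ directly. The paper reads off directly that $(a_0,a_1)$ has two points iff $\#\A_X=2$. Your appeal to Proposition \ref{intervelhomotopy} for this step also works: two points give a non-contractible $S^0$, forcing $[C,D]=\Ch(\A)_F$ with $\rank\A_F=2$ and four chambers, hence $\#\A_F=2$.
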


\begin{proof}
    (1) and (2) follow from (1) and (2) of Proposition \ref{lowdeg}, together with the observation that an edge in $\T(\A)$ corresponds to a chamber of the restriction $\A^H$ for some $H\in\A$. 
    
    For (3), since $m_{\T(\A)}\geq 3$, we may apply Theorem \ref{gsmh} for $\ell=2<m_{\T(\A)}$:
    \[
    MH_{2,2}(\A)\cong \bigoplus_{\Vec{a}\in P_{1,2}(\A)\cup P_{2,2}(\A)}MH^{\gs}_{2,\Vec{a}}(\A).
    \]
    By Theorem \ref{fmh}, the framed magnitude homology of $\A$ can be computed from interval posets. For $\Vec{a}=(a_0,a_1)\in P_{1,2}(\A)$, where $a_0$ and $a_1$ are chambers at distance $2$, we may suppose $S(a_0,a_1)=\{H_1,H_2\}$ and let $X=H_1\cap H_2\in L_2(\A)$. If $\#\A_X=2$, then the interval poset $(a_0,a_1)_{\T(\A)}$ consists of two incomparable chambers between $a_0$ and $a_1$. If $\#\A_X>2$, then $(a_0,a_1)_{\T(\A)}$ is a singleton. Since $MH^{\gs}_{2,(a_0,a_1)}(\A)\cong \widetilde{H}_0(C(a_0,a_1)_{\T(\A)})$, taking reduced homology shows that only the first type of $(a_0,a_1)$, with $\#\A_X=2$, contributes a copy of $\Z$ to the magnitude homology. Hence the total contribution to $MH_{2,2}(\A)$ from $P_{1,2}(\A)$ is $4\sum_{X\in L_2(\A),\#\A_X=2}c^X$ copies of $\Z$.
    
    Similarly, for $\Vec{a}=(a_0,a_1,a_2)\in P_{2,2}(\A)$, if $a_0\neq a_2$, then $\Vec{a}$ is geodesic and hence $\varphi(\Vec{a})\neq \Vec{a}$, making the $\Vec{a}$-framed magnitude homology trivial. Thus we only need to consider $\Vec{a}=(a_0,a_1,a_0)\in P_{2,2}(\A)$, where $a_0$ and $a_1$ are adjacent. The interval posets $(a_0,a_1)_{\T(\A)}$ and $(a_1,a_0)_{\T(\A)}$ are empty. Then $MH^{\gs}_{2,(a_0,a_1,a_0)}(\A)\cong \widetilde{H}_{-2}(C(a_0,a_1)_{\T(\A)}\otimes C(a_1,a_0)_{\T(\A)})\cong \Z$. Each such $(a_0,a_1,a_0)$ corresponds to a directed edge of $\T(\A)$. Hence the total contribution to $MH_{2,2}(\A)$ from $P_{2,2}(\A)$ is $\beta_{1,1}(\A)$ copies of $\Z$. This proves (3).

    To finish the proof, we need to show that all other $MH_{k,\ell}(\A)$ for $\ell\leq 2$ are trivial. By Proposition \ref{lowdeg} (3), we only need to prove $MH_{1,2}(\A)=0$. We use Theorem \ref{gsmh} again:
    \[
    MH_{1,2}(\A)\cong \bigoplus_{\Vec{a}\in P_{1,2}(\A)}MH^{\gs}_{1,\Vec{a}}(\A).
    \]
    Consider $\Vec{a}=(a_0,a_1)\in P_{1,2}(\A)$. The interval poset $(a_0,a_1)_{\T(\A)}$ cannot be empty. Hence $MH^{\gs}_{1,(a_0,a_1)}(\A)\cong \widetilde{H}_{-1}(C(a_0,a_1)_{\T(\A)})=0$.
\end{proof}

\subsection{Geodesic magnitude homology and diagonal tope graphs}

We now determine the geodesic magnitude homology of arrangements.

\begin{thm}
    Let $\A$ be an arrangement. Then its geodesic magnitude homology is given as follows:
    \[
    MH_{k,\ell}^{\geod}(\A)\cong \bigoplus_{\substack{F\in\FF(\A)\\ \rank \A_F=k,\ \#\A_F=\ell}}\Z^{\#\Ch(\A_F)}
    \cong
    \bigoplus_{\substack{X\in L_k(\A)\\\#\A_X=\ell}}\Z^{c^Xc_X}.
    \]
    In particular, $MH_{k,\ell}^{\geod}(\A)$ is determined by $L(\A)$.
\end{thm}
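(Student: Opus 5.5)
Start from Corollary \ref{geodMHdecomp}, which writes $MH^{\geod}_{k,\ell}(\A)$ as a direct sum over pairs of chambers $(C_1,C_2)$ with $d(C_1,C_2)=\ell$ of $\widetilde{H}_{k-2}$ of the order complex of the open interval $(C_1,C_2)_{\T(\A)}$. Proposition \ref{intervelhomotopy} computes each summand. If $[C_1,C_2]_{\T(\A)}$ is not of the form $\Ch(\A)_F$, the interval is contractible and contributes nothing. If $[C_1,C_2]_{\T(\A)}=\Ch(\A)_F$, the interval is homotopy equivalent to $S^{\rank\A_F-2}$, so it contributes one copy of $\Z$ in degree $k=\rank \A_F$. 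Two degenerate cases need a separate check. When $C_1=C_2$ (so $F$ is a chamber, $\rank \A_F=0$), there is the $\Z$ in $MH_{0,0}$. When $C_1,C_2$ are adjacent, the open interval is empty, and $\widetilde H_{-1}(\varnothing)=\Z$ lands in $k=1$. Both agree with the sphere convention $S^{-2}$, $S^{-1}$, so the formula holds uniformly.

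Next I identify which pairs occur. If $[C_1,C_2]=\Ch(\A)_F$, then $C_1$ and $C_2$ lie in $\T(\A)_F\cong\T(\A_F)$. Since $\T(\A)_F$ is gated and hence convex (Lemma \ref{facegated}, Proposition \ref{gatedisconvex}), the interval computed in $\T(\A)_F$ agrees with the one in $\T(\A)$. The interval $[C_1,C_2]$ equals the whole face subgraph exactly when $C_2=-_F C_1$, the chamber of $\Ch(\A)_F$ opposite to $C_1$ inside $\A_F$, i.e. $C_2=F(-C_1)$. Indeed, in $\T(\A_F)$ the interval $[C,-C]$ is everything, while for any other $C'$ the set $S(C,C')$ misses some hyperplane of $\A_F$, and then the interval omits chambers on the other side of it. In that case $d(C_1,C_2)=\#\A_F$. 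So the pairs that contribute are indexed by pairs $(F,C_1)$ with $C_1\in\Ch(\A)_F$, and each contributes $\Z$ in bidegree $(\rank\A_F,\#\A_F)$. The face $F$ is recovered from the pair as the face whose chamber set is $[C_1,C_2]$, so distinct pairs $(F,C_1)$ give distinct chamber pairs. This yields the first isomorphism with multiplicity $\#\Ch(\A)_F=\#\Ch(\A_F)$.

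Finally I regroup the sum by support $X=s(F)$, using $\A_F=\A_X$ and $\rank\A_X=\rank X$. There are $c^X$ faces with support $X$, each with $c_X$ chambers, which gives $\bigoplus_{X\in L_k(\A),\#\A_X=\ell}\Z^{c^Xc_X}$. This depends only on $L(\A)$, because $c^X=|\chi(\A^X,-1)|$ and $c_X=|\chi(\A_X,-1)|$ by Zaslavsky's theorem, and $L(\A^X),L(\A_X)$ are an upper and a lower interval of $L(\A)$.

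The main obstacle is the combinatorial identification in the second paragraph: showing that $[C_1,C_2]$ equals some $\Ch(\A)_F$ exactly when $C_2$ is opposite to $C_1$ within $\A_F$. To prove it, I first show that $F$ must be the meet $C_1\wedge C_2$ in the face poset, i.e. the largest face below both. Then I use the sign-vector description: $D\in[C_1,C_2]$ if and only if $D_H=(C_1)_H$ for every $H\notin S(C_1,C_2)$.
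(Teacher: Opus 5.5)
Your proposal is correct and takes the same route as the paper: combine Corollary \ref{geodMHdecomp} with Proposition \ref{intervelhomotopy}, then count, for each face $F$, the chamber pairs whose interval is $\Ch(\A)_F$. You also spell out steps the paper's short proof leaves implicit, namely the bijection $(F,C_1)\mapsto (C_1,F(-C_1))$ that yields the multiplicity $\#\Ch(\A)_F$, the degenerate cases $\ell\le 1$, and the regrouping by flats together with Zaslavsky's theorem for the dependence on $L(\A)$.
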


\begin{proof}
    By Corollary \ref{geodMHdecomp}, we have
    \[
    MH_{k,\ell}^{\geod}(\A)\cong \bigoplus_{\substack{a,b\in\Ch(\A)\\ d(a,b)=\ell}}\widetilde{H}_{k-2}(C_*(a,b)_{\T(\A)}).
    \]
    By Proposition \ref{intervelhomotopy}, the interval $(a,b)_{\T(\A)}$ is homotopy equivalent to $S^{\rank \A_F-2}$ if $[a,b]_{\T(\A)}=\Ch(\A)_F$ for some $F\in\FF(\A)$, and is contractible otherwise. 
    Therefore each $F\in\FF(\A)$ such that $k=\rank \A_F$ and $\ell=\#\A_F$ contributes a copy of $\Z^{\#\Ch(\A)_F}$ to $MH_{k,\ell}^{\geod}(\A)$, and there are no other summands.
\end{proof}

\begin{cor}\label{geodbetti}
    Let $\A$ be an arrangement. Then its geodesic magnitude Betti numbers are given by
    \[
    \beta_{k,\ell}^{\geod}(\A)=\sum_{\substack{X\in L_k(\A)\\ \#\A_X=\ell}}c^Xc_X.
    \]
\end{cor}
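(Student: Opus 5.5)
The plan is to take ranks in the isomorphism of the preceding theorem. That theorem gives
\[
MH_{k,\ell}^{\geod}(\A)\cong \bigoplus_{\substack{X\in L_k(\A)\\\#\A_X=\ell}}\Z^{c^Xc_X},
\]
so $MH_{k,\ell}^{\geod}(\A)$ is a finite direct sum of free abelian groups. Its rank is therefore the sum of the ranks of the summands, which is exactly the claimed formula for $\beta^{\geod}_{k,\ell}(\A)$.

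The only step requiring care is to confirm the passage from faces to flats used in the second isomorphism of the theorem. I would do this in three checks.

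\begin{enumerate}
\item The summands are first indexed by faces $F\in\FF(\A)$ with $\rank\A_F=k$ and $\#\A_F=\ell$, each contributing $\Z^{\#\Ch(\A_F)}$.
\item Group these faces by their support $X=s(F)$. Since $\A_F=\A_X$, the conditions become $\rank X=\rank\A_X=k$ and $\#\A_X=\ell$, and each face then contributes $\#\Ch(\A_X)=c_X$.
\item The number of faces with support $X$ is $\#s^{-1}(X)=c^X$, by Definition \ref{cX}.
\end{enumerate}

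Summing over the flats $X\in L_k(\A)$ with $\#\A_X=\ell$ then gives the total rank $\sum c^Xc_X$, as required. I expect no real obstacle here; the corollary follows at once from the theorem.
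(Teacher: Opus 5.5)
Your proposal is correct and follows the paper's route: the corollary is stated immediately after the theorem computing $MH^{\geod}_{k,\ell}(\A)$ as a free abelian group, and the Betti numbers are obtained by taking ranks. Your three checks on regrouping faces by their support $X=s(F)$, using $\A_F=\A_X$ and $\#s^{-1}(X)=c^X$, are exactly the implicit justification for the theorem's second isomorphism.
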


As an application, we give a characterization of diagonal tope graphs.

\begin{cor}\label{booleaniffdiagonal}
     Let $\A$ be an essential arrangement. Then $\T(\A)$ is diagonal if and only if $\A$ is Boolean.
\end{cor}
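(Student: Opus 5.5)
The two implications are of very different difficulty. If $\A$ is Boolean, Example \ref{topeboolean} identifies $\T(\A)$ with the hypercube $Q_d$, and Example \ref{hypercube} (via the K\"unneth formula, Theorem \ref{Kunneth}) shows that $Q_d$ is diagonal.

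For the converse, suppose $\T(\A)$ is diagonal. Since $MH^{\geod}_{k,\ell}(\A)$ is a direct summand of $MH_{k,\ell}(\A)$, it must vanish whenever $k\neq \ell$. Corollary \ref{geodbetti} gives
\[
\beta^{\geod}_{k,\ell}(\A)=\sum_{\substack{X\in L_k(\A)\\ \#\A_X=\ell}}c^Xc_X,
\]
and every summand $c^Xc_X$ is a positive integer, since $c^X$ and $c_X$ count chambers of nonempty arrangements. Take $X=\bigcap\A$. Because $\A$ is essential, this flat has rank $d=\rank\A$, and $\A_X=\A$. It therefore contributes a positive amount to $\beta^{\geod}_{d,\#\A}(\A)$. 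Diagonality then forces $\#\A=d$.

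It remains to see that an essential arrangement of $d$ hyperplanes in $\R^d$ is Boolean. The normals $\alpha_H$ must span $V^*$, since the center is the origin. As there are exactly $d$ of them, they form a basis. A linear change of coordinates then turns them into the coordinate forms, so $\A$ is Boolean. (Equivalently, one could apply the same positivity argument flat by flat, to every $X$ with $\#\A_X>\rank X$, but the center alone already suffices.)

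The only step needing care is the passage from diagonality of the full magnitude homology to vanishing of the geodesic part. This is immediate because $MH^{\geod}$ is a direct summand of $MH$ by the vertex decomposition \eqref{vertexdecomp}. The rest is bookkeeping: we need $c^X\geq 1$ and $c_X\geq 1$, and for the center $c^X=\#\Ch(\A^X)=1$, since the restriction to the origin is the empty arrangement on a point.
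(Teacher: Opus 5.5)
Your proposal is correct and is essentially the paper's proof. The paper argues by contraposition: a non-Boolean essential $\A$ has $n=\#\A>r=\rank\A$ and $\beta_{r,n}(\A)\geq\beta^{\geod}_{r,n}(\A)=\#\Ch(\A)>0$ by Corollary~\ref{geodbetti}, which is exactly the contribution of the center that you isolate, and it cites Example~\ref{hypercube} for the Boolean direction; your explicit check that an essential arrangement of $d$ hyperplanes in $\R^d$ is Boolean spells out a step the paper leaves implicit.
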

\begin{proof}
    We know that the Boolean arrangement $Bl_d$ has tope graph $\T(Bl_d)=Q_d$, which is diagonal (see Example \ref{hypercube}). Suppose $\A$ is not Boolean. Then $n=\#\A>r=\rank \A$. We have
    \[
    \beta_{r,n}(\A)\geq \beta_{r,n}^{\geod}(\A)=\#\Ch(\A),
    \]
    where the equality follows from Corollary \ref{geodbetti}. This means that $\A$ is not diagonal.
\end{proof}

\subsection{Magnitude homology with support}

For a proper chain $\Vec{x}=(x_0,\ldots,x_k)\in P_{k,\ell}(\A)$, we define its support $S(\Vec{x})$ to be $S(x_0,x_1)\cup\cdots\cup S(x_{k-1},x_k)$.
The following definition is a special case of Definition \ref{def:MH_support_COM}.

\begin{defn}[Magnitude homology with support]\label{relativechain}
    For an arrangement $\A$ and a subarrangement $\B\subseteq\A$, we define $MC_{k,\ell}(\A;\B)$ as the subgroup of $MC_{k,\ell}(\A)$ generated by $P_{k,\ell}(\A;\B)=\{\Vec{x}\in P_{k,\ell}(\A)\mid S(\Vec{x})=\B\}$. This defines a subcomplex $MC_{*,\ell}(\A;\B)$ of $MC_{*,\ell}(\A)$, called the \emph{$\B$-supported magnitude complex} of $\A$. Its homology $MH_{*,\ell}(\A;\B)$ is called the \emph{$\B$-supported magnitude homology} of $\A$.
\end{defn}

The following proposition is immediate from the definition.

\begin{prop}\label{wallcrossing}
    For an arrangement $\A$, the magnitude homology $MH_{*,\ell}(\A)$ decomposes as
    \[
        MH_{*,\ell}(\A)=\bigoplus_{\B\subseteq \A} MH_{*,\ell}(\A;\B).
    \]
    Furthermore, $MH_{*,\ell}(\A;\B)=0$ if $\ell<\#\B$.
\end{prop}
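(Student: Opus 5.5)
The decomposition is the arrangement case of the splitting stated after Lemma \ref{lem:full_support_boundary}, so the plan is to check three things: that the relevant subgroups are subcomplexes, that they exhaust $MC_{*,\ell}(\A)$, and that they vanish for $\ell<\#\B$.

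\textbf{Identifying the support.} View $\T(\A)$ as a partial cube in $Q_\A$. For chambers $x,y$, the sign vectors differ exactly on $S(x,y)$. Hence the support $S(\{x_0,\dots,x_k\})$ of Definition \ref{def:MH_support_COM} is the set of $H$ on which not all $x_i$ have the same sign. A hyperplane on which the signs vary must change sign between some consecutive pair $x_{i-1},x_i$. So this set equals $S(x_0,x_1)\cup\cdots\cup S(x_{k-1},x_k)$, which is $S(\Vec{x})$ in the sense used here.

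\textbf{Subcomplexes and splitting.} Lemma \ref{lem:full_support_boundary} applies verbatim to $\T(\A)$, which is a partial cube. Its proof only uses the coordinatewise description of geodesic triples. Thus each nonzero face $\partial_i\Vec{x}$ has the same support as $\Vec{x}$, and $MC_{*,\ell}(\A;\B)$ is a subcomplex. The basis $P_{k,\ell}(\A)$ is partitioned by the value of $S(\Vec{x})$. Therefore $MC_{*,\ell}(\A)=\bigoplus_{\B}MC_{*,\ell}(\A;\B)$ as chain complexes, and homology commutes with direct sums. This gives the first assertion.

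\textbf{Vanishing.} Since $d(x_{i-1},x_i)=\#S(x_{i-1},x_i)$, we have
\[
\ell(\Vec{x})=\sum_i\#S(x_{i-1},x_i)\ \ge\ \#\bigcup_i S(x_{i-1},x_i)=\#S(\Vec{x}).
\]
So if $\ell<\#\B$, no chain of length $\ell$ has support $\B$. Then $P_{k,\ell}(\A;\B)=\varnothing$, the complex is zero, and so is its homology.

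There is no real obstacle here. The only point needing care is the identification of the two notions of support in the first step.
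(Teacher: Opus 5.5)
Your proposal is correct and matches the paper's approach: the paper calls this immediate from the definition, relying on the support-preserving boundary (Lemma \ref{lem:full_support_boundary}) for the splitting and on $\ell(\Vec{x})=\sum_i\#S(x_{i-1},x_i)\ge\#S(\Vec{x})$ for the vanishing. You make those steps explicit, and you also check that the two definitions of support agree.
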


Using the notion of magnitude homology with support, we define interior magnitude homology. We later show that this is a categorification of the interior magnitude defined in Definition \ref{def:intmag}.

\begin{defn}[Interior magnitude homology]\label{interiorMH}
    Let $\A$ be an arrangement.
    We define
    \[
    MC_{*,\ell}^{\circ}(\A):=MC_{*,\ell}(\A;\A),
    \]
    and call it the length $\ell$ \emph{interior magnitude complex} of $\A$.
    We call $MH_{*,\ell}^{\circ}(\A):=H_*(MC_{*,\ell}^{\circ}(\A))$ the length $\ell$ \emph{interior magnitude homology} of $\A$.
    Note that $MH^\circ_{*,\ell}(\A)=0$ for $\ell<\#\A$ by definition.
    We also define the \emph{interior magnitude Betti number} $\beta_{k,\ell}^{\circ}(\A):=\rank MH_{k,\ell}^{\circ}(\A)$ and the length $\ell$ \emph{interior magnitude Euler characteristic} $\chi_{\ell}^{\circ}(\A):=\sum_{k}(-1)^k\beta_{k,\ell}^{\circ}(\A)$.
\end{defn}

The following theorem is a homological version of the face decomposition formula given in Theorem \ref{facedecomp}.

\begin{thm}[Homological face decomposition]\label{facedecompMH}
    For an arrangement $\A$, we have the following face decomposition of its magnitude homology:
    \[
    MH_{k,\ell}(\A)\cong \bigoplus_{F\in \mathcal{F}(\A)}MH^\circ_{k,\ell}(\A_F)\quad (\forall k,\ell\geq 0).
    \]
\end{thm}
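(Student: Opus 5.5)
We start from the support decomposition of Proposition \ref{wallcrossing}, $MH_{*,\ell}(\A)=\bigoplus_{\B\subseteq\A}MH_{*,\ell}(\A;\B)$. The plan is to identify each summand $MH_{*,\ell}(\A;\B)$ with a sum of interior homologies indexed by faces. The relevant objects are the tope graphs of the subarrangements $\A_F$, realized inside $\T(\A)$ as the gated subgraphs $\T(\A)_F\cong\T(\A_F)$ (Lemma \ref{facegated}). Note that a chain in $\T(\A)_F$ with full support $\A_F$ gives exactly a generator of $MC^\circ_{*,\ell}(\A_F)$. So the target is to prove, for each fixed $\B$,
\[
MH_{*,\ell}(\A;\B)\cong\bigoplus_{F\in\FF(\A),\ \A_F=\B}MH^\circ_{*,\ell}(\A_F).
\]
Summing over $\B$ then gives the theorem.

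First step: show that the right-hand side embeds as a subcomplex. For each face $F$ with $\A_F=\B$, the $\B$-supported chains lying entirely in $\Ch(\A)_F$ form a subcomplex isomorphic to $MC^\circ_{*,\ell}(\A_F)$. This holds because $\T(\A)_F$ is convex by Proposition \ref{gatedisconvex}, hence isometric, so distances agree. Distinct faces with the same support $X=s(F)$ have disjoint chamber sets. Moreover, a chain whose vertices are all in $\Ch(\A)_F$ has support inside $\A_F$. So we obtain a direct sum of subcomplexes
\[
\bigoplus_{F:\A_F=\B}MC^\circ_{*,\ell}(\A_F)\subseteq MC_{*,\ell}(\A;\B).
\]
We must show the quotient $Q_{*}$ is acyclic. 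The quotient is spanned by $\B$-supported chains that are not contained in a single $\Ch(\A)_F$. By Lemma \ref{lem:full_support_boundary}, the boundary preserves support, so $Q_*$ is a well-defined complex.

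Second step: reduce the acyclicity of $Q_*$ to the COM vanishing theorem. A chain $\vec x$ with $S(\vec x)=\B$ has all vertices agreeing off $\B$, in the sense that their sign vectors coincide on $\A\setminus\B$. Hence the vertices lie in the subgraph $G_\sigma=\{C\in\Ch(\A)\mid C_H=\sigma_H\ \forall H\notin\B\}$ for a fixed sign pattern $\sigma$ on $\A\setminus\B$. This $G_\sigma$ is an intersection of halfspaces, hence convex in $\T(\A)$. Projected to $Q_\B$, it is a realizable COM-graph: it comes from the arrangement $\B$ restricted to the open convex region of $\R^d$ cut out by the sign conditions $\sigma$ on $\A\setminus\B$. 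Distances computed in $G_\sigma$ and in $\T(\A)$ agree, so
\[
MC_{*,\ell}(\A;\B)=\bigoplus_\sigma MC_{*,\ell}(G_\sigma;\B).
\]
The antipodal subgraphs of $G_\sigma$ with full support $\B$ correspond to covectors $F$ with zero set exactly $\B$ and $F|_{\A\setminus\B}=\sigma$, that is, faces $F$ with $\A_F=\B$. Such an $F$ exists for a given $\sigma$ only if $\B$ is a flat-closed set, and then it is unique with chamber set $G_\sigma=\Ch(\A)_F$. Thus there are two cases. If $G_\sigma$ has a full-support antipodal subgraph, then $G_\sigma$ equals it (the region is a neighborhood cone of $F$ containing all chambers of the local arrangement), and the summand is exactly $MC^\circ_{*,\ell}(\A_F)$. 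Otherwise Theorem \ref{thm:COM_vanish} gives $MH_{*,\ell}(G_\sigma;\B)=0$. This yields the displayed isomorphism, and in fact $Q_*=0$ after this finer splitting.

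The main obstacle is the geometric identification in the second step. We must verify that $G_\sigma$ is a realizable COM-graph. We must also check that, when a face $F$ with $\A_F=\B$ and $F|_{\A\setminus\B}=\sigma$ exists, $G_\sigma$ is all of $\Ch(\A)_F$, which is the whole hypercube-like tope graph of $\A_F$, and that $G_\sigma$ has no other full-support antipodal subgraph. Conversely, we must check that a full-support antipodal subgraph forces such an $F$. For the latter we would use the bijection between antipodal subgraphs and covectors of the COM: a covector of the restricted COM with support $\B$ has zeros exactly on $\B$. Its realization is a point of the region lying on all hyperplanes of $\B$ and off the others, hence a face $F$ of $\A$ with $\A_F\supseteq\B$ and $F|_{\A\setminus\B}=\sigma\neq0$, forcing $\A_F=\B$. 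Every chamber $C\geq F$ then satisfies $C|_{\A\setminus\B}=\sigma$, so $\Ch(\A)_F\subseteq G_\sigma$. The equality $G_\sigma=\Ch(\A)_F$ follows from convexity of the region together with the fact that the region meets every chamber of the local arrangement near $F$.
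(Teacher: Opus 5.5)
Your proposal is correct and is essentially the paper's proof. Splitting $MC_{*,\ell}(\A;\B)$ by the sign pattern $\sigma$ on $\A\setminus\B$ is the paper's decomposition over chambers $D\in\Ch(\A\setminus\B)$. The summands with no full-support antipodal subgraph (i.e.\ $\bigcap\B\cap D=\emptyset$, which includes every $\B\neq\A_X$) vanish by Theorem~\ref{thm:COM_vanish}, as in Proposition~\ref{prop:vanishing_arr}. The remaining summands are identified with $MC^\circ_{*,\ell}(\A_F)$ via $\T(\A)_F\cong\T(\A_F)$, as in Proposition~\ref{prop:face_localization} and Corollary~\ref{cor:MH_with_support}.

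Two small points, neither of which affects the argument. Your ``first step'' becomes unnecessary once this splitting is in place. Also, the quotient $Q_*$ is acyclic rather than zero: it is the direct sum of the complexes $MC_{*,\ell}(G_\sigma;\B)$ that have no full-support antipodal subgraph.
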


To prove this theorem, we introduce some auxiliary definitions.

\begin{defn}
Let $\A$ be an arrangement and let $\B\subseteq \A$ be a subarrangement.
Then any $\B$-supported chain is entirely contained in a single chamber of $\A\setminus \B$.
For $D\in \Ch(\A\setminus\B)$, we write $MC_{*,\ell}(\A;\B)_D$ for the subcomplex of $MC_{*,\ell}(\A;\B)$ spanned by chains contained in $D$.
We write $MH_{*,\ell}(\A;\B)_D$ for its homology.
\end{defn}

By definition, $MH_{*,\ell}(\A;\B)$ decomposes as
\[
MH_{*,\ell}(\A;\B)\cong \bigoplus_{D\in \Ch(\A\setminus\B)}MH_{*,\ell}(\A;\B)_D.
\]
The following result is a direct consequence of Theorem \ref{thm:COM_vanish} for COM-graphs.

\begin{prop}\label{prop:vanishing_arr}
    Let $\A$ be an arrangement and $\B\subset \A$ be a subarrangement with $\bigcap \B=X$.
    Let $D$ be a chamber of $\A\setminus\B$.
    If $X\cap D=\emptyset$, then $MH_{*,\ell}(\A;\B)_D=0$.
\end{prop}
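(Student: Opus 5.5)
The plan is to realize the complex $MC_{*,\ell}(\A;\B)_D$ as the full-support magnitude complex of a realizable COM-graph and then apply Theorem \ref{thm:COM_vanish}. Set $E=\B$. A $\B$-supported chain contained in $D$ consists of chambers of $\A$ lying in $D$. Restricting sign vectors to the coordinates in $\B$ sends such a chamber $C$ to $C|_\B\in\{+,-\}^\B$. This map is injective on chambers of $\A$ contained in $D$: two such chambers agree on $\A\setminus\B$ because both lie in $D$, so they are determined by their $\B$-signs. The image is the set of sign patterns realized by $\B$ on the open convex set $D$, i.e. the topes of the realizable COM $\mathcal{L}$ obtained by restricting $\B$ to $D$. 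Call its tope graph $G\subset Q_\B$.

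Next I would check that this identification respects the metric. For chambers $C,C'\subseteq D$ we have $S(C,C')\subseteq\B$, so $d_{\T(\A)}(C,C')=\#S(C,C')$ equals the Hamming distance of $C|_\B$ and $C'|_\B$, which is $d_G$ since $G$ is a partial cube. Hence chains in $D$ correspond bijectively to chains in $G$, with the same lengths and the same smoothness conditions. The support $S(\vec{x})=\bigcup S(x_{i-1},x_i)$ in the arrangement sense coincides with the COM support $S(\{x_0,\dots,x_k\})$: a coordinate on which two vertices of the chain differ must flip along some consecutive step. Thus $MC_{*,\ell}(\A;\B)_D\cong MC_{*,\ell}(G;E)$ as chain complexes.

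It remains to verify the hypothesis of Theorem \ref{thm:COM_vanish}, that $G$ has no antipodal subgraph with support $E=\B$. Antipodal subgraphs of $G$ correspond to covectors of $\mathcal{L}$, i.e. to faces of $\B$ restricted to $D$. The antipodal subgraph of a covector $F$ consists of the topes above $F$, and its support is $\{H\in\B\mid F_H=0\}$. Full support would therefore require a point $p\in D$ with $\alpha_H(p)=0$ for all $H\in\B$, that is $p\in X\cap D$, which is empty by assumption. This is the step needing the most care, namely the precise dictionary between covectors of the realizable COM and antipodal subgraphs with their supports (\cite[Theorem 1.1]{KM-COM}). Once it is in place, Theorem \ref{thm:COM_vanish} gives $MH_{*,\ell}(G;E)=0$, and hence $MH_{*,\ell}(\A;\B)_D=0$.
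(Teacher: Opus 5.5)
Your proposal is correct and follows the paper's proof. Both arguments identify $MC_{*,\ell}(\A;\B)_D$ with the full-support complex $MC_{*,\ell}(G;\B)$ of the realizable COM-graph of chambers in $D$, note that an antipodal subgraph with support $\B$ would require a point of $X\cap D$, and then apply Theorem \ref{thm:COM_vanish}; you additionally spell out the distance, edge and support identifications that the paper only asserts.
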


\begin{proof}
    Let $G$ be the induced subgraph of $\mathcal{T}(\A)$ spanned by chambers contained in $D$.
    Then $G\subset \{+,-\}^\B$ is a realizable COM-graph, and its $\B$-supported magnitude homology $MH_{*,\ell}(G;\B)$ can be identified with $MH_{*,\ell}(\A;\B)_D$.
    Antipodal subgraphs of $G$ correspond bijectively to faces of $\B$ that have nonempty intersection with $D$.
    In particular, the assumption $X\cap D=\emptyset$ shows that there is no antipodal subgraph $H\subset G$ with $\supp(H)=\B$.
    Therefore Theorem \ref{thm:COM_vanish} implies the desired vanishing $MH_{*,\ell}(G;\B)=0$.
\end{proof}

\begin{prop}\label{prop:face_localization}
    Let $\A$ be an arrangement and $\B\subset \A$ be a subarrangement with $\bigcap \B=X$.
    Let $D$ be a chamber of $\A\setminus\B$.
    If $X\cap D=F\neq\emptyset$, then $MH_{*,\ell}(\A;\B)_D\cong MH_{*,\ell}(\A_F;\B)$.
\end{prop}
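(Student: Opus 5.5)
We will exhibit an isomorphism of graphs between the subgraph $G_D$ of $\T(\A)$ spanned by chambers contained in $D$ and a subgraph of $\T(\A_F)$. Then we will check that the isomorphism identifies the two $\B$-supported complexes.

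Note first that $\B\subseteq\A_F$. Indeed, $F\subseteq X=\bigcap\B$, so every $H\in\B$ contains $F$. Chambers of $\A$ contained in $D$ are exactly the chambers $C$ whose sign vectors agree with $D$ on $\A\setminus\B$. Consider the map $C\mapsto$ (the chamber of $\A_F$ containing $C$). This sends $G_D$ into $\T(\A_F)$ and preserves the signs on $\A_F$.

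We claim its image is the set of chambers of $\A_F$ that agree with $D$ on $\A_F\setminus\B$. Equivalently, the image consists of the chambers of $\A_F$ lying in the chamber $D'$ of $\A_F\setminus\B$ containing $D$. To see this we use the face structure. The subgraph $\T(\A)_F$ is isomorphic to $\T(\A_F)$ (this identification is already used in the proof of Theorem \ref{facedecomp}). Its chambers are $FC'$ for $C'$ ranging over chambers of $\A_F$. Because $F\subseteq D$ and $D$ is open in the complement of $\bigcup(\A\setminus\B)$, every chamber $FC'$ with $C'\subseteq D'$ lies in $D$. Conversely, let $C\subseteq D$ be a chamber of $\A$. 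Then $FC$ is a chamber in $D$ with the same signs as $C$ on $\A_F$. It differs from $C$ only on $\A\setminus\A_F$, and on those hyperplanes both agree with $D$ once they also lie outside $\B$. Since $\B\subseteq\A_F$, we get $C=FC$. Hence $G_D$ is contained in $\T(\A)_F$, and it is identified with the subgraph $G'_{D'}\subseteq\T(\A_F)$ spanned by chambers inside $D'$.

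Distances match. The subgraph $\T(\A)_F$ is gated, hence convex and isometric by Lemma \ref{facegated} and Proposition \ref{gatedisconvex}. Its metric, the Hamming distance on $\A_F$, is exactly the metric of $\T(\A_F)$. So chains, lengths, supports and the geodesic conditions in the boundary maps correspond. This yields an isomorphism of chain complexes
\[
MC_{*,\ell}(\A;\B)_D\cong MC_{*,\ell}(\A_F;\B)_{D'}.
\]

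It remains to compare $MH_{*,\ell}(\A_F;\B)_{D'}$ with all of $MH_{*,\ell}(\A_F;\B)$. Decompose the latter over the chambers $E$ of $\A_F\setminus\B$. In $\A_F$ every hyperplane contains $F$, and $X\supseteq F$. So every chamber $E$ of $\A_F\setminus\B$ satisfies $X\cap E\supseteq F\cap E$. We must decide which $E$ survive. If $E\neq D'$ and $X\cap E=\emptyset$, then $E$ contributes zero by Proposition \ref{prop:vanishing_arr}. So the main obstacle is to show that $D'$ is the only chamber of $\A_F\setminus\B$ meeting $X$.

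Suppose $x\in X\cap E$. The points of $X$ near a relative interior point of $F$ lie in $D$, because $F\subseteq D$ and $D$ is open in the complement of $\A\setminus\B$. Hence they lie in $D'$. Now use the fact that $\A_F\setminus\B$ is a central arrangement whose hyperplanes all contain $F$. Moving from $x$ toward a point of $F$ inside $X$ stays in the convex set $X\cap E$, since $E$ is a convex cone containing $F$ in its closure. So $E$ meets $X$ near $F$, and therefore $E=D'$.

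One point still needs checking: $X\cap E$ must be nonempty near $F$ and not merely in the closure. This holds because $X\cap E$ is relatively open in $X$ and its closure contains $F$. Taking this for granted, only $E=D'$ contributes, and the proposition follows.
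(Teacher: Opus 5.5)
Your proof is correct, and its first half is exactly the paper's argument: the chambers of $\A$ inside $D$ are precisely those $C$ with $F\leq C$, and $\T(\A)_F$ is gated (hence isometric) and isomorphic to $\T(\A_F)$, which identifies the two $\B$-supported complexes. The second half is superfluous, however. Since $F\subseteq D$ and $D$ meets no hyperplane of $\A\setminus\B$, no hyperplane of $\A\setminus\B$ contains $F$, so $\A_F=\B$ and $\A_F\setminus\B$ is empty. Hence $D'$ is the whole ambient space and $G'_{D'}=\T(\A_F)$, so neither the convexity argument nor the appeal to Proposition~\ref{prop:vanishing_arr} (and through it the vanishing theorem) is needed.
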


\begin{proof}
    If $X\cap D=F\neq \emptyset$, then a chamber $C\in \Ch(\A)$ is contained in $D$ if and only if $F\leq C$.
    Therefore $MC_{*,\ell}(\A;\B)_D$ is generated by the chains contained in the subgraph $\mathcal{T}(\A)_F$ of $\mathcal{T}(\A)$.
    Since $\mathcal{T}(\A)_F$ is a gated subgraph of $\mathcal{T}(\A)$ isomorphic to $\mathcal{T}(\A_F)$, there is an isomorphism of chain complexes $MC_{*,\ell}(\A;\B)_D\cong MC_{*,\ell}(\A_F;\B)$.
    This induces an isomorphism $MH_{k,\ell}(\A;\B)_D\cong MH_{k,\ell}(\A_F;\B)$.
\end{proof}

\begin{cor}\label{cor:MH_with_support}
    Let $\A$ be an arrangement and $\B\subset \A$ be a subarrangement with $\bigcap\B=X$.
    \begin{enumerate}
        \item If $\B=\A_X$, then $MH_{k,\ell}(\A;\B)$ decomposes as
        \[
        MH_{k,\ell}(\A;\B)\cong \bigoplus_{\substack{F\in \mathcal{F}(\A)\\s(F)=X}}MH^\circ_{k,\ell}(\A_F).
        \]
        \item If $\B\neq \A_X$, then $MH_{k,\ell}(\A;\B)=0$.
    \end{enumerate}
\end{cor}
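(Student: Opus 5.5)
We start from the chamber decomposition
\[
MH_{k,\ell}(\A;\B)\cong\bigoplus_{D\in\Ch(\A\setminus\B)}MH_{k,\ell}(\A;\B)_D
\]
stated just before Proposition \ref{prop:vanishing_arr}, and analyze each summand. Propositions \ref{prop:vanishing_arr} and \ref{prop:face_localization} reduce everything to understanding the set $X\cap D$ for $D\in\Ch(\A\setminus\B)$, where $X=\bigcap\B$. If $X\cap D=\emptyset$, the summand vanishes. Otherwise $X\cap D$ is nonempty, open in $X$ and convex.

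First we note that $X\cap D$ is a single face of $\A$ in that case. By definition of $\A_X$, every hyperplane in $\A\setminus\A_X$ meets $X$ in a proper hyperplane of $X$. Also, $\B\subseteq\A_X$, since every $H\in\B$ contains $X$.

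For (2), suppose $\B\neq\A_X$, so there is $H\in\A_X\setminus\B$. Then $H\in\A\setminus\B$, so each chamber $D$ of $\A\setminus\B$ lies in an open side of $H$. But $X\subseteq H$, so $X\cap D=\emptyset$ for every $D$. By Proposition \ref{prop:vanishing_arr}, every summand vanishes, giving $MH_{k,\ell}(\A;\B)=0$.

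For (1), suppose $\B=\A_X$. Then $\A\setminus\B$ consists exactly of the hyperplanes not containing $X$. A point of $X$ lies in $D$ iff it has the prescribed nonzero signs on all $H\in\A\setminus\A_X$, and every point of $X$ has sign $0$ on $\A_X$. So the nonempty sets $X\cap D$ are precisely the faces $F$ of $\A$ with support $s(F)=X$. Conversely, each face $F$ with $s(F)=X$ determines a unique chamber $D$ of $\A\setminus\A_X$ containing it, given by the signs of $F$ off $\A_X$. This gives a bijection between chambers $D$ with $X\cap D\neq\emptyset$ and faces $F$ with $s(F)=X$.

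For such a pair, Proposition \ref{prop:face_localization} gives $MH_{*,\ell}(\A;\B)_D\cong MH_{*,\ell}(\A_F;\B)$. Since $\A_F=\A_X=\B$, this is $MH_{*,\ell}(\A_F;\A_F)=MH^\circ_{*,\ell}(\A_F)$ by Definition \ref{interiorMH}. The remaining chambers $D$ (those with $X\cap D=\emptyset$) contribute zero by Proposition \ref{prop:vanishing_arr}, and summing over $D$ yields (1).

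The main point requiring care is the face/chamber bijection in (1): checking that $X\cap D$ is exactly one face, rather than a union of several. This holds because the restriction of $\A\setminus\A_X$ to $X$ is $\A^X$, and $X\cap D$ is a chamber of $\A^X$.
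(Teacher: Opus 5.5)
Your proof is correct and follows essentially the same route as the paper. You decompose over the chambers $D$ of $\A\setminus\B$ and kill every summand with $X\cap D=\emptyset$ via Proposition \ref{prop:vanishing_arr}; in case (2) that is all of them, because some $H\in\A_X\setminus\B$ separates $X$ from every $D$. In case (1) you use the bijection $D\leftrightarrow F=X\cap D$ with $s(F)=X$, together with Proposition \ref{prop:face_localization} and $\A_F=\A_X=\B$. Your sign-vector justification that $X\cap D$ is exactly one face also fills in a detail the paper states without proof.
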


\begin{proof}
    (1) Suppose that $\B=\A_X$.
        By Propositions \ref{prop:vanishing_arr} and \ref{prop:face_localization}, $MH_{k,\ell}(\A;\B)$ decomposes as
        \[
        MH_{k,\ell}(\A;\B)\cong \bigoplus_{\substack{D\in \Ch(\A\setminus\A_X)\\X\cap D=F\neq \emptyset}}MH_{k,\ell}(\A_F;\B).
        \]
        A face $F\in \mathcal{F}(\A)$ can be written as $X\cap D$ if and only if $s(F)=X$.
        Moreover, when $s(F)=X$, there is a unique $D\in \Ch(\A\setminus\A_X)$ such that $X\cap D=F$, and we have $\A_F=\A_X=\B$.
        This proves the claim.
    
    (2) If $\B\neq \A_X$, then there is a hyperplane $H\in \A\setminus \B$ which contains $X$.
        This means that all chambers of $\A\setminus\B$ are disjoint from $X$.
        By Proposition \ref{prop:vanishing_arr}, we conclude that $MH_{k,\ell}(\A;\B)=0$.
\end{proof}

Now we are ready to prove Theorem \ref{facedecompMH}.

\begin{proof}[Proof of Theorem \ref{facedecompMH}]
    By Proposition \ref{wallcrossing} and Corollary \ref{cor:MH_with_support}, the magnitude homology of $\A$ decomposes as
    \[
    MH_{*,\ell}(\A)\cong\bigoplus_{X\in L(\A)}MH_{*,\ell}(\A;\A_X)\cong\bigoplus_{X\in L(\A)}\bigoplus_{\substack{F\in \mathcal{F}(\A)\\s(F)=X}}MH^\circ_{*,\ell}(\A_F).
    \]
    By rewriting the double direct sum on the right-hand side as a single direct sum over $F\in \mathcal{F}(\A)$, we obtain Theorem \ref{facedecompMH}.
\end{proof}

\begin{cor}\label{reformulation}
Let $\A$ be an arrangement with $\rank \A=r$ and $\#\A=n$.
Then we have
\[
\Mag^\circ(\A)=\sum_{\ell=0}^{\infty}\chi_{\ell}^{\circ}(\A)q^{\ell}.
\]
Moreover, we have $\chi_{\ell}^{\circ}(\A)=(-1)^r\chi_{\ell-n}(\A)$ for $\ell\geq n$, and $\chi_{\ell}^{\circ}(\A)=0$ for $\ell<n$.
\end{cor}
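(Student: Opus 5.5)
The plan is to prove the identity $\Mag^\circ(\A)=\sum_\ell \chi^\circ_\ell(\A)q^\ell$ by induction on the face lattice, comparing the decategorified homological face decomposition with the face decomposition formula of Theorem \ref{facedecomp}. Everything else follows from that identity together with Definition \ref{def:intmag}.

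First, take Euler characteristics in Theorem \ref{facedecompMH}. For each length $\ell$, only finitely many $k$ contribute, namely $k\le\ell$. Passing to ranks and alternating sums gives
\[
\chi_\ell(\A)=\sum_{F\in\FF(\A)}\chi^\circ_\ell(\A_F).
\]
Writing $M^\circ(\A):=\sum_\ell\chi^\circ_\ell(\A)q^\ell\in\Z[\![q]\!]$ and using $\Mag(\A)=\sum_\ell\chi_\ell(\A)q^\ell$ (the theorem of Hepworth--Willerton recalled in Section 2), this becomes
\[
\Mag(\A)=\sum_{F\in\FF(\A)}M^\circ(\A_F).
\]
Theorem \ref{facedecomp} gives the same identity with $\Mag^\circ$ in place of $M^\circ$.

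Next, I isolate the minimal face. The minimal face $F_0=\cap\A$ is the unique face with $\A_F=\A$. Every other face satisfies $\A_F\subsetneq\A$, and $\A_F$ is itself an arrangement (essentializable) with strictly fewer hyperplanes. I induct on $\#\A$.

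The base case is $\#\A=0$. Here $\A$ is empty, there is one chamber, and $\Mag=1$. The interior complex $MC^\circ_{*,\ell}$ consists of chains with empty support, which are the constant $0$-chains of length $0$. So $M^\circ=1=(-1)^0q^0\Mag$.

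For the inductive step, assume $M^\circ(\A_F)=\Mag^\circ(\A_F)$ for all $F\neq F_0$. Subtracting the two face decomposition identities yields $M^\circ(\A)=\Mag^\circ(\A)$. One point to check is that $MH^\circ(\A_F)$ is intrinsic to the arrangement $\A_F$, so that the induction hypothesis applies to it. This holds by the same identification used in Proposition \ref{prop:face_localization}.

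Finally, Definition \ref{def:intmag} gives $\Mag^\circ(\A)=(-1)^rq^n\Mag(\A)=\sum_m(-1)^r\chi_m(\A)q^{m+n}$. Comparing coefficients yields $\chi^\circ_\ell(\A)=(-1)^r\chi_{\ell-n}(\A)$ for $\ell\ge n$, and $\chi^\circ_\ell(\A)=0$ for $\ell<n$. The vanishing for $\ell<n$ is also immediate from Definition \ref{interiorMH}.

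The only delicate point is the bookkeeping that lets the induction apply. The main obstacle I expect is making sure Theorem \ref{facedecompMH}, applied to $\A$, really produces exactly the term $MH^\circ(\A)$ for the minimal face and terms depending only on the smaller arrangements $\A_F$ otherwise. This is exactly what the statement of that theorem provides.
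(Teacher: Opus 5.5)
Your proposal is correct and follows essentially the same route as the paper: both take Euler characteristics in Theorem \ref{facedecompMH}, compare the result with Theorem \ref{facedecomp}, and isolate the minimal face $\cap\A$ inductively. The paper phrases this step as induction on rank or M\"obius inversion over $\FF(\A)$, while you induct on $\#\A$; this works equally well since $\A_F\subsetneq\A$ for $F\neq\cap\A$. You then obtain the reciprocity statement by comparing coefficients in $\Mag^\circ(\A)=(-1)^rq^n\Mag(\A)$, exactly as the paper does.
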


\begin{proof}
    Define $\Delta(\A)$ to be the difference of the two expressions:
    \[
    \Delta(\A)=\Mag^\circ(\A)-\sum_{\ell=0}^\infty\chi_\ell^\circ(\A)q^\ell.
    \]
    We prove $\Delta(\A)=0$ by induction on the rank of $\A$.
    By Theorem \ref{facedecomp} and Theorem \ref{facedecompMH}, we have
    \[
    \Mag(\A)=\sum_{F\in \FF(\A)}\Mag^\circ(\A_F), \quad \chi_\ell(\A)=\sum_{F\in \FF(\A)}\chi^\circ_{\ell}(\A_F).
    \]
    Since $\Mag(\A)$ can be written as $\sum_\ell \chi_\ell(\A)q^\ell$, we obtain
    $\sum_{F\in \FF(\A)}\Delta(\A_F)=0$.
    Applying M\"obius inversion to $\FF(\A)$ yields $\Delta(\A)=0$.
    The final assertion follows from the identity $\Mag^\circ(\A)=(-1)^rq^n\Mag(\A)$.
\end{proof}

\begin{rem}
    The identity $\chi_{\ell}^{\circ}(\A)=(-1)^r\chi_{\ell-n}(\A)$ in Corollary \ref{reformulation} is analogous to the Ehrhart--Macdonald reciprocity theorem for lattice polytopes \cite{Ehrhart1967Reciprocity,Macdonald1971} and Stanley's reciprocity theorem for posets \cite{Stanley1970}.
\end{rem}

\begin{rem}\label{magcpxintmagcpx}
    In light of Corollary \ref{reformulation}, it is natural to expect that there is an isomorphism
    \[
    MH^\circ_{k,\ell}(\A)\cong MH_{k-r,\ell-n}(\A)
    \]
    for $k\geq r$ and $\ell\geq n$, and $MH^\circ_{k,\ell}(\A)=0$ otherwise.
\end{rem}

\subsection{Magnitude homology on the diagonal}

In this subsection, we give a combinatorial formula for the diagonal magnitude homology $MH_{\ell,\ell}(\A)$ of an arrangement $\A$.
First, we determine the interior magnitude homology of Boolean arrangements.

\begin{cor}\label{intMHbool}
    For the Boolean arrangement $Bl_d$, the interior magnitude homology $MH_{k,\ell}^{\circ}(Bl_d)$ is concentrated on the diagonal $k=\ell$, and $MH_{\ell,\ell}^{\circ}(Bl_d)$ is free abelian of rank
    \[
    \beta_{\ell,\ell}^{\circ}(Bl_d)=2^d\binom{\ell-1}{d-1}.
    \]
\end{cor}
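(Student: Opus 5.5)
Since the statement is labelled as a corollary, the plan is to derive it from the homological face decomposition (Theorem \ref{facedecompMH}) applied to $Bl_d$, combined with the known homology of the hypercube (Example \ref{hypercube}), by induction on $d$.

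\textbf{Setup.} The faces of $Bl_d$ are the sign vectors in $\{+,-,0\}^d$. For a face $F$ with zero set $Z$, $\#Z=j$, the localization $(Bl_d)_F$ is the Boolean arrangement $Bl_j$; we may pass to its essentialization, since this changes neither the tope graph nor the interior complex. There are $\binom{d}{j}2^{d-j}$ such faces. Theorem \ref{facedecompMH} therefore gives
\[
MH_{k,\ell}(Q_d)\cong\bigoplus_{j=0}^{d}\bigl(MH^\circ_{k,\ell}(Bl_j)\bigr)^{\oplus\binom{d}{j}2^{d-j}} .
\]

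\textbf{Diagonality and freeness.} We induct on $d$, with base case $d=0$: here $MH^\circ(Bl_0)$ is $\Z$ in bidegree $(0,0)$, matching $2^0\binom{\ell-1}{-1}=\mathbf{1}_{\ell=0}$ under the convention $\binom{-1}{-1}=1$. The summand $j=d$ occurs exactly once, so $MH^\circ_{k,\ell}(Bl_d)$ is a direct summand of $MH_{k,\ell}(Q_d)$. By Example \ref{hypercube}, $MH_{k,\ell}(Q_d)$ is free and concentrated on $k=\ell$. Hence $MH^\circ_{k,\ell}(Bl_d)$ vanishes off the diagonal, and on the diagonal it is a direct summand of a free abelian group of finite rank, hence free. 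This part needs no induction at all.

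\textbf{Rank.} Take ranks on the diagonal and write $b_j(\ell)=\beta^\circ_{\ell,\ell}(Bl_j)$. The decomposition gives
\[
2^d\binom{\ell+d-1}{d-1}=\sum_{j=0}^{d}\binom{d}{j}2^{d-j}b_j(\ell).
\]
By induction we assume $b_j(\ell)=2^j\binom{\ell-1}{j-1}$ for $j<d$. It then suffices to verify
\[
\sum_{j=0}^{d}\binom{d}{j}\binom{\ell-1}{j-1}=\binom{\ell+d-1}{d-1},
\]
which is Vandermonde's identity after rewriting $\binom{\ell-1}{j-1}=\binom{\ell-1}{\ell-j}$. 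With this identity, the term $j=d$ is forced to equal $2^d\binom{\ell-1}{d-1}$.

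\textbf{Alternative route and main obstacle.} The rank could instead be obtained from Corollary \ref{reformulation}: diagonality gives $\beta^\circ_{\ell,\ell}=(-1)^\ell\chi^\circ_\ell$, and $\Mag^\circ(Bl_d)=(-1)^d q^d\,2^d/(1+q)^d$, whose coefficients are $2^d\binom{\ell-1}{d-1}$ up to sign. This avoids the induction altogether. The only delicate point in either route is bookkeeping: checking the Vandermonde reindexing (or the sign in the series expansion), and making sure the $d=0$ edge case and the convention $\binom{-1}{-1}=1$ are consistent.
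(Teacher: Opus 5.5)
Your proposal is correct, and its main route differs from the paper's. Diagonality and freeness are argued as in the paper: $MH^\circ_{*,*}(Bl_d)$ is a direct summand of $MH_{*,*}(Q_d)$, and Example \ref{hypercube} does the rest. The paper reads the direct-summand property off the support decomposition of Proposition \ref{wallcrossing}; your $F=\{0\}$ summand is the same piece.

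For the rank, the paper takes what you call the alternative route. Corollary \ref{reformulation} gives $\chi^\circ_\ell(Bl_d)=(-1)^d\chi_{\ell-d}(Bl_d)$, so by diagonality $\beta^\circ_{\ell,\ell}(Bl_d)=\beta_{\ell-d,\ell-d}(Bl_d)=2^d\binom{\ell-1}{d-1}$ for $\ell\ge d$, and the rank vanishes for $\ell<d$.

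Your main route instead applies Theorem \ref{facedecompMH} to $Bl_d$ and inverts the resulting triangular system by induction on $d$. It needs only the homological face decomposition and the K\"unneth computation for $Q_d$. It therefore bypasses Theorem \ref{facedecomp}, the symmetry of Proposition \ref{reci}, and the M\"obius inversion behind Corollary \ref{reformulation}. The price is the Vandermonde identity. That identity is just the coefficientwise form of $\bigl(1-\tfrac{q}{1+q}\bigr)^d=(1+q)^{-d}$, i.e.\ the Boolean instance of face decomposition plus reciprocity, which the paper has already proved in general.

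The $\ell=0$ convention issue you flag disappears if you use $MH^\circ_{*,\ell}(Bl_d)=0$ for $\ell<d$ directly. Then you only invoke the identity for $\ell\ge 1$, where all binomial coefficients are standard.
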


\begin{proof}
    We know from Corollary \ref{booleaniffdiagonal} and Example \ref{hypercube} that $\T(Bl_d)$ is diagonal and $MH_{\ell,\ell}(Bl_d)$ is free abelian of rank $\beta_{\ell,\ell}(Bl_d)=2^d\binom{\ell+d-1}{d-1}$.
    Since the interior magnitude homology $MH_{k,\ell}^{\circ}(Bl_d)$ is a direct summand of $MH_{k,\ell}(Bl_d)$, it is concentrated on the diagonal $k=\ell$.
    In particular, we have
    \[
    \chi_\ell(Bl_d)=(-1)^\ell\beta_{\ell,\ell}(Bl_d),\quad \chi^\circ_{\ell}(Bl_d)=(-1)^\ell\beta^\circ_{\ell,\ell}(Bl_d).
    \]
    By Corollary \ref{reformulation}, we have $\chi^\circ_{\ell}(Bl_d)=(-1)^d\chi_{\ell-d}(Bl_d)$ and hence
    \[
    \beta^\circ _{\ell,\ell}(Bl_d)=\beta_{\ell-d,\ell-d}(Bl_d)=2^d\binom{\ell-1}{d-1}
    \]
    for $\ell\geq d$.
    Combining this with $\beta^\circ_{\ell,\ell}(Bl_d)=0\ (\ell<d)$, we obtain the desired formula.
\end{proof}

Next we prove the following vanishing result for the diagonal interior magnitude homology of non-Boolean arrangements.
\begin{thm}\label{nonbooldiagintMH}
    Let $\A$ be an arrangement of $\rank \A=r$ and $\#\A=n$. Suppose $\A$ is not Boolean, so that $r<n$. Then the diagonal interior magnitude homology of $\A$ vanishes in all lengths:
    \[
    MH_{\ell,\ell}^{\circ}(\A)=0,\quad \forall \ell\geq 0.
    \]
\end{thm}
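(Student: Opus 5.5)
Since $k=\ell$ is the top degree of $MC_{*,\ell}$ (by Proposition \ref{lowdeg}(3) there are no chains with $k>\ell$), we have $MH^\circ_{\ell,\ell}(\A)=\ker\bigl(\partial\colon MC^\circ_{\ell,\ell}(\A)\to MC^\circ_{\ell-1,\ell}(\A)\bigr)$. So the plan is to show directly that there are no nonzero full-support cycles on the diagonal.

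\textbf{Setup.} A generator of $MC^\circ_{\ell,\ell}(\A)$ is a walk $\vec x=(x_0,\dots,x_\ell)$ in $\T(\A)$ along edges. It is determined by $x_0$ together with its word $(h_1,\dots,h_\ell)$ of crossed hyperplanes, and full support means $\{h_1,\dots,h_\ell\}=\A$. The face $\partial_i\vec x$ is nonzero exactly when $h_i\neq h_{i+1}$. In that case it is the chain obtained by deleting $x_i$, which has a single step of length $2$ at position $i$.

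\textbf{Step 1: local cancellation.} I would first observe that $\partial_i\vec x=\partial_j\vec y$ with both sides nonzero forces $j=i$. It also forces $\vec y$ to agree with $\vec x$ except at position $i$, with $y_i$ lying in the interval $[x_{i-1},x_{i+1}]$. Let $X$ be the rank-$2$ flat $h_i\cap h_{i+1}$.
\begin{itemize}
\item If $\#\A_X\ge 3$, or more precisely if $[x_{i-1},x_{i+1}]$ is not a square, this interval has only one interior vertex. Then the term $\partial_i\vec x$ cannot cancel, so every walk containing such a corner has coefficient $0$ in any cycle.
\item If $[x_{i-1},x_{i+1}]$ is a square, there is exactly one partner $\vec x'$, obtained by swapping $h_i$ and $h_{i+1}$. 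The cycle condition then reads $c_{\vec x}+c_{\vec x'}=0$, since both terms carry the same sign $(-1)^{i-1}$.
\end{itemize}
Thus a diagonal cycle is a function $c$ on walks with the following properties: $c$ vanishes on walks having a non-square corner with $h_i\neq h_{i+1}$, and $c$ is antisymmetric under commuting swaps. Backtracking steps $h_i=h_{i+1}$ impose no condition.

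\textbf{Step 2: reduce to a combinatorial statement.} Let $W$ be the equivalence class of a walk under square swaps. On $W$, $c$ is determined up to a sign by its value on one walk. So it suffices to show the following: for a non-Boolean $\A$, every full-support class $W$ either contains a walk with a non-square corner, or contains a closed sequence of swaps of odd parity. Either alternative forces $c|_W=0$.

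To find such a walk I would use non-Booleanness through a circuit $\{H_1,\dots,H_m\}\subseteq\A$, meaning a minimal dependent set, with $m\ge 3$. The walk crosses every $H_j$. The idea is to try to commute the crossings of the circuit hyperplanes until two of them become adjacent at a corner where they do not span a square. Geometrically, a walk crossing all hyperplanes of a circuit cannot be rearranged so that all these crossings commute freely, because the sign vectors along the way would realize every orthant of the circuit, which is impossible. To make this precise I would pass to the gated subgraph or projection onto a rank-$2$ localization. Alternatively, I would induct on $\rank\A$ using Theorem \ref{facedecompMH} together with the gated projections $D\mapsto FD$ of Lemma \ref{facegated}, so as to reduce to the case where the circuit spans a flat of rank $2$ with $\#\A_X\ge3$. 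Here, commuting all its crossings is visibly impossible, because the interval through a rank-$2$ flat with at least $3$ hyperplanes is never a square.

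\textbf{Main obstacle.} I expect the main obstacle to be this Step 2 argument for circuits of rank at least $3$, for example a generic $U_{3,4}$, where every rank-$2$ flat has size $2$. There, local corners can all be squares, and one must instead produce an odd loop of swaps, or a forced non-square corner after projecting to a face, inside each full-support class. As a fallback, I would use the inductive structure: assume the theorem for all proper localizations $\A_F$ and use Corollary \ref{intMHbool} for the Boolean ones. Then the claim reduces to the identity
\[
\beta_{\ell,\ell}(\A)=\sum_{F:\ \A_F\text{ Boolean}}2^{\rank\A_F}\binom{\ell-1}{\rank\A_F-1},
\]
and I would try to prove this identity by counting the antisymmetric classes from Step 1 directly.
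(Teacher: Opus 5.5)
Your Step 1 is correct, and so is your reduction. A diagonal full-support cycle is exactly a function on full-support edge walks that vanishes on every walk having a non-square corner with $h_i\neq h_{i+1}$ and changes sign under each square swap. So it suffices to show that every swap class $W$ contains a walk with a non-square corner.

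That claim, however, is the entire content of the theorem, and you do not prove it. The circuit heuristic, the projection to rank-$2$ localizations and the induction on rank are never turned into an argument. The fallback is no easier: computing $\beta_{\ell,\ell}(\A)$ by counting classes requires knowing which classes carry cycles, which is Step 2 again.

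Your diagnosis of the difficulty is also off. The odd-loop alternative never occurs. Every square of $\T(\A)$ is a square of $Q_n=\{+,-\}^{\A}$, and a swap transposes two adjacent \emph{distinct} letters of the word. So if you label the successive occurrences of each hyperplane, each walk in $W$ becomes a permutation of the $\ell$ labelled steps, and every swap changes its sign. This sign is a consistent antisymmetric function on $W$. Nor is $U_{3,4}$ special: $\#\A_X=2$ does not make every corner crossing $h_i,h_{i+1}$ a square, since a corner far from $X$ can have only one interior vertex. In any case, no case distinction on flats or circuits is needed.

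The missing idea is to compare with the ambient hypercube, which is what the paper does.
\begin{itemize}
\item Suppose no walk in $W$ has a non-square corner. Then $W$ is closed under all transpositions of adjacent distinct letters, so every rearrangement of the word, started at the fixed chamber $x_0$, is a walk in $\T(\A)$.
\item Since $r<n$, the defining forms are linearly dependent, so some sign vector $v_0\in\{+,-\}^{\A}$ is not a chamber.
\item Every hyperplane occurs in the word (full support). So you may rearrange it so that its first steps flip, once each, exactly the hyperplanes on which $x_0$ and $v_0$ differ. That walk passes through $v_0$, a contradiction.
\end{itemize}
Hence every class contains a walk with a non-square corner, and $c=0$.

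The paper runs the same idea homologically. It pushes $c$ into $MC_{\ell,\ell}(Q_n;s,t)$ and expands it in the K\"unneth basis $\nabla(b^\lambda)$ of signed shuffle sums. These have pairwise disjoint walk supports, and full support forces all $\ell_i\geq 1$. It then notes that each such $\nabla(b^\lambda)$ contains a walk through $v_0$. Completed this way, your route would be a more elementary, K\"unneth-free version of the same proof.
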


\begin{proof}
    Take $c\in MC_{\ell,\ell}^{\circ}(\A)\cap\ker\partial\subseteq MC_{\ell,\ell}(\A)$.
    We will show that $c=0$.
    By the endpoint decomposition \eqref{vertexdecomp}, we may assume that $c$ is a linear combination of chains $\vec{x}=(x_0,\dots,x_\ell)$ with $x_0=s$ and $x_\ell=t$ for fixed $s,t\in \T(\A)$.
    
    Let $\phi:\T(\A)\hookrightarrow Q_{n}=\{+,-\}^n$ be an isometric embedding of $\T(\A)$ into the hypercube $Q_n$, which is not surjective since $r<n$. Since $\phi$ is distance-preserving, it induces injective chain maps $\phi_{\#}:MC_{*,\ell}(\A)\hookrightarrow MC_{*,\ell}(Q_n)$.
    Therefore it suffices to show that $\phi_\#(c)=0$.
    Since $\partial c=0$, $\phi_{\#}(c)\in MC_{\ell,\ell}(Q_n)$ is a cycle. For a fixed $\ell$, we have $MC_{\ell+1,\ell}(Q_n)=0$, so $MH_{\ell,\ell}(Q_n)$ can be regarded as a subgroup of $MC_{\ell,\ell}(Q_n)$.
    We regard $\phi_{\#}(c)$ as an element of $MH_{\ell,\ell}(Q_n)$.

    By the K\"unneth formula (Theorem \ref{Kunneth}), we have an isomorphism
    \[
    \nabla:\bigoplus_{\ell_1+\cdots+\ell_n=\ell}MH_{\ell_1,\ell_1}(Q_1)\otimes\cdots\otimes MH_{\ell_n,\ell_n}(Q_1) \xrightarrow{\cong}MH_{\ell,\ell}(Q_n)
    \]
    which is induced by the Eilenberg--Zilber chain map \cite{Hepworth2017}.
    This restricts to an isomorphism
    \[
    \nabla:\bigoplus_{\ell_1+\cdots+\ell_n=\ell}MH_{\ell_1,\ell_1}(Q_1;s_1,t_1)\otimes\cdots\otimes MH_{\ell_n,\ell_n}(Q_1;s_n,t_n) \xrightarrow{\cong}MH_{\ell,\ell}(Q_n;s,t).
    \]
    Each direct summand on the left-hand side is either $0$ or $\mathbb{Z}$.
    In the latter case, we write $b^\lambda$ for the canonical generator of the summand, where $\lambda=(\ell_1,\ldots,\ell_n)$.
    Then we can write
    \[
    \phi_{\#}(c)=\sum_{\lambda}d_{\lambda}\nabla(b^{\lambda})
    \]
    for some scalars $d_{\lambda}$. We must show that $d_{\lambda}=0$.
    
    Since $c\in MC_{\ell,\ell}^{\circ}(\A)$, it is a linear combination of walks that cross all hyperplanes in $\A$. Its image $\phi_{\#}(c)$ in $MC_{\ell,\ell}(Q_n)$ is a linear combination of walks in $Q_n$ along which every coordinate gets flipped at least once. This means the basis elements $b^{\lambda}$ involved are indexed by $\lambda=(\ell_1,\ldots,\ell_n)$ with all $\ell_i\geq 1$. We write such $b^{\lambda}$ as $b^\lambda=b_1^{\lambda}\otimes\cdots\otimes b_n^{\lambda}$, where $b_i^{\lambda}\in MH_{\ell_i,\ell_i}(Q_1;s_i,t_i)$ is an $\ell_i$-step walk in the $i$-th $Q_1$. Thus $b^{\lambda}$ consists of $\ell$ steps in these $n$ dimensions, with distribution $\lambda$.
    
    By definition of the Eilenberg--Zilber map, $\nabla(b^{\lambda})$ is the signed sum of all possible shuffles of all these $\ell$ steps. It is a linear combination of walks in $Q_n$ from $s$ to $t$.
    Since all $\ell_i\geq 1$, any vertex in $Q_n$ must be visited by at least one walk in $\nabla(b^{\lambda})$. The next key observation is that if $\lambda\neq \mu$, then $\nabla(b^{\lambda})$ and $\nabla(b^{\mu})$ share no common walk because their step distributions are different.

    Now recall that $\phi:\T(\A)\hookrightarrow Q_n$ is not surjective.
    Thus there is a vertex $v_0\in Q_n$ that is not a vertex of $\T(\A)$.
    Observe that any $\nabla(b^{\lambda})$ contains a walk passing through $v_0$ (see Figure \ref{fig:ghost_trap}), but $c\in MC_{\ell,\ell}(\A)$ contains no walk passing through $v_0$.
    Therefore we must have $d_{\lambda}=0$ in the sum $\sum_{\lambda}d_{\lambda}\nabla(b^{\lambda})=\phi_{\#}(c)$.
    Hence we conclude $c=0$, and thus $MH_{\ell,\ell}^{\circ}(\A)=0$.
\end{proof}

\begin{figure}[htbp]
    \centering
    % Define the 3D perspective
    \begin{tikzpicture}[x={(1.5cm,0cm)}, y={(0cm,1.5cm)}, z={(-0.6cm,-0.5cm)}]
        
        % 1. Draw the background cube edges (Q_3) in light gray
        % Back face
        \draw[black] (-1,-1,-1) -- (1,-1,-1) -- (1,1,-1) -- (-1,1,-1) -- cycle;
        % Front face
        \draw[black] (-1,-1,1) -- (1,-1,1) -- (1,1,1) -- (-1,1,1) -- cycle;
        % Connecting edges
        \draw[black] (-1,-1,-1) -- (-1,-1,1);
        \draw[black] (1,-1,-1) -- (1,-1,1);
        \draw[black] (1,1,-1) -- (1,1,1);
        \draw[black] (-1,1,-1) -- (-1,1,1);

        % 2. Draw the A_2 Hexagon strictly on the mixed-sign vertices
        \draw[blue!80!black, ultra thick] 
            (1,-1,-1) -- (1,1,-1) -- (-1,1,-1) -- (-1,1,1) -- (-1,-1,1) -- (1,-1,1) -- cycle;

        % 3. Mark the missing "Ghost" vertices
        \fill[red!90!black] (1,1,1) circle (3pt) node[fill=white, right=0.1, text=red!90!black, font=\bfseries] at (1,0.9,1) {\small $G^+ (+,+,+)$};
        \fill[red!90!black] (-1,-1,-1) circle (3pt) node[fill=white, left=0.1, text=red!90!black, font=\bfseries] at (-1,-0.85,-1) {\small $G^- (-,-,-)$};

        % 4. Mark the Hexagon vertices
        \fill[blue!80!black] (1,-1,-1) circle (2.5pt) node[right, text=black] {\small $(+,-,-)$};
        \fill[blue!80!black] (1,1,-1) circle (2.5pt) node[above, text=black, yshift=2pt] {\small Start: $s=(+,+,-)$};
        \fill[blue!80!black] (-1,1,-1) circle (2.5pt) node[above left, text=black] {\small $(-,+,-)$};
        \fill[blue!80!black] (-1,1,1) circle (2.5pt) node[left, text=black] {\small $(-,+,+)$};
        \fill[blue!80!black] (-1,-1,1) circle (2.5pt) node[left, text=black] {\small $(-,-,+)$}; 
        \fill[blue!80!black] (1,-1,1) circle (2.5pt) node[below right, text=black] {\small End: $t=(+,-,+)$};

        % 5. Draw the fatal Shuffle Path
        % Step 1: Flips Z (Hits the Ghost!)
        \draw[-{Latex[length=4mm]}, ultra thick, orange!90!black, dashed] (1,1,-1) -- (1,1,1) 
            node[fill=white, midway, right=0.2] {\small Step 1 ($Z_1$)};
        % Step 2: Flips X (Moving to valid vertex)
        \draw[-{Latex[length=4mm]}, ultra thick, orange!90!black, dashed, dashed] (1,1,1) to[bend right=20] 
            node[fill=white, above] {\small Step 2 ($X_1$)} (-1,1,1);
        % Step 3: Flips X again (Bouncing back to Ghost)
        \draw[-{Latex[length=4mm]}, ultra thick, orange!90!black, dashed] (-1,1,1) to[bend right=20] 
            node[fill=white, below] {\small Step 3 ($X_2$)} (1,1,1);
        % Step 4: Flips Y (Moving to End)
        \draw[-{Latex[length=4mm]}, ultra thick, orange!90!black, dashed] (1,1,1) -- (1,-1,1) 
            node[fill=white, midway, right] {\small Step 4 ($Y_1$)};

    \end{tikzpicture}
    \caption{The tope graph of the $A_2$ arrangement embedded as a hexagon (blue/bold) inside the cube $Q_3$. Let $b^{\lambda}=(+\xrightarrow{X_1}-\xrightarrow{X_2}+)\otimes (+\xrightarrow{Y_1}-)\otimes (-\xrightarrow{Z_1} +)$, where $\lambda=(2,1,1)$. The 4-step shuffle walk $Z_1X_1X_2Y_1$ (orange/dotted) in $\nabla(b^\lambda)$ passes through the forbidden vertex $G^+=(+,+,+)$. Note also that this walk cannot appear in any other $\nabla(b^{\mu})$.}
    \label{fig:ghost_trap}
\end{figure}

\begin{rem}
    Corollary \ref{intMHbool} and Theorem \ref{nonbooldiagintMH} show that an arrangement $\A$ is Boolean if and only if $\beta_{\ell,\ell}^{\circ}(\A)\neq 0$ for some $\ell\geq 0$.
\end{rem}

\begin{cor}\label{diagbetti}
    Let $\A$ be an arrangement. The diagonal magnitude homology $MH_{\ell,\ell}(\A)$ is free abelian of rank $\beta_{\ell,\ell}(\A)$, which is determined by the Boolean flats of the intersection lattice $L(\A)$. More precisely, $\beta_{0,0}=\#\Ch(\A)$, and
    \[
    \beta_{\ell,\ell}(\A) = \sum_{\substack{X \in L(\A)\setminus\{\hat{0}\} \\ \rank X = \#\A_X}} c^X \cdot 2^{\rank X} \binom{\ell-1}{\rank X - 1}\quad (\ell\geq 1).
    \]
    Consequently, $\beta_{\ell,\ell}(\A)$ grows as a polynomial in $\ell$ of degree $d_{\max} - 1$, where
    \[
    d_{\max}=\max\{\rank X\mid X\in L(\A),\ \rank X=\#\A_X\}.
    \]
\end{cor}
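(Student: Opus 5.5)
The plan is to combine the homological face decomposition (Theorem \ref{facedecompMH}) with the two results just proved on interior diagonal homology. Taking $k=\ell$ in Theorem \ref{facedecompMH} gives
\[
MH_{\ell,\ell}(\A)\cong\bigoplus_{F\in\FF(\A)}MH^\circ_{\ell,\ell}(\A_F).
\]
For each face $F$, the localization $\A_F$ is an arrangement of rank $\codim F=\rank s(F)$ with $\#\A_F=\#\A_{s(F)}$ hyperplanes. It is Boolean, in the sense that its essentialization is $Bl_d$, exactly when $\rank s(F)=\#\A_{s(F)}$.

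If $\A_F$ is not Boolean, Theorem \ref{nonbooldiagintMH} gives $MH^\circ_{\ell,\ell}(\A_F)=0$. If $\A_F$ is Boolean of rank $d$, then its tope graph is $Q_d$, and Corollary \ref{intMHbool} shows that $MH^\circ_{\ell,\ell}(\A_F)$ is free of rank $2^d\binom{\ell-1}{d-1}$. Here we use that the interior magnitude homology depends only on the tope graph, so passing to the essentialization is harmless.

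When $d=0$, i.e.\ $F$ is a chamber, $\A_F$ is empty and $\binom{\ell-1}{-1}$ should be read as $\delta_{\ell,0}$. Indeed, the interior complex of the empty arrangement is the one-point graph, concentrated in $(0,0)$. This case accounts for $\beta_{0,0}=\#\Ch(\A)$, and it contributes nothing for $\ell\ge1$. This is why $\hat 0$ is excluded from the sum in the statement.

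Summing over faces, a direct sum of free groups is free, so $MH_{\ell,\ell}(\A)$ is free abelian. To obtain the formula, group the faces by their support $X=s(F)$. Each flat has exactly $c^X=\#s^{-1}(X)$ faces with that support, and $\rank\A_F=\rank X$, $\#\A_F=\#\A_X$. Collecting terms gives
\[
\beta_{\ell,\ell}(\A)=\sum_{\substack{X\in L(\A)\setminus\{\hat 0\}\\ \rank X=\#\A_X}}c^X\,2^{\rank X}\binom{\ell-1}{\rank X-1}\quad(\ell\ge1).
\]
The flats satisfying $\rank X=\#\A_X$ are precisely the Boolean flats, and $c^X$ is determined by the intersection lattice via Zaslavsky's theorem applied to $\A^X$. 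Hence $\beta_{\ell,\ell}(\A)$ depends only on $L(\A)$.

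For the growth statement, each summand $\binom{\ell-1}{d-1}$ is a polynomial in $\ell$ of degree $d-1$ with positive leading coefficient $1/(d-1)!$. All coefficients $c^X2^{d}$ are positive, so there is no cancellation. The top degree therefore comes from the flats attaining $d_{\max}$, giving degree $d_{\max}-1$. The set of such flats is nonempty as soon as $\A\neq\varnothing$, since every hyperplane is a Boolean flat of rank $1$.

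The main point needing care is bookkeeping rather than a real obstacle. One must justify applying Corollary \ref{intMHbool} to a non-essential Boolean localization $\A_F$, which follows because the tope graph is unchanged. One must also check that the edge case $d=0$ matches the stated formula.
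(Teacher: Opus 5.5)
Your proposal is correct and follows the same route as the paper. You restrict the homological face decomposition (Theorem \ref{facedecompMH}) to the diagonal, discard non-Boolean localizations via Theorem \ref{nonbooldiagintMH}, insert Corollary \ref{intMHbool} for the Boolean ones, and group faces by their support flat using $c^X=\#s^{-1}(X)$.

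The only minor difference is in the freeness argument. The paper obtains it directly from $MC_{\ell+1,\ell}(\A)=0$, so that $MH_{\ell,\ell}(\A)$ is a subgroup of the free group $MC_{\ell,\ell}(\A)$. You instead deduce freeness summand by summand; both arguments work. Your explicit treatment of the $d=0$ chamber terms and of non-essential Boolean localizations is extra care that the paper leaves implicit.
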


\begin{proof}
    The claim for $\beta_{0,0}$ is proved in Proposition \ref{smalllength}.
    Let $\ell\geq 1$.
    Since $MC_{\ell+1,\ell}(\A)=0$, the diagonal magnitude homology $MH_{\ell,\ell}(\A)$ is free abelian. By Theorem \ref{facedecompMH}, the magnitude homology admits a face decomposition. Restricting this decomposition to the diagonal ($k=\ell$) and taking Betti numbers, we obtain
    \[
    \beta_{\ell,\ell}(\A) = \sum_{F \in \FF(\A)} \beta^\circ_{\ell,\ell}(\A_F).
    \]
    By Theorem \ref{nonbooldiagintMH}, the interior Betti number $\beta^\circ_{\ell,\ell}(\A_F)$ vanishes unless the localization $\A_F$ is Boolean, that is, $\#\A_F=\rank \A_F\geq 1$.
    Therefore, the sum collapses to the faces whose support flat $X = s(F)$ satisfies $\rank X = \#\A_X\geq 1$.
    For such faces, $\A_F \cong Bl_{\rank X}$.
    We can group the nonzero terms in the summation by their support flats $X \in L(\A)$.
    For each such flat $X$, the number of faces of $\A$ with support $X$ is exactly $c^X$.
    Substituting the known formula for the interior diagonal Betti numbers of the Boolean arrangement (Corollary \ref{intMHbool}), namely $\beta^\circ_{\ell,\ell}(Bl_d) = 2^d \binom{\ell-1}{d-1}$ where $d = \rank X$, we obtain
    \[
    \beta_{\ell,\ell}(\A) = \sum_{\substack{X \in L(\A)\setminus\{\hat{0}\} \\ \rank X = \#\A_X}} c^X \cdot 2^{\rank X} \binom{\ell-1}{\operatorname{rank}X - 1}.
    \]
    For $\ell\geq\rank X$, the binomial coefficient $\binom{\ell-1}{\rank X-1}$ is a polynomial in $\ell$ of degree $\rank X-1$. Therefore, for $\ell$ sufficiently large, $\beta_{\ell,\ell}(\A)$ grows as a polynomial in $\ell$ of degree $d_{\max}-1$.
\end{proof}

\begin{rem}
    Example \ref{hypercube} and parts (2) and (3) of Proposition \ref{smalllength} are special cases of the formula in Corollary \ref{diagbetti}.
\end{rem}

\section{Open problems}
Based on observations from examples in the appendix, we propose the following conjectures.

\begin{conj}
    Let $\A$ be an arrangement of rank $r$ and $n=\#\A$. Let $\Mag(\A,q)=P(q)/Q(q)$ be the reduced form. Then 
    \begin{enumerate}
        \item Suppose $r\geq 3$ is odd. Then $\T(\A)$ is not vertex-transitive if and only if $\Phi_{2n}(q)$ divides $Q(q)$.
        \item Suppose $r\geq 4$ is even. Then $\T(\A)$ is not vertex-transitive if and only if $\Phi_n(q)$ divides $Q(q)$.
        \item $(-1)^{\ell}\chi_\ell(\A)\geq0$ for sufficiently large $\ell$.
        \item $MH_{k,\ell}(\A)$ is determined by $L(\A)$.
        \item $MH_{k,\ell}(\A)$ is torsion-free.
        \item $MH^\circ_{k,\ell}(\A)\cong MH_{k-r,\ell-n}(\A)$ if $k\geq r$ and $\ell\geq n$, and $MH^\circ_{k,\ell}(\A)=0$ otherwise.
    \end{enumerate}
\end{conj}

\section*{Acknowledgements}
The authors thank Weili Guo, Yoh Kitajima, Takuya Saito, and Masahiko Yoshinaga for their comments on a preliminary version of this paper.

\appendix

\section{Computer experiments}\label{appendix}
The following examples were computed with SageMath \cite{sagemath}.
\begin{ex}\label{exa1}
We list the magnitudes of simplicial arrangements $\A(n,k)$ in Gr\"unbaum's catalogue \cite{Grunbaum2009} for $n\leq 13$. Note that $\A(n,0),\A(6,1),\A(9,1)$ are recorded in Examples \ref{nearpencil} and \ref{Coxeterrank3}. We use the database of \cite{Cuntz2022} to obtain the list of normal vectors.
    \begin{align*}
        \Mag(\A(7,1))&=\frac{32q^4-40q^2+32}{(q^4+3q^3+4q^2+3q+1)(q^7+1)}\\
        &=\frac{8(4q^4-5q^2+4)}{[2]_q^2[3]_q(q^7+1)}\\
        &=\frac{8(4q^4-5q^2+4)}{\Phi_2(q)^3\Phi_3(q)\Phi_{14}(q)}\\
        &=32-96q+120q^2-72q^3+24q^4-48q^5+144q^6-272q^7+360q^8-336q^9+240q^{10}+\cdots
        \end{align*}
        
        \begin{align*}
        \Mag(\A(8,1))&=\frac{40q^6-8q^4-16q^3-8q^2+40}{(q^6+3q^5+5q^4+6q^3+5q^2+3q+1)(q^8+1)}\\
        &=\frac{8(5q^6-q^4-2q^3-q^2+5)}{[4]_q!(q^8+1)}\\
        &=\frac{8(5q^6-q^4-2q^3-q^2+5)}{\Phi_2(q)^2\Phi_3(q)\Phi_4(q)\Phi_{16}(q)}\\
        &=40-120q+152q^2-112q^3+88q^4-136q^5+240q^6-344q^7+352q^8-248q^9+176q^{10}+\cdots
        \end{align*}

\begin{align*}
    \Mag(\A(10,1))&=\frac{60q^8+60q^7-20q^6-40q^5-40q^3-20q^2+60q+60}{(q^8+4q^7+8q^6+11q^5+12q^4+11q^3+8q^2+4q+1)(q^{10}+1)}\\
    &=\frac{20(3q^8+3q^7-q^6-2q^5-2q^3-q^2+3q+3)}{[2]_q^2[3]_q[5]_q(q^{10}+1)}\\
    &=\frac{20(3q^8+3q^7-q^6-2q^5-2q^3-q^2+3q+3)}{\Phi_2(q)^2\Phi_3(q)\Phi_4(q)\Phi_5(q)\Phi_{20}(q)}\\
    &=60-180q+220q^2-140q^3+60q^4-80q^5+220q^6-380q^7+420q^8-340q^9+220q^{10}+\cdots
\end{align*}
        
        \begin{align*}
        \Mag(\A(10,2))&=\Mag(\A(10,3))=\frac{60q^6-12q^4-48q^3-12q^2+60}{(q^6+3q^5+5q^4+6q^3+5q^2+3q+1)(q^{10}+1)}\\
        &=\frac{12(5q^6-2q^4-4q^3-2q^2+5)}{[4]_q!(q^{10}+1)}\\
        &=\frac{12(5q^6-2q^4-4q^3-2q^2+5)}{\Phi_2(q)^2\Phi_3(q)\Phi_4(q)^2\Phi_{20}(q)}\\
        &=60-180q+228q^2-192q^3+204q^4-300q^5+432q^6-564q^7+660q^8-672q^9+576q^{10}+\cdots
        \end{align*}
        
        \begin{align*}
        \Mag(\A(11,1))&=\frac{72q^6-16q^4-64q^3-16q^2+72}{(q^6+3q^5+5q^4+6q^3+5q^2+3q+1)(q^{11}+1)}\\
        &=\frac{8(9q^6-2q^4-8q^3-2q^2+9)}{[4]_q!(q^{11}+1)}\\
        &=\frac{8(9q^6-2q^4-8q^3-2q^2+9)}{\Phi_2(q)^3\Phi_3(q)\Phi_4(q)\Phi_{22}(q)}\\
        &=72-216q+272q^2-232q^3+256q^4-376q^5+528q^6-680q^7+800q^8-824q^9+784q^{10}+\cdots
    \end{align*}

    \begin{align*}
        \Mag(\A(12,1))&=\frac{84q^6-84q^5-36q^4+96q^3-36q^2-84q+84}{(q^6+2q^5+2q^4+2q^3+2q^2+2q+1)(q^{12}+1)}\\
        &=\frac{12(7q^6-7q^5-3q^4+8q^3-3q^2-7q+7)}{[2]_q[6]_q(q^{12}+1)}\\
        &=\frac{12(7q^6-7q^5-3q^4+8q^3-3q^2-7q+7)}{\Phi_2(q)^2\Phi_3(q)\Phi_6(q)\Phi_8(q)\Phi_{24}(q)}\\
        &=84-252q+300q^2-168q^3+36q^4-84q^5+336q^6-588q^7+636q^8-504q^9+372q^{10}+\cdots
    \end{align*}

    \begin{align*}
        \Mag(\A(12,2))&=\frac{84q^{10}+84q^9+64q^8-4q^7-44q^6-128q^5-44q^4-4q^3+64q^2+84q+84}{(q^{10}+4q^9+9q^8+15q^7+20q^6+22q^5+20q^4+15q^3+9q^2+4q+1)(q^{12}+1)}\\
        &=\frac{4(21q^{10}+21q^9+16q^8-q^7-11q^6-32q^5-11q^4-q^3+16q^2+21q+21)}{[5]_q!(q^{12}+1)}\\
        &=\frac{4(21q^{10}+21q^9+16q^8-q^7-11q^6-32q^5-11q^4-q^3+16q^2+21q+21)}{\Phi_2(q)^2\Phi_3(q)\Phi_4(q)\Phi_5(q)\Phi_8(q)\Phi_{24}(q)}\\
        &=84-252q+316q^2-260q^3+252q^4-344q^5+508q^6-692q^7+804q^8-796q^9+760q^{10}+\cdots
    \end{align*}

    \begin{align*}
        \Mag(\A(12,3))&=\frac{84q^6-12q^4-96q^3-12q^2+84}{(q^6+3q^5+5q^4+6q^3+5q^2+3q+1)(q^{12}+1)}\\
        &=\frac{12(7q^6-q^4-8q^3-q^2+7)}{[4]_q!(q^{12}+1)}\\
        &=\frac{12(7q^6-q^4-8q^3-q^2+7)}{\Phi_2(q)^2\Phi_3(q)\Phi_4(q)\Phi_8(q)\Phi_{24}(q)}\\
        &=84-252q+324q^2-312q^3+396q^4-564q^5+720q^6-876q^7+1044q^8-1128q^9+1116q^{10}+\cdots
    \end{align*}

    \begin{align*}
        \Mag(\A(13,1))&=\frac{96q^8-96q^7+72q^6-48q^5-48q^3+72q^2-96q+96}{(q^{8}+2q^{7}+3q^{6}+4q^{5}+4q^{4}+4q^{3}+3q^{2}+2q^{1}+1)(q^{13}+1)}\\
        &=\frac{24(4q^8-4q^7+3q^6-2q^5-2q^3+3q^2-4q+4)}{[2]_q^2(1+2q^2+2q^4+q^6)(q^{13}+1)}\\
        &=\frac{24(4q^8-4q^7+3q^6-2q^5-2q^3+3q^2-4q+4)}{\Phi_2(q)^3\Phi_3(q)\Phi_4(q)\Phi_6(q)\Phi_{26}(q)}\\
        &=96-288q+360q^2-288q^3+264q^4-384q^5+624q^6-864q^7+984q^8-960q^9+888q^{10}+\cdots
    \end{align*}

    \begin{align*}
        \Mag(\A(13,2))&=\frac{96q^6-144q^3+96}{(q^6+3q^5+5q^4+6q^3+5q^2+3q+1)(q^{13}+1)}\\
        &=\frac{48(2q^6-3q^3+2)}{[4]_q!(q^{13}+1)}\\
        &=\frac{48(2q^6-3q^3+2)}{\Phi_2(q)^3\Phi_3(q)\Phi_4(q)\Phi_{26}(q)}\\
        &=96-288q+384q^2-432q^3+624q^4-864q^5+1008q^6-1152q^7+1392q^8-1584q^9+1632q^{10}+\cdots
    \end{align*}

    \begin{align*}
        \Mag(\A(13,3))&=\frac{96q^{10}+96q^9+80q^8-8q^7-64q^6-160q^5-64q^4-8q^3+80q^2+96q+96}{(q^{10}+4q^9+9q^8+15q^7+20q^6+22q^5+20q^4+15q^3+9q^2+4q+1)(q^{13}+1)}\\
        &=\frac{8(12q^{10}+12q^9+10q^8-q^7-8q^6-20q^5-8q^4-q^3+10q^2+12q+12)}{[5]_q!(q^{13}+1)}\\
        &=\frac{8(12q^{10}+12q^9+10q^8-q^7-8q^6-20q^5-8q^4-q^3+10q^2+12q+12)}{\Phi_2(q)^3\Phi_3(q)\Phi_4(q)\Phi_5(q)\Phi_{26}(q)}\\
        &=96-288q+368q^2-328q^3+336q^4-424q^5+584q^6-784q^7+912q^8-920q^9+920q^{10}+\cdots
    \end{align*}

    \begin{align*}
        \Mag(\A(13,4))&=\frac{104q^6-56q^4-48q^3-56q^2+104}{(q^6+3q^5+5q^4+6q^3+5q^2+3q+1)(q^{13}+1)}\\
        &=\frac{8(13q^6-7q^4-6q^3-7q^2+13)}{[4]_q!(q^{13}+1)}\\
                &=\frac{8(13q^6-7q^4-6q^3-7q^2+13)}{\Phi_2(q)^3\Phi_3(q)\Phi_4(q)\Phi_{26}(q)}\\
        &=104-312q+360q^2-192q^3+72q^4-168q^5+432q^6-696q^7+792q^8-672q^9+504q^{10}+\cdots
    \end{align*}
\end{ex}

\begin{ex}\label{generic4}
    Consider the rank $3$ generic arrangement $\A=U_{3,4}$ defined by $xyz(x+y+z)$.
    \begin{align*}
        \Mag(\A)&=\frac{14q^2-20q+14}{(q^2+2q+1)(q^4+1)}\\
        &=\frac{2(7q^2-10q+7)}{[2]_q^2(q^4+1)}\\
                &=\frac{2(7q^2-10q+7)}{\Phi_2(q)^2\Phi_8(q)}\\
        &=14-48q+96q^2-144q^3+178q^4-192q^5+192q^6-192q^7+206q^8-240q^9+288q^{10}+\cdots
    \end{align*}
\end{ex}

\begin{ex}\label{K4-e}
    Consider the non-simplicial arrangement $\A$ defined by $xyz(x+z)(y+z)$, which is also the graphic arrangement of the graph $K_4\setminus e$. Note that $K_4\setminus e$ is chordal.
    \begin{align*}
        \Mag(\A)&=\frac{18q^4-2q^3-8q^2-2q+18}{(q^4+3q^3+4q^2+3q+1)(q^5+1)}\\
        &=\frac{2(9q^4-q^3-4q^2-q+9)}{[2]_q^2[3]_q(q^5+1)}\\
                &=\frac{2(9q^4-q^3-4q^2-q+9)}{\Phi_2(q)^3\Phi_3(q)\Phi_{10}(q)}\\
        &=18-56q+88q^2-96q^3+104q^4-154q^5+248q^6-336q^7+376q^8-392q^9+450q^{10}+\cdots
    \end{align*}
\end{ex}

\begin{ex}
    Consider the graphic arrangement $\A$ of the graph $K_5\setminus e$.
    \begin{align*}
        \Mag(\A)&=\frac{12(8q^8-9q^7+4q^6+3q^5+3q^4+4q^2-9q+8)}{[2]_q[4]_q[9]_q(q^5+1)}\\
        &=\frac{12(8q^8-9q^7+4q^6+3q^5+3q^4+4q^2-9q+8)}{\Phi_2(q)^3\Phi_3(q)\Phi_4(q)\Phi_9(q)\Phi_{10}(q)}\\
        &=96-396q+756q^2-924q^3+996q^4-1320q^5+1956q^6-2652q^7+3252q^8-3756q^9+4212q^{10}+\cdots
    \end{align*}
    where the numerator is $P(q)=96q^8-108q^7+48q^6+36q^5+36q^3+48q^2-108q+96$ and the denominator is $Q(q)=q^{17}+3q^{16}+5q^{15}+7q^{14}+8q^{13}+9q^{12}+11q^{11}+13q^{10}+15q^9+15q^8+13q^7+11q^6+9q^5+8q^4+7q^3+5q^2+3q+1$.
\end{ex}

\begin{ex}
    Consider the bracelet arrangement $\A$ defined by $x_1x_2x_3(x_1+x_4)(x_2+x_4)(x_3+x_4)(x_1+x_2+x_4)(x_1+x_3+x_4)(x_2+x_3+x_4)$ (see Example 1.4 of \cite{Denham2026}).
    \begin{align*}
        \Mag(\A)&=\frac{6(17q^8-4q^7-8q^6-9q^5+44q^4-9q^3-8q^2-4q+17)}{\Phi_2(q)^3\Phi_3(q)^2\Phi_9(q)\Phi_{10}(q)}\\
        &=102-432q+864q^2-1176q^3+1584q^4-2628q^5+4344q^6-6048q^7+7200q^8-8130q^9+9684q^{10}+\cdots
    \end{align*}
    where the numerator is $P(q)=102q^8-24q^7-48q^6-54q^5+264q^4-54q^3-48q^2-24q+102$ and the denominator is $Q(q)=q^{17}+4q^{16}+8q^{15}+11q^{14}+12q^{13}+13q^{12}+16q^{11}+20q^{10}+23q^9+23q^8+20q^7+16q^6+13q^5+12q^4+11q^3+8q^2+4q+1$.
\end{ex}

\begin{ex}\label{ex:MH_tables}
Here we collect magnitude Betti numbers computed using the SageMath code of \cite{Hepworth2017}.
\end{ex}

\begin{table}[htpb]
\centering
\begin{tabular}{c|ccccccccc}
\hline
$k \backslash \ell$ & 0 & 1 & 2 & 3 & 4 & 5 & 6 & 7 & 8 \\
\hline
0 & 4 & 0  & 0  & 0   & 0   & 0   & 0   & 0  & 0 \\
1 & 0  & 8 & 0  & 0   & 0   & 0   & 0   & 0  & 0\\
2 & 0  & 0  & 12 & 0  & 0   & 0   & 0   & 0  & 0\\
3 & 0  & 0  & 0  & 16 & 0  & 0  & 0   & 0  & 0\\
4 & 0  & 0  & 0  & 0   & 20 & 0  & 0  & 0  & 0\\
5 & 0  & 0  & 0  & 0   & 0   & 24 & 0  & 0  & 0\\
6 & 0  & 0  & 0  & 0   & 0   & 0   & 28 & 0  & 0\\
7 & 0  & 0  & 0  & 0   & 0   & 0   & 0   & 32 & 0\\
8 & 0  & 0  & 0  & 0   & 0   & 0   & 0   & 0 & 36\\
\hline
\end{tabular}
\vspace{1ex}
\caption{Magnitude Betti numbers $\beta_{k,\ell}$ for the Boolean arrangement $Bl_2=U_{2,2}$ defined by $xy=0$.}
\label{tab:Bool2}
\end{table}

\begin{table}[htpb]
\centering
\begin{tabular}{c|ccccccccc}
\hline
$k \backslash \ell$ & 0 & 1 & 2 & 3 & 4 & 5 & 6 & 7 & 8 \\
\hline
0 & 6 & 0  & 0  & 0   & 0   & 0   & 0   & 0  & 0 \\
1 & 0  & 12 & 0  & 0   & 0   & 0   & 0   & 0  & 0\\
2 & 0  & 0  & 12 & 6  & 0   & 0   & 0   & 0  & 0\\
3 & 0  & 0  & 0  & 12 & 12  & 0  & 0   & 0  & 0\\
4 & 0  & 0  & 0  & 0   & 12 & 12  & 6  & 0  & 0\\
5 & 0  & 0  & 0  & 0   & 0   & 12 & 12  & 12  & 0\\
6 & 0  & 0  & 0  & 0   & 0   & 0   & 12 & 12  & 12\\
7 & 0  & 0  & 0  & 0   & 0   & 0   & 0   & 12 & 12\\
8 & 0  & 0  & 0  & 0   & 0   & 0   & 0   & 0 & 12\\
\hline
\end{tabular}
\vspace{1ex}
\caption{Magnitude Betti numbers $\beta_{k,\ell}$ for the Coxeter arrangement $A_2=U_{2,3}$.}
\label{tab:A2}
\end{table}

\begin{table}[htpb]
\centering
\begin{tabular}{c|ccccccccc}
\hline
$k \backslash \ell$ & 0 & 1 & 2 & 3 & 4 & 5 & 6 & 7 & 8 \\
\hline
0 & 8 & 0  & 0  & 0   & 0   & 0   & 0   & 0  & 0 \\
1 & 0  & 16 & 0  & 0   & 0   & 0   & 0   & 0  & 0\\
2 & 0  & 0  & 16 & 0  & 8   & 0   & 0   & 0  & 0\\
3 & 0  & 0  & 0  & 16 & 0  & 16  & 0   & 0  & 0\\
4 & 0  & 0  & 0  & 0   & 16 & 0  & 16  & 0  & 8\\
5 & 0  & 0  & 0  & 0   & 0   & 16 & 0  & 16  & 0\\
6 & 0  & 0  & 0  & 0   & 0   & 0   & 16 & 0  & 16\\
7 & 0  & 0  & 0  & 0   & 0   & 0   & 0   & 16 & 0\\
8 & 0  & 0  & 0  & 0   & 0   & 0   & 0   & 0 & 16\\
\hline
\end{tabular}
\vspace{1ex}
\caption{Magnitude Betti numbers $\beta_{k,\ell}$ for the Coxeter arrangement $B_2=U_{2,4}$.}
\label{tab:B2}
\end{table}

\begin{table}[htpb]
\centering
\begin{tabular}{c|ccccccccc}
\hline
$k \backslash \ell$ & 0 & 1 & 2 & 3 & 4 & 5 & 6 & 7 & 8 \\
\hline
0 & 24 & 0  & 0  & 0   & 0   & 0   & 0   & 0  & 0 \\
1 & 0  & 72 & 0  & 0   & 0   & 0   & 0   & 0  & 0\\
2 & 0  & 0  & 96 & 48  & 0   & 0   & 0   & 0  & 0\\
3 & 0  & 0  & 0  & 120 & 96  & 0  & 24   & 0  & 0\\
4 & 0  & 0  & 0  & 0   & 144 & 96  & 48  & 72  & 0\\
5 & 0  & 0  & 0  & 0   & 0   & 168 & 96  & 96  & 96\\
6 & 0  & 0  & 0  & 0   & 0   & 0   & 192 & 96  & 96\\
7 & 0  & 0  & 0  & 0   & 0   & 0   & 0   & 216 & 96\\
8 & 0  & 0  & 0  & 0   & 0   & 0   & 0   & 0 & 240\\
\hline
\end{tabular}
\vspace{1ex}
\caption{Magnitude Betti numbers $\beta_{k,\ell}$ for the Coxeter arrangement $A_3$.}
\label{tab:A3}
\end{table}

\begin{table}[htpb]
\centering
\begin{tabular}{c|ccccccc}
\hline
$k \backslash \ell$ & 0 & 1 & 2 & 3 & 4 & 5 & 6  \\
\hline
0 & 120 & 0  & 0  & 0   & 0   & 0    & 0  \\
1 & 0  & 480 & 0  & 0   & 0   & 0     & 0 \\
2 & 0  & 0  & 840 & 360  & 0   & 0    & 0  \\
3 & 0  & 0  & 0  & 1200 & 960  & 0   & 240  \\
4 & 0  & 0  & 0  & 0   & 1560 & 1440  & 360  \\
5 & 0  & 0  & 0  & 0   & 0   & 1920   & 1920 \\
6 & 0  & 0  & 0  & 0  & 0   &  0   & 2280\\
\hline
\end{tabular}
\vspace{1ex}
\caption{Magnitude Betti numbers $\beta_{k,\ell}$ for the Coxeter arrangement $A_4$.}
\label{tab:A4}
\end{table}

\begin{table}[htpb]
\centering
\begin{tabular}{c|ccccccccc}
\hline
$k \backslash \ell$ & 0 & 1 & 2 & 3 & 4 & 5 & 6 & 7 & 8 \\
\hline
0 & 14 & 0  & 0  & 0   & 0   & 0   & 0   & 0   &  0\\
1 & 0  & 48 & 0  & 0   & 0   & 0   & 0   & 0   &  0\\
2 & 0  & 0  & 96 & 0  & 0   & 0   & 0   & 0   &  0\\
3 & 0  & 0  & 0  & 144 & 14  & 0  & 0   & 0   & 0\\
4 & 0  & 0  & 0  & 0   & 192 & 48  & 0  & 0   & 0\\
5 & 0  & 0  & 0  & 0   & 0   & 240 & 96  & 0 & 0\\
6 & 0  & 0  & 0  & 0   & 0   & 0   & 288 & 144  & 14\\
7 & 0  & 0  & 0  & 0   & 0   & 0   & 0   & 336 & 192\\
8 & 0  & 0  & 0  & 0   & 0   & 0   & 0   & 0   & 384\\ 
\hline
\end{tabular}
\vspace{1ex}
\caption{Magnitude Betti numbers $\beta_{k,\ell}$ for the generic arrangement $U_{3,4}$ defined by $xyz(x+y+z)$; see Example \ref{generic4}.}
\label{tab:generic4}
\end{table}

\begin{table}[htpb]
\centering
\begin{tabular}{c|ccccccc}
\hline
$k \backslash \ell$ & 0 & 1 & 2 & 3 & 4 & 5 & 6  \\
\hline
0 & 30 & 0  & 0  & 0   & 0   & 0    & 0  \\
1 & 0  & 140 & 0  & 0   & 0   & 0     & 0 \\
2 & 0  & 0  & 380 & 0  & 0   & 0    & 0  \\
3 & 0  & 0  & 0  & 780 & 0  & 0   & 0  \\
4 & 0  & 0  & 0  & 0   & 1340 & 30  & 0  \\
5 & 0  & 0  & 0  & 0   & 0   & 2060   & 140 \\
6 & 0  & 0  & 0  & 0  & 0   &  0   & 2940\\
\hline
\end{tabular}
\vspace{1ex}
\caption{Magnitude Betti numbers $\beta_{k,\ell}$ for the generic arrangement $U_{4,5}$.}
\label{tab:U45}
\end{table}

\begin{table}[htpb]
\centering
\begin{tabular}{c|ccccccccc}
\hline
$k \backslash \ell$ & 0 & 1 & 2 & 3 & 4 & 5 & 6 & 7 & 8 \\
\hline
0 & 18 & 0  & 0  & 0   & 0   & 0   & 0   & 0   &  0\\
1 & 0  & 56 & 0  & 0   & 0   & 0   & 0   & 0   &  0\\
2 & 0  & 0  & 88 & 24  & 0   & 0   & 0   & 0   &  0\\
3 & 0  & 0  & 0  & 120 & 48  & 18  & 0   & 0   & 0\\
4 & 0  & 0  & 0  & 0   & 152 & 48  & 80  & 0   & 0\\
5 & 0  & 0  & 0  & 0   & 0   & 184 & 48  & 136 & 24\\
6 & 0  & 0  & 0  & 0   & 0   & 0   & 216 & 48  & 168\\
7 & 0  & 0  & 0  & 0   & 0   & 0   & 0   & 248 & 48\\
8 & 0  & 0  & 0  & 0   & 0   & 0   & 0   & 0   & 280\\ 
\hline
\end{tabular}
\vspace{1ex}
\caption{Magnitude Betti numbers $\beta_{k,\ell}$ for the graphic arrangement of $K_4 \setminus e$; see Example \ref{K4-e}.}
\label{tab:k4_minus_e}
\end{table}

\begin{table}[htpb]
\centering
\begin{tabular}{c|ccccccccc}
\hline
$k \backslash \ell$ & 0 & 1 & 2 & 3 & 4 & 5 & 6 & 7 & 8 \\
\hline
0 & 32 & 0  & 0  & 0   & 0   & 0   & 0   & 0   &  0\\
1 & 0  & 96 & 0  & 0   & 0   & 0   & 0   & 0   &  0\\
2 & 0  & 0  & 120 & 72  & 0   & 0   & 0   & 0   &  0\\
3 & 0  & 0  & 0  & 144 & 144  & 0  & 0   & 32   & 0\\
4 & 0  & 0  & 0  & 0   & 168 & 144  & 72  & 0   & 96\\
5 & 0  & 0  & 0  & 0   & 0   & 192 & 144  & 144 & 0\\
6 & 0  & 0  & 0  & 0   & 0   & 0   & 216 & 144  & 144\\
7 & 0  & 0  & 0  & 0   & 0   & 0   & 0   & 240 & 144\\
8 & 0  & 0  & 0  & 0   & 0   & 0   & 0   & 0   & 264\\ 
\hline
\end{tabular}
\vspace{1ex}
\caption{Magnitude Betti numbers $\beta_{k,\ell}$ for $\A(7,1)$}
\label{tab:A71}
\end{table}

\begin{table}[htpb]
\centering
\begin{tabular}{c|ccccccccc}
\hline
$k \backslash \ell$ & 0 & 1 & 2 & 3 & 4 & 5 & 6 & 7 & 8 \\
\hline
0 & 40 & 0  & 0  & 0   & 0   & 0   & 0   & 0   &  0\\
1 & 0  & 120 & 0  & 0   & 0   & 0   & 0   & 0   &  0\\
2 & 0  & 0  & 152 & 72  & 16   & 0   & 0   & 0   &  0\\
3 & 0  & 0  & 0  & 184 & 144  & 32  & 0   & 0   & 40\\
4 & 0  & 0  & 0  & 0   & 216 & 144  & 104  & 0   & 16\\
5 & 0  & 0  & 0  & 0   & 0   & 248 & 144  & 176 & 0\\
6 & 0  & 0  & 0  & 0   & 0   & 0   & 280 & 144  & 176\\
7 & 0  & 0  & 0  & 0   & 0   & 0   & 0   & 312 & 144\\
8 & 0  & 0  & 0  & 0   & 0   & 0   & 0   & 0   & 344\\ 
\hline
\end{tabular}
\vspace{1ex}
\caption{Magnitude Betti numbers $\beta_{k,\ell}$ for $\A(8,1)$}
\label{tab:A81}
\end{table}

\begin{table}[htpb]
\centering
\begin{tabular}{c|ccccccccc}
\hline
$k \backslash \ell$ & 0 & 1 & 2 & 3 & 4 & 5 & 6 & 7 & 8 \\
\hline
0 & 48 & 0  & 0  & 0   & 0   & 0   & 0   & 0   &  0\\
1 & 0  & 144 & 0  & 0   & 0   & 0   & 0   & 0   &  0\\
2 & 0  & 0  & 192 & 48  & 48   & 0   & 0   & 0   &  0\\
3 & 0  & 0  & 0  & 240 & 96  & 96  & 0   & 0   & 0\\
4 & 0  & 0  & 0  & 0   & 288 & 96  & 144  & 0   & 48\\
5 & 0  & 0  & 0  & 0   & 0   & 336 & 96  & 192 & 0\\
6 & 0  & 0  & 0  & 0   & 0   & 0   & 384 & 96  & 192\\
7 & 0  & 0  & 0  & 0   & 0   & 0   & 0   & 432 & 96\\
8 & 0  & 0  & 0  & 0   & 0   & 0   & 0   & 0   & 480\\ 
\hline
\end{tabular}
\vspace{1ex}
\caption{Magnitude Betti numbers $\beta_{k,\ell}$ for $\A(9,1)$}
\label{tab:A91}
\end{table}

\FloatBarrier

\begin{rem}
In almost all of the examples appearing in Example \ref{ex:MH_tables}, the equality
\[
\beta_{0,0}=\beta_{r,n}
\]
holds, where $r=\rank \A$ and $n=\#\A$. It is therefore natural to ask how generally this equality holds. Of course, it does not hold for Boolean arrangements. Moreover, further examples can be constructed by taking the direct sum of two or more arrangements. For instance, for $\A=U_{2,3}\oplus Bl_1$, the K\"unneth formula gives
\[
\beta_{3,4}(\A)
=
\beta_{3,4}(U_{2,3})\beta_{0,0}(Bl_1)
+
\beta_{2,3}(U_{2,3})\beta_{1,1}(Bl_1)
=36,
\]
whereas $\beta_{0,0}(\A)=12$.

There even exist examples with no nontrivial direct-sum decomposition, as the following example shows.
Let $G_{\house}$ be the house graph, that is, $V(G_\house)=\{1,2,3,4,5\}$ and $E(G_\house)=\{12,23,34,45,51,25\}$ (see Figure \ref{fig:Ghouse}).
Let $\A(G_\house)$ be the graphical arrangement associated with $G_\house$, i.e., $\A(G_\house)=\{x_i=x_j\mid ij\in E(G_\house)\}$.
Then we have $\rank \A(G_\house)=4$ and $\#\A(G_\house)=6$, but $\beta_{0,0}=42\neq 78=\beta_{4,6}$ (see Table \ref{tab:large_beta_r_n}).
\end{rem}

\begin{figure}[htpb]
\centering
\begin{tikzpicture}[scale=1.2,
  vertex/.style={circle, draw, fill=white, inner sep=1.5pt}
]
  \node[vertex] (1) at (1,2.5) {$1$};
  \node[vertex] (2) at (0,1.5) {$2$};
  \node[vertex] (3) at (0,0) {$3$};
  \node[vertex] (4) at (2,0) {$4$};
  \node[vertex] (5) at (2,1.5) {$5$};
  
  \draw (1)--(2)--(3)--(4)--(5)--(1);
  \draw (2)--(5);
\end{tikzpicture}
\vspace{1ex}
\caption{The house graph $G_\house$}
\label{fig:Ghouse}
\end{figure}

\begin{table}[htpb]
\centering
\begin{tabular}{c|ccccccc}
\hline
$k \backslash \ell$ & 0 & 1 & 2 & 3 & 4 & 5 & 6\\
\hline
0 & 42 & 0  & 0  & 0   & 0   & 0   & 0\\
1 & 0  & 188& 0  & 0   & 0   & 0   & 0\\
2 & 0  & 0  & 452& 36  & 0   & 0   & 0\\
3 & 0  & 0  & 0  & 812 & 172 & 0   & 0\\
4 & 0  & 0  & 0  & 0   & 1268& 384 & 78\\
5 & 0  & 0  & 0  & 0   & 0   & 1820& 624\\
6 & 0  & 0  & 0  & 0   & 0   & 0   & 2468\\
\hline
\end{tabular}
\vspace{1ex}
\caption{Magnitude Betti numbers $\beta_{k,\ell}$ for $\A(G_\house)$}
\label{tab:large_beta_r_n}
\end{table}

\FloatBarrier

\renewcommand\refname{References}
\bibliographystyle{hep}
\bibliography{magarr}

\end{document}